\begin{document}

\author{Chlo\'e Perin}
\title{Elementary embeddings in torsion-free hyperbolic groups}
\maketitle

\begin{abstract}We consider embeddings in a torsion-free hyperbolic group which are elementary in the sense of first-order logic. We give a description of these embeddings in terms of Sela's hyperbolic towers. We deduce as a corollary that subgroups elementarily embedded in finitely generated free groups are free factors. 
\end{abstract}

\section{Introduction}

Tarski's problem asks whether any two finitely generated non abelian free groups are elementary equivalent, namely whether they satisfy the same closed first-order formulas over the language of groups. In a series of articles starting with \cite{Sel1} and culminating in \cite{Sel6}, Sela answered this question positively (see also the work of Kharlampovich and Myasnikov \cite{KharlampovichMyasnikov}). Sela's approach is very geometric, and thus enables him in \cite{Sel7} to tackle problems on the first-order theory of torsion-free hyperbolic groups as well.

Another notion of interest in first-order theory is that of an elementary subgroup, or elementary embedding. Informally, a subgroup $H$ of a group $G$ is elementary if any tuple of elements of $H$ satisfies the same first-order properties in $H$ and $G$ (see Section \ref{LogicSec} for a definition).

Denote by $\F_n$ the free group on $n$ generators. To prove that finitely generated free groups of rank at least $2$ all have the same elementary theory, Sela shows in fact the following stronger result:
\begin{thm} \cite[Theorem 4]{Sel6} \label{FreeFactIsElementary} Suppose $2 \leq k \leq n$.
The standard embedding $\F_k \hookrightarrow \F_n$ is elementary.
\end{thm}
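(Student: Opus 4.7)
The plan is to apply the Tarski--Vaught test: an inclusion $H \hookrightarrow G$ is elementary if and only if, for every formula $\phi(\bar{x}, y)$ in the language of groups and every tuple $\bar{a}$ in $H$, the existence of a witness $y \in G$ to $\phi(\bar{a}, y)$ forces the existence of such a witness in $H$. An easy induction on $n-k$ reduces the problem to the case $n = k+1$. Write $\F_{k+1} = \F_k * \langle t \rangle$; the standard inclusion then comes equipped with a canonical retraction $r \colon \F_{k+1} \to \F_k$ sending $t$ to $1$, which will be the main algebraic tool.

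Decomposing along the quantifier structure of a prenex formula, the easy cases are handled directly: any universal formula holding in $\F_{k+1}$ holds automatically in $\F_k$ by restriction of variables, and for an existential formula $\exists \bar{y}\ \Sigma(\bar{y}, \bar{a}) = 1 \wedge \Psi(\bar{y}, \bar{a}) \neq 1$ the retraction $r$ preserves the equations, reducing the problem to controlling the inequations. The crux is therefore the $\forall \exists$ case: assume
\[
\F_{k+1} \models \forall \bar{x}\, \exists \bar{y}\ \Sigma(\bar{x}, \bar{y}, \bar{a}) = 1 \wedge \Psi(\bar{x}, \bar{y}, \bar{a}) \neq 1,
\]
and, for each $\bar{u} \in \F_k$, one must produce a witness $\bar{v} \in \F_k$ and not merely in $\F_{k+1}$.

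The main technical obstacle lies here: for a given tuple $\bar{u}$ the hypothesis only supplies some witness $\bar{v}(\bar{u}) \in \F_{k+1}$, and applying $r$ naively may trivialise a factor of $\Psi$. The approach I would follow is Sela's: parametrise the set of all witnesses by a Makanin--Razborov diagram over $\F_k$, and, for each limit group appearing on this diagram, run the Rips--Sela shortening argument on its JSJ decomposition relative to the constants $\bar{a}$. This should produce a finite collection of \emph{formal solutions} --- parametric families $\bar{y}(\bar{x})$ defined already over $\F_k$ --- which together cover every actual witness in $\F_{k+1}$. The retraction $r$ applied to a formal solution outputs specialisations in $\F_k$, and the exhaustiveness of the cover is precisely what forces at least one such specialisation to satisfy all the required inequations.

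The hardest step is thus the construction of these covers and the proof that they are comprehensive. This is the technical heart of Sela's Diophantine Geometry series, recast in the present paper in the geometric language of hyperbolic towers: $\F_{k+1} = \F_k * \langle t \rangle$ is a (trivial) hyperbolic floor over $\F_k$, hence a hyperbolic tower, and Theorem \ref{FreeFactIsElementary} will appear as the special case of the paper's main result that every hyperbolic tower inclusion is elementary.
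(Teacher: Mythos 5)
The paper does not prove this theorem: it states it as a citation to Sela's \cite[Theorem 4]{Sel6}, and the whole point of the present paper is to establish the \emph{converse} direction (Theorem \ref{MainResult}: elementary $\Rightarrow$ hyperbolic tower). Your final sentence has the implication reversed --- you claim that ``every hyperbolic tower inclusion is elementary'' is the paper's main result, but it is not; the paper only remarks in the introduction that this converse holds by an argument of Sela (Proposition 7.6 of \cite{Sel7}, as modified in \cite{SelaPrivate}), and never reproves it. So there is no proof in the paper to compare your sketch against.

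As for the sketch itself, the broad strokes do reflect Sela's strategy, but two of your reductions are presented as easy when they are in fact the deep content. First, the passage from arbitrary formulas to $\forall\exists$-formulas is not a routine prenex-normal-form induction: the Tarski--Vaught test requires you to produce witnesses for $\exists y\,\phi(\bar a, y)$ where $\phi$ itself has arbitrary quantifier complexity, and the reduction of general first-order sentences over free groups to (boolean combinations of) $\forall\exists$-sentences is one of the major theorems of Sela's quantifier-elimination program, not a preliminary step. Second, your treatment of the purely existential case as ``handled directly by $r$, reducing to controlling the inequations'' understates the difficulty: the retraction $r$ can trivially kill a factor of $\Psi$, and ``controlling the inequations'' is exactly the problem that the formal-solution and test-sequence machinery is built to address; it is not dispatched before the hard case begins. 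The notion of a formal solution over a closure of an MR-diagram, and the proof that finitely many of them exhaust all genuine witnesses (Sela's ``validation'' and ``sculpted resolution'' apparatus), is where the genuine work lies, and the sketch gestures at it without giving a workable outline. For a self-contained treatment you would need to reproduce a substantial part of \cite{Sel5} and \cite{Sel6}, which is why the present paper simply cites the result.
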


In this paper, we use some of Sela's techniques to give a description of elementary subgroups of torsion-free hyperbolic groups. Our main result is
\begin{thm}\label{MainResult}
Let $G$ be a torsion-free hyperbolic group. Let $H \hookrightarrow G$ be an elementary embedding. Then $G$ is a hyperbolic tower based on $H$.
\end{thm}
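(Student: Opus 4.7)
The plan is to extract a retraction $r \colon G \to H$ from the elementary embedding, and then to use Sela's JSJ and formal-solution machinery in the relative setting (with $H$ as a parameter subgroup) to promote this retraction to a full hyperbolic tower structure over $H$.

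For the first step, we may assume $H$ is finitely generated (as an elementary subgroup of a torsion-free hyperbolic group it is a limit group, hence finitely generated), with generators $h_1,\dots,h_k$. Fix a finite presentation $G=\langle g_1,\dots,g_n \mid R(\bar g)\rangle$ and write each $h_i$ as a word $w_i(\bar g)$. The existential sentence $\exists\,\bar x\,\bigl(R(\bar x)=1 \wedge \bigwedge_i w_i(\bar x)=h_i\bigr)$ holds in $G$ (taking $\bar x=\bar g$), so by $H\preceq G$ it also holds in $H$. The witnesses define a retraction $r\colon G\to H$ fixing $H$ pointwise.

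For the second step, consider the cyclic JSJ decomposition of $G$ relative to $H$, in which $H$ sits inside a single rigid vertex group $G_0$. I would apply the shortening argument to $r$: precomposition with short modular automorphisms of $G$ fixing $H$ confines the non-injective part of $r$ to the surface-type and abelian pieces of the JSJ, isolating a candidate top floor above a subgroup $G_0\supseteq H$. To certify that this is an actual floor in Sela's strict sense --- that is, each surface piece retracts non-abelianly onto the floor below --- invoke the elementary hypothesis a second time: the existence in $G$ of suitable ``test configurations'' on each surface piece (tuples realizing the JSJ combinatorics together with non-commutation witnesses) can be expressed existentially with parameters in $H$, so such configurations must exist in $H$ as well. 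Sela's formal solution theorem (a Merzlyakov-type result) then converts them into the required non-abelian retractions, and shows that $G_0$ is again torsion-free hyperbolic with $H$ still elementarily embedded.

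Iterating this construction produces a descending chain of floors, and a complexity invariant of the relative JSJ (for example, a lexicographic pair built from Euler characteristics of surface pieces and ranks of free factors) strictly decreases at each step, so the process terminates with $H$ at the bottom, yielding the desired tower structure. The main obstacle is the geometric content of the middle step: converting the bare logical datum of a retraction $G \to H$ into concrete surface-with-boundary and free-factor pieces above $H$. This hinges on the formal solution theorem and on a careful treatment of modular automorphisms in the setting relative to $H$ --- essentially, a relative version of the machinery developed in Sela's solution to Tarski's problem.
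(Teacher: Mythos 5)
The proposal shares the general flavor of the paper (first-order logic plus the cyclic JSJ relative to $H$, iterated over floors), but there are several genuine gaps in the key steps.

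\textbf{The finite generation of $H$.} You assume at the outset that $H$ is finitely generated, asserting that an elementary subgroup of a torsion-free hyperbolic group is a $\Gamma$-limit group. This is not a priori true: $\Gamma$-limit groups are by definition quotients of \emph{finitely generated} groups, so the claim is circular. The paper explicitly flags this point (``In general, we do not know a priori that $H$ is finitely generated (though this is a consequence of Theorem \ref{MainResult})'') and avoids it by never writing down a formula that quantifies over all of $H$. Instead, it uses Proposition \ref{MRGammaRelPartial}, which shows that the factor set statement already holds for morphisms that fix only a large enough \emph{finite} subset $H_0 \subseteq H$, together with Lemma \ref{FreelyIndecFG}, which says $G$ is freely indecomposable relative to a finitely generated subgroup of $H$. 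Without this refinement your first-order sentence cannot be written down, so your first step does not get off the ground.

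\textbf{The global retraction $G \to H$ is not the right object.} Even granting that you can produce a retraction $r \colon G \to H$, that alone does not give a tower: hyperbolic floor structures are constructed floor by floor via retractions $G^m \to G^{m+1}$ that must respect a specific graph-of-groups decomposition with surfaces and send surface groups to non-abelian images. The paper does not produce $r\colon G\to H$ at any point; it produces non-injective \emph{preretractions} $A^m \to G$ relative to the JSJ of $A^m$ (Propositions \ref{PreretractionForA}, \ref{PreretractionForB}), then converts them to actual floor structures by a separate geometric argument (Proposition \ref{Retraction}), using low-dimensional topology and Bass–Serre theory. Your ``apply the shortening argument to $r$'' is too coarse: the shortening argument modifies a morphism by modular precomposition, but it does not by itself extract a surface-with-boundary retraction from $r$. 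The real work is in Sections 6 and 7.

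\textbf{Iterating requires that $H$ stay elementary in the floors, which you have not established.} Your induction ``invokes the elementary hypothesis a second time'' inside $G_0$. That requires $H \preceq G_0$, which is the converse of Theorem \ref{MainResult} (a tower retract is elementarily embedded) and is not proven in this paper — it is attributed to Sela as a private communication. The paper's induction is carefully arranged to avoid this: it only ever uses $H \preceq G$, applying Propositions \ref{PreretractionForA}/\ref{PreretractionForB} to subgroups $A \leq G$ with the sentence still interpreted in $G$, then descending from a preretraction $A \to G$ to a preretraction $A \to A$ via Proposition \ref{RetractionPlus}. Your version would need to prove the converse theorem first.

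\textbf{Termination.} Your complexity invariant (Euler characteristics plus ranks) is plausible but unproven to strictly decrease. The paper instead uses the descending chain condition for $\Gamma$-limit groups (Theorem \ref{DecSeqOfGammaLimit}), which is robust and requires no ad hoc invariant. Also, the Merzlyakov/formal-solution machinery you invoke is not used in this paper; the arguments go through factor sets and preretractions, which is a different and more direct route for this particular statement.
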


Hyperbolic towers are groups built by successive addition of hyperbolic floors, which can be described as follows. A group $G$ has a hyperbolic floor structure over a subgroup $G'$ if it is the fundamental group of a complex $X$ built by gluing some surfaces $\Sigma_1, \ldots, \Sigma_m$ along their boundary to the disjoint union of complexes $X'_1, \ldots, X'_l$, such that $G'$ is the fundamental group of a subcomplex $X'$ which contains the subcomplexes $X'_i$, and whose intersection with each surface $\Sigma_j$ is contractible (in particular $G'$ is isomorphic to the free product of the groups $\pi_1(X'_i)$). We require moreover the existence of a retraction $r:G \to G'$ which sends the fundamental groups $\pi_1(\Sigma_j)$ to non abelian images.

A hyperbolic tower over $H$ is built by successively adding hyperbolic floors to a ``ground floor'' which is the free product of $H$, closed surface groups and a free group (see Figure~\ref{HypTowerEx}). For a precise definition, see Definition \ref{HypTower}. 

Hyperbolic towers are defined by Sela in \cite{Sel1}, and enable him to give in \cite{Sel6} a description of finitely generated groups which are elementary equivalent to free groups. This structure is also used in \cite{Sel7} to give a classification of elementary equivalence classes of torsion-free hyperbolic groups. In fact, Proposition 7.6 of \cite{Sel7} shows that some particular subgroups of a torsion-free hyperbolic group $\Gamma$ (its ``elementary cores''), over which $\Gamma$ has a structure of hyperbolic tower, are elementarily embedded in $\Gamma$. According to Sela, the specific properties of these subgroups (apart from the structure of hyperbolic tower $\Gamma$ admits over them) are not used in the proof, which in fact shows that the converse of Theorem \ref{MainResult} holds \cite{SelaPrivate}.

\begin{figure}[!ht]
\begin{center}
\input{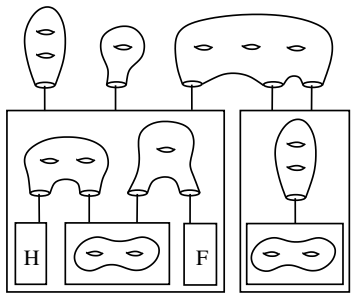}
\caption{A hyperbolic tower over $H$.}
\label{HypTowerEx}
\end{center}
\end{figure}

In the particular case where $G$ is a free group, we show that Theorem \ref{MainResult} implies the converse of Theorem \ref{FreeFactIsElementary}, so that we have
\begin{thm} \label{CaseFreeGroups} Let $H$ be a proper subgroup of $\F_n$. The embedding of $H$ in $\F_n$ is elementary if and only if $H$ is a non abelian free factor of $\F_n$.
\end{thm}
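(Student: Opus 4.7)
The plan is to obtain one direction from Theorem \ref{FreeFactIsElementary} and the other from Theorem \ref{MainResult} combined with a structural analysis of hyperbolic towers whose top group is free.

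For the direction ``non abelian free factor implies elementary embedding'', I observe that if $H$ is a non abelian free factor of $\F_n$ then $H \cong \F_k$ for some $2 \le k \le n$, and an automorphism of $\F_n$ carries the embedding $H \hookrightarrow \F_n$ to the standard embedding $\F_k \hookrightarrow \F_n$. Since elementary embeddings are preserved under post-composition with an automorphism of the target, the conclusion follows from Theorem \ref{FreeFactIsElementary}.

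For the converse, assume $H$ is a proper subgroup of $\F_n$ that is elementarily embedded. The sentence $\exists x\, \exists y\, [x,y]\neq 1$ is true in $\F_n$ whenever $n \ge 2$, so by elementariness $H$ is non abelian; the case $n=1$ is vacuous, as divisibility formulas readily show that $\Z$ has no proper elementarily embedded subgroup. Theorem \ref{MainResult} then provides a hyperbolic tower structure on $\F_n$ based on $H$, with successive floors
\[
H \subset G_0 \subset G_1 \subset \cdots \subset G_\ell = \F_n
\]
and ground floor $G_0 = H * S_1 * \cdots * S_k * F_r$, where the $S_j$ are closed surface groups. All $G_i$ are subgroups of the free group $\F_n$ and hence are themselves free; since closed surface groups of positive genus are not free, no $S_j$ can appear, so $G_0 = H * F_r$ and $H$ is a free factor of $G_0$.

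It remains to prove that $G_i$ is a free factor of $G_{i+1}$ at each floor of the tower, after which transitivity of the free factor relation finishes the argument. This is where I expect the main obstacle to lie. Since $G_{i+1}$ is free, every surface $\Sigma_j$ attached in the floor from $G_i$ to $G_{i+1}$ must have a free fundamental group, i.e.\ must have non-empty boundary. The hyperbolic floor then presents $G_{i+1}$ as the fundamental group of a graph of groups with free vertex groups (the pieces of $G_i$ and the $\pi_1(\Sigma_j)$) and cyclic edge groups, equipped with a retraction $r_{i+1} : G_{i+1} \to G_i$ sending every $\pi_1(\Sigma_j)$ onto a non abelian subgroup of $G_i$. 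I would apply Bass-Serre theory to this splitting, using both the retraction and Grushko-type rank arguments, to show that such a free $G_{i+1}$ must split as a genuine free product $G_{i+1} = G_i * F'$, exhibiting $G_i$ as a free factor. Iterating this along the tower finally displays $H$ as a free factor of $\F_n$.
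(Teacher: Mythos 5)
Your proof of the forward direction (free factor implies elementary) is correct and matches the paper: one reduces via an automorphism of $\F_n$ to the standard inclusion and cites Theorem~\ref{FreeFactIsElementary}. Your reductions in the converse direction are also sound: elementariness forces $H$ non-abelian (and $n\geq 2$), Theorem~\ref{MainResult} gives a hyperbolic tower structure, and no closed surface factors can appear in the ground floor of a free group. However, the crux of the argument --- why $G_i$ is a free factor of $G_{i+1}$ at every floor --- is exactly where you stop and hand-wave with ``Bass--Serre theory'' and ``Grushko-type rank arguments,'' and this is a genuine gap, not a routine verification.

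Note first that the statement you aim for is a bit off-target. The paper proves (Lemma~\ref{FreeHaveNoFloors}) the stronger fact that a free group admits \emph{no} non-trivial hyperbolic floor structure at all, so the tower must have zero floors and $\F_n=G^m=H*F'$ directly. Your plan instead tries to show that a floor on a free group ``degenerates'' to a free product decomposition $G_{i+1}=G_i*F'$; but a hyperbolic floor, by definition, has a non-trivial graph-of-groups decomposition with non-trivial cyclic edge groups and a retraction $r$ sending the attached surface groups to \emph{non-abelian} images of $G_i$. This is not merely a rank-counting obstruction, and a Grushko/Euler-characteristic count does not obviously close the argument: amalgamating free groups over maximal cyclic subgroups can perfectly well yield a free group of the ``wrong'' rank. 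The actual mechanism in the paper is more delicate. One picks a maximal collection of pairwise non-conjugate maximal boundary elements $\gamma_1,\dots,\gamma_r$ of the attached surfaces, and uses Lemma~4.1 of \cite{BestvinaFeighnOuterLimits} (valid precisely because both $G_{i+1}$ and $G_i$ are free) to produce a decomposition $G_i = \langle\gamma_1\rangle * F''$ with all other $\gamma_i$ conjugate into $F''$. Composing the retraction $r$ with the map $G_i\to \Z/2\Z$ killing $F''$ and squares in $\langle\gamma_1\rangle$, one sends $\gamma_1$ to the generator and the other $\gamma_i$ to $0$; but the surface group relation expresses the product of the boundary elements of a given surface as a product of commutators and squares, which is killed by this $\Z/2\Z$-valued map, a contradiction. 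Nothing in your sketch supplies this use of the boundary-element relation nor the Bestvina--Feighn input, and without them the floor-by-floor step remains unproved.
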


Let us consider the case where $G$ is the fundamental group of a closed hyperbolic surface. In the example represented on Figure~\ref{StupidEx}, the element corresponding to $\gamma$ can be written as a product of two commutators in $\pi_1(\Sigma)$, though not in $\pi_1(\Sigma_1)$. This is a property which can be expressed by a first-order formula, and that $\gamma$ satisfies on $\pi_1(\Sigma)$, though not in $\pi_1(\Sigma_1)$: the embedding of $\pi_1(\Sigma_1)$ in $\pi_1(\Sigma)$ is not elementary. 

\begin{figure}[H]
\begin{center}
\input{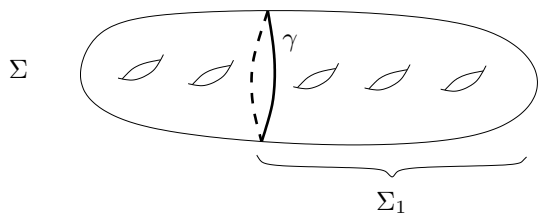}
\caption{$\pi_1(\Sigma_1)$ is not elementarily embedded in $\pi_1(\Sigma)$.}
\label{StupidEx}
\end{center}
\end{figure}

This example seems to suggest that an elementary subgroup of the fundamental group of a hyperbolic surface cannot be too big. In fact, we show that applying Theorem \ref{MainResult} gives
\begin{thm} \label{SurfaceCaseIntro} Let $S$ be the fundamental group of a closed hyperbolic surface $\Sigma$. Suppose $H$ is a proper subgroup of $S$ whose embedding in $S$ is elementary.

Then $H$ is a non abelian free factor of the fundamental group of a connected subsurface $\Sigma_0$ of $\Sigma$ whose complement in $\Sigma$ is connected, and which satisfies $|\chi(\Sigma_0)| \leq |\chi(\Sigma)|/2$ (with equality if and only if $\Sigma$ is the double of $\Sigma_0$).
\end{thm}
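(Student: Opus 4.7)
By Theorem \ref{MainResult}, the elementary embedding $H \hookrightarrow S = \pi_1(\Sigma)$ implies that $S$ admits the structure of a hyperbolic tower based on $H$. The plan is to translate this algebraic tower structure into a surface-theoretic decomposition of $\Sigma$, and then derive the Euler characteristic bound from a retraction argument.

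First, I would analyze hyperbolic towers whose top group is a closed hyperbolic surface group. Each hyperbolic floor cuts the ambient complex along a family of essential simple closed curves into a ``base'' part and a ``surface'' part, linked by a retraction of fundamental groups that sends the $\pi_1$ of each surface piece to a non-abelian subgroup of the base. Since $\pi_1(\Sigma)$ is one-ended, it does not split as a non-trivial free product, and Grushko's theorem then constrains the ground floor $H * S_1 * \cdots * S_\ell * F_r$: any closed-surface factor $S_i$ must be realized as the fundamental group of a surface piece that is subsequently absorbed into $\Sigma$ by a later floor, while the free factor $F_r$ persists as a free complement to $H$ in the base. Compressing the tower, I obtain a single-floor picture: a decomposition $\Sigma = \Sigma_0 \cup \Sigma_1$ along a disjoint family of essential simple closed curves, together with a retraction $r : \pi_1(\Sigma) \to \pi_1(\Sigma_0)$ whose image of $\pi_1(\Sigma_1)$ is non-abelian, and with $H$ a non-abelian free factor of $\pi_1(\Sigma_0)$. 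Connectedness of both $\Sigma_0$ and $\Sigma_1$ is then achieved by regrouping disconnected pieces into the base while extending the retraction by the identity.

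Second, I would invoke the classical inequality for retractions onto subsurfaces: if $r : \pi_1(\Sigma) \to \pi_1(\Sigma_0)$ is a retraction, with $\Sigma_0$ a proper connected subsurface of a closed surface $\Sigma$ with connected complement $\Sigma_1$, and $r$ sends $\pi_1(\Sigma_1)$ to a non-abelian subgroup, then $|\chi(\Sigma_0)| \leq |\chi(\Sigma_1)|$, equivalently $|\chi(\Sigma_0)| \leq |\chi(\Sigma)|/2$. The proof is a degree-style argument: realising $r$ by a map $\Sigma \to \Sigma_0$ that is the identity on $\Sigma_0$, its restriction to $\Sigma_1$ is a map into $\Sigma_0$ that is the identity on the common boundary, and standard surface topology then forces $|\chi(\Sigma_1)| \geq |\chi(\Sigma_0)|$. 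Equality occurs only when this restricted map induces an isomorphism of fundamental groups that is the identity on boundary conjugacy classes; by classical surface rigidity, such an isomorphism is realised by a homeomorphism $\Sigma_1 \to \Sigma_0$ rel boundary, exhibiting $\Sigma$ as the double of $\Sigma_0$.

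The main obstacle is the first step: extracting from a possibly multi-floor tower a clean single-floor retraction picture with both $\Sigma_0$ and $\Sigma_1$ connected and with $H$ appearing as a non-abelian free factor of $\pi_1(\Sigma_0)$. This requires careful bookkeeping on how successive floors compose inside the fixed closed surface $\Sigma$, exploiting one-endedness of $S$, the JSJ decomposition of $\Sigma$, and the rigidity of retractions between surface groups to control the placement of cutting curves and the disposition of the ground-floor free and closed-surface factors.
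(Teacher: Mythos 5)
Your overall route matches the paper's: apply Theorem \ref{MainResult} to get a hyperbolic tower over $H$, realize the first floor decomposition geometrically on $\Sigma$, identify $H$ as a free factor of the fundamental group of the base subsurface $\Sigma_0$ (via the fact that $\pi_1(\Sigma_0)$ is a free group carrying a tower over $H$, hence Lemma \ref{FreeHaveNoFloors} applies), and then establish the Euler characteristic bound from the retraction. However, there is a genuine gap in the last step.

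The ``degree-style argument'' you invoke does not work as stated. Realizing the retraction $r$ by a map $\phi : \Sigma \to \Sigma_0$ that is the identity on $\Sigma_0$ and restricting to $\phi|_{\Sigma_1} : \Sigma_1 \to \Sigma_0$, there is no elementary reason this restriction should satisfy $|\chi(\Sigma_1)| \geq |\chi(\Sigma_0)|$: the restriction is in general far from $\pi_1$-injective and may pinch essential simple closed curves of $\Sigma_1$ to null-homotopic images, at which point ``degree'' gives no information. The paper's proof of Proposition \ref{HypFloorMinorSubsurface} deals with exactly this: it passes to a maximal essential set $\mathcal{C}$ of curves on $\Sigma_1$ pinched by $r|_{S_1}$, factors $r|_{S_1} = r'_1 \circ \rho_{\mathcal{C}}$, and then applies Lemma \ref{HomeoLemma} (which rests on Gabai's theorem and Lemma \ref{FiniteIndex}) to the \emph{non-pinching} map $r'_1$ on the exterior vertex group $S_1^{\Gamma}$, concluding that $r'_1|_{S_1^{\Gamma}}$ is an isomorphism of surface groups onto $R_0 = \pi_1(\Sigma_0)$. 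The Euler characteristic chain $\chi(\Sigma_0) = \chi(\Sigma_1^{\Gamma}) \geq \chi(\Sigma_1) \geq \chi(\Sigma_0^c)$ then follows from Remark \ref{PinchingComplexity}, and the equality analysis (hence the ``double'' characterization) tracks when $\mathcal{C}$ is empty and $\Sigma_1 = \Sigma_0^c$. This pinching analysis is the substantive technical content that your proposal elides by appeal to ``standard surface topology.''

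Two smaller points: first, you do not need a delicate ``compression'' of a multi-floor tower into one floor; the paper only examines the first floor $(S, R, r)$ and then applies Lemma \ref{FreeHaveNoFloors} to the vertex group $R_0$, which is already free. Second, the argument must cite Theorem III.2.6 of Morgan--Shalen to realize the floor decomposition $\Lambda$ as dual to simple closed curves on $\Sigma$, and must separately handle the non-orientable situations (including the exceptional case where $\Sigma$ is the connected sum of four projective planes, for which the statement is verified by showing there are no proper elementary subgroups at all).
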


To prove Theorem \ref{MainResult}, we need to uncover a decreasing sequence $G=G_0 \geq G_1 \geq G_2 \geq \ldots$ of subgroups of $G$ which contain $H$, each of the subgroups $G_i$ forming a floor of a hyperbolic tower above the next subgroup $G_{i+1}$. 

Let us give an idea of the proof in the special case where $G$ is freely indecomposable relative to $H$. For each decomposition of $G$ as an amalgamated product or an HNN extension above a cyclic group for which $H$ lies in one of the factors, we consider the Dehn twists which fix the factor containing $H$. We then define the modular group $\Mod_H(G)$ as the group of automorphisms of $G$ generated by all such Dehn twists. 

The shortening argument of Rips and Sela gives the following result (this is a special case of one of the two key results for the construction of restricted Makanin-Razborov diagrams in \cite{Sel7}):
\begin{thm}\label{MRGammaRelPartialIntro} Let $G$ be a hyperbolic group which is freely indecomposable with respect to a non abelian subgroup $H$. 
There exists a finite set of proper quotients of $G$, such that for any non injective morphism $h:G \to G$ which fixes $H$, there is an element $\sigma$ of $\Mod_H(G)$ such that $h \circ \sigma$ factors through one of the corresponding quotient maps. 
\end{thm}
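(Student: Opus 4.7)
My plan is to argue by contradiction using the Bestvina--Paulin compactness theorem together with Sela's shortening argument in the relative setting. Suppose no such finite collection of proper quotients exists. Then I can inductively construct an infinite sequence of non-injective morphisms $h_n : G \to G$ fixing $H$ such that, for every $n$ and every $\sigma \in \Mod_H(G)$, the composition $h_n \circ \sigma$ does not factor through any of the quotient maps $G \to G/\ker h_i$ with $i<n$. Within each $\Mod_H(G)$-orbit I choose $h_n$ to be shortest, where the length of a morphism $h$ is $\mu(h) := \max_{s \in S} |h(s)|$ for a fixed finite generating set $S$ of $G$. Since $G$ is finitely generated and hyperbolic, only finitely many morphisms have length below any given bound, so after extraction $\mu(h_n) \to \infty$.

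Next, I consider the actions of $G$ on its Cayley graph defined by the $h_n$ with basepoint $1$, and rescale distances by $1/\mu(h_n)$. The Cayley graph is $\delta$-hyperbolic, so by Bestvina--Paulin a subsequence converges in the equivariant Gromov--Hausdorff topology to a non-trivial isometric action of $G$ on an $\R$-tree $T$. Since each $h_n$ fixes $H$ pointwise, the translation length of any $h \in H$ on the rescaled Cayley graph tends to zero, and since $H$ is finitely generated and non-abelian, $H$ is elliptic in $T$. As $G$ is torsion-free hyperbolic, arc stabilizers in $T$ are cyclic and tripod stabilizers are trivial, so the action is superstable and the Rips--Sela structure theorem applies, yielding a non-trivial decomposition of $G$ as a graph of groups $\Lambda$ over cyclic subgroups in which $H$ lies in a single vertex group.

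Because $G$ is freely indecomposable relative to $H$, the edges of $\Lambda$ cannot correspond to free product splittings in which $H$ would end up on one side; every edge carries a genuine non-trivial cyclic stabilizer and gives a one-edge splitting with $H$ in a factor. The associated Dehn twists, together with mapping class group elements supported on the surface-type components of $\Lambda$, therefore all belong to $\Mod_H(G)$. The shortening argument of Rips and Sela now produces, for all sufficiently large $n$, an element $\tau_n \in \Mod_H(G)$ with $\mu(h_n \circ \tau_n) < \mu(h_n)$, contradicting the minimality of $h_n$ in its orbit. I conclude that the set of $\Mod_H(G)$-orbits of shortest non-injective morphisms is finite, and the kernels of a finite set of representatives provide the required proper quotients.

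The main obstacle is the passage from the Rips decomposition of $T$ to an honest cyclic splitting of $G$ relative to $H$: one must verify that $H$ sits inside a single vertex group, disjoint from the surface and axial components on which shortening acts nontrivially, so that the shortening automorphisms genuinely lie in $\Mod_H(G)$. Free indecomposability of $G$ relative to $H$, combined with the ellipticity of $H$ in $T$, is precisely what guarantees this. Equational Noetherianity of $G$, which holds for all torsion-free hyperbolic groups, is then used to ensure that the kernels of the shortest representatives stabilize, making the finite list of output quotients well-defined.
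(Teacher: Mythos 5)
Your overall approach --- Bestvina--Paulin convergence, Rips--Sela shortening in the relative setting, with $H$ elliptic in the limit because the basepoint is taken at the identity and each $h_n$ fixes $H$ pointwise --- is essentially the paper's, and you correctly identify the roles of non-abelianity of $H$ and of free indecomposability of $G$ relative to $H$. But the way you extract a finite factor set has a real gap. The shortening argument produces an element of $\Mod_H(G)$ strictly shortening $h_n$ only when the stable kernel of the (extracted, stable) sequence $(h_n)$ is trivial: if the stable kernel $K$ is non-trivial, the limit tree carries a faithful action of the quotient $L = G/K$ rather than of $G$, the Rips decomposition is a decomposition of $L$, and the automorphisms produced are automorphisms of $L$ that need not lift to $\Mod_H(G)$ --- so there is no contradiction with minimality of $h_n$ in its orbit. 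Your inductive construction (choose $h_n$ not factoring, even after precomposition by $\Mod_H(G)$, through the earlier $G/\ker h_i$) does nothing to force the stable kernel to be trivial, so the argument is incomplete in exactly the case that matters. The paper avoids this by first setting up $\Gamma$-shortening quotients relative to $H$: the eventual-factorization property of $\Gamma$-limit groups gives finitely many maximal such quotients; the shortening argument (applied only to sequences with trivial stable kernel) shows all of them are proper; and a sequence of relatively short morphisms missing them all would have its stable-kernel quotient dominated by a maximal one, so by eventual factorization almost all terms factor through it --- the contradiction. You invoke equational Noetherianity only in a final, disconnected sentence; it needs to enter precisely at this bookkeeping stage, as the descending-chain/eventual-factorization input.

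Two smaller points. With the basepoint at $1$ rather than at a displacement minimizer, non-triviality of the limit action is not automatic and must be argued: a global fixed point distinct from the basepoint would be joined to the basepoint by an arc stabilized by the non-abelian group $H$ (which fixes the basepoint), contradicting abelianity of arc stabilizers; you assert non-triviality without this. Also, $H$ is not assumed to be finitely generated; ellipticity of $H$ follows directly because $H$ fixes the basepoint, not from any Serre-type lemma requiring finite generation, so that hypothesis should be dropped from your justification.
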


The key idea in the proof of Theorem \ref{MainResult} is to try and express this factorization result by a first-order formula that $H$ satisfies. 

Suppose now that $G$ does not admit any non trivial splitting over a cyclic subgroup in which $H$ is elliptic, so that the modular group $\Mod_H(G)$ is trivial. Theorem \ref{MRGammaRelPartialIntro} then implies that any non injective morphism $G \to G$ fixing $H$ factors through one of a finite set of quotients. Note that  if $H$ is a proper subgroup of $G$, a morphism $G \to H$ fixing $H$ is in particular a non injective morphism $G \to G$ fixing $H$.  Let $U$ be a finite set containing one non trivial element in the kernel of each one of the quotient maps given by Theorem \ref{MRGammaRelPartialIntro}. If $H$ has a finite generating set $\{h_1, \ldots, h_n\}$, we can express by a first-order formula $\phi(h_1, \ldots, h_n)$ satisfied by $H$ the following statement: ``any morphism $G \to H$ fixing the elements $h_i$ sends one of the elements of $U$ to a trivial image''. Since $H$ is elementary in $G$, the formula $\phi(h_1, \ldots, h_n)$ is satisfied by $G$, and its interpretation on $G$ is the following statement: `` any morphism $G \to G$ fixing each of the elements $h_i$ sends one of the elements of $U$ to a trivial image''. This is obviously not true of the identity map $G \to G$, so we get a contradiction. Thus if $H$ is elementary in $G$, there is at least one non trivial splitting of $G$ over a cyclic group in which $H$ is elliptic. 

In general, we do not know a priori that $H$ is finitely generated (though this is a consequence of Theorem \ref{MainResult}), and thus we cannot express the fact that a morphism fixes $H$ in a first-order formula. We can in fact generalize Theorem \ref{MRGammaRelPartialIntro} to show that any non injective morphism $G \to G$ which fixes a large enough finitely generated subgroup $H_0$ of $H$ factors after precomposition by a modular automorphism through one of finitely many quotients. We also need to show that a morphism $G \to H$ which fixes a large enough finitely generated subgroup $H_0$ of $H$ cannot be injective. For this, we prove that if $G$ is freely indecomposable with respect to a subgroup $H$, then it is freely indecomposable with respect to a finitely generated subgroup $H_0$ of $H$, and we combine this with the following result: 
\begin{thm} \label{CoHopfRelIntro} Let $G$ be a torsion-free hyperbolic group. Let $H_0$ be a non cyclic subgroup of $G$ relative to which $G$ is freely indecomposable. Then $G$ is co-Hopf relative to $H_0$, that is, if a morphism $\phi: G \rightarrow G$ is injective and fixes $H_0$ then it is an isomorphism.
\end{thm}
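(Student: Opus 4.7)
The strategy is to argue by contradiction: assume $\phi:G\to G$ is injective, fixes $H_0$ pointwise, and is not surjective. I will extract from the iterates $\phi^n$ a non-trivial isometric action of $G$ on an $\R$-tree relative to $H_0$, and then invoke a relative shortening argument to obtain a contradiction.

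Since $\phi$ is injective and not surjective, the subgroups $\phi^n(G)$ form a strictly decreasing chain, each containing $H_0$ because $\phi^n|_{H_0}=\id$. Fix a finite generating set $S$ of $G$ and set $\lambda_n := \max_{s\in S}|\phi^n(s)|_S$. If $\lambda_n$ were bounded, the tuples $(\phi^n(s))_{s\in S}$ would take only finitely many values, so $\phi^n=\phi^m$ for some $n<m$, contradicting the strict chain of images. Hence $\lambda_n\to\infty$.

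Next, view each $\phi^n$ as a $G$-action on the Cayley graph $X$ of $G$ by $g\cdot_n x := \phi^n(g)\,x$. Since $\phi^n$ fixes $H_0$ pointwise, $H_0$ acts via its original action, so displacements of elements of $H_0$ stay bounded in $n$. Rescale the metric by $1/\lambda_n$; by the Bestvina--Paulin construction, using the non-cyclic (hence non-elementary, as $G$ is torsion-free hyperbolic) subgroup $H_0$ to pin down basepoints, I extract an equivariant Gromov--Hausdorff subsequential limit, which is a non-trivial isometric $G$-action on an $\R$-tree $T$. In this limit $H_0$ fixes a point (its displacements rescale to zero) and arc stabilizers are cyclic, since $G$ is torsion-free hyperbolic.

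Finally, by the Rips--Bestvina--Feighn analysis of $\R$-tree actions of finitely presented groups, $T$ yields a non-trivial graph-of-groups decomposition of $G$ with cyclic edge groups in which $H_0$ is elliptic, so $\Mod_{H_0}(G)$ is non-trivial. The main obstacle is the shortening step: I would precompose each $\phi^n$ by a modular automorphism $\sigma_n\in\Mod_{H_0}(G)$ chosen to minimise $\max_{s\in S}|\phi^n\sigma_n(s)|_S$ within its $\Mod_{H_0}(G)$-orbit, then re-apply Bestvina--Paulin to the sequence $\phi^n\circ\sigma_n$ and invoke the Rips--Sela shortening lemma to exhibit a strictly shorter representative in the same orbit, contradicting the choice of $\sigma_n$. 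This is the same relative shortening principle underlying Theorem~\ref{MRGammaRelPartialIntro}, now applied to injective rather than non-injective morphisms, and executing it rigorously in this relative setting is the delicate technical point. The resulting contradiction forces $\phi$ to be surjective, proving co-Hopf relative to $H_0$.
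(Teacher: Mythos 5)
Your proof is essentially the same as the paper's: both reduce to the observation that the iterates $\phi^n$ are pairwise distinct embeddings $G \hookrightarrow G$ fixing $H_0$, and then invoke a relative shortening argument. The paper simply cites the already-established Theorem~\ref{RelShortForEmbeddings} (there are only finitely many embeddings $G \hookrightarrow G$ fixing $H_0$ up to precomposition by $\Mod_{H_0}(G)$, and modular precomposition does not change the image), whereas you re-derive the underlying Bestvina--Paulin/Rips--Sela shortening machinery inline, acknowledging that making it rigorous is the delicate point.
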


A more difficult problem is that of expressing precomposition by a modular automorphism even when the modular group is not trivial. To overcome this, we consider the cyclic JSJ decomposition of $G$ relative to $H$: it is a decomposition of $G$ as a graph of group with cyclic edge stabilizers in which $H$ is elliptic, and which is maximal in some sense among all such decompositions of this type that $G$ admits. Some of the vertex groups of this decomposition are fundamental group of surfaces with boundary. The point is that the modular group preserves some of the structure of the JSJ. The JSJ decomposition gives some combinatorial structure to both $G$ and $\Mod_H(G)$, which enables us to partially express precomposition by an element of the modular group in first-order. 

If the JSJ decomposition of $G$ relative to $H$ does not contain any surface groups, precomposition by a modular element can be completely expressed in a first-order formula, so that the argument we gave in the trivial modular group case generalizes and we get a contradiction. Thus if $H$ is elementary in $G$, the JSJ decomposition of $G$ relative to $H$ must contain some surface groups. It is precisely some of these surface groups which will appear in the hyperbolic floor structure we will uncover. 

As soon as the JSJ contains some surface group, we can only partially express precomposition by a modular automorphism, and the first-order formula $\phi(h_1, \ldots, h_n)$ satisfied by $H$ that we build expresses a result weaker than the factorization result. Its interpretation on $G$ does not give a contradiction, but a \textbf{non injective} morphism $G \to G$ which preserves some properties of the JSJ: we call such a morphism a preretraction. 

We then show that a non injective preretraction can be modified to get a retraction of some of the surface groups of the JSJ onto a proper subgroup of $G$ so as to form a floor of hyperbolic tower. This result is stated in Proposition \ref{Retraction}, and is proved using arguments of low dimensional topology and Bass-Serre theory. We then proceed by induction. 

Section \ref{LogicSec} recalls the definition of an elementary embedding. Section \ref{SurfaceGroupsSec} gives some basic but key results about surfaces with boundary and maps between such surfaces. These results will be used later to deduce Theorem \ref{SurfaceCaseIntro} from Theorem \ref{MainResult}, and are crucial in the proof of Proposition \ref{Retraction}. Section \ref{FactorSetsSec} recalls known results about the existence of factor sets, such as Theorem \ref{MRGammaRelPartialIntro}, and gives an outline of their proof. We also indicate how the same techniques can prove Proposition \ref{CoHopfRelIntro}. In Section \ref{MainResultSec}, we prove Theorem \ref{MainResult}, \ref{CaseFreeGroups} and \ref{SurfaceCaseIntro} assuming that both Proposition \ref{Retraction} and Proposition \ref{RetractionPlus} (another result concerning preretraction needed for the induction step) hold. In Section \ref{IsoSec}, we prove that a preretraction $A \to A$ which satisfies some injectivity conditions on some subgroups of $A$ must in fact be an isomorphism. This will be useful in the last section, which is devoted to the proof of Propositions \ref{Retraction} and \ref{RetractionPlus}.

\begin{rmk} This paper is a corrected version of \cite{PerinElementaryAnn}. The original version contained a mistake in the statement as well as in the proof of its Proposition 5.11 (correponding to Proposition 5.11 also in this version), which claimed that the existence of a preretraction implies the existence of a hyperbolic floor. This is not strictly true, and this is the reason why we introduce here the notion of \textbf{extended} hyperbolic floor. More details about the mistake and the modifications brought here can be found in \cite{PerinElementaryErratum}.
\end{rmk}

Most of the results presented here are part of the author's Ph.D. thesis. Many thanks are due to Zlil Sela for suggesting this problem as well as for the many discussions that followed, to Gilbert Levitt for his constant assistance, and to Thomas Delzant, Panos Papasoglu and Vincent Guirardel for their helpful advice. We are also grateful to the referee for his helpful remarks.

\section{Elementary embeddings} \label{LogicSec}
We give only an informal definition of first-order formulas, for a precise definition and more detailed background, the reader is referred to \cite{Chatzidakis}, or to \cite{ChangKiesler}.

The language of groups ${\cal L}$ is the set of symbols consisting of 
\begin{itemize}
	\item the symbols $\cdot$, $^{-1}$, and $1$, standing for the group multiplication, the inverse and the identity element respectively (these are specific to the language of groups);
	\item the usual first-order symbols: $=$, $\neq$, $\wedge$ (meaning ``and''), $\vee$ (meaning ``or''), $\Rightarrow$, and the quantifiers $\forall$ and $\exists$;
	\item variables $x_1, x_2, \ldots$
\end{itemize}
A first-order formula in the language of groups is a finite formula using these symbols. Note that variables always stand for elements of the group, so that in a first-order formula, we can only quantify on elements of the group and not on integers, say, or on subsets of the group. 

A variable $x$ is free in the formula $\phi$ if it is not bound by any quantifier, that is, if neither $\forall x$ nor $\exists x$ appear in $\phi$. We usually denote a first-order formula by $\phi(x_1, \ldots, x_n)$ if the variables which appear in it and are free are exactly $x_1, \ldots, x_n$. 

If $\phi(x_1, \ldots, x_n)$ is a first-order formula and $g_1, \ldots, g_n$ are elements of a group $G$, we say that $\phi(g_1, \ldots, g_n)$ is satisfied by $G$ if its interpretation  is true on $G$. This is denoted by $G \models \phi$. 

\begin{ex} \label{CentreElementary} Consider the following formula:
$$\phi(x) : \; \forall y \; xy = yx,$$
Given an element $g$ in a group $G$, we have $G \models \phi(g)$ if and only if $g$ lies in the centre of $G$.
\end{ex}

Let $H$ be a subgroup of a group $G$. Given a first-order formula $\phi(x)$ and an element $h$ of $H$, we could have $H \models \phi(h)$ and $G \not \models \phi(h)$, that is, $h$ could have different first-order properties in $H$ and in $G$. If this never happens, we say that $H$ is elementary in $G$. More precisely:
\begin{defi} \emph{(elementary embedding)} We say that the embedding of $H$ in $G$ is elementary (or that $H$ is an elementary subgroup of $G$) if for any first-order formula $\phi(x_1, \ldots, x_n)$ and for any elements $h_1, \ldots, h_n$, we have 
$$ H \models \phi(h_1, \ldots, h_n) \iff G \models \phi(h_1, \ldots, h_n).$$
This is denoted by $H \preceq G$.
\end{defi} 

\begin{ex} Suppose $H$ is an elementary subgroup of a group $G$. By considering the formula given in Example \ref{CentreElementary}, we see that an element $h$ of $H$ is in the centre of $H$ if and only if it is in the centre of $G$.
\end{ex}

Note that if the embedding $H \hookrightarrow G$ is elementary, in particular formulas without free variables are satisfied by $H$ if and only if they are satisfied by $G$ (i.e. $H$ and $G$ are elementary equivalent). So for example, we can see by considering the formula $\forall x \; \forall y \; xy=yx$ that $H$ is abelian if and only if $G$ is.

\section{Some preliminary results on surface groups} \label{SurfaceGroupsSec}
All the surfaces we consider are, unless otherwise stated, compact, connected, and with (possibly empty) boundary.

Let $\Sigma$ be a surface, and denote by $S$ its fundamental group. To each boundary component of $\Sigma$ corresponds a conjugacy class of maximal cyclic subgroups of $S$: we call these subgroups \emph{maximal boundary subgroups}, and their generators \emph{maximal boundary elements}. A non trivial element in a maximal boundary subgroup is called a \emph{boundary element}, and the cyclic group it generates is called a \emph{boundary subgroup}.

The group $S$ endowed with the set of its maximal boundary subgroups is called a \emph{surface group}. If $\Sigma$ and $\Sigma'$ are surfaces, a morphism $f$ between their fundamental groups $S$ and $S'$ is a \emph{morphism of surface groups} if it sends boundary elements to boundary elements.

We will denote by $\rk(F)$ the rank of a finitely generated free group $F$. We will need a notion of complexity for surfaces with non empty boundary such that if we cut a surface along a simple closed curve, the connected components we get have complexity strictly smaller than that of the original surface. The rank of the fundamental group will not be sufficient for this, as cutting a surface along a non separating simple closed curve preserves it, we thus give 
\begin{defi} \emph{(topological complexity)} \label{TopComplexity} Let $\Sigma$ be a surface with non empty boundary, denote by $S$ its fundamental group. The topological complexity $k(\Sigma)$ of $\Sigma$ is the pair $(\rk(S), -n)$, where $n$ is the number of boundary components of $\Sigma$. We order topological complexities lexicographically.
\end{defi}

Note that complexity determines a surface up to orientability.

\subsection{Surface groups acting on simplicial trees}

\begin{defi} \emph{(splitting $\Delta(S, {\cal C})$ dual to a set of simple closed curves)} \label{DualSplitting} Let $\Sigma$ be a surface, and let ${\cal C}$ be a set of non null-homotopic, two-sided, disjoint simple closed curves on $\Sigma$ (note that we do not exclude pairs of parallel curves). We denote $\Delta(S, {\cal C})$ the splitting of the fundamental group $S$ of $\Sigma$ dual to the set of curves ${{\cal C}}$ (given by the Van Kampen Lemma). We call the corresponding $S$-tree $T_{{\cal C}}$ the tree dual to ${{\cal C}}$.
\end{defi}

Theorem III.2.6 of \cite{MorganShalen} states that if the fundamental group of a surface $\Sigma$ acts minimally on a simplicial tree $T$, in such a way that boundary elements are elliptic and edge stabilizers are cyclic, then there exists a set ${\cal C}$ of non null-homotopic, two-sided and non boundary-parallel simple closed curves on $\Sigma$ such that $T$ is isomorphic to $T_{{\cal C}}$ (as an $S$-tree). This is proved by building an equivariant map between a universal cover of $\Sigma$ and $T$, which we choose so that the inverse image of midpoints of edges of $T$ give us lifts of non null-homotopic simple closed curves on $\Sigma$. 

We will give in Lemma \ref{CuttingLemma} a slightly different version of this result, which can be proved in essentially the same way. One difference is that we do not assume that the edge stabilizers are cyclic, so we get a surjective equivariant map $T_{\cal C} \to T$ which is not necessarily injective. Also, we restrict ourselves to sets ${\cal C}$ of simple closed curves which are pairwise non parallel, so that we lose simpliciality of the map $T_{\cal C} \to T$. 

\begin{defi} \label{EssentialSet} \emph{(essential set of curves on a surface)} A set ${\cal C}$ of simple closed curves on a surface is essential if its elements are non null-homotopic, two-sided, non boundary-parallel and pairwise non parallel.
We consider such sets up to homotopy.
\end{defi}

\begin{lemma} \label{CuttingLemma} Suppose that the fundamental group $S$ of a surface with boundary $\Sigma$ acts on a simplicial tree $T$, in such a way that boundary subgroups are elliptic. Then there exists an essential set of curves ${\cal C}$ on $\Sigma$, and an equivariant map $t:T_{\cal C} \to T$ between the $S$-tree dual to ${\cal C}$ and $T$.
\end{lemma}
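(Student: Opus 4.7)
The plan is to adapt the classical Morgan--Shalen construction of dual splittings via equivariant maps from the universal cover to the tree, dropping the injectivity of the final map since we no longer require edge stabilizers of $T$ to be cyclic. Fix a CW structure on $\Sigma$ with $\partial\Sigma$ as a subcomplex, and let $\tilde{\Sigma}$ be its universal cover. First I build an equivariant continuous map $\tilde{f}:\tilde{\Sigma}\to T$ as follows: send each lift of a boundary circle constantly to a fixed point of the corresponding boundary subgroup in $T$ (such a point exists by hypothesis); choose the images of a set of orbit representatives of interior vertices arbitrarily, and extend equivariantly to all vertices; extend to $1$-cells by geodesic arcs (possibly constant) between the images of their endpoints, and finally to $2$-cells using the contractibility of $T$.

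Next, perform a small equivariant homotopy so that $\tilde{f}$ is transverse to the set of midpoints of edges of $T$. Since $\partial\tilde{\Sigma}$ maps to vertices of $T$, the preimage $\Gamma$ of the midpoints is a properly embedded $S$-invariant $1$-submanifold of the interior of $\tilde{\Sigma}$, and its image in $\Sigma$ is a disjoint union of two-sided simple closed curves. I now clean up $\Gamma$ to make the downstairs family essential. If a component of $\Gamma$ bounds a disk in $\tilde{\Sigma}$, choose an innermost such disk $D$: the restriction $\tilde{f}|_D$ is into a neighborhood of a single midpoint, and contractibility of $T$ lets me homotope $\tilde{f}|_D$ to avoid the midpoint altogether. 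Performing this simultaneously on all $S$-translates of $D$ kills the component together with its orbit; iterating eliminates all null-homotopic components. Boundary-parallel components, which cobound annuli in $\tilde{\Sigma}$ with lifts of boundary circles, are removed by an analogous homotopy pushing the annulus onto $\partial\tilde{\Sigma}$. Finally, two parallel curves downstairs cobound an annulus in $\Sigma$ whose lifts map into neighborhoods of midpoints in a single edge-orbit of $T$; an equivariant homotopy on these annuli reduces them to a single curve. The resulting family $\mathcal{C}$ is essential by construction.

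To construct the equivariant map $t:T_\mathcal{C}\to T$, observe that each connected component of $\tilde{\Sigma}\setminus\Gamma$ is mapped by $\tilde{f}$ into the complement of the set of midpoints, hence into the open star of a single vertex of $T$; I set the image of the corresponding vertex of $T_\mathcal{C}$ to be that vertex. Each component of $\Gamma$ maps to a single midpoint, and I define $t$ on the corresponding edge of $T_\mathcal{C}$ to traverse the geodesic segment in $T$ joining the images of its endpoints through that midpoint; this segment may have length $0$, or be longer than one edge, which is why $t$ is neither simplicial nor injective in general. Equivariance and continuity follow directly. The main obstacle I anticipate is book-keeping the successive equivariant homotopies during cleanup while maintaining transversality and properness, and ensuring that merging parallel curves does not reintroduce null-homotopic or boundary-parallel components; compactness of $\Sigma$ makes this a finite process, but each modification must be carried out simultaneously on the entire $S$-orbit of the affected region.
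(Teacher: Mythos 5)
Your overall strategy is exactly the one the paper points to: run the Morgan--Shalen construction (equivariant map $\tilde{f}:\tilde\Sigma\to T$, transversality to midpoints, innermost-disk and boundary-parallel cleanup, read off the curve system) while dropping the hypothesis of cyclic edge stabilizers, so that the resulting map $T_{\cal C}\to T$ need be neither injective nor simplicial. The paper's own treatment of this lemma is precisely a reference to \cite{MorganShalen}, Theorem III.2.6, together with the two observations you make, so the approach matches.

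One step is described inaccurately and would not work as stated: the merging of a parallel pair. If $\gamma_1$ and $\gamma_2$ are parallel and $A$ is the annulus between them, a lift $\tilde A$ maps into the \emph{star of a single vertex} $v$ of $T$, not into a neighborhood of a single midpoint orbit; the two boundary lines $L_1$, $L_2$ of $\tilde A$ can go to midpoints $m_1\in e_1$ and $m_2\in e_2$ of two \emph{different} edges adjacent to $v$. In that case no equivariant homotopy supported near $\tilde A$ can reduce the two preimage curves to one: the regions just beyond $L_1$ and $L_2$ still map into the open stars of the far endpoints $v_1\ne v_2$ of $e_1$ and $e_2$, and any path in $T$ from $\mathrm{star}(v_1)\setminus\{\text{midpoints}\}$ to $\mathrm{star}(v_2)\setminus\{\text{midpoints}\}$ must cross at least two midpoints, so at least two preimage curves persist. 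The fix is to abandon the homotopy at this stage and define the final map directly: let ${\cal C}'\subset{\cal C}$ retain one curve per parallel class, send each vertex of $T_{{\cal C}'}$ (a merged complementary region) to the $\tilde f$-image vertex of an equivariantly chosen sub-region, and extend over edges by geodesic segments. It is exactly at these merged vertices that simpliciality and local injectivity are lost, which is the loss the paper explicitly warns about when it restricts to pairwise non-parallel curves. With that adjustment your proof is complete and agrees with the intended argument.
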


\begin{rmk} The cyclic subgroups of $S$ corresponding to curves in ${\cal C}$ stabilize edges of $T$. The fundamental groups of connected components of the complement of ${\cal C}$ in $\Sigma$ are vertex groups of $\Delta(S, {\cal C})$, thus they are elliptic in $T$.
\end{rmk}

It will be useful to know that an equivariant map is locally minimal in the following sense:
\begin{defi} \emph{(locally minimal map)} \label{MinEquivariant} Let $G$ be a group which acts on (the topological  realizations of) simplicial trees $T$ and $T'$. Let $t$ be a continuous equivariant map $T \to T'$, which sends vertices to vertices. Up to subdividing $T$, we can also assume that $t$ sends an edge to a vertex or an edge. 

The map $t$ is said to be locally minimal if for any vertex $v$ of $T$ all of whose adjacent edges are sent to an edge $e'$ of $T'$, the stabilizer $G_{e'}$ of $e'$ in $T'$ is strictly contained in the stabilizer $G_{v}$ of $v$ in $T$.
\end{defi}

Note that this is strictly weaker than being an immersion, as edges can be collapsed or folded. 

\begin{rmk} \label{CanAssumeLocallyMinimal}If there is a continuous equivariant map $T \to T'$ between two $G$-trees, there is a locally minimal equivariant map $T \to T'$.
\end{rmk}

\subsection{Maps between surfaces}

Let $\Sigma$ and $\Sigma'$ be surfaces with non empty boundary, and let $\phi: \Sigma \to \Sigma'$ be a continuous map which sends $\partial \Sigma$ into $\partial \Sigma'$. We are interested in the corresponding map $\phi_*$ between the fundamental groups $S$ and $S'$ of $\Sigma$ and $\Sigma'$. Note that it is a morphism of surface groups.

We will now give two results which give sufficient geometric conditions on $\phi$ to guarantee respectively the injectivity and the virtual surjectivity of the morphism $\phi_*$. The first of these results is Theorem 3.1 of \cite{Gabai}. An arc in a surface with boundary $\Sigma$ is a path $\alpha: [0,1] \to \Sigma$ whose endpoints lie in the boundary, that is is said to be simple if $\alpha$ is injective, and boundary-parallel if it can be homotoped into the boundary relative to its endpoint. 

\begin{thm} \label{Gabai} Let $\Sigma$ and $\Sigma'$ be connected surfaces with non empty boundary, and denote by $S$ and $S'$ their respective fundamental groups.  Let $\phi: \Sigma \to \Sigma'$ be a continuous map such that $\phi(\partial \Sigma) \subseteq \partial \Sigma'$. If $\phi$ does not send any non boundary-parallel simple arc $\alpha$ in $\Sigma$ to a boundary-parallel arc $\phi \circ \alpha$ in $\Sigma'$, then the corresponding map $\phi_*$ from $S$ to $S'$ is injective.
\end{thm}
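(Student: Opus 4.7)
The plan is to prove the contrapositive: assuming $\phi_*$ has non-trivial kernel, I would construct a simple non-boundary-parallel arc $\alpha\subset\Sigma$ such that $\phi\circ\alpha$ is boundary-parallel in $\Sigma'$.

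First I would prepare the map $\phi$. After a homotopy respecting $\phi(\partial\Sigma)\subseteq\partial\Sigma'$, one can take $\phi$ to be PL and in general position with respect to a triangulation of $\Sigma'$. Then I would further choose $\phi$ in its homotopy class so as to minimise a complexity measure---for instance the number of components of $\phi^{-1}(\mathcal{A}')$, where $\mathcal{A}'$ is a fixed essential properly embedded arc system cutting $\Sigma'$ into a disk. This minimisation will be the key tool at the very end.

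Next I would apply a Papakyriakopoulos-style tower argument. Pass to the cover $p:\widetilde{\Sigma}'\to\Sigma'$ corresponding to $\phi_*(S)\leq S'$ and lift $\phi$ to $\widetilde{\phi}:\Sigma\to\widetilde{\Sigma}'$; the lift is $\pi_1$-surjective and has the same non-trivial kernel as $\phi_*$. Replacing $\widetilde{\Sigma}'$ by a compact core containing $\widetilde{\phi}(\Sigma)$ yields a $\pi_1$-surjective, non-injective map between compact bordered surfaces. Iterating this construction---at each stage either the target's complexity drops strictly, or the failure of injectivity combined with $\pi_1$-surjectivity forces a compressible configuration---the tower must terminate after finitely many steps.

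The last step is to extract the arc. At the bottom of the tower, a non-trivial relation in the kernel is witnessed by a two-sided simple curve or arc $\beta$; taking its preimage under the terminal lift of $\phi$, selecting an outermost subarc $\alpha_0\subset\Sigma$ with endpoints on $\partial\Sigma$, and pushing the data back down through the tower yields a simple arc $\alpha\subset\Sigma$ such that $\phi\circ\alpha$ is homotopic rel endpoints into $\partial\Sigma'$. The main obstacle---and the reason for the initial minimisation of $\phi$---is guaranteeing that $\alpha$ is itself not boundary-parallel in $\Sigma$: any boundary-parallel $\alpha$ would allow a local homotopy of $\phi$ strictly reducing the chosen complexity, contradicting the minimising choice.
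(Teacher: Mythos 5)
The paper does not prove this statement; it is cited as Theorem~3.1 of Gabai's paper on the simple loop conjecture, so there is no in-text argument to compare against. Your proposal does go in the general direction of Gabai's work (a Papakyriakopoulos-style tower together with a complexity minimisation), but as written it has two genuine gaps.

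First, the tower does not iterate as described. Passing to the cover $p:\widetilde{\Sigma}'\to\Sigma'$ corresponding to $\phi_*(S)$ and restricting to a compact core makes the lifted map $\pi_1$-surjective; after that, the cover corresponding to the image is trivial, so ``iterating this construction'' has no effect. You assert that ``at each stage either the target's complexity drops strictly, or the failure of injectivity combined with $\pi_1$-surjectivity forces a compressible configuration,'' but neither branch is justified. The compact core has $\pi_1$ equal to $\phi_*(S)$, a free group whose rank need not be smaller than that of $\pi_1(\Sigma')$, so there is no clear monotone quantity; and if the map is $\pi_1$-surjective but not injective, no mechanism is given for producing a new, strictly smaller stage.

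Second, and more seriously, the phrase ``a non-trivial relation in the kernel is witnessed by a two-sided \emph{simple} curve or arc $\beta$'' is exactly the hard part of the theorem, not a consequence of being at the bottom of a tower. In a free group an element of the kernel is just some word; producing from it a \emph{simple} properly embedded arc whose image is inessential is the whole content of the statement, and the tower framing does not supply it. Relatedly, the minimisation you set up at the start (minimising $|\phi^{-1}(\mathcal{A}')|$ for a fixed arc system $\mathcal{A}'$ on $\Sigma'$) is never connected to the tower: the lift $\widetilde{\phi}$ and the compact core use a different target, so the minimising property of the original $\phi$ does not automatically descend, and the final claim that a boundary-parallel outermost $\alpha$ would contradict minimality is not argued. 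As it stands the proposal records the right geography (tower, preimage of an arc system, outermost-subarc surgery) but leaves the two pivotal steps --- termination and the production of a simple inessential witness --- without proof.
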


Thus injectivity is guaranteed provided non boundary-parallel simple arcs are sent to non boundary-parallel images. The next result guarantees virtual surjectivity of the map $\phi_*$ provided non null-homotopic simple closed curves are sent by $\phi$ to non null-homotopic images. We give
\begin{defi} \emph{(non pinching map)} \label{NonPinching} Let $\Sigma$ be a surface, denote by $S$ its fundamental group. A morphism $S \to G$ is said to be non pinching with respect to $\Sigma$ if its kernel does not contain any element corresponding to a non null-homotopic simple closed curve lying on $\Sigma$.
\end{defi}

We now have
\begin{lemma} \label{FiniteIndex} Let $S$ and $S'$ be fundamental groups of surfaces $\Sigma$ and $\Sigma'$ with non empty boundary. Let $f: S \to S'$ be a morphism of surface groups. If $f$ is non pinching with respect to $\Sigma$, and if $f(S)$ is not contained in a boundary subgroup of $S'$, then $f(S)$ has finite index in $S'$.
\end{lemma}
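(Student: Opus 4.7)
Plan: I would realize $f$ by a continuous map $\phi: \Sigma \to \Sigma'$ sending $\partial\Sigma$ into $\partial\Sigma'$; this is possible because $f$ is a morphism of surface groups. Assume for contradiction that $f(S)$ has infinite index in $S'$ and consider the covering $p: \tilde\Sigma' \to \Sigma'$ corresponding to the subgroup $f(S) \leq S'$. This cover is connected, non-compact, with $\pi_1(\tilde\Sigma') = f(S)$ finitely generated (being a quotient of $\pi_1(\Sigma)$). Lift $\phi$ to $\tilde\phi: \Sigma \to \tilde\Sigma'$; it is $\pi_1$-surjective and sends $\partial\Sigma$ into $\partial\tilde\Sigma'$.

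By Scott's compact core theorem there is a compact subsurface $K \subseteq \tilde\Sigma'$ such that the inclusion induces an isomorphism $\pi_1(K) \to \pi_1(\tilde\Sigma')$; up to homotopy I arrange $\tilde\phi(\Sigma) \subseteq K$. Since $\tilde\Sigma'$ is non-compact, some boundary component $\alpha$ of $K$ lies in $\mathrm{int}(\tilde\Sigma')$; the Van Kampen Lemma applied with $\pi_1(K) \cong \pi_1(\tilde\Sigma')$ shows that the component of $\tilde\Sigma' \setminus \mathrm{int}(K)$ adjacent to $\alpha$ is a half-cylinder $S^1 \times [0,\infty)$. In particular $[\alpha]$ is non-trivial in $\pi_1(\tilde\Sigma')$, and because $\tilde\phi(\partial\Sigma) \subseteq \partial\tilde\Sigma'$, the map $\tilde\phi$ misses $\alpha$ on the boundary of $\Sigma$.

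To conclude I would homotope $\tilde\phi$ to be transverse to $\alpha$ with minimal intersection. Each component of $\tilde\phi^{-1}(\alpha)$ is then an essential simple closed curve in the interior of $\Sigma$: an inessential component would bound a disk whose image provides a null-homotopy of a non-trivial power of $[\alpha]$, contradicting that $\alpha$ is essential in $\tilde\Sigma'$. By non-pinching each such curve maps to a non-zero power of $[\alpha]$. Now cut $\Sigma$ along $\tilde\phi^{-1}(\alpha)$; the resulting subsurfaces that land in the half-cylinder end have their fundamental group sent into the cyclic subgroup $\langle[\alpha]\rangle$. For such a subsurface $\Sigma_0$ with non-cyclic $\pi_1$, I would use Poincar\'e--Lefschetz duality to produce an essential simple closed curve in the kernel of $\pi_1(\Sigma_0) \to \langle[\alpha]\rangle$: the Poincar\'e dual of the corresponding class in $H^1(\Sigma_0)$ contains a closed curve as soon as the class has a non-peripheral component, and that curve is essential and in the kernel, contradicting non-pinching.

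The main obstacle is the last step in degenerate configurations, namely when $\Sigma_0$ is planar (so Poincar\'e duality can be realized by arcs alone) or when the boundary-degree pattern of $\Sigma_0$ avoids a proper subset summing to zero so that no separating essential simple closed curve is pinched. Ruling these configurations out requires a global argument combining the $\pi_1$-surjectivity of $\tilde\phi$ with the hypothesis that $f(S)$ is not contained in a boundary subgroup of $S'$: the latter prevents the image from collapsing into a cyclic peripheral subgroup, forcing at least one component $\Sigma_0$ arising from the cut to carry enough topology for the Poincar\'e-dual closed curve argument to go through.
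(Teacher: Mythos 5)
Your opening moves (realize $f$ by a map $\phi$, pass to the cover $\tilde\Sigma'$ with $\pi_1 = f(S)$, invoke Scott's compact core theorem) match the paper's Lemma \ref{InfiniteIndex}, which packages the same Scott argument into an algebraic output: a free product decomposition $f(S) = C_1 * \cdots * C_m * F$ with the $C_j$ boundary subgroups of $S'$. At that point the two proofs diverge. The paper observes that when $f(S)$ is not peripheral this decomposition has at least two factors, hence $f(S)$ acts non-trivially on a tree $T_0$ with trivial edge stabilizers in which boundary subgroups of $S$ (via $f$) are elliptic; Lemma \ref{CuttingLemma} (the Morgan--Shalen dualization) then hands you a non-empty essential set of curves on $\Sigma$ stabilizing edges of $T_0$, which are therefore pinched. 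You instead try to exhibit the pinched curve by hand: make $\tilde\phi$ transverse to a boundary curve $\alpha$ of the compact core, cut $\Sigma$ along $\tilde\phi^{-1}(\alpha)$, and look for an essential simple closed curve in the kernel of $\pi_1(\Sigma_0) \to \langle[\alpha]\rangle$ for some piece $\Sigma_0$ that lands in the cylindrical end.

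There are two problems with this. First, a small inconsistency: you arrange $\tilde\phi(\Sigma) \subseteq K$ and then ask $\tilde\phi$ to be transverse to $\alpha \subset \partial K$ with non-empty preimage. These are incompatible; to have non-trivial $\tilde\phi^{-1}(\alpha)$ you must let $\tilde\phi$ cross into the end, so the normalization $\tilde\phi(\Sigma) \subseteq K$ should simply be dropped. Second, and more seriously, the final step genuinely fails, and you acknowledge this without closing the gap. A piece $\Sigma_0$ with non-cyclic free fundamental group mapping to $\mathbb{Z}$ need not have an essential simple closed curve in the kernel. For instance, take $\Sigma_0$ a pair of pants with boundary classes $a$, $b$, $c = (ab)^{-1}$ and the map $a \mapsto 1$, $b \mapsto 1$ to $\mathbb{Z}$; the only essential simple closed curves on a pair of pants are boundary-parallel and they map to $\pm 1$ or $\pm 2$, so nothing is pinched. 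Your Poincar\'e--Lefschetz dual may well be a union of arcs with no circle component, which is precisely what happens here. Your proposed fix, a ``global argument'' forcing some $\Sigma_0$ to carry enough topology, is not supplied, and I do not believe it can be carried out by tracking only the curves $\tilde\phi^{-1}(\alpha)$: those curves register just one edge of the Bass--Serre tree $T_0$, whereas the pinched curve promised by the lemma may be dual to an entirely different edge. The paper avoids all of this by working with the action of $S$ on the whole tree $T_0$ at once. In particular, Lemma \ref{CuttingLemma} with ${\cal C} = \emptyset$ degenerates to the statement that $S$ fixes a point of $T_0$; since $f(S)$, being a non-trivial free product, fixes no point, ${\cal C}$ is non-empty and its members are pinched. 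This tree-level argument handles exactly the degenerate configurations you flag (and for a pair of pants it reduces to Serre's elementary observation that if $a$, $b$, and $ab$ are all elliptic then $\langle a, b\rangle$ fixes a vertex, so $f(S)$ would be contained in a $C_j$, contradicting the hypothesis). In short: right first step, wrong second step; the missing ingredient is the Bass--Serre/Morgan--Shalen machinery that replaces the cut-and-inspect analysis.
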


To prove this result, we will use
\begin{lemma} \label{InfiniteIndex} Let $Q$ be the fundamental group of a surface $\Xi$ with non empty boundary. If $Q_0$ is a finitely generated infinite index subgroup of $Q$, it is of the form $$ Q_0 = C_1*\ldots *C_m*F $$ where $F$ is a (possibly trivial) free group, $m \geq 0$, each of the groups $C_j$ is a boundary subgroup of $Q$, and any boundary element of $Q$ contained in $Q_0$ can be conjugated in one of the groups $C_j$ by an element of $Q_0$.
\end{lemma}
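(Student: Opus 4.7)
The plan is to pass to the covering $p\colon \tilde{\Xi} \to \Xi$ corresponding to $Q_0$ and study a compact core. Since $\Xi$ has non-empty boundary, $Q = \pi_1(\Xi)$ is free, so $Q_0$ is automatically free. Because $Q_0$ is finitely generated, Scott's compact core theorem furnishes a compact connected subsurface $\Xi_0 \subset \tilde{\Xi}$ whose inclusion induces the isomorphism $\pi_1(\Xi_0) \cong Q_0$. By a standard isotopy and enlargement argument, I may further assume that $\Xi_0$ contains every compact component of $\partial \tilde{\Xi}$, and that each boundary component of $\Xi_0$ is either such a compact component of $\partial\tilde{\Xi}$ (\emph{type (i)}) or a two-sided simple closed curve in the interior of $\tilde{\Xi}$ (\emph{type (ii)}). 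Since $[Q : Q_0] = \infty$, the cover $\tilde{\Xi}$ is non-compact and connected, which forces $\Xi_0$ to carry at least one type-(ii) boundary component (else its compact closure would be a whole connected component of $\tilde{\Xi}$).

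For the peripheral structure, fix a component $\alpha$ of $\partial \Xi$ with associated maximal boundary subgroup $\langle c\rangle \leq Q$. The components of $p^{-1}(\alpha)$ are in bijection with the double cosets $Q_0 \backslash Q / \langle c \rangle$; the component corresponding to $Q_0 g \langle c \rangle$ is a compact circle precisely when $g \langle c \rangle g^{-1} \cap Q_0$ is non-trivial, and in that case this intersection is generated by the image in $Q_0$ of a loop once around that circle. Thus each type-(i) boundary component $\gamma_j$ of $\Xi_0$ ($j=1,\ldots,m$) yields a cyclic subgroup $C_j \leq Q_0$ contained in a conjugate of a maximal boundary subgroup of $Q$, hence a boundary subgroup of $Q$ in the sense of the statement. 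Conversely, if $q_0 \in Q_0$ is a boundary element of $Q$, then $q_0 \in g\langle c \rangle g^{-1} \cap Q_0$ for some $g \in Q$ and maximal boundary $\langle c \rangle \leq Q$; this intersection is non-trivial, so the corresponding boundary component of $\tilde{\Xi}$ is compact, hence coincides with some $\gamma_j$ up to the choice of base path in $\Xi_0$, so $q_0$ is conjugate in $Q_0$ into $C_j$.

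Finally, let $c_1,\ldots,c_k$ denote the boundary loops of $\Xi_0$, with $c_1,\ldots,c_m$ the type-(i) ones. The standard presentation of $\pi_1$ of a compact surface with boundary has a single relation $[a_1,b_1]\cdots[a_g,b_g]\,c_1\cdots c_k = 1$ (with the obvious modification in the non-orientable case), so eliminating any one of the boundary loops via this relation leaves a free presentation on the remaining generators. Since $m \leq k-1$ thanks to the existence of a type-(ii) boundary loop, I may eliminate some type-(ii) $c_j$ and obtain a free basis of $Q_0$ containing $c_1,\ldots,c_m$, yielding the required decomposition $Q_0 = C_1*\cdots*C_m*F$ with $F$ free. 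The main obstacle in the plan is the clean setup of $\Xi_0$ in the first step --- pushing any partial boundary arcs off $\partial\tilde{\Xi}$ and absorbing all compact components of $\partial\tilde{\Xi}$ --- which requires standard but slightly delicate surface isotopy and compression arguments exploiting the non-compactness of $\tilde{\Xi}$; the subsequent algebraic steps are routine.
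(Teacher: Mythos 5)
Your argument is correct, and it reaches the same endgame as the paper (a compact subsurface realizing $Q_0$, a distinguished ``free'' boundary component to sacrifice in the single surface-group relation, and the resulting free-factor decomposition with the remaining peripheral loops as the $C_j$'s), but it gets there via a different Scott theorem. The paper invokes Scott's ``subgroups of surface groups are almost geometric'' result: there is a \emph{finite} cover $\Xi_1 \to \Xi$ and a compact subsurface $\Xi_0 \subset \Xi_1$ with $\pi_1(\Xi_0) = Q_0$. Since $\Xi_1$ is compact, the finiteness of boundary components, the identification of boundary elements of $Q$ lying in $Q_1$ with boundary elements of $Q_1$, and the existence of a non-peripheral boundary curve of $\Xi_0$ all come essentially for free. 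You instead pass to the \emph{infinite} cover $\tilde\Xi$ corresponding to $Q_0$ and take a compact core. This works, but the two things you flag as ``standard but slightly delicate'' really are the nontrivial content here: you must first show that $\tilde\Xi$ has only \emph{finitely many} compact boundary circles (not automatic from non-compactness of $\tilde\Xi$ alone), and then show that the core can be isotoped and enlarged to absorb all of them while remaining a $\pi_1$-isomorphic compact subsurface. The easiest way to establish the first point is in fact to pass through a finite-index supergroup of $Q_0$ as the paper does, so in practice a fully detailed version of your proof would likely re-derive the paper's intermediate step. Once those points are granted, your double-coset analysis of the peripheral structure is clean and, if anything, makes the last clause of the lemma (every boundary element of $Q$ in $Q_0$ is $Q_0$-conjugate into some $C_j$) more explicit than the paper's rather terse final sentence.
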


\begin{proof} By Theorem 2.1 in \cite{Scott}, there exists a finite covering $p: \Xi_1 \to \Xi$, and a subsurface $\Xi_0$ of $\Xi_1$, such that $Q_0$ is the image by the injection $p_*$ of the fundamental group of $\Xi_0$. Let $Q_1=\pi_1(\Xi_1)$, and identify $Q_1$ to its isomorphic image in $Q$ by $p_*$. The covering is finite, so $\Xi_1$ is compact, $Q_1$ is of finite index in $Q$, and the boundary elements of $Q_1$ are exactly the boundary elements of $Q$ contained in $Q_1$. Since $Q_0$ is of infinite index in $Q$, it must be of infinite index in $Q_1$.
Thus $\Xi_0$ is a proper subsurface of $\Xi_1$, and at least one of its boundary components is not parallel to a boundary component of $\Xi_1$. In particular, there is a basis of $Q_0$ as a free group which contains a maximal boundary element corresponding to each boundary component of $\Xi_0$ which is also a boundary component of $\Xi_1$. This basis gives the required free factor decomposition of $Q_0$.
\end{proof}

We can now prove Lemma \ref{FiniteIndex}.
\begin{proof} Suppose $f(S)$ has infinite index in $S'$. Then it admits a free product decomposition $C_1*\ldots *C_m*F $ as given by Lemma \ref{InfiniteIndex}, and $m \geq 1$ since boundary elements of $S$ are sent to boundary elements of $S'$. If $f(S)$ is not contained in a boundary subgroup of $S'$, this decomposition contains at least two factors, so the corresponding minimal $f(S)$-tree $T_0$ with trivial edge stabilizers is not reduced to a point. The group $S$ acts via $f$ on $T_0$ and boundary subgroups of $S$ are sent to boundary subgroups of $S'$, thus they lie in conjugates of the factors $C_i$ and are elliptic in $T_0$. By Lemma \ref{CuttingLemma}, we get a set of simple closed curves on $\Sigma$ whose corresponding elements stabilize edges of $T_0$ via $f$, i.e. have trivial image by $f$. This contradicts the fact that $f$ is non pinching.
\end{proof}

In the setting of Lemma \ref{FiniteIndex}, we can also deduce that the complexity of $\Sigma$ must be greater than that of $\Sigma'$ thanks to the following lemma.

\begin{lemma} \label{ComplexitiesAndFiniteIndex} Let $S$ and $S'$ be the fundamental groups of surfaces $\Sigma$ and $\Sigma'$ with non empty boundary. If $f: S \to S'$ is a morphism of surface groups such that $f(S)$ has finite index in $S'$, then the complexity of $\Sigma$ is at least that of $\Sigma'$, that is
$$k(\Sigma) \geq k(\Sigma');$$
and we have equality if and only if $f$ is an isomorphism of surface groups.
\end{lemma}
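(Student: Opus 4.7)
The plan is to compute both coordinates of $k$ by combining Euler characteristic with the Nielsen--Schreier formula, and to derive the equality case from the Hopfian property of finitely generated free groups.

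Since $\Sigma$ and $\Sigma'$ have non-empty boundary, $S$ and $S'$ are free, and for any compact surface $\Xi$ with non-empty boundary one has $\rk(\pi_1(\Xi)) = 1 - \chi(\Xi)$. Setting $d = [S' : f(S)]$, the subgroup $f(S)$ corresponds to a $d$-sheeted cover of $\Sigma'$, and Nielsen--Schreier gives $\rk(f(S)) = 1 - d\chi(\Sigma') = 1 + d(\rk(S')-1)$. Since $f$ factors as $S \twoheadrightarrow f(S) \hookrightarrow S'$, we get $\rk(S) \geq \rk(f(S)) \geq \rk(S')$, the second inequality coming from $\chi(\Sigma') \leq 0$ (the degenerate cases of a disk, annulus or M\"obius band being handled by hand). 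This already proves the first-coordinate part of $k(\Sigma) \geq k(\Sigma')$.

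Assume now $\rk(S) = \rk(S')$. Then both inequalities above are equalities; in the non-degenerate case $\rk(S') \geq 2$, the Nielsen--Schreier formula forces $d=1$, so $f$ is surjective, and a surjection between two free groups of the same finite rank is an isomorphism by the Hopfian property. To compare boundary counts $n$ and $n'$, I would note that $f$ sends each maximal boundary subgroup $C$ of $S$ into a unique maximal boundary subgroup of $S'$, giving an induced map on conjugacy classes. This map is injective: if $C_1, C_2$ are maximal boundary subgroups of $S$ whose images lie in conjugate maximal boundary subgroups of $S'$, then after conjugating in $S$ both $C_1, C_2$ lie in $f^{-1}(C')$ for a single maximal boundary subgroup $C'$ of $S'$; but $f^{-1}(C')$ is cyclic (because $f$ is a group iso and $C'$ is cyclic), so maximality forces $C_1 = C_2$. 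Hence $n \leq n'$, i.e.\ $-n \geq -n'$, finishing the proof of $k(\Sigma) \geq k(\Sigma')$.

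For the "iff" statement: if $k(\Sigma) = k(\Sigma')$, the rank equality yields a group isomorphism $f$, and $n = n'$ upgrades the injection on conjugacy classes of maximal boundary subgroups to a bijection. Then $f^{-1}$ also maps each maximal boundary subgroup of $S'$ onto one of $S$, hence sends boundary elements to boundary elements, so $f$ is an isomorphism of surface groups. The converse is immediate. The main obstacle is the equality case: one has to use the maximality clause in the definition of maximal boundary subgroup, together with the Hopfian property, to force $f^{-1}$ to preserve the boundary structure when the two complexities coincide.
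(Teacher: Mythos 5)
Your proof is correct and takes essentially the same route as the paper's: the rank comparison via the Nielsen--Schreier formula (which the paper invokes implicitly in the form "a finite-index subgroup of a rank-$r$ free group has rank $\geq r$, with equality iff index $1$"), the Hopfian property to identify the equality case, and the injectivity of the induced map on conjugacy classes of maximal boundary subgroups to compare boundary counts. The only stylistic difference is that you make the Nielsen--Schreier computation $\rk(f(S)) = 1 + d(\rk(S')-1)$ explicit; the chain of reasoning and the treatment of the equality case are the same. One small wording slip: in the injectivity argument you write "maximality forces $C_1 = C_2$", but what you actually obtain (after conjugating $C_1$ and $C_2$ separately) is that they are conjugate in $S$, which is exactly what is needed for injectivity on conjugacy classes.
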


\begin{proof} A subgroup of finite index in a finitely generated free group of rank $r$ is a free group of rank at least $r$, with equality if and only if the index is $1$. Thus $\rk(S') \leq \rk(f(S))$ with equality if and only if $f$ is surjective. Now $\rk(f(S)) \leq \rk(S)$, and since free groups are Hopfian, we have equality if and only if $f$ is injective.
Thus $\rk(S') \leq \rk(S)$, with equality if and only if $f$ is bijective. If this is the case, $f$ sends non conjugate maximal boundary subgroups of $S$ to non conjugate maximal boundary subgroups of $S'$, so that $\Sigma'$ has at least as many boundary component as $\Sigma$, and $k(\Sigma) \geq k(\Sigma')$.
If the number of boundary components is the same, $f$ induces a bijection between conjugacy classes of maximal boundary subgroups, so the image by $f^{-1}$ of a boundary element is a boundary element, and $f^{-1}$ is also a morphism of surface groups. 
\end{proof}

By combining the previous results we can also get
\begin{lemma} \label{HomeoLemma} Suppose that $f: S_1 \to S_0$ is a non pinching morphism between surface groups corresponding to surfaces with boundary $\Sigma_1$ and $\Sigma_0$. Assume moreover that $f$ sends non conjugate maximal boundary elements to non conjugate maximal boundary elements. Then $f$ is an isomorphism of surface groups.
\end{lemma}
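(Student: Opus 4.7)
The plan is to combine Lemmas \ref{FiniteIndex} and \ref{ComplexitiesAndFiniteIndex} with the non-conjugacy hypothesis.

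I would first rule out that $f(S_1)$ is contained in a maximal boundary subgroup $\langle c\rangle$ of $S_0$, so that Lemma \ref{FiniteIndex} applies. If $f(S_1) \subseteq \langle c\rangle$, every maximal boundary element of $S_1$ would be sent to $c^{\pm 1}$, the only maximal boundary elements of $S_0$ inside $\langle c\rangle$; since $c$ and $c^{-1}$ are not conjugate in the free group $S_0$, the non-conjugacy hypothesis would force $\Sigma_1$ to have at most one boundary component, and in the case of one boundary with $S_1$ non abelian free, the boundary element of $\Sigma_1$ (a product of commutators) lies in $[S_1,S_1]\subseteq\ker f$ while representing a non null-homotopic simple closed curve on $\Sigma_1$, contradicting non-pinching. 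The disk and annulus cases are handled directly by inspection.

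Lemma \ref{FiniteIndex} then yields that $f(S_1)$ has finite index in $S_0$, and Lemma \ref{ComplexitiesAndFiniteIndex} gives $k(\Sigma_1)\geq k(\Sigma_0)$ with equality if and only if $f$ is an isomorphism of surface groups. The non-conjugacy hypothesis also directly gives $n_1\leq n_0$ by inducing an injection on conjugacy classes of maximal boundary subgroups.

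The main obstacle is to establish the equality of complexities, equivalently that the index $d=[S_0:f(S_1)]$ equals $1$ and that $f$ is injective. For $d=1$: a maximal boundary element of $S_0$ is not a proper power of any other boundary element, so each boundary circle of $\Sigma_0$ lying in the image of the maximal boundary elements of $S_1$ must have an unramified preimage in the cover $\tilde\Sigma_0\to\Sigma_0$ corresponding to $f(S_1)$. If $d\geq 2$, any additional preimage of such a boundary contributes a new maximal boundary conjugacy class of $\pi_1(\tilde\Sigma_0)$ that collapses in $S_0$ to an already-attained class, contradicting the non-conjugacy hypothesis. For injectivity I would apply Theorem \ref{Gabai}: the non-conjugacy hypothesis forces a continuous realization $\phi:\Sigma_1\to\Sigma_0$ of $f$ to send simple arcs joining distinct boundary circles of $\Sigma_1$ to non-boundary-parallel arcs of $\Sigma_0$ (since distinct boundary circles map to distinct boundary circles), while simple arcs with both endpoints on the same boundary circle can be closed along a boundary segment into essential simple closed curves on $\Sigma_1$ whose boundary-parallel image would be ruled out by a refined use of non-pinching. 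Once $f$ is a group isomorphism, max-boundary preservation combined with bijectivity forces $n_1=n_0$, so $k(\Sigma_1)=k(\Sigma_0)$ and $f$ is the desired isomorphism of surface groups.
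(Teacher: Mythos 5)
Your proposal uses the same basic toolkit as the paper (Lemma \ref{FiniteIndex} to get finite index, a finite cover $\tilde\Sigma_0\to\Sigma_0$, Theorem \ref{Gabai} for injectivity, Lemma \ref{ComplexitiesAndFiniteIndex} to conclude), and the overall plan is sensible. However, there is a genuine gap in the step you call ``For $d=1$''. You argue that if $d\ge 2$, an additional preimage of a boundary circle of $\Sigma_0$ yields a new maximal boundary conjugacy class of $\tilde S_0$ whose $p_*$-image is conjugate to an already-attained one, ``contradicting the non-conjugacy hypothesis''. But the non-conjugacy hypothesis concerns maximal boundary elements \emph{of $S_1$}, and the new boundary class $\tilde c'$ of $\tilde S_0$ has no a priori reason to be the $(p_*^{-1}\circ f)$-image of a maximal boundary element of $S_1$. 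Without that, no contradiction arises. The paper closes exactly this hole with a free-group counting argument: a set $M$ containing one maximal boundary element per boundary component of $\Sigma_1$ does \emph{not} extend to a basis of $S_1$ (the surface relation forbids it), hence $p_*^{-1}\circ f(M)$ does not extend to a basis of $\tilde S_0$, and the latter is only possible if $p_*^{-1}\circ f(M)$ already meets every boundary component of $\tilde\Sigma_0$. This yields $|\partial\tilde\Sigma_0|\le|\partial\Sigma_1|$, which, combined with $|\partial\Sigma_1|\le|\partial\Sigma_0|\le|\partial\tilde\Sigma_0|$, forces all equalities at once (including $d=1$). Your proposal needs this or an equivalent argument; it also addresses your unsupported final claim that ``bijectivity forces $n_1=n_0$'', which does not follow from bijectivity of $f$ alone.

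Two smaller points. First, the injectivity argument is left vague at exactly the delicate place: for an arc $\alpha$ with both endpoints on the same boundary circle you invoke ``a refined use of non-pinching'' on a boundary-parallel image, but non-pinching only excludes \emph{null-homotopic} images. The paper's argument is sharper: it closes $\alpha$ along the \emph{two} complementary boundary arcs $\beta,\beta'$ and argues that the $\phi$-image of one of $\alpha\beta$, $\alpha\beta'$ is actually null-homotopic, which is what contradicts the chosen properties of $\phi$. Second, in ruling out $f(S_1)\subseteq\langle c\rangle$, your claim that the non-conjugacy hypothesis forces $\Sigma_1$ to have at most one boundary component is not right as stated (two boundary components mapping to $c$ and $c^{-1}$ are not excluded by that hypothesis); the case of two boundary components with non-abelian $S_1$ still needs the non-pinching argument, just as the one-boundary case does.
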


\begin{proof} There exists a continuous map $\phi: \Sigma_1 \to \Sigma_0$ such that $\phi_*=f$. The properties of $f$ imply that we can choose $\phi$ to be injective on $\partial \Sigma_1$, to satisfy $\phi(\partial \Sigma_1) \subseteq \partial \Sigma_0$, and to send non null-homotopic simple closed curve on $\Sigma_1$ to non null-homotopic images.

Suppose $f$ is not injective: by Theorem \ref{Gabai}, there is some non boundary-parallel simple arc $\alpha$ in $\Sigma_1$ such that $\phi(\alpha)$ is boundary-parallel. Since $\phi(\partial \Sigma_1) \subseteq \partial \Sigma_0$, up to restricting to a subpath we may assume that $\alpha$ intersects the boundary in its endpoint only. Since $\phi$ is injective on $\partial \Sigma_1$, the endpoints of $\alpha$ must belong to the same boundary component of $\Sigma_1$, so they are joined by two non homotopic paths $\beta$ and $\beta'$ which lie in this boundary component. Note that $\alpha \beta$ and $\alpha \beta'$ are both non null-homotopic simple closed curves. Since $\phi$ sends $\partial \Sigma_1$ into $\partial \Sigma_0$, we see that the image of either $\alpha \beta$ or $\alpha \beta'$ by $\phi$ is null-homotopic in $\Sigma_0$: this contradicts the properties of $\phi$. We deduce that $f$ is injective.

Now $f$ is non pinching, so by Lemma \ref{FiniteIndex}, the subgroup $f(S_1)$ has finite index in $S_0$. Thus there is a finite cover $p: \tilde{\Sigma}_0 \to \Sigma_0$ of $\Sigma_0$ such that $p_*(\tilde{S}_0) = f(S_1)$, where $\tilde{S}_0 = \pi_1(\tilde{\Sigma}_0)$. The map $p^{-1}_* \circ f$ is an isomorphism between $S_1$ and $\tilde{S}_0$ seen as free groups (without their surface group structure). Moreover, it sends maximal boundary elements of $S_1$ to maximal boundary elements of $\tilde{S}_0$. 

Let $M$ be a subset of $S_1$ containing one maximal boundary element for each boundary component of $\Sigma_1$: it does not extend to a basis of $S_1$ as a free group. Thus the image of $M$ by $p^{-1}_* \circ f$ is a set of maximal boundary elements of $\tilde{S}_0$ which does not extend to a basis of $\tilde{S}_0$. This is only possible if $p^{-1}_* \circ f(M)$ contains an element corresponding to each boundary component of $\tilde{\Sigma}_0$, so we deduce that $\tilde{\Sigma}_0$ has at most as many boundary components as $\Sigma_1$. 

Now $f$ is injective on the boundary of $\Sigma_1$ so that $\Sigma_1$ has at most as many boundary components as $\Sigma_0$; and $\tilde{\Sigma}_0$ is a finite cover of $\Sigma_0$, so $\Sigma_0$ has at most as many boundary components as $\tilde{\Sigma}_0$, with equality if and only if $p$ is a homeomorphism.

All these inequalities must therefore be equalities, so $p_*$ is an isomorphism and $f(S_1)=p_*(\tilde{S}_0)=S_0$, and $\Sigma_0$ has the same number of boundary components as $\Sigma_1$. This implies first that $f$ is surjective, then that it is in fact an isomorphism of surface groups.

\end{proof}

\section{Factor sets} \label{FactorSetsSec}

In this section, we recall the result obtained in \cite{Sel7} of the existence of a factor set for non injective homomorphisms into a torsion-free hyperbolic group $\Gamma$. We then indicate how to get a relative version (the ``restricted'' version of Sela). 

\subsection{Modular groups}

We want to define a subgroup of the group of automorphisms of a group $G$.
\begin{defi} \emph{(Dehn twist)} Suppose that the group $G$ has a decomposition as an amalgamated product $G = A *_C B$, or as an HNN extension $A*_C = \langle A, t \mid tct^{-1} = \phi(c) \textrm{ for all } c \in C \rangle$ for some embedding $\phi: C \to A$. Let $\gamma$ be an element of the centralizer of $C$.

The automorphism of $G$ which restricts to the identity on $A$, and which restricts to conjugation by $\gamma$ on $B$ (in the amalgamated product case) or which sends $t$ to $t\gamma$ (in the HNN case) is called the Dehn twist of $G$ corresponding to $\gamma$.
\end{defi}

We say that a group $G$ is freely indecomposable if it does not admit any non trivial free product decompositions. We will only use modular groups in the case where the group $G$ is torsion-free hyperbolic and freely indecomposable. In this case, the modular group of $G$ is generated by the automorphisms we just defined, together with inner automorphisms.

\begin{defi} \emph{(modular group $\Mod(G)$)} Let $G$ be a freely indecomposable torsion-free hyperbolic group. We define the modular group of $G$, denoted by $\Mod(G)$, to be the subgroup of $\Aut(G)$ generated by the Dehn twists of $G$ and inner automorphisms. 
\end{defi}

Note that for such a $G$, the only non trivial Dehn twists correspond to splittings over cyclic subgroups.

\subsection{Factor sets for morphisms to a torsion-free hyperbolic group}

We have the following result
\begin{prop} \label{MRGammaFI} Let $\Gamma$ be a torsion-free hyperbolic group. Let $G$ be a non cyclic freely indecomposable hyperbolic group. There exists a finite set of proper quotients of $G$ such that for any non injective morphism $f: G \to \Gamma$, there is an element $\sigma$ of $\Mod(G)$ such that $f \circ \sigma$ factors through one of the corresponding quotient maps. 
\end{prop}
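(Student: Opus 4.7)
\textbf{Proof proposal for Proposition \ref{MRGammaFI}.}

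The plan is to argue by contradiction using the Bestvina--Paulin limit construction, the Rips machine, and the shortening argument of Rips--Sela, which together form the standard machinery for producing factor sets of this kind.

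Assume no finite set of proper quotients has the stated property. Fix a finite generating set $S$ of $G$. I will construct an infinite sequence $(f_n)$ of non-injective morphisms $G \to \Gamma$ whose kernels are pairwise non-nested modulo $\Mod(G)$, in the sense that $\ker(f_n)$ is not contained in $\ker(f_m \circ \sigma)$ for any $m<n$ and any $\sigma \in \Mod(G)$. Using inner automorphisms of $\Gamma$ and elements of $\Mod(G)$, I replace each $f_n$ by a representative which is \emph{short} in its $\Mod(G)\times\mathrm{Inn}(\Gamma)$-orbit, meaning that $\mu(f_n):=\max_{s\in S}|f_n(s)|_\Gamma$ is minimized. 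The $\mu(f_n)$ must tend to infinity, for otherwise the $f_n$, up to conjugation, take values in a finite subset of $\Gamma$, giving only finitely many distinct kernels.

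Rescaling the Cayley graph of $\Gamma$ by $1/\mu(f_n)$ and choosing basepoints appropriately, the Bestvina--Paulin construction furnishes, after passing to a subsequence, a minimal non-trivial isometric action of $G$ on a real tree $T$. Because $\Gamma$ is torsion-free hyperbolic, arc stabilizers in $T$ are cyclic and the action is stable in the sense of Bestvina--Feighn; since $G$ is freely indecomposable, the kernel of the action is trivial. The Rips machine (in the form due to Bestvina--Feighn or Guirardel) then decomposes the action into axial, interval-exchange (surface) and simplicial pieces, and produces a graph-of-groups decomposition of $G$ over cyclic subgroups together with surface-type vertex groups. Each cyclic edge of this decomposition yields a Dehn twist in $\Mod(G)$, and each surface-type vertex contributes mapping-class-like automorphisms that also lie in $\Mod(G)$.

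The shortening argument of Rips--Sela asserts that, given the Rips decomposition of $T$, for all $n$ large enough there exist $\sigma_n \in \Mod(G)$ and $\gamma_n \in \Gamma$ such that $\gamma_n^{-1}(f_n \circ \sigma_n)\gamma_n$ is strictly shorter than $f_n$ with respect to $\mu$, contradicting the shortness of $f_n$. This shortening step is the main technical obstacle: one must show that a point of $T$ realizing $\mu$ can be pushed closer to the basepoint by an explicit modular automorphism extracted from the decomposition, using Dehn twists around cyclic pieces and homeomorphisms of the surface pieces, and this in turn requires precise control over how orbit segments in the rescaled Cayley graph of $\Gamma$ converge to segments in $T$. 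The contradiction yields that the set of kernels of short non-injective morphisms is finite modulo $\Mod(G)$; the corresponding finite collection of proper quotients of $G$ is the sought-after factor set, since any non-injective morphism becomes short, and hence representable on the list, after precomposition by a suitable modular automorphism.
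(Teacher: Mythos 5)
Your high-level strategy — Bestvina–Paulin degeneration, Rips analysis of the limit tree, and the Rips–Sela shortening argument — is indeed the right machinery, and it is the machinery the paper uses. But your proof has a genuine gap in the finiteness step, and it surfaces in the false assertion ``since $G$ is freely indecomposable, the kernel of the action is trivial.'' Free indecomposability of $G$ (together with hyperbolicity) controls the \emph{structure} of the Rips decomposition of the limit tree — it rules out Levitt/thin components and axial components — but it says nothing about the kernel of the limit action. That kernel is the stable kernel of the sequence $(f_n)$, and it can very well be non-trivial: the $f_n$ are non-injective, after all. When the stable kernel $K$ is non-trivial, the shortening automorphisms produced by the Rips machine live in $\Mod(G/K)$ and need not lift to $\Mod(G)$, so no contradiction to the shortness of $f_n$ is obtained. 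Your argument therefore only handles the case of trivial stable kernel, and you have no mechanism to reduce to that case, since $K$ is only visible in the limit and cannot be anticipated when constructing the sequence.

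The paper's proof separates cleanly what you are trying to do in one stroke. It first defines $\Gamma$-shortening quotients (quotients of $G$ by stable kernels of short non-injective sequences), and invokes Sela's descending chain condition for $\Gamma$-limit groups (\cite[Theorem 1.17]{Sel7}, recorded here as Theorem~\ref{EventuallyFactors}) to deduce that there are \emph{finitely many maximal} shortening quotients and that every short non-injective morphism factors through one of them. Only then does it apply the shortening argument — in Proposition~\ref{ShortQuotientsAreProper}, under the \emph{explicit hypothesis} that the stable kernel is trivial — to conclude that maximal shortening quotients are proper. The finiteness in the theorem comes from the compactness/DCC input, not from the shortening argument itself; the shortening argument only supplies properness. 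Without Theorem~\ref{EventuallyFactors} (or an equivalent), the contradiction you are aiming at simply does not close.
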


Such a set of proper quotients is called a factor set for non injective morphisms $G \to \Gamma$.

This result can be seen as a consequence of \cite[Theorem 1.26]{Sel7}. However, it can also be proved directly by an argument similar to that of \cite[Theorem 1.25]{Sel7}, which states it in the case where $G$ is a $\Gamma$-limit group.

We give in the rest of this subsection an outline of the proof, which follows that of \cite[Theorem 1.25]{Sel7}. It is based on the powerful but technical shortening argument. 

The shortening argument was first used in \cite{RipsSelaHypI}, the reader is also referred to versions of it exposed in \cite{WiltonThesis} for the case where $\Gamma$ is free, and \cite{Bes} or \cite{ThesisPerin} in general. The idea is to consider sequences of morphisms $f_n: G \to \Gamma$, and the sequences of actions of $G$ on the $\delta$-hyperbolic Cayley graph of $\Gamma$ via these morphisms. It can be shown that after proper rescaling, such sequences converge to an action of $G$ on a path-connected $0$-hyperbolic space, namely a real tree \cite{PaulinGromov}. This limiting action is then analyzed using Rips theory (see \cite{RipsSelaHypI}, \cite{BFStableActions} and \cite{GuirardelRTrees}). Rips theory gives a decomposition of certain faithful actions on a real tree $T$ by finding a finite set of subtrees $T_1, \ldots, T_s$ of $T$ with the following properties: if $i \neq j$ or $g$ is non trivial then $T_i$ and $g \cdot T_j$ intersect in at most a point; the translates of the subtrees $T_i$ cover $T$; and the global stabilizer of $T_i$ acts on it in one of a few types of actions which can be described very precisely (see Theorem 10.8 in \cite{RipsSelaHypI} or Theorem 5.1 of \cite{GuirardelRTrees}). If the limit action obtained by the morphisms $h_n$ was faithful, this information can then be used to deduce information about the maps $f_n$, in particular it is possible to shorten the images by $f_n$ of the generators of $G$ by precomposing $f_n$ by an element of $\Mod(G)$.

\begin{defi}\emph{(stable sequence, stable kernel)} Let $G$ be a finitely generated group, and let $(h_n)_{n \in \N}$ be a sequence of morphisms from $G$ to a group $G'$. The sequence $(h_n)_{n \in \N}$ is stable if for any element $g$ of $G$, either all but finitely many of the $h_n(g)$ are trivial, or all but finitely many of the $h_n(g)$ are non trivial. The set of elements $g$ for which the former holds is a normal subgroup of $G$, we call it the stable kernel of the sequence $(h_n)_{n \in \N}$. 
\end{defi}

Note that by a diagonal argument, one can extract a stable subsequence from any sequence of morphisms.
For the rest of this section, let $\Gamma$ be a torsion-free hyperbolic group endowed with a finite generating set $D(\Gamma)$. 

\begin{defi}\emph{($\Gamma$-limit group)} A $\Gamma$-limit group is the quotient of a finitely generated group $G$ by the stable kernel of a stable sequence of morphisms $h_n: G \to \Gamma$. 
\end{defi}

Although $\Gamma$-limit groups are not necessarily finitely presented if $\Gamma$ is not free, Sela shows \cite[Theorem 1.17]{Sel7}: 
\begin{thm} \label{EventuallyFactors} If $\eta: G \to L$ is a $\Gamma$-limit quotient of $G$ corresponding to a stable sequence $h_n$ of morphisms $G \to \Gamma$, all but finitely many of the maps $h_n$ factor through $\eta$.
\end{thm}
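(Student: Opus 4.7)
The plan is to reduce the statement to the \emph{equational Noetherianity} of the torsion-free hyperbolic group $\Gamma$ (a theorem due to Sela, also reproved by Reinfeldt--Weidmann), which says that any system of equations in finitely many variables with coefficients in $\Gamma$ is equivalent over $\Gamma$ to a finite subsystem. Without a finite presentation of $L$ (which is not guaranteed when $\Gamma$ is not free), this is the natural replacement that turns the ``infinite'' stable kernel into a finite amount of data to control along the sequence.

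Concretely, I would fix a finite generating tuple $(x_1,\ldots,x_m)$ of $G$, write $F$ for the free group on these generators, and let $\pi:F\to G$ be the canonical surjection. Each $h_n$ lifts to $\tilde h_n := h_n\circ\pi:F\to\Gamma$, and $\tilde h_n$ is completely determined by the tuple $(\tilde h_n(x_1),\ldots,\tilde h_n(x_m))\in\Gamma^m$. Let $K=\ker\eta\triangleleft G$ be the stable kernel and $\tilde K=\pi^{-1}(K)\triangleleft F$, so that $L\cong F/\tilde K$; the conclusion ``$h_n$ factors through $\eta$'' amounts to $\tilde K\subseteq\ker\tilde h_n$. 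Pick any (possibly infinite) set $W\subseteq\tilde K$ that normally generates $\tilde K$ in $F$, and consider the system of equations $\mathcal S=\{w(x_1,\ldots,x_m)=1 : w\in W\}$ over $\Gamma$, whose solution set in $\Gamma^m$ is exactly the set of tuples defining a morphism $F\to\Gamma$ trivial on $\tilde K$.

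By equational Noetherianity of $\Gamma$ applied to $\mathcal S$, there is a finite subsystem $\mathcal S_0 = \{w_1=1,\ldots,w_k=1\}$ with $w_j\in W$ whose solution set in $\Gamma^m$ coincides with that of $\mathcal S$. Each $\pi(w_j)$ lies in the stable kernel $K$, so by the very definition of stability there exists $N_j$ such that $h_n(\pi(w_j))=1$ for all $n\geq N_j$; set $N=\max_j N_j$. Then for every $n\geq N$, the tuple $(\tilde h_n(x_1),\ldots,\tilde h_n(x_m))$ solves $\mathcal S_0$, hence by the equivalence it also solves $\mathcal S$, i.e.\ $\tilde h_n(w)=1$ for every $w\in W$. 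Since $W$ normally generates $\tilde K$, this forces $\tilde K\subseteq\ker\tilde h_n$, which is precisely the desired factorization $h_n = \bar h_n\circ\eta$.

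The only hard ingredient in this route is the equational Noetherianity of $\Gamma$ itself; once granted, everything else is a purely formal finiteness/compactness manoeuvre. So the main obstacle is really hidden inside that input theorem, which for torsion-free hyperbolic $\Gamma$ is a nontrivial result proved using the full machinery of limit groups over $\Gamma$, restricted Makanin--Razborov diagrams and descending chain arguments on $\Gamma$-limit quotients. If one wanted a self-contained proof, one would need to establish equational Noetherianity first (for instance by showing that any descending chain of $\Gamma$-varieties in $\Gamma^m$ stabilises, via an analysis of $\Gamma$-limit groups running in parallel to the one the paper is already setting up).
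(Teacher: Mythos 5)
The paper does not give a proof of this theorem; it cites it as Theorem 1.17 of Sela's \cite{Sel7}, so there is no in-paper argument to compare against. Your reduction to equational Noetherianity of $\Gamma$ is correct and complete as a formal argument: the solution set of $\mathcal S$ in $\Gamma^m$ is indeed the set of tuples whose associated morphism $F \to \Gamma$ kills $\tilde K$ (any homomorphism trivial on $W$ is trivial on its normal closure, which is $\tilde K$); stability of $(h_n)$ is precisely what makes each of the finitely many $w_j$ die for large $n$; and $\tilde K \subseteq \ker \tilde h_n$ pulls back to $K \subseteq \ker h_n$ because $\tilde K = \pi^{-1}(K)$. One caveat you should be aware of: equational Noetherianity of $\Gamma$ is essentially equivalent to the statement you are proving. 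The converse implication comes from applying the theorem with $G$ free to a diagonal sequence of tuples built from an infinite system of relators: if no finite subsystem of $\mathcal S$ sufficed, one could choose $h_n$ killing the first $n$ relators but not all of $\mathcal S$, pass to a stable subsequence whose stable kernel contains the normal closure of $\mathcal S$, and derive a contradiction from eventual factorization. So you have not reduced the theorem to something genuinely easier; you have correctly isolated the one non-formal input and shown the remainder is bookkeeping, which is the right way to read the statement. Since equational Noetherianity of torsion-free hyperbolic groups is a known external theorem (Sela, Groves, Reinfeldt--Weidmann), relying on it as a black box is appropriate and is the same order of appeal as the paper's own citation of Sela.
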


Theorem \ref{EventuallyFactors} also implies the following result, which will be of use later.
\begin{thm} \label{DecSeqOfGammaLimit} If $(L_i)_{i \in \N}$ is a sequence of $\Gamma$-limit groups such that there exist surjective maps $\eta_i: L_i \to L_{i+1}$ for all $i$, then all but finitely many of the maps $\eta_i$ are isomorphisms.
\end{thm}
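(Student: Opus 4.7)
The plan is to argue by contradiction, supposing that infinitely many $\eta_i$ fail to be injective. After passing to a subsequence I may assume each $\eta_i : L_i \to L_{i+1}$ has non-trivial kernel. Writing $\pi_i : L_0 \to L_i$ for the composition $\eta_{i-1} \circ \cdots \circ \eta_0$ (with $\pi_0 = \id$), I obtain a strictly increasing chain of normal subgroups $\ker(\pi_0) \subsetneq \ker(\pi_1) \subsetneq \ker(\pi_2) \subsetneq \cdots$ of $L_0$. Set $K_\infty := \bigcup_i \ker(\pi_i)$ and $L_\infty := L_0 / K_\infty$. Note that $L_0$ is finitely generated, being a $\Gamma$-limit group, and so is every $L_i$.

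The strategy is to exhibit $L_\infty$ as a $\Gamma$-limit quotient of $L_0$ via a diagonal sequence, and then invoke Theorem \ref{EventuallyFactors}. For each $i$, fix a stable sequence $h_n^{(i)} : L_i \to \Gamma$ witnessing that $L_i$ is a $\Gamma$-limit group, so its stable kernel in $L_i$ is trivial. Pick a witness $k_i \in \ker(\pi_{i+1}) \setminus \ker(\pi_i)$, so that $\pi_i(k_i) \neq 1$ in $L_i$ while $k_i \in K_\infty$. Fixing an enumeration $g_1, g_2, \ldots$ of $L_0$, I would choose indices $n_i$ such that the morphism
$$ f_i := h_{n_i}^{(i)} \circ \pi_i \;:\; L_0 \to \Gamma $$
satisfies $f_i(g_j) \neq 1$ for every $j \leq i$ with $g_j \notin \ker(\pi_i)$, and also $f_i(k_i) \neq 1$. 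This is possible because each of the finitely many elements $\pi_i(g_j)$, $\pi_i(k_i)$ to be controlled is non-trivial in $L_i$, and the stable sequence $h_n^{(i)}$ has trivial stable kernel, so any such element is eventually non-trivial under $h_n^{(i)}$.

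A routine check then shows that $(f_i)$ is stable with stable kernel exactly $K_\infty$: any $g \notin K_\infty$ lies outside every $\ker(\pi_i)$, and once it appears in the enumeration the construction forces $f_i(g) \neq 1$ for all larger $i$; any $g \in K_\infty$ is killed by some $\pi_i$ and hence by all subsequent $f_i$. Therefore $L_\infty$ is a $\Gamma$-limit quotient of $L_0$ corresponding to $(f_i)$, so by Theorem \ref{EventuallyFactors} all but finitely many $f_i$ factor through $L_0 \to L_\infty$. But by construction $k_i \in K_\infty$ while $f_i(k_i) \neq 1$ for every $i$, contradicting this factorization.

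The main obstacle I expect is the diagonal extraction: it must simultaneously guarantee (a) stability of $(f_i)$, (b) coincidence of its stable kernel with $K_\infty$ rather than some larger subgroup, and (c) non-vanishing of $f_i$ on the pre-selected witness $k_i$. Once these three are arranged, the application of Theorem \ref{EventuallyFactors} delivers the contradiction immediately, and the theorem follows.
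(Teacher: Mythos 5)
Your proof is correct and is the natural argument this theorem is meant to have: a diagonal extraction to realize $L_\infty = L_0/\bigcup_i \ker(\pi_i)$ as a $\Gamma$-limit quotient of $L_0$, followed by Theorem \ref{EventuallyFactors} to force factorization of the $f_i$ through $L_\infty$, contradicting the choice of witnesses $k_i$. The only step you elide slightly is that each $L_i$ admits a stable sequence $L_i \to \Gamma$ with trivial stable kernel, which itself requires Theorem \ref{EventuallyFactors} (or an equivalent reformulation of the definition of $\Gamma$-limit group) to factor the defining sequence $G_i \to \Gamma$ through $L_i$; this is standard and does not affect correctness.
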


\begin{defi} \emph{(short morphism)} Let $G$ be a group endowed with a finite generating set $D(G)$. A morphism $h: G \to \Gamma$ is said to be short if 
$$ \max_{g \in D(G)} |h(g)|_{D(\Gamma)} \leq  \max_{g \in D(G)} |\gamma h(\sigma(g)) \gamma^{-1}|_{D(\Gamma)}$$
for any element $\sigma$ of $\Mod(G)$ and $\gamma$ of $\Gamma$. Here $|.|_{D(\Gamma)}$ denotes the word length in $\Gamma$ with respect to $D(\Gamma)$.
\end{defi}

\begin{defi} \emph{($\Gamma$ shortening quotient)} 
A $\Gamma$ shortening quotient $Q$ of a finitely generated group $G$ is the quotient of $G$ by the stable kernel of a sequence of non injective short morphisms $h_n: G \to \Gamma$. 

We order $\Gamma$ shortening quotients of a finitely generated group $G$ by the following relation: if $Q_1, Q_2$ are $\Gamma$ shortening quotients of $G$ with corresponding quotient maps $\eta_i: G \to Q_i$, we say $Q_1 \geq Q_2$ if there exists a morphism $\tau: Q_1 \to Q_2$ such that $\eta_2 = \tau \circ \eta_1$.
\end{defi}

Sela shows, using Theorem \ref{EventuallyFactors}, that every $\Gamma$ shortening quotient of $G$ is smaller than a maximal $\Gamma$ shortening quotient, and that there are only finitely many maximal $\Gamma$ shortening quotients $\eta_i: G \to M_i$ (Propositions 1.20 and 1.21 of \cite{Sel7}). 

Now suppose $f: G \to \Gamma$ is a non injective morphism, and let $\sigma$ and $\gamma$ be elements of $\Mod(G)$ and $\Gamma$ respectively, such that $h= \Conj (\gamma) \circ f \circ \sigma$ is short. The sequence $(h_n)_{n \in \N}$ of constant term $h_n=h$ is a sequence of non injective short morphisms, so the quotient of $G$ by its stable kernel (which is just the kernel of $h$) is a $\Gamma$ shortening quotient. Thus it is smaller than one of the $\Gamma$ maximal shortening quotients $M_i$, which means that $f \circ \sigma$ factors through the corresponding quotient map $\eta_i$. Thus to complete the proof of Proposition \ref{MRGammaFI}, there only remains to show that these $\Gamma$ maximal shortening quotients are proper. 

\begin{prop} \label{ShortQuotientsAreProper} If $G$ is a non cyclic and freely indecomposable hyperbolic group, then $\Gamma$ shortening quotients of $G$ are proper quotients.
\end{prop}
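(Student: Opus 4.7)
The plan is to argue by contradiction. Suppose that some $\Gamma$ shortening quotient $Q$ of $G$ coincides with $G$, so the stable kernel of a defining sequence $h_n: G \to \Gamma$ of non injective short morphisms is trivial; equivalently, every non trivial $g \in G$ satisfies $h_n(g) \neq 1$ for all but finitely many $n$.

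Set $\lambda_n = \max_{g \in D(G)} |h_n(g)|_{D(\Gamma)}$. If $(\lambda_n)$ stays bounded, then $\{h_n\}$ takes only finitely many values, so up to a subsequence $h_n$ is constantly equal to some non injective $h$; the non trivial $\ker h$ is then contained in the stable kernel, contradicting triviality. So $\lambda_n \to \infty$, and I would rescale the Cayley graph of $\Gamma$ by $1/\lambda_n$ and let $G$ act on the resulting metric space via $h_n$. By Paulin's theorem, after passing to a subsequence this sequence of actions converges to a non trivial isometric action of $G$ on a real tree $T$ (non triviality follows from the fact that, in the rescaled metrics, the generators move a suitably chosen basepoint by distance at most $1$, with this bound attained).

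Now I would apply Rips theory to the limit action on $T$. Arc stabilizers in $T$ are abelian (indeed virtually cyclic) as Hausdorff limits of commuting elements in the hyperbolic group $\Gamma$, which puts us in the framework of \cite{RipsSelaHypI} and \cite{BFStableActions}: one obtains a finite collection of subtrees $T_1, \dots, T_s$ whose $G$-translates cover $T$, each carrying a simplicial, axial, or surface-type action. Running the shortening argument as in \cite{RipsSelaHypI} (or in the expositions of \cite{WiltonThesis} and \cite{ThesisPerin}) then produces a modular automorphism $\sigma \in \Mod(G)$ and, for all sufficiently large $n$, an element $\gamma_n \in \Gamma$ such that
$$\max_{g \in D(G)} |\gamma_n\, h_n(\sigma(g))\, \gamma_n^{-1}|_{D(\Gamma)} < \lambda_n,$$
directly contradicting the shortness of $h_n$.

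The main obstacle is the shortening argument itself. The hard part is converting the Rips-theoretic decomposition of $T$ into a concrete element of $\Mod(G)$ -- typically built from cyclic Dehn twists associated to simplicial edges, mapping class group elements of surface-type components, and generalised Dehn twists associated to axial components -- and then transferring the strict decrease of translation length on $T$ back to a strict decrease of generator length for $h_n$ at each large enough $n$. Free indecomposability of $G$ is used crucially at two points: it prevents the limit tree $T$ from carrying a non trivial free splitting of $G$ with trivial edge stabilizers (which would be invisible to $\Mod(G)$), and it ensures that every relevant splitting of $G$ read off from $T$ is a cyclic splitting, so the cyclic Dehn twists defining $\Mod(G)$ suffice to realise the shortening.
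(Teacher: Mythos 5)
Your argument follows the same route as the paper's: assume the stable kernel is trivial, note the rescaling constants must blow up, pass to a limit action of $G$ on a real tree, analyse it by Rips theory, and run the shortening argument to contradict shortness. (The paper rescales by the minimal displacement at an optimally placed basepoint rather than at the identity, but since the shortness condition already allows conjugation by $\gamma \in \Gamma$, your choice is essentially equivalent.)

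The one point you should correct is your treatment of axial components. You list axial pieces as a possible type and propose handling them by generalised Dehn twists, and then attribute the fact that all relevant splittings are cyclic to free indecomposability. That attribution is wrong: free indecomposability is what excludes Levitt (thin) components, and hence free splittings with trivial edge stabilizers, but it does not by itself prevent axial pieces. What rules out axial components here is the hyperbolicity of $G$: an axial component would produce a subgroup of $G$ acting densely by translations on a line, hence a non cyclic subgroup with no free subgroup of rank two, which is impossible in a torsion-free hyperbolic group. The distinction matters because $\Mod(G)$ as defined in this paper is generated only by Dehn twists over cyclic subgroups (the paper explicitly notes that these are the only non trivial ones for a freely indecomposable torsion-free hyperbolic $G$); the generalised Dehn twists you invoke for axial pieces would live outside this group, and the shortening argument would not close. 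As it stands your proof is saved by a fact you did not actually prove; replacing ``free indecomposability ensures cyclic splittings'' with ``hyperbolicity of $G$ excludes axial components'' fixes the logic and matches the paper's reasoning.
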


Theorem 1.25 of \cite{Sel7} claims that this holds for $G$ a freely indecomposable $\Gamma$ limit group. This is what allows Sela to then build Makanin-Razborov diagrams. The main difference is that to show this more general result, Sela has to deal also with axial components in the limit tree: when $G$ is assumed to be hyperbolic, there are no such components. However, since abelian subgroups of $\Gamma$-limit groups are well behaved, the shortening argument can be extended.  

\begin{proof}[Outline of the proof of Proposition \ref{ShortQuotientsAreProper}] Let $D$ be a finite generating set for $G$. Suppose we have a stable sequence $h_n: G \to \Gamma$ of non injective morphisms whose stable kernel is trivial. This implies in particular that $G$ is torsion-free. Our aim is to show that the $h_n$ cannot be short. 

For each $n$, consider the action of $G$ on the Cayley graph $X$ of $\Gamma$ via the morphism $h_n$. Pick a point $x_n$ in $X$ which minimizes the displacement function $x \mapsto \max_{g \in D} d_X(x, h_n(g)\cdot x)$ of this action. Rescale the distance on $X$ by the minimal displacement $\mu[h_n]=\max_{g \in D} d_X(x_n, h_n(g)\cdot x_n)$. We get a sequence $(X_n,x_n)$ of pointed $G$-spaces. 

Thanks to this rescaling, the sequence of actions $(\lambda_n)_{n \in \N}$ converges in the equivariant Gromov-Hausdorff topology \cite{PaulinGromov} to an action $\lambda$ of $G$ on a pointed metric space. This action is non trivial by choice of basepoints. 

The fact that the $h_n$ are non injective and that their stable kernel is trivial implies that they belong to infinitely many conjugacy classes. Thus, up to extraction of a subsequence, the rescaling constant $\mu[h_n]$ tends to infinity. 

Now if $X$ is $\delta$-hyperbolic, each $X_n$ is a $\delta/\mu[h_n]$-hyperbolic space, and the limit is a path-connected $0$-hyperbolic space, i.e. a real tree.

Using the fact that $G$ is torsion-free hyperbolic, and that the sequence of hyperbolicity constants of the spaces $X_n$ tends to $0$, we can show that the limit action satisfies some nice properties, such as abelianity of arc stabilizers and triviality of tripod stabilizers.

These conditions allow us to analyze the limit tree with Rips theory (see \cite{AcylindricalAccessibility} or \cite{GuirardelRTrees}), and this gives us a decomposition of $G$ as the fundamental group of a graph of groups. Note that there are no Levitt components since $G$ is freely indecomposable, and no axial components since $G$ is hyperbolic. We can thus use the shortening argument developed by Rips and Sela in \cite{RipsSelaHypI}: it shows that for any $n$ large enough, we can find an element $\sigma_n$ of $\Mod(G)$ such that the action $\lambda_n$ twisted by $\sigma_n$ is strictly shorter than $\lambda_n$, i.e. the displacement of the basepoint by $\lambda_n \circ \sigma_n$ is smaller than by $\lambda_n$. By our choice of basepoint, this implies that the morphisms $h_n$ were not short. 
\end{proof}

Note that the non injectivity of the maps $h_n$ is only used to show that the rescaling constant tends to infinity. Suppose now we are given an infinite sequence of pairwise non conjugate short injective maps $i_n$ from $G$ to $\Gamma$. The stable kernel of such a sequence is trivial. We build $(X_n, x_n)$ as above. The non conjugacy of the maps $i_n$ is sufficient to ensure that the rescaling constant tends to infinity, so that by following the argument above, we get a contradiction to the shortness of the maps $i_n$. Thus no such sequence exist, i.e. there is a finite number of conjugacy classes of short embeddings $G \to \Gamma$. This can be formulated as
\begin{thm} \label{ShortForEmbeddings} Let $\Gamma$ be a torsion-free hyperbolic group. Let $G$ be a non cyclic freely indecomposable hyperbolic group. 
There exists a finite set $\{i_1, \ldots, i_k\}$ of embeddings $G \hookrightarrow \Gamma$ such that for any embedding $i: G \hookrightarrow \Gamma$, there is an index $j$ with $1 \leq j \leq k$, an element $\gamma$ of $\Gamma$, and an element $\sigma$ of $\Mod(G)$ such that
$$ i = \Conj(\gamma) \circ i_j \circ \sigma.$$
\end{thm}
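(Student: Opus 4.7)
The plan is to follow the very structure suggested by the author's final remark: observe that the proof of Proposition \ref{ShortQuotientsAreProper} used the non-injectivity of $h_n$ solely to force the rescaling constants $\mu[h_n]$ to tend to infinity, and then replace that input by the hypothesis that the short embeddings at hand are pairwise non-conjugate. The statement of Theorem \ref{ShortForEmbeddings} then reduces to showing that there are only finitely many $\Gamma$-conjugacy classes of short embeddings $G \hookrightarrow \Gamma$, since by definition of shortness, given any embedding $i \colon G \hookrightarrow \Gamma$, we may choose $\sigma \in \Mod(G)$ and $\gamma \in \Gamma$ that minimize $\max_{g \in D(G)} |\gamma \cdot i(\sigma(g)) \cdot \gamma^{-1}|_{D(\Gamma)}$, so that $\Conj(\gamma) \circ i \circ \sigma$ is short. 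Hence any embedding $i$ is of the form $\Conj(\gamma^{-1}) \circ i_j \circ \sigma^{-1}$ for some short embedding $i_j$.

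For the finiteness, I would argue by contradiction. Suppose there is an infinite sequence of short embeddings $i_n \colon G \hookrightarrow \Gamma$ that are pairwise non-conjugate in $\Gamma$. By a standard diagonal extraction I may assume the sequence is stable; since the $i_n$ are injective, the stable kernel is trivial. The first step is to show that the minimal displacements $\mu[i_n] = \min_{x \in X} \max_{g \in D(G)} d_X(x, i_n(g) \cdot x)$ on the Cayley graph $X$ of $\Gamma$ are unbounded. Indeed, if $\mu[i_n]$ were bounded, then after conjugating each $i_n$ to move a displacement-minimizing point to the identity vertex of $X$, the images of the generators of $G$ would lie in a fixed finite ball of $\Gamma$; there are only finitely many choices, contradicting the pairwise non-conjugacy.

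Once we know $\mu[i_n] \to \infty$, we rescale the metric on $X$ by $\mu[i_n]$ and choose a minimizing basepoint $x_n$, producing a pointed sequence of $G$-spaces $(X_n, x_n)$. By Paulin's equivariant Gromov-Hausdorff convergence, a subsequence converges to a non-trivial isometric action $\lambda$ of $G$ on a pointed real tree (the ambient spaces are $\delta/\mu[i_n]$-hyperbolic, so the limit is $0$-hyperbolic and path-connected). As in the proof of Proposition \ref{ShortQuotientsAreProper}, the torsion-freeness and hyperbolicity of both $G$ and $\Gamma$ together with the vanishing of the stable kernel yield the standard good properties of the limit action: arc stabilizers are abelian and tripod stabilizers are trivial.

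The final step applies Rips theory \cite{GuirardelRTrees} to the limit action. Since $G$ is freely indecomposable, no Levitt components arise, and since $G$ is hyperbolic, there are no axial components, so only simplicial and surface (IET) pieces appear. The Rips-Sela shortening argument of \cite{RipsSelaHypI} then produces, for all $n$ large enough, a modular automorphism $\sigma_n \in \Mod(G)$ and an element $\gamma_n \in \Gamma$ such that $\Conj(\gamma_n) \circ i_n \circ \sigma_n$ strictly shortens $i_n$ at the chosen basepoint, i.e.\ has smaller maximal generator-displacement. This contradicts the shortness of $i_n$ and completes the proof. The main obstacle is the first step above: verifying that pairwise non-conjugacy of the $i_n$ in $\Gamma$ forces $\mu[i_n] \to \infty$, since this is the only place where the input to the shortening machinery differs from the non-injective setting already treated; the rest of the argument is a verbatim reprise of the proof of Proposition \ref{ShortQuotientsAreProper}.
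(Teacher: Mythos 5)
Your proposal is correct and follows essentially the same route as the paper: the paper's own justification (the paragraph immediately preceding the theorem) is precisely that the non-injectivity of the morphisms in the proof of Proposition \ref{ShortQuotientsAreProper} is used only to force the rescaling constants to infinity, and that pairwise non-conjugacy of short embeddings substitutes for this via the pigeonhole argument you spell out. The one place you make things slightly more roundabout than necessary is in the pigeonhole step: since the $i_n$ are already short, the definition of shortness (which minimizes over conjugation by $\gamma \in \Gamma$) already makes the identity vertex a displacement-minimizing vertex for each $i_n$, so if the $\mu[i_n]$ were bounded the generator images would immediately lie in a fixed finite ball without any further conjugation; but this is a cosmetic point and does not affect correctness.
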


\subsection{Relative factor sets}

One of the most important hypotheses in Proposition \ref{ShortQuotientsAreProper} is the fact that $G$ is freely indecomposable: this is required to show that the limit tree has no Levitt components, a condition which is absolutely essential to make the shortening argument work. But in fact, the absence of Levitt components is also guaranteed if $G$ is only freely indecomposable relative to a subgroup $H$, provided $H$ is elliptic in the limit tree. To ensure this, we fix an embedding $H \hookrightarrow \Gamma$, and replace a few definitions and arguments by their relative versions. 

We say that a group $G$ is freely indecomposable with respect to a subgroup $H$ if it does not admit any non trivial free product decomposition of the form $G = G' * G''$ where $H$ is contained in $G'$.
We start by giving
\begin{defi} \emph{(relative modular group $\Mod_H(G)$)} Let $G$ be a torsion-free hyperbolic group, and let $H$ be a subgroup of $G$ with respect to which $G$ is freely indecomposable. The modular group of $G$ relative to $H$ is the subgroup of $\Aut(G)$ generated by those inner automorphisms and Dehn twists which restrict to the identity on $H$. We denote it by $\Mod_H(G)$.
\end{defi}
Note that if $H$ is not abelian, its centralizer in $G$ is trivial so the modular group is in fact generated by Dehn twists. In the relative case, the factor set existence result is given by

\begin{prop} \label{MRGammaRelPartial} Let $G$ be a hyperbolic group which is freely indecomposable with respect to a non abelian subgroup $H$. Let $\Gamma$ be a torsion-free hyperbolic group endowed with a fixed embedding $j:H \hookrightarrow \Gamma$.
There exists a finite set of proper quotients of $G$, and a finite subset $H_0$ of $H$, such that for any non injective morphism $h:G \to \Gamma$ which coincides with $j$ on $H_0$, there is an element $\sigma$ of $\Mod_H(G)$ such that $h \circ \sigma$ factors through one of the corresponding quotient maps. 
\end{prop}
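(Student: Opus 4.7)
The plan is to adapt the proof of Proposition \ref{MRGammaFI} to the relative setting. Given a finite $K \subseteq H$, call a morphism $h : G \to \Gamma$ \emph{$K$-marked} if $h|_K = j|_K$, and call the quotient of $G$ by the stable kernel of a stable sequence of $K$-marked morphisms a \emph{$(K,j)$-relative $\Gamma$-limit quotient}. Since these are in particular $\Gamma$-limit quotients, Theorems \ref{EventuallyFactors} and \ref{DecSeqOfGammaLimit} apply and give the descending chain condition. Call $h$ \emph{$(K,j)$-short} if, for a fixed finite generating set $D$ of $G$, $\max_{g \in D} |h(g)|_{D(\Gamma)}$ is minimized over all $h \circ \sigma$ with $\sigma \in \Mod_H(G)$; note that no additional conjugation by elements of $\Gamma$ is needed provided $j(K)$ has trivial centralizer in $\Gamma$, which can be arranged by choosing $K$ to generate a non-abelian subgroup of $H$ (possible since $H$ is non-abelian and $\Gamma$ is torsion-free hyperbolic).

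Next the plan is to mirror the absolute construction. Define $(K,j)$-relative shortening quotients via stable sequences of $K$-marked short morphisms. An analogue of \cite[Propositions 1.20, 1.21]{Sel7} yields finitely many maximal such quotients, and precomposition by an element of $\Mod_H(G)$ (which preserves $K$-markedness since it fixes $H$ pointwise) reduces any $K$-marked non-injective morphism to a short one, hence factoring through some maximal quotient. To finish the proof it then suffices to exhibit a finite $H_0 \supseteq K$ for which every maximal $(H_0,j)$-relative shortening quotient is proper.

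The hard part is the relative shortening argument, which is where $H_0$ has to be chosen carefully. Suppose for contradiction $(h_n)$ is a stable sequence of $H_0$-marked, $(H_0,j)$-short, non-injective morphisms with trivial stable kernel; rescaling by $\mu[h_n] = \max_{g \in D} d_X(x_n, h_n(g)\cdot x_n)$ and passing to an equivariant Gromov-Hausdorff limit as in \cite{PaulinGromov} yields an action of $G$ on a real tree $T$ with abelian arc stabilizers and trivial tripod stabilizers, and $\mu[h_n] \to \infty$ as in the absolute case. Since $h_n|_{H_0} = j|_{H_0}$ keeps the displacements of $H_0$ bounded, after rescaling $\langle H_0 \rangle$ fixes the basepoint $x_\infty$; non-abelianity of $\langle H_0 \rangle$ then forces $\{x_\infty\}$ to be its unique fixed point. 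The main obstacle is to ensure that the full subgroup $H$, and not only $\langle H_0 \rangle$, acts elliptically on $T$: for this the plan is to enlarge $H_0$ iteratively, adding any element $h \in H$ that witnesses non-ellipticity in some limit tree. Each such enlargement strictly shrinks the collection of relative $\Gamma$-limit quotients that can arise from $H_0$-marked sequences, so by Theorem \ref{DecSeqOfGammaLimit} the process terminates after finitely many steps.

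Once $H$ is elliptic in $T$, Rips theory decomposes $T$ into controlled pieces: there are no axial components since $G$ is hyperbolic, and no Levitt components since any Levitt component would produce a free splitting of $G$ in which $H$ lies inside a vertex group, contradicting the assumption that $G$ is freely indecomposable relative to $H$. The Rips-Sela shortening argument applied to this decomposition then produces, for $n$ large, a modular automorphism $\sigma_n \in \Mod_H(G)$ with $\max_{g \in D} d_X(x_n, h_n \circ \sigma_n(g) \cdot x_n) < \mu[h_n]$; since $\sigma_n$ fixes $H$ pointwise, $h_n \circ \sigma_n$ is still $H_0$-marked, contradicting the $(H_0,j)$-shortness of $h_n$. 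Hence the maximal $(H_0,j)$-relative shortening quotients are proper, and the proposition follows.
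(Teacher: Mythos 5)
Your overall strategy is close to the paper's: drop the $\Gamma$-conjugation from the shortness definition, use the marking condition to make $H$ elliptic in the limit tree (ruling out Levitt components), and invoke the descending chain condition for $\Gamma$-limit groups for finiteness. The observation that $j(K)$ having trivial centralizer makes the no-conjugation convention harmless is a nice touch and parallels the paper's switch to the fixed basepoint $1 \in \Gamma$.

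However, there is a genuine gap in the step where you try to pin down the finite set $H_0$ by iterative enlargement. You argue: ``adding any element $h \in H$ that witnesses non-ellipticity in some limit tree\ldots Each such enlargement strictly shrinks the collection of relative $\Gamma$-limit quotients\ldots so by Theorem \ref{DecSeqOfGammaLimit} the process terminates.'' This does not follow. Theorem \ref{DecSeqOfGammaLimit} concerns a single descending chain of $\Gamma$-limit groups $L_1 \twoheadrightarrow L_2 \twoheadrightarrow \cdots$ with surjective transition maps; your enlargement process produces no such chain. What shrinks is an a priori infinite \emph{collection} of quotients (the set of $(K,j)$-relative $\Gamma$-limit quotients), and shrinking a collection implies nothing about termination unless you already know the collection is finite or well-ordered in some way --- which is close to what you are trying to prove. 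Moreover, it is not even clear the collection \emph{strictly} shrinks at each step rather than staying the same.

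The paper sidesteps this entirely by not fixing a finite $H_0$ upfront. Relative shortening quotients are defined via sequences that ``fix $H$ in the limit'' (Definition \ref{FixHLimit}): for every radius $r$, all but finitely many $h_n$ agree with $j$ on $B_G(r)\cap H$. This is a condition on the \emph{sequence}, not a single finite subset, and it forces every element of $H$ (not just a finitely generated piece) to fix the basepoint in the limit tree, so ellipticity of $H$ is automatic and no iteration is needed. The finite $H_0$ is then recovered at the very end by a compactness argument: if no finite $H_0$ satisfies the conclusion, one produces a sequence $(h_r)$ which fixes $H$ on $B_G(r)\cap H$, is short, non-injective, yet factors through none of the finitely many maximal shortening quotients; its stable-kernel quotient is a shortening quotient dominated by some maximal one, and Theorem \ref{EventuallyFactors} then forces all but finitely many $h_r$ to factor through it --- contradiction. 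You should replace the iterative enlargement with this define-then-extract structure, or else supply an actual well-foundedness argument that you do not currently have.

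A smaller point: you write $\mu[h_n]=\max_{g\in D} d_X(x_n, h_n(g)\cdot x_n)$ without specifying $x_n$. For the argument that $\langle H_0\rangle$ fixes the basepoint in the limit, you need $x_n$ to be a fixed point (the identity vertex of the Cayley graph of $\Gamma$), not the displacement-minimizer used in the absolute case --- otherwise there is no reason the bounded displacement of $H_0$ translates to ellipticity at $x_\infty$. This is precisely why the paper changes its basepoint convention in the relative case, and it is also the deeper reason why conjugation drops out of the shortness definition.
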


The proof of this proposition is similar to the non relative case, we will thus only outline the differences. For the rest of this section, let $G$ be a hyperbolic group which is freely indecomposable with respect to a non abelian subgroup $H$, fix $D$ a finite generating set for $G$. Let $\Gamma$ be a torsion-free hyperbolic group endowed with a fixed embedding $j:H \hookrightarrow \Gamma$ and with a finite generating set $D(\Gamma)$. We will say that a morphism $G \to \Gamma$ fixes $H$ if it coincides with $j$ on $H$.

The notion of shortness of a morphism $G \to \Gamma$ is now changed to
\begin{defi} \emph{(short morphism relative to $H$)} A morphism $h: G \to \Gamma$ is said to be short relative to $H$ if 
$$ \max_{g \in D} |h(g)|_{D(\Gamma)} \leq  \max_{g \in D} |h(\sigma(g))|_{D(\Gamma)}$$
for any element $\sigma$ of $\Mod_H(G)$.
\end{defi}
The difference with the previous case is that we do not conjugate by an element of $G$: this is because we will only be interested in maps which fix $H$.

\begin{defi} \emph{(fixing $H$ in the limit)} \label{FixHLimit} Denote by $B_G(r)$ the set of elements of $G$ represented by words in $D$ whose length is at most $r$. We say that a sequence of morphisms $h_n: G \rightarrow \Gamma$ fixes $H$ in the limit if for any $r$, for all $n$ large enough, the map $h_n$ coincides on $B_G(r) \cap H$ with the fixed embedding $j: H \hookrightarrow \Gamma$.
\end{defi}

\begin{defi} \emph{($\Gamma$ shortening quotient relative to $H$)} A $\Gamma$ shortening quotient of $G$ relative to $H$ is the quotient of $G$ by the stable kernel of a stable sequence of non injective morphisms $h_n: G \to \Gamma$ which are short relative to $H$ and fix $H$ in the limit. 
\end{defi}

Note that $\Gamma$ shortening quotients relative to $H$ are in particular $\Gamma$-limit groups. Thus, they satisfy the strong descending chain condition given by Proposition \ref{EventuallyFactors}, which is required to prove that every $\Gamma$ shortening quotient relative to $H$ is under a maximal such quotient, and that maximal such quotients are in finite number.

Suppose that no finite set $H_0$ satisfying the conclusion of Proposition \ref{MRGammaRelPartial} exists. Then we can find a stable sequence $(h_r)_{r \in \N}$ of non injective morphisms $h_r: G \to \Gamma$ which fix $H$ on $B_G(r) \cap \Gamma$ and are short relative to $H$, yet do not factor through any of the maximal $\Gamma$ shortening quotients. The quotient of $G$ by the stable kernel of this sequence is a $\Gamma$-shortening quotient $\eta: G \to L$, so it is smaller than one of the maximal $\Gamma$ shortening quotient relative to $H$. Since by Proposition \ref{EventuallyFactors} all but finitely many of the maps $h_r$ factor through $\eta$, we get a contradiction.

As before, there only remains to show
\begin{prop} Let $G$ be a hyperbolic group which is freely indecomposable with respect to a non abelian subgroup $H$. Let $\Gamma$ be a torsion-free hyperbolic group endowed with a fixed embedding $j: H \hookrightarrow \Gamma$. Then $\Gamma$ shortening quotients of $G$ relative to $H$ are proper quotients.
\end{prop}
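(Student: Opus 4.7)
The plan is to run the same scheme as in the proof of Proposition \ref{ShortQuotientsAreProper}, with the modifications needed to make the shortening machinery work in the relative setting. Suppose for contradiction that there is a stable sequence $h_n: G \to \Gamma$ of non-injective morphisms, short relative to $H$ and fixing $H$ in the limit, with trivial stable kernel. Let $D$ be a finite generating set of $G$, and for each $n$ pick a basepoint $x_n$ in the Cayley graph $X$ of $\Gamma$ achieving the minimal displacement $\mu[h_n] = \min_x \max_{g \in D} d_X(x, h_n(g) \cdot x)$. The goal is to rescale $X$ by $1/\mu[h_n]$, extract a limiting action of $G$ on a real tree $T$, and use Rips theory together with the Rips--Sela shortening argument to produce $\sigma_n \in \Mod_H(G)$ strictly shortening $h_n$, contradicting shortness of $h_n$ relative to $H$.

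I would first establish that $\mu[h_n] \to \infty$. If not, up to extraction the elements $x_n^{-1} h_n(g) x_n$ (viewing $x_n$ as an element of $\Gamma$) take only finitely many values for each $g \in D$, since balls in $\Gamma$ are finite. By a diagonal extraction one can then assume $h_n = \Conj(x_n) \circ h$ for a fixed morphism $h: G \to \Gamma$; all the $h_n$ share the common non-trivial kernel $\ker h$, which is contained in the stable kernel, contradicting triviality. Unlike in the non-relative case we do not use conjugation to produce short morphisms; we only need the shared kernel, so the same idea works here.

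Next, up to a further subsequence, the rescaled pointed $G$-spaces $(X_n, x_n)$ converge equivariantly in the Gromov--Hausdorff sense to a non-trivial action of $G$ on a real tree $T$ with abelian arc stabilizers and trivial tripod stabilizers, since $G$ is torsion-free hyperbolic and the hyperbolicity constants of the $X_n$ tend to $0$. The key relative ingredient is that $H$ is elliptic in $T$: for every $h \in H$ we have $h_n(h) = j(h)$ eventually, so the translation length of $h$ in $X_n$ is $\tau_X(j(h))/\mu[h_n] \to 0$. Since $H$ is non-abelian while arc stabilizers of $T$ are abelian, $H$ cannot fix only a non-degenerate arc, and a standard argument rules out $H$ fixing only an end, so $H$ fixes a point of $T$. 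Applying Rips theory as in \cite{GuirardelRTrees}, the action on $T$ decomposes into simplicial, surface, axial and Levitt components. Axial components are excluded because $G$ is torsion-free hyperbolic, and Levitt components are excluded because they would yield a non-trivial free product decomposition of $G$ in which $H$, being elliptic, lies in a factor, contradicting free indecomposability of $G$ relative to $H$. Only simplicial and surface components remain.

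It then remains to run the Rips--Sela shortening argument. The modular automorphisms it produces are Dehn twists along edges of the simplicial part together with surface-type automorphisms of the surface components. Because $H$ fixes a point of $T$, a conjugate of $H$ sits inside a single vertex group of the simplicial decomposition, on which all these automorphisms restrict to the identity; they therefore lie in $\Mod_H(G)$. For $n$ large enough this gives $\sigma_n \in \Mod_H(G)$ with $h_n \circ \sigma_n$ strictly shorter than $h_n$ at $x_n$, contradicting shortness of $h_n$ relative to $H$. The step I expect to be the main obstacle is the last one, namely verifying that the shortening automorphisms genuinely fix $H$ pointwise and not merely up to conjugation by some $\gamma_n \in \Gamma$; this uses in a crucial way that $H$ actually fixes a point of $T$ (rather than only an arc or an end), which is in turn why the hypothesis that $H$ is non-abelian is needed.
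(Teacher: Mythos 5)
Your outline imports the minimizer basepoint $x_n$ from the non-relative proof, but this is precisely what has to change in the relative setting, and the obstacle you flag at the end stems from it. The definition of ``short relative to $H$'' compares $\max_{g\in D}|h_n(g)|_{D(\Gamma)}$ with $\max_{g\in D}|h_n(\sigma(g))|_{D(\Gamma)}$ with \emph{no conjugation} by an element of $\Gamma$ -- conjugating could destroy the property of coinciding with $j$ on $H$. A $\sigma_n$ that shrinks the displacement of a minimizing point $x_n$, i.e.\ $\max_{g\in D} d_X(x_n, h_n(\sigma_n(g))\cdot x_n)$, says nothing about $\max_{g\in D}|h_n(\sigma_n(g))|_{D(\Gamma)}$, so no contradiction with relative shortness follows. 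The paper instead takes $x_n$ to be the identity vertex of the Cayley graph and rescales by $\max_{g\in D}|h_n(g)|_{D(\Gamma)}$.

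This change has two consequences your proof then needs. First, with $x_n=1_\Gamma$ the group $H$ fixes the \emph{basepoint} of the limit tree $T$, not merely some point of $T$: $d_X(1, j(h)\cdot 1)=|j(h)|_{D(\Gamma)}$ is independent of $n$, hence goes to $0$ after rescaling. This strong form is what you actually need -- the ellipticity you derive from vanishing translation lengths only places a conjugate of $H$ in some vertex group, which, as you yourself note, is not enough to conclude that the shortening automorphisms restrict to the identity (rather than a conjugation) on $H$ and so lie in $\Mod_H(G)$. Second, with the identity basepoint, non-triviality of the limit action is no longer automatic (a minimizer guarantees it; the identity does not). The paper recovers it as follows: if some $y\ne x$ were fixed by all of $G$, then $H$ would stabilize both $x$ and $y$, hence the arc between them; arc stabilizers of $T$ are abelian while $H$ is not. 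This, rather than upgrading ``$H$ stabilizes an arc or end'' to ``$H$ is elliptic'', is where the non-abelianity of $H$ is really used. With these two adjustments, the rest of your outline (ruling out axial and Levitt components, running the Rips--Sela shortening) matches the paper.
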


\begin{proof}[Outline of the proof] Suppose we have a stable sequence $h_n: G \to \Gamma$ of non injective morphisms which fix $H$ in the limit, and whose stable kernel is trivial. We want to see that the $h_n$ are not all short relative to $H$. The sequence $(h_n)_{n \in \N}$ gives a sequence of actions of $G$ on the Cayley graph $X$ of $\Gamma$. As before, we choose basepoints $x_n$, however the choice of basepoints is different: here we take $x_n$ to be simply the vertex corresponding to the identity element of $\Gamma$, and we rescale the metric on $X$ by the displacement of the basepoint which is now $\max_{g \in D} |h_n(g)|_{D(\Gamma)}$. We get in the limit an action $\lambda$ on a pointed metric space $(X_n,x_n)$.

This change in the choice of basepoints matches the change in our definition of shortness of a morphism, so that it is still true that if the $h_n$ are short relative to $H$, the actions $\lambda_n$ are short. However, the non triviality of the limit action is not immediate anymore: we can only show that the basepoint $x$ of the limit metric space is not a global fixed point.

Again the non injectivity of the maps $h_n$ implies that they belong to an infinity of conjugacy classes, so that the rescaling constant tends to infinity. Thus $\lambda$ is an action on a pointed real tree $(T,x)$, and since the morphisms $h_n$ fix $H$ in the limit, $H$ fixes $x$ in the action $\lambda$. 

As in the non relative case, the limit $G$-tree satisfies some nice conditions: in particular its arc stabilizers are abelian. Now if $\lambda$ has a global fixed point $y$, it must be distinct from $x$, but then $H$ stabilizes both $x$ and $y$ so it stabilizes the arc between them. This contradicts the non abelianity of $H$, and we deduce that the limit action is non trivial. 

We analyze the action $\lambda$ with Rips theory. Since $H$ fixes a point in $\lambda$ and $G$ is freely indecomposable with respect to $H$, there are no Levitt components, and since $G$ is hyperbolic, there are no axial components. The shortening argument gives us elements $\sigma_n$ of $\Mod_H(G)$ to shorten all but finitely many of the actions $\lambda_n$. Thus at most finitely many of the morphisms $h_n$ are short relative to $H$.
\end{proof}

If we start with a sequence of pairwise distinct injective maps $i_n$ from $G$ to $\Gamma$ which fix $H$, and are short relative to $H$, the rescaling constants $\max_{g \in D} |h_n(g)|_{n \in \N}$ still tend to infinity. Thus we can apply an argument similar to that used in the proof above, and we get a contradiction: this means that there are only finitely many such maps. We get
\begin{thm} \label{RelShortForEmbeddings} Let $G$ be a hyperbolic group which is freely indecomposable with respect to a non abelian subgroup $H$. Let $\Gamma$ be a torsion-free hyperbolic group endowed with a fixed embedding $H \hookrightarrow \Gamma$.
There exists a finite set $\{i_1, \ldots, i_k\}$ of embeddings $G \hookrightarrow \Gamma$ such that for any embedding $i: G \hookrightarrow \Gamma$ which fixes $H$, there is an index $j$ with $1 \leq j \leq l$, and an element $\sigma$ of $\Mod_H(G)$ such that
$$ i = i_j \circ \sigma.$$
\end{thm}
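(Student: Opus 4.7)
The plan is to argue by contradiction, following the template laid out just after Proposition~\ref{ShortQuotientsAreProper} and its relative version. Suppose no finite set $\{i_1,\ldots,i_k\}$ as in the statement exists. Then the collection of $\Mod_H(G)$-equivalence classes of embeddings $G\hookrightarrow\Gamma$ fixing $H$ is infinite. In each such class the function $\sigma\mapsto \max_{g\in D}|i\circ\sigma(g)|_{D(\Gamma)}$ attains its minimum on $\mathbb{N}$, so every class has a short (relative to $H$) representative. Choose one such short representative in infinitely many pairwise inequivalent classes to obtain an infinite sequence $(i_n)_{n\in\N}$ of pairwise distinct short embeddings fixing $H$. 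After a diagonal extraction, we may assume $(i_n)$ is stable; since each $i_n$ is injective, the stable kernel is trivial, and since the $i_n$ fix $H$ (literally, not just in the limit), in particular they fix $H$ in the limit.

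Next, I would set up the same rescaled limit as in the proof of the relative shortening proposition: let $X$ be the Cayley graph of $\Gamma$, let $x_n\in X$ be the vertex corresponding to $1\in\Gamma$, and rescale the metric on $X$ by $\mu_n=\max_{g\in D}|i_n(g)|_{D(\Gamma)}$. The key observation is that $\mu_n\to\infty$: otherwise a subsequence of the $i_n$ would take each generator of $D$ into a fixed finite ball of $\Gamma$, which contains only finitely many elements, contradicting the pairwise distinctness of the $i_n$. Passing to a Gromov--Hausdorff limit of the pointed $G$-spaces $(X_n,x_n)$, we obtain an isometric action $\lambda$ of $G$ on a pointed real tree $(T,x)$.

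As in the relative proof of Proposition~\ref{ShortQuotientsAreProper}, since the $i_n$ fix $H$ in the limit, the subgroup $H$ fixes the basepoint $x$; non-abelianity of $H$ combined with abelianity of arc stabilizers then rules out a second global fixed point, so the action $\lambda$ is non-trivial. The Rips-theoretic analysis goes through verbatim: there are no axial components (as $G$ is hyperbolic) and no Levitt components (as $G$ is freely indecomposable relative to $H$ and $H$ is elliptic), so the relative shortening argument of Rips--Sela applies and produces, for all sufficiently large $n$, elements $\sigma_n\in\Mod_H(G)$ with
\[ \max_{g\in D}|i_n(\sigma_n(g))|_{D(\Gamma)} < \max_{g\in D}|i_n(g)|_{D(\Gamma)}, \]
contradicting the shortness of $i_n$ relative to $H$.

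The main obstacle, and the only real departure from the previous proposition, is justifying that $\mu_n\to\infty$ without appealing to non-injectivity: this is where the argument uses the \emph{pairwise distinctness} of the $i_n$ together with the finiteness of balls in the locally finite Cayley graph of $\Gamma$. Once this is in hand, every other step (stability, fixing $H$ in the limit, non-triviality of $\lambda$ via the non-abelian $H$, absence of axial and Levitt components, and the final shortening step) is the same as in the relative version of Proposition~\ref{ShortQuotientsAreProper} already outlined in the paper.
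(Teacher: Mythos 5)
Your proposal is correct and follows essentially the same route the paper sketches: reduce to finitely many short embeddings fixing $H$, argue by contradiction with a stable sequence of pairwise distinct short embeddings, observe that the rescaling constants still go to infinity (which the paper asserts without elaboration and which you correctly justify via local finiteness of the Cayley graph of $\Gamma$), and then rerun the relative shortening argument to contradict shortness. The only superfluous step is the diagonal extraction for stability: a sequence of injective morphisms is automatically stable with trivial stable kernel, so no extraction is needed for that purpose (though one may still pass to a subsequence for equivariant Gromov--Hausdorff convergence).
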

Note that this is stronger than the result obtained in the non relative case.

\subsection{Relative co-Hopf properties}

From Theorem \ref{RelShortForEmbeddings}, we can deduce a relative co-Hopf property for torsion-free hyperbolic groups:
\begin{cor} \label{CoHopfRel} Let $G$ be a torsion-free hyperbolic group. Let $H$ be a non cyclic subgroup of $G$ relative to which $G$ is freely indecomposable.
If $\phi: G \rightarrow G$ is injective and fixes $H$ then it is an isomorphism.
\end{cor}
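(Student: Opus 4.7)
The plan is to apply Theorem \ref{RelShortForEmbeddings} directly to the sequence of iterates $\phi, \phi^2, \phi^3, \ldots$, viewed as embeddings $G \hookrightarrow G$, taking $\Gamma = G$ and the fixed embedding $H \hookrightarrow \Gamma$ to be the inclusion. Before doing so, one must check that the hypotheses of that theorem are satisfied. In particular, Theorem \ref{RelShortForEmbeddings} requires $H$ to be non-abelian, whereas we are only given that $H$ is non-cyclic; however, every abelian subgroup of a torsion-free hyperbolic group is cyclic, so non-cyclic implies non-abelian and the theorem applies.

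Suppose for contradiction that $\phi$ is an injective endomorphism of $G$ which fixes $H$ but is not surjective, so $\phi(G) \subsetneq G$. Since $\phi$ is injective, applying $\phi^n$ to both sides of a strict inclusion preserves strictness, so one obtains the strictly decreasing chain
\[
G \supsetneq \phi(G) \supsetneq \phi^2(G) \supsetneq \phi^3(G) \supsetneq \cdots
\]
On the other hand, each iterate $\phi^n : G \hookrightarrow G$ is an embedding which restricts to the identity on $H$, so Theorem \ref{RelShortForEmbeddings} furnishes a finite set of embeddings $i_1, \ldots, i_k : G \hookrightarrow G$ such that for every $n$ one can write $\phi^n = i_{j(n)} \circ \sigma_n$ for some $j(n) \in \{1, \ldots, k\}$ and some $\sigma_n \in \Mod_H(G)$. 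Because $\sigma_n$ is an automorphism of $G$, this yields $\phi^n(G) = i_{j(n)}(G)$, so the image of $\phi^n$ depends only on the index $j(n)$.

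By the pigeonhole principle, there exist $n < m$ with $j(n) = j(m)$, hence $\phi^n(G) = \phi^m(G)$, contradicting the strict decrease of the chain. I expect the main point to verify carefully is the strictness of the chain (which uses injectivity of $\phi$ in an essential way) and the translation of hypotheses so that Theorem \ref{RelShortForEmbeddings} is actually applicable; once those are in place, the conclusion is a short pigeonhole argument and there is no further obstacle.
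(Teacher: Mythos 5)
Your proof is correct and takes essentially the same approach as the paper: the paper also considers the iterates $\phi^n$, notes that they all fix $H$ and have pairwise distinct images, and invokes Theorem \ref{RelShortForEmbeddings} to get a contradiction (the pigeonhole step you spell out is implicit in the paper's one-line argument). Your preliminary observation that non-cyclic implies non-abelian for subgroups of a torsion-free hyperbolic group is a correct and worthwhile clarification that the paper leaves tacit.
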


Note that this can in fact shown to be true for any subgroup $H$. The proof in the case where $H$ is cyclic is almost the same, there is more work to do in the case where $H$ is trivial (see Theorem 4.4 of \cite{SelaHypII}).

\begin{proof} Suppose $\phi$ is a strict embedding: then the injective morphisms $\phi^n: G \to G$ all fix $H$, and their images are are pairwise distinct since they are strictly embedded one into the other: this contradicts Theorem \ref{RelShortForEmbeddings}.
\end{proof}

Now we can actually get a stronger statement by using the following lemma, suggested by Vincent Guirardel.
\begin{lemma} \label{FreelyIndecFG} If a finitely generated group $G$ is freely indecomposable relative to a subgroup $H$, then $H$ has a finitely generated subgroup relative to which $G$ is freely indecomposable.
\end{lemma}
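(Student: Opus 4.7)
The plan is to proceed by contrapositive: assume that for every finitely generated subgroup $H_0\leq H$ the group $G$ is freely decomposable relative to $H_0$, and deduce that $G$ is then freely decomposable relative to $H$. The case $H=\{1\}$ being vacuous, I assume $H\neq\{1\}$ and enumerate its non-trivial elements $h_1,h_2,\ldots$, setting $H_n=\langle h_1,\ldots,h_n\rangle$, a non-trivial finitely generated subgroup with $\bigcup_n H_n=H$. For each $n$, the collection $\mathcal{S}_n$ of non-trivial free splittings of $G$ (minimal $G$-actions on simplicial trees with trivial edge stabilizers and at least one edge orbit) in which $H_n$ is elliptic is then non-empty.

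The core of the argument is to produce, for each $n$, a canonical maximally-refined element $T_n$ of $\mathcal{S}_n$. For this I would rely on two ingredients from Bass--Serre theory: first, a common-refinement construction on free splittings, where each vertex $u$ of one splitting $T$ is blown up using the minimal $\Stab(u)$-invariant subtree of a second splitting $T'$; this preserves triviality of edge stabilizers, and also preserves ellipticity of any subgroup elliptic in both $T$ and $T'$, since the projection of a fixed vertex onto an invariant subtree of a tree is itself fixed. Second, Grushko's theorem, which bounds the number of edge orbits of a free splitting of $G$ by $\rk(G)-1$, so that any chain of strict refinements inside $\mathcal{S}_n$ must terminate. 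Together these give existence of a maximum in $\mathcal{S}_n$; uniqueness up to equivariant isomorphism is then immediate from common-refining any two candidates, and non-triviality of $T_n$ follows from non-emptiness of $\mathcal{S}_n$.

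Once the $T_n$ are in place I would run a descent. Because $H_n\leq H_{n+1}$ implies $\mathcal{S}_{n+1}\subseteq\mathcal{S}_n$, the tree $T_{n+1}$ belongs to $\mathcal{S}_n$, so by uniqueness of the maximum $T_n$ refines $T_{n+1}$. The resulting chain $T_1\geq T_2\geq\cdots$ of free splittings has non-increasing edge-orbit count bounded between $1$ and $\rk(G)-1$, so it stabilizes at some $T_N$. For $n\geq N$ the non-trivial subgroup $H_n$ fixes a unique vertex $v_n$ of $T_N$ (uniqueness because edge stabilizers are trivial and $H_n$ is non-trivial); the nesting $H_n\leq H_{n+1}$ forces $H_n$ to fix both $v_n$ and $v_{n+1}$, whence $v_n=v_{n+1}$. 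Thus all the $H_n$ share a common fixed vertex $v$, which is therefore fixed by $H=\bigcup_n H_n$; since $T_N$ is non-trivial, $\Stab(v)$ is then a proper free factor of $G$ containing $H$, contradicting free indecomposability of $G$ relative to $H$.

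The main obstacle I expect is the very first step: carefully checking that the common-refinement construction on free splittings simultaneously preserves triviality of edge stabilizers and ellipticity of the prescribed subgroup, and combining this with Grushko's bound to exhibit a canonical maximum of $\mathcal{S}_n$. Everything downstream --- the descent on edge-orbit count and the fixed-vertex argument --- is then essentially combinatorial, using only the basic fact that in a tree with trivial edge stabilizers a non-trivial subgroup can fix at most one vertex.
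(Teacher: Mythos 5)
Your final descent and fixed-vertex argument are sound, but the construction of the canonical tree $T_n$ is not, and this is a genuine gap. Specifically, the claim that maximal elements of $\mathcal{S}_n$ are unique up to equivariant isomorphism, ``immediate from common-refining any two candidates,'' is false: free splittings with $H_n$ elliptic need not admit a common refinement in $\mathcal{S}_n$, and maximal ones need not be isomorphic.

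Concretely, take $G=F_2=\langle a,b\rangle$ and $H_1=\langle a\rangle$. Starting from the Bass--Serre trees of $\langle a\rangle * \langle b\rangle$ and $\langle a\rangle * \langle ab\rangle$, blow up the vertex orbit not stabilized by $\langle a\rangle$ in each (replacing it by the axis of $b$, resp.\ of $ab$). One obtains reduced free splittings $\hat T$, $\hat T'$ with two edge orbits in which the only non-trivial elliptic subgroups are the conjugates of subgroups of $\langle a\rangle$. Both lie in $\mathcal{S}_1$, and both are maximal there: the unique non-trivial vertex group $\langle a\rangle$ is cyclic and cannot be blown up without losing ellipticity of $H_1$. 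Yet they are not equivariantly isomorphic --- as marked graphs of groups they differ by the stable letter of the loop, $b$ versus $ab$, and these are not conjugate in $F_2$ --- and consequently they have no common refinement in $\mathcal{S}_1$: any such refinement would, by maximality, be equivariantly isomorphic to each of them, forcing $\hat T\cong\hat T'$. The blow-up construction you describe produces a refinement of whichever tree you feed it as the base, but nothing forces that output to refine the other tree. Without a canonical $T_n$, the step ``$T_{n+1}\in\mathcal{S}_n$, so $T_n$ refines $T_{n+1}$'' has no justification, and the descending chain on which everything rests may simply not exist.

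The paper's proof avoids this by tracking a canonical \emph{subgroup} rather than a canonical tree. It sets $A(G')=\bigcap_{\tau\in T(G')}\Stab(v_\tau)$, where $v_\tau$ is the unique vertex of $\tau$ fixed by $G'$, and shows this intersection equals the free factor containing $G'$ in any finest free decomposition of $G$ relative to $G'$. That factor is unique (relative Grushko) even though the maximal tree is not, and the monotonicity $G'\leq G''\Rightarrow A(G')\leq A(G'')$, $m_G(G')\geq m_G(G'')$, with equality of the integer $m_G$ forcing $A(G')=A(G'')$, yields the stabilization you were after. Replacing your $T_n$ by the canonical factor $A(H_n)$ and running the descent on the bounded integer $m_G(H_n)$ repairs the argument.
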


\begin{proof} Suppose $G'$ is a subgroup of $G$. Denote by $T(G')$ the set of all simplicial $G$-trees $\tau$ with trivial edge stabilizers in which $G'$ fixes a vertex $v_{\tau}$. Define 
$$A(G') = \bigcap _{\tau \in T(G')} \Stab(v_{\tau})$$

To each $\tau$ in $T(G')$, we associate the corresponding free product decomposition of $G$.  Since $G$ is finitely generated, the number of factors of such a decomposition is bounded: let $m_G(G')$ be the maximal number of factors that such a decomposition can have. A decomposition with $m_G(G')$ factors is clearly of the form
$$A * B_1 * \ldots * B_r $$
where $B_1, \ldots , B_r$ are freely indecomposable (possibly cyclic), and $A$ contains $G'$ and is freely indecomposable with respect to $G'$. Such a decomposition corresponds to a tree $\tau$ in $T(G')$ so $A(G') \leq A$. But in any tree $\tau$ of $T(G')$, $A$ fixes the vertex $v_{\tau}$, so $A=A(G')$. 

If $G'\leq G''$, we have $T(G') \supseteq T(G'')$, so that $A(G') \leq A(G'')$ and
$m_G(G')\geq m_G(G'')$, and if we have equality, a maximal decomposition with respect to $G''$ is also a maximal decomposition with respect to $G'$ so that $A(G')=A(G'')$. 

We can now prove the lemma. Let $\{h_1, h_2,  \ldots\}$ be a generating set for $H$, and let $H_k= \langle h_1, \ldots h_k \rangle$ of $H$. The sequence $(m_G(H_k))_{k > 0}$ is non increasing, so it must stabilize, thus the sequence $A(H_k)$ stabilizes after some index $k_0$. In particular $H_k \leq A(H_k) \leq A(H_{k_0})$ for all $k$, so $H \leq A(H_{k_0})$. But $A(H_{k_0})$ is a free factor of $G$: since we assumed $G$ freely indecomposable with respect to $H$, we must have $A(H_{k_0})=G$, and $G$ is freely indecomposable with respect to $H_{k_0}$.
\end{proof}

We get a partial relative co-Hopf property for hyperbolic groups.
\begin{prop} \label{CoHopfRelPartial} Let $G$ be a torsion-free hyperbolic group. Let $H$ be a non cyclic subgroup of $G$, with respect to which $G$ is freely indecomposable. There exists a finite subset $H_0$ of $H$ such that if $\phi: G \rightarrow G$ is an injective morphism which fixes $H_0$, then it is an isomorphism.
\end{prop}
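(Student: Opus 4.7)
The plan is to combine Lemma \ref{FreelyIndecFG} with the relative co-Hopf property established in Corollary \ref{CoHopfRel}. Applying the lemma to the pair $(G,H)$ produces a finitely generated subgroup $H'$ of $H$ relative to which $G$ is freely indecomposable. The subtlety is that the lemma gives no guarantee on the isomorphism type of $H'$, whereas Corollary \ref{CoHopfRel} requires a non cyclic subgroup; so a first step is to verify that $H'$ can be arranged to be non cyclic.

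To do this, I would observe that relative free indecomposability is monotone under enlargement of the subgroup: if $G$ is freely indecomposable relative to $K$ and $K \leq K'$, then $G$ is freely indecomposable relative to $K'$ as well, since any splitting $G = G_1 * G_2$ with $K' \leq G_1$ would yield a splitting with $K \leq G_1$. Since $H$ is non cyclic, it contains two elements $a,b$ that do not commute (and in particular do not lie in a common cyclic subgroup); adjoining $a$ and $b$ to a finite generating set of $H'$ yields a larger finitely generated subgroup $H''$ of $H$ that is non cyclic and still satisfies the relative free indecomposability of $G$.

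Now let $H_0$ be a finite generating set of $H''$. I claim this $H_0$ does the job. If $\phi : G \to G$ is an injective morphism fixing every element of $H_0$, then $\phi$ fixes $H''$ pointwise, since $H_0$ generates $H''$ and $\phi$ is a homomorphism. The hypotheses of Corollary \ref{CoHopfRel} are then satisfied with $H''$ in place of $H$: namely, $H''$ is a non cyclic subgroup of the torsion-free hyperbolic group $G$ relative to which $G$ is freely indecomposable, and $\phi$ is an injective endomorphism of $G$ fixing $H''$. The corollary then yields that $\phi$ is an isomorphism, which concludes the proof.

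The only non routine ingredient is Lemma \ref{FreelyIndecFG}, which is already available, so I do not anticipate a real obstacle. The only thing to be careful about is the monotonicity argument above, which ensures that the reduction from $H$ to $H'$ and then to a non cyclic enlargement $H''$ does not destroy the hypothesis needed to apply Corollary \ref{CoHopfRel}.
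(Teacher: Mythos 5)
Your proof is correct and follows essentially the same route as the paper: apply Lemma~\ref{FreelyIndecFG} to obtain a finitely generated subgroup of $H$ relative to which $G$ is freely indecomposable, then invoke Corollary~\ref{CoHopfRel}. In fact your argument is slightly more careful than the paper's: the paper just takes $H_0$ to be a generating set for the subgroup $H'$ produced by Lemma~\ref{FreelyIndecFG} and applies Corollary~\ref{CoHopfRel} directly, but that corollary as stated requires the subgroup to be non cyclic, and the lemma does not guarantee this. You correctly notice this and close the gap by the monotonicity observation (free indecomposability relative to $K$ implies free indecomposability relative to any $K' \geq K$), enlarging $H'$ to a non cyclic $H''$ by adjoining two non-commuting elements of $H$. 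The paper instead relies implicitly on the remark following Corollary~\ref{CoHopfRel} that the co-Hopf statement holds for cyclic $H$ as well; your fix avoids needing that remark. Both are fine, but your version is self-contained given the exact statement of the corollary.
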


\begin{proof} Just take $H_0$ to be a generating set for the subgroup $H'$ given by Lemma \ref{FreelyIndecFG}. If $\phi$ fixes $H_0$, it fixes $H'$ relative to which $G$ is freely indecomposable. Thus we can apply Corollary \ref{CoHopfRel} to $G$ with the subgroup $H'$, to deduce that $\phi$ is an isomorphism.
\end{proof}

\section{Elementary embeddings in hyperbolic groups} \label{MainResultSec}
\subsection{Hyperbolic towers}

We will use the notations and results of Bass-Serre theory exposed in \cite{SerreTrees}. 
\begin{defi} \emph{(Bass-Serre presentation)} Let $G$ be a group acting on a simplicial tree $T$ without inversions, denote by $\Gamma$ the corresponding quotient graph of groups and by $p$ the quotient map $T \to \Gamma$. A Bass-Serre presentation for $\Gamma$ is a pair $(T^0, \Gamma^0)$ consisting of
\begin{itemize}
\item a subtree $T^0$ of $T$ which contains exactly one edge of $p^{-1}(e)$ for each edge $e$ of $\Gamma$;
\item a subtree $\Gamma^0$ of $T^0$ which is mapped injectively by $p$ onto a maximal subtree of $\Gamma$;
\end{itemize}
\end{defi}

\begin{defi} \emph{(graph of groups with surfaces)} \label{GOGWithSurfaces} A graph of groups with surfaces is a graph of groups $\Lambda$ together with a set $V_{S}$ of its vertices such that any vertex $v$ in $V_S$ satisfies:
\begin{itemize} 
\item there exists a compact connected surface with boundary $\Sigma$, such that the vertex group $G_v$ is the fundamental group $S$ of $\Sigma$; 
\item for each edge $e$ adjacent to $v$, the injection $i_e: G_e \hookrightarrow G_v$ maps $G_e$ onto a maximal boundary subgroup of $S$;
\item this induces a bijection between the set of edges adjacent to $v$ and the set of conjugacy classes in $S$ of maximal boundary subgroups of $S$.
\end{itemize}
The vertices of $V_S$ are called surface type vertices. A vertex $v$ of the tree $T_{\Lambda}$ corresponding to $\Lambda$ whose projection $p(v)$ to $\Lambda$ is of surface type is also said to be of surface type. The surfaces corresponding to surface type vertices of $\Lambda$ are called the surfaces of $\Lambda$.
\end{defi}

We define extended hyperbolic towers. 
\begin{defi} \emph{((extended) hyperbolic floors)} \label{HypFloor} Consider a triple $(G, G', r)$ where $G$ is a group, $G'$ is a subgroup of $G$, and $r$ is a retraction from $G$ onto $G'$. 

We say that $(G, G', r)$ is an extended hyperbolic floor if there exists a non trivial decomposition $\Gamma$ of $G$ as a graph of groups with surfaces, and a Bass-Serre presentation $(\Gamma^0, T^0)$ of $\Gamma$ such that:
\begin{itemize}
\item the surfaces of $\Gamma$ which are not once punctured tori have Euler characteristic at most $-2$; 
\item $G'$ is the free product of the stabilizers of the non surface type vertices of $\Gamma^0$;
\item every edge of $\Gamma$ joins a surface type vertex to a non surface type vertex (bipartism);
\item either the retraction $r$ sends surface type vertex groups of $\Gamma$ to non abelian images; or $G'$ is cyclic and there exists a retraction $r': G * \Z \to G' * \Z$ which sends surface type vertex groups of $\Gamma$ to non abelian images.
\end{itemize}
If the first alternative holds in this last condition, we say that $(G,G',r)$ is a hyperbolic floor.
\end{defi}

\begin{defi} \emph{((extended) hyperbolic tower)} \label{HypTower}
Let $G$ be a non cyclic group, let $H$ be a subgroup of $G$.
We say that $G$ is an extended hyperbolic tower based on $H$ if there exists a finite
sequence $G=G^0 \geq G^1 \geq \ldots \geq G^m \geq H$ of subgroups of $G$ where $m \geq 0$ and:
\begin{itemize}
\item for each $k$ in $[0, m-1]$, there exists a  retraction $r_k:G^{k} \rightarrow G^{k+1}$
such that the triple $(G^k, G^{k+1}, r_k)$ is an (extended) hyperbolic floor, and $H$ is contained in one of the non surface type vertex group of the corresponding hyperbolic floor decomposition;
\item $G^m = H * F * S_1 * \ldots * S_p$ where $F$ is a (possibly trivial) free group, $p \geq 0$, and each $S_i$
is the fundamental group of a closed surface without boundary of Euler characteristic at most $-2$.
\end{itemize}
Note that for $0 \leq k < m-1$, $(G^k, G^{k+1}, r_k)$ can only be a (non extended) hyperbolic floor. Moreover if $(G^{m-1}, G^m, r_{m-1})$ is not a hyperbolic floor, then $G^{m}$ must be cyclic, so $H$ is cyclic or trivial. In particular every extended hyperbolic tower over a non trivial non cyclic group is in fact a hyperbolic tower. 
\end{defi}

In the published version of this paper \cite{PerinElementaryAnn}, we did not define extended hyperbolic floors and towers, but only hyperbolic floors and towers. However the proof of Proposition 5.11 of \cite{PerinElementaryAnn} overlooked some special cases, and for the statement of the proposition to be true, hyperbolic floors should be replaced by \textbf{extended} hyperbolic floors (see Proposition \ref{Retraction} in this paper).

Note that the definition of hyperbolic towers coincides exactly with that given in Sela's work, where they are called "non-elementary hyperbolic $\omega$-residually free towers" (see the paragraph before Proposition 6 in \cite{Sel6}, and Definition 6.1 of \cite{Sel1} for the definition of $\omega$-residually free towers). 

In fact, non-elementary hyperbolic $\omega$-residually free towers should also be replaced by {\bf extended} hyperbolic towers for Proposition 6 of \cite{Sel6} to hold : a group $G$ that is elementary equivalent to a non abelian free group will only admit the structure of an {\bf extended} hyperbolic tower over the trivial subgroup. This is because Proposition \ref{Retraction} is an intermediate step in the proof of Proposition 6 of \cite{Sel6}, and that there Sela also overlooked the possibility of extended hyperbolic floors. According to Sela, the converse of this modified result also holds, namely any extended hyperbolic tower over the trivial subgroup is elementary equivalent to the free group $\F_2$ \cite{SelaPrivate}.

\begin{rmk} \label{TransitivityOfHypTowers} If $G_1$ and $G_2$ are (extended) hyperbolic towers over subgroups $H_1$ and $H_2$, then $G_1*G_2$ is an (extended) hyperbolic tower over $H_1 * H_2$. If $G$ is an (extended) hyperbolic tower over a subgroup $G'$, and $G'$ is an (extended) hyperbolic tower over a subgroup $H$, then $G$ is an (extended) hyperbolic tower over $H$. If $H * G_1$ and $H * G_2$ are (extended) hyperbolic towers over $H$, then $H* G_1*G_2$ is a hyperbolic tower over $H$.
\end{rmk}

Recall that our main result, Theorem \ref{MainResult}, says that if $G$ is a torsion-free hyperbolic group, and $H$ is an elementary subgroup of $G$, then $G$ is a hyperbolic tower based on $H$.

To prove Theorem \ref{MainResult}, we need to construct successive
retractions from subgroups of $G$ to proper subgroups until we get to $H$. The strategy will be to build by the mean of first-order sentences some maps that we will call preretractions, which preserve some characteristics of the cyclic (relative) JSJ decomposition of these subgroups of $G$ (with respect to $H$). Then we will show in Section \ref{RetractionSec} that the existence of a preretraction implies the existence of an extended hyperbolic floor.

\subsection{JSJ decompositions}

A JSJ decomposition $\Lambda$ of a group $G$ is a decomposition as a graph of groups which encodes all possible splittings of the group $G$ over a given class ${\cal E}$ of subgroups. The standard reference for the case where $G$ is finitely presented and one-ended, and ${\cal E}$ is the class of finite and cyclic subgroups of $G$ is \cite{RipsSelaJSJ}. This has been generalized in \cite{DunwoodySageev} and \cite{FujiwaraPapasoglu} to the case where ${\cal E}$ is the class of slender subgroups of $G$. For a unifying approach, see \cite{GuirardelLevittJSJI, GuirardelLevittJSJII}. In the case where $G$ is one-ended hyperbolic, \cite{Bowditch} gives a canonical construction. A JSJ decomposition of a group $G$ relative to a subgroup $H$ is a graph of groups decomposition in which $H$ is elliptic, and which encodes all possible splittings of $G$ in which $H$ is elliptic and edge groups lie in ${\cal E}$.

In the sequel, we will use JSJ decompositions in the case where $G$ is torsion-free hyperbolic and freely indecomposable (respectively freely indecomposable with respect to a subgroup $H$), and ${\cal E}$ is the class of cyclic groups. We call such a decomposition a cyclic JSJ decomposition of $G$ (respectively a relative cyclic JSJ decomposition with respect to $H$). In this case, a (relative) cyclic JSJ decomposition admits a natural structure of graph of groups with surfaces (see \cite{Bowditch} or \cite[Theorem 7.7]{GuirardelLevittJSJI}, and \cite[Theorem 1.5]{GuirardelLevittJSJII} for the relative case). Moreover, in the relative case, the subgroup $H$ lies in a non surface type vertex group. It will be convenient to consider as surface type vertices only those whose corresponding surface has characteristic at most $-2$, or are punctured tori (i.e. surfaces admitting a pseudo-Anosov). In particular, the JSJ of the closed surface of characteristic $-1$ will be considered to consist of a single non surface type vertex.

An important property of such a cyclic (relative) JSJ decomposition $\Lambda$ is that its vertex groups are ``preserved'' under modular automorphisms, as given by 
\begin{lemma} \label{ModAndJSJ} Let $G$ be a torsion-free hyperbolic group which is freely indecomposable (with respect to a subgroup $H$). Let $\Lambda$ be a (relative) cyclic JSJ decomposition (with respect to $H$). An element of $\Mod(G)$ (respectively of $\Mod_H(G)$) restricts to conjugation on each non surface type vertex group of $\Lambda$, and sends surface type vertex groups isomorphically on conjugates of themselves.
\end{lemma}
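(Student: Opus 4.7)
My plan is to reduce to the case of a single Dehn twist and then exploit the universal property of the cyclic (relative) JSJ decomposition. First, I would observe that inner automorphisms trivially satisfy the conclusion, and that both claims --- that $\sigma$ restricts to a conjugation on non-surface vertex groups, and that $\sigma$ sends surface vertex groups isomorphically onto conjugates of themselves --- are preserved under composition. So it suffices to treat a single Dehn twist $\tau$ arising from a cyclic splitting $G = A *_C B$ (or HNN extension); in the relative case $\tau$ restricts to the identity on $H$, forcing $H \subseteq A$ up to relabelling and in particular $H$ elliptic in this splitting.

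The key input is the universal property of the cyclic (relative) JSJ: non-surface type vertex groups of $\Lambda$ are universally elliptic in every cyclic splitting of $G$ (in which $H$ is elliptic, in the relative case), while a surface vertex group that is non-elliptic in a given splitting induces on its minimal invariant subtree of the Bass-Serre tree a splitting dual to a set of essential simple closed curves on the surface (by Lemma \ref{CuttingLemma}). For a non-surface vertex group $V$, universal ellipticity gives $V \subseteq gAg^{-1}$ for some $g \in G$; writing each $v \in V$ as $gag^{-1}$ with $a \in A$, and noting that $\tau$ is the identity on $A$, one computes
$$\tau(v) = \tau(g)\,a\,\tau(g)^{-1} = \bigl(\tau(g)g^{-1}\bigr)\,v\,\bigl(\tau(g)g^{-1}\bigr)^{-1},$$
so $\tau|_V$ is conjugation by $\tau(g)g^{-1}$. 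The same direct calculation applies to any surface vertex group $S_v$ that happens to be elliptic in the splitting.

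The remaining case is a surface vertex group $S_v$ that is non-elliptic in $\Delta$. I would refine $\Lambda$ by cutting $\Sigma_v$ along the curves ${\cal C}$ provided by Lemma \ref{CuttingLemma}, producing a cyclic splitting $\Lambda'$ of $G$ in which every vertex group is elliptic in $\Delta$: the non-surface ones by universal ellipticity, the new subsurface pieces by the defining property of ${\cal C}$. Applying the previous calculation inside $\Lambda'$, for each vertex group $V_i$ of $\Lambda'$ contained in $S_v$ there is $h_i \in G$ with $\tau|_{V_i}$ equal to conjugation by $h_i$; and whenever $V_i, V_j$ share an edge group $E$, the element $h_i^{-1}h_j$ centralizes $E$ and hence lies in the cyclic centralizer $C_G(E)$. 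Patching these local conjugations together expresses $\tau|_{S_v}$ as a uniform conjugation by some $h \in G$ composed with a product of Dehn twists of $\Sigma_v$ along curves of ${\cal C}$; since such surface Dehn twists preserve $S_v$ setwise, one concludes $\tau(S_v) = hS_vh^{-1}$.

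The main obstacle will be this last patching step: translating the local conjugations on the pieces $V_i$ into a global description of $\tau|_{S_v}$ as a conjugation composed with a surface mapping class. A subtle point is that $C_G(E)$ may strictly contain the maximal cyclic subgroup of $S_v$ around $E$, so one must invoke the maximality/malnormality properties of edge groups in the cyclic JSJ of a torsion-free hyperbolic group to rule out spurious ``extra'' centralizer contributions that would move $S_v$ off its conjugacy class. In the relative setting I would verify at each stage that $H$ remains elliptic in every splitting and refinement considered, so that the universal ellipticity of the JSJ continues to apply.
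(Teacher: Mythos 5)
The paper does not actually carry out a proof of this lemma: immediately after the statement it simply remarks that the claim is a consequence of the universal property of the cyclic (relative) JSJ decomposition, without elaboration. Your proposal is the natural way to flesh out that remark, and most of it is sound. In particular, the reduction to a single Dehn twist $\tau$ arising from a splitting $\Delta$, the computation showing that $\tau$ restricts to $\Conj(\tau(g)g^{-1})$ on a universally elliptic vertex group contained in $gAg^{-1}$, and the observation that the surface case requires refining along curves supplied by Lemma \ref{CuttingLemma}, are all correct.

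The genuine gap is in the surface patching step, which you correctly identify as the main obstacle but do not close. After refining $\Sigma_v$ along ${\cal C}$ you have pieces $V_i$ with $\tau|_{V_i} = \Conj(h_i)$, and when $V_i$ and $V_j$ meet along an edge group $E$ generated by a curve element $c$, the element $h_i^{-1}h_j$ lies in $C_G(E)$. In a torsion-free hyperbolic group $C_G(E)$ is the maximal cyclic subgroup of $G$ containing $c$, generated by some primitive $w$. Your conclusion that $\tau|_{S_v}$ is ``a conjugation composed with a product of Dehn twists of $\Sigma_v$'' requires $h_i^{-1}h_j$ to lie in $S_v$: a twist by a power of $w$ that is \emph{not} in $S_v$ is not realised by a mapping class of $\Sigma_v$, and the resulting subgroup $\langle V_1,\, h_i^{-1}h_j\, V_2\, (h_i^{-1}h_j)^{-1}\rangle$ need not be a conjugate of $S_v$, nor even abstractly isomorphic to it. To close the gap one needs the structural fact that in the cyclic (relative) JSJ of a one-ended torsion-free hyperbolic group, the cyclic subgroup of a surface type vertex group generated by a non-boundary-parallel two-sided simple closed curve equals its own centralizer in $G$ --- equivalently, such curve elements are root-closed in $G$. (Boundary curves do not arise here, since the set ${\cal C}$ given by Lemma \ref{CuttingLemma} is essential and hence non-boundary-parallel; boundary subgroups coincide with edge groups of $\Lambda$ and are handled by the rigid-vertex computation.) This root-closedness is part of the maximality of the QH vertex groups in the JSJ construction, and it is a genuine input that your phrase ``one must invoke the maximality/malnormality properties of edge groups'' gestures at but does not actually supply.
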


This lemma is a consequence of the universal property of cyclic (relative) JSJ decompositions: recall that the modular group $\Mod(G)$ is generated by automorphisms of $G$ which preserve some cyclic splitting of $G$, and that a JSJ decomposition in some sense contains all such splittings.

Following \cite{GuirardelLevittJSJI}, a group $G$ might admit several JSJ decompositions, but it is often possible to choose one which admits nice properties by the tree of cylinders construction \cite{GuirardelLevittTreeOfCylinders}. In the case where $G$ is one-ended torsion-free hyperbolic, for example, this construction gives a JSJ decomposition which is $2$-acylindrical \cite[Theorem 2]{GuirardelLevittTreeOfCylinders}. We will call it \textbf{the} cyclic (relative) JSJ decomposition of $G$ (with respect to $H$). 

The properties of the JSJ decomposition that we will use (in addition to the fact that it satisfies Lemma \ref{ModAndJSJ}) are summarized in the following remark.
\begin{rmk} \label{JSJIsJSJLike} Let $\Lambda$ be the cyclic (relative) JSJ decomposition (with respect to a subgroup $H$) of a torsion-free hyperbolic group $G$ which is freely indecomposable (relative to $H$). Then
\begin{enumerate}
\item[(i)] \label{CyclicEdges} the edge groups of $\Lambda$ are cyclic;
\item[(ii)] \label{NoNeighb} an edge of $\Lambda$ is adjacent to at most one surface type vertex, and to at most one vertex with cyclic vertex group;
\item[(iii)] \label{Acylindricity} (strong $2$-acylindricity) if a non trivial element of $A$ stabilizes two distinct edges of the tree $T_{\Lambda}$ corresponding to $\Lambda$, they are adjacent and their common endvertex has cyclic stabilizer;
\item[(iv)] the surfaces of $\Lambda$ are punctured tori or have characteristic at most $-2$.
\end{enumerate}
\end{rmk}

Many of the results we will need about cyclic (relative) JSJ decompositions only use the properties given by Remark \ref{JSJIsJSJLike}. We thus give
\begin{defi} \emph{(JSJ-like decomposition)} \label{JSJLike} Let $\Lambda$ be a graph of groups with surfaces, with fundamental group $A$. We say that $\Lambda$ is a JSJ-like decomposition of $A$ if it satisfies the properties $(i)$-$(iv)$ given in Remark \ref{JSJIsJSJLike}.
\end{defi}

Thus in particular a (relative) cyclic JSJ decomposition is a JSJ-like decomposition. 

Note that distinct vertices of the tree corresponding to a JSJ-like decomposition have distinct stabilizers. Also, since surface groups are restricted so that boundary subgroups are malnormal, a JSJ-like decomposition is $1$-acylindrical next to surface type vertices, that is, no non trivial element stabilizes distinct edges adjacent to a same surface type vertex.

\subsection{Preretractions}

Preretractions are morphisms that preserve some of the structure of a JSJ-like decomposition. We need to define them as maps $A \to G$ where $A$ is a subgroup of $G$.

\begin{defi} \emph{(preretraction)} \label{Preretraction} Let $G$ be a group, let $A$ be a subgroup of $G$, and let $\Lambda$ be a JSJ-like decomposition of $A$. A morphism $A \to G$ is a preretraction with respect to $\Lambda$ if its restriction to each non surface type vertex group $A_v$ of $\Lambda$ is a conjugation by some element $g_v$ of $G$,
and surface type vertex groups have non abelian images.
\end{defi} 

\begin{rmk} By definition of a JSJ-like decomposition, the restriction of a preretraction to an edge group of $\Lambda$ is also a conjugation by an element of $G$. 
\end{rmk}

We will now give two results which are central in our proof of Theorem \ref{MainResult}.

\begin{prop} \label{Retraction} \label{RETRACTION} Let $A$ be a torsion-free hyperbolic group. Let $\Lambda$ be a cyclic JSJ-like decomposition of $A$ which does not consist of a single surface type vertex.

Assume that there exists a non injective preretraction $A \rightarrow A$ with respect to $\Lambda$. Then there exists a subgroup $A'$ of $A$ and a retraction $r$ from $A$ to $A'$ such that $(A,A',r)$ is an extended hyperbolic floor. Moreover given a rigid vertex group $R_0$ of $\Lambda$ we can choose $A'$ to contain $R_0$.
\end{prop}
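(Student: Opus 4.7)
The strategy is to build from the preretraction $p: A \to A$ a graph of groups with surfaces $\Gamma$ on $A$, a subgroup $A'$ equal to the free product of the non-surface vertex groups of $\Gamma$ (chosen to contain the designated rigid vertex group $R_0$), and a retraction $r: A \to A'$ realizing $(A, A', r)$ as an extended hyperbolic floor.

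I would first normalize $p$ by composing with inner automorphisms. The preretraction restricts to $\Conj(g_v)$ on each non-surface vertex group $A_v$ of $\Lambda$; compatibility of these conjugations along each cyclic edge $e = (v,w)$ forces $g_v^{-1} g_w$ to lie in the centralizer of the edge group, which is cyclic since $A$ is torsion-free hyperbolic. After composing with a global inner automorphism I may assume $g_{v_0} = 1$, where $v_0$ is a non-surface vertex containing $R_0$; propagating along a spanning tree of $\Lambda$ and absorbing the resulting conjugations into suitable modifications of the Bass-Serre presentation, I arrange that edge groups of $\Lambda$, and hence the boundary subgroups of each surface $\Sigma_v$, are pointwise fixed by $p$.

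I then analyze each surface vertex $v$ of $\Lambda$. The action of $S_v = \pi_1(\Sigma_v)$ on the Bass-Serre tree $T_\Lambda$ via $p|_{S_v}$ has elliptic boundary elements, so Lemma \ref{CuttingLemma} yields an essential set ${\cal C}_v$ of simple closed curves on $\Sigma_v$ and an equivariant map $T_{{\cal C}_v} \to T_\Lambda$. Cutting $\Sigma_v$ along ${\cal C}_v$ splits $S_v$ as a graph of groups with pieces $S_v^1, \ldots, S_v^k$, each mapped by $p$ into a conjugate of some vertex group $A_u$ of $\Lambda$. I call a piece \emph{rigid} if $u$ is a surface vertex of $\Lambda$ and $p|_{S_v^j}$ is an isomorphism of surface groups onto $A_u$ up to conjugation (tested via Lemma \ref{HomeoLemma}: non-pinching combined with non-identification of non-conjugate boundary subgroups implies isomorphism), and \emph{collapsed} otherwise. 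Theorem \ref{Gabai} and Lemma \ref{FiniteIndex} ensure that collapsed pieces have image in a rigid vertex group of $\Lambda$ or in a strictly smaller subsurface of some $\Sigma_u$. The non-injectivity of $p$ together with the assumption that $\Lambda$ is not a single surface vertex forces at least one piece of some $\Sigma_v$ to be collapsed, since otherwise $p$ would be an automorphism of $A$ by Bass-Serre considerations.

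Finally I form $\Gamma$. Its non-surface vertex groups comprise the non-surface vertex groups of $\Lambda$ together with the rigid pieces, while its surface vertex groups are the collapsed pieces (whose subsurfaces inherit the Euler characteristic restriction from $\Lambda$). Edges come from $\Lambda$ and from the cutting curves ${\cal C}_v$; I enforce bipartism either by amalgamating two adjacent rigid vertices along their common edge group into a single rigid vertex, or by subdividing an edge between two surface pieces with a new non-surface vertex carrying the cyclic edge group. I set $A'$ equal to the free product of the non-surface vertex groups of $\Gamma$ in a Bass-Serre presentation containing $R_0$, and define $r: A \to A'$ to be the identity on non-surface vertex groups and equal to $p|_{S_v^j}$ on each collapsed piece; the image lies in $A'$ by construction, and non-abelianity follows from the definition of preretraction. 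If $A'$ turns out cyclic, I pass to $r': A * \Z \to A' * \Z$ by sending the new generator to itself and twisting images of surface groups off the cyclic part. The main obstacle will be the surface analysis step: globally organizing the cuttings $\{{\cal C}_v\}$ so that the pieces and their boundary correspondences fit together into a bona fide graph of groups with surfaces $\Gamma$, verifying bipartism without altering the ambient group, and checking that non-abelianity of $r$ on each collapsed piece survives the cutting process --- the latter invoking Theorem \ref{Gabai} to rule out trivial-image degenerations and the non-abelianity condition in the preretraction definition, and requiring special treatment in the extended-floor case where $A'$ is cyclic.
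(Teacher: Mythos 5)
Your overall outline is in the right spirit — cut the surfaces of $\Lambda$ along curves pinched by the preretraction, sort the pieces, and reassemble — but it skips past the two points where the real work of the paper's proof is concentrated, and both are genuine gaps rather than details.

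First, your dichotomy of pieces into ``rigid'' (isomorphic image) and ``collapsed'' is too coarse. After cutting $\Sigma_v$, a piece $S_v^j$ can be carried by $p$ onto a \emph{finite-index proper} subgroup of another surface vertex group $A_u$ of $\Lambda$ (Lemma~\ref{FiniteIndex} only gives finite index, not isomorphism). Such a piece is neither collapsed nor rigid in your sense, and it cannot simply be promoted to a non-surface vertex of $\Gamma$ nor retained as a surface of the new decomposition. The paper resolves this by iterating: it proves (Lemma~\ref{StabilizingAPreretraction}, Lemma~\ref{StabilizingWithFSHatNonAbelian}) that one can replace $f$ by a \emph{stable} preretraction, i.e.\ one all of whose powers are preretractions, and then uses Lemma~\ref{ComplexitiesAndFiniteIndex} and a pigeonhole argument (Lemma~\ref{IsoOnCoveredSurfaces}) to show some power of $f$ sends the relevant surface groups isomorphically onto conjugates of themselves; Proposition~\ref{IsoOnVertexIsIsoOnGroup} then closes the circle. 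Without the power-iteration step your claim that ``otherwise $p$ would be an automorphism of $A$ by Bass-Serre considerations'' doesn't follow, and it is exactly the hard step. Closely related: when the pinching decomposition $\rho_{\cal C}(A)=A_1*\cdots*A_r*R_{\cal C}$ has several factors $A_i$, a surface vertex group of $\Lambda$ might intersect several $f_A(A_i)$ in distinct boundary subgroups, which would wreck your piece analysis; the paper introduces ${\cal C}$-minimal preretractions and proves Proposition~\ref{NoWeirdSurfaces} (via Lemmas~\ref{DistantSubtrees} and~\ref{PingPong}) precisely to conjugate the factors apart so this cannot happen. Your proposal makes no provision for this.

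Second, and more seriously given the history of this statement, you dismiss the extended-floor case in a single sentence. That case is exactly where the stabilization lemmas fail: $\Lambda$ has a unique non-surface vertex group $Z=\langle z\rangle$ which is cyclic, and the pinched pieces of some surface are all punctured spheres, projective planes, or Klein bottles with abelian image. The paper treats this head-on in Section~\ref{AnnoyingJSJ}, proving sharp arithmetic constraints on the boundary exponents (Lemma~\ref{SumBoundaryExponents}) and then, case by case ($4$-punctured sphere, $3$-punctured projective plane, $2$-punctured Klein bottle, once-punctured genus-$2$ non-orientable surface, etc.), writes an explicit retraction either to a subgroup of $A$ or to a subgroup of $A*\Z$. ``Twisting images of surface groups off the cyclic part'' does not just work in general: whether such a twist exists depends on the parity and matching of the $k_i$, which is what Lemma~\ref{SumBoundaryExponents} establishes. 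This is exactly the subtlety that caused the original published version to be wrong (Remark 1.5 of the paper), so it cannot be hand-waved.

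A smaller point: your claimed normalization of $p$ to fix edge groups pointwise is not in general achievable. The conjugating elements $g_v$ on the non-surface vertices need not combine consistently across a spanning tree (adjacent non-surface vertices share an edge group whose centralizer is cyclic, but the surface-to-nonsurface edges impose no compatibility at all, and the surface vertex groups do not come with conjugating elements), and the paper works with the unconjugated picture throughout. Finally, even once $\Gamma$ is built, showing that $r$ actually lands in $A'$ requires the free product decomposition $f(A)=g_1H_1g_1^{-1}*\cdots*g_lH_lg_l^{-1}$ coming from the pinching decomposition and Lemma~\ref{DerDesDer}; ``the image lies in $A'$ by construction'' is not a proof, because collapsed pieces a priori have image spread over translates of various vertex groups of $\Lambda$.
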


The second proposition gives sufficient conditions to get a preretraction $A \to A$ from a preretraction $A \to G$, so that we can then apply Proposition \ref{RETRACTION}. It will be needed for the induction steps in the proof of \ref{MainResult}.

\begin{prop} \label{RetractionPlus} \label{RETRACTIONPLUS} Let $G$ be a torsion-free hyperbolic group. Let $A$ be a non cyclic retract of $G$ which admits a cyclic JSJ-like decomposition $\Lambda$. Suppose $G'$ is a subgroup of $G$ containing $A$ such that either $G'$ is a free factor of $G$, or $G'$ is a retract of $G$ by a retraction $r:G \to G'$ which makes $(G, G', r)$ a hyperbolic floor.

If there exists a non injective preretraction $A \to G$ with respect to $\Lambda$, then there exists a non injective preretraction $A \to G'$ with respect to $\Lambda$. 
\end{prop}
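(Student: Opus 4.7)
The plan is to construct the desired preretraction $q : A \to G'$ directly from $p$ by post-composing with the retraction $s : G \to A$ that witnesses $A$ as a retract of $G$. Since $A \subseteq G'$, the morphism $q := s \circ p : A \to A$ factors through the inclusion $A \hookrightarrow G'$ and so defines a morphism $A \to G'$, which we will show is a non-injective preretraction with respect to $\Lambda$. Note that this candidate does not use the hypothesis on $G'$ directly; the cases ``$G'$ is a free factor'' and ``$G'$ is a hyperbolic floor retract'' will only intervene in the final contradiction.

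The easy verifications are direct. Since $s(1)=1$, any non-trivial element of $\ker p$ lies in $\ker q$, so $q$ is non-injective. If $v$ is a non-surface vertex of $\Lambda$ and $p$ restricts on $A_v$ to conjugation by some $g_v \in G$, then for each $a \in A_v \subseteq A$ we have $q(a) = s(g_v a g_v^{-1}) = s(g_v)\, a\, s(g_v)^{-1}$ (using $s|_A = \mathrm{id}_A$), so the restriction of $q$ to $A_v$ is conjugation by $s(g_v) \in A \subseteq G'$, as required.

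The essential task is to show that $q$ has non-abelian image on every surface type vertex group $A_v$ of $\Lambda$. Assume toward contradiction that $q(A_v)$ is abelian. By property $(ii)$ of a JSJ-like decomposition, every edge $e$ adjacent to $v$ has a non-surface endpoint $w_e$, so the preretraction property forces $q$ to restrict to an injective conjugation on each boundary subgroup of $A_v$. Hence $q(A_v)$ contains non-trivial cyclic subgroups, and being abelian in the torsion-free hyperbolic group $A$, must itself be infinite cyclic, say $\langle c \rangle$. For each such edge $e$, the image $q(i_{e,v}(A_e)) = s(g_{w_e})\, A_e\, s(g_{w_e})^{-1}$ is a non-trivial cyclic subgroup of $\langle c \rangle$, so in the Bass--Serre tree $T_\Lambda$ a sufficiently high non-trivial power of $c$ simultaneously stabilizes each translate $s(g_{w_e})\cdot \tilde e$ of a lift $\tilde e$ adjacent to the chosen lift $\tilde v$ of $v$.

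Invoking the strong $2$-acylindricity of $\Lambda$ (property $(iii)$), any two of these translates are adjacent and share a common endpoint with cyclic stabilizer. Since surface vertices have non-cyclic stabilizer, this common endpoint must be on the non-surface side, and an iteration in the tree forces all translates $s(g_{w_e})\cdot \tilde e$ (as $e$ runs over the edges adjacent to $v$) to meet at a single non-surface vertex $u$ of cyclic stabilizer. A case analysis on the topology of $\Sigma_v$ now delivers the contradiction: when $\Sigma_v$ has a single boundary component, the boundary element is a product of commutators in $A_v$ and therefore maps to $1$ under the map to $\langle c\rangle$, contradicting the fact that its image must be a non-trivial conjugate of an infinite cyclic edge group; when $\Sigma_v$ has several boundary components, the rigid configuration of translated edges forced by $2$-acylindricity, together with the surface relation $\prod [a_i, b_i] \prod c_j = 1$ and the fact that $q$ must realize each $q(c_j)$ as a specific non-trivial conjugate of an edge generator lying in $\langle c \rangle$, is incompatible with the structure of $\Lambda$ near $v$. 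This multi-boundary subcase, requiring one to track the interaction between the translations $s(g_{w_e})$, the cyclic vertex $u$, and the combinatorics of $T_\Lambda$, is the main obstacle; in the particular hypothesis that $G'$ is a free factor or a hyperbolic floor retract, additional structure on $s$ and on the floor retraction may be exploited to close the argument by ruling out the possibility that $p(A_v)$ be absorbed into $\ker s$.
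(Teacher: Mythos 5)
Your approach differs from the paper's and contains a genuine gap which you acknowledge but do not close. The naive composite $q = s \circ p$ need not be a preretraction: the image $p(A_v)$ of a surface type vertex group can be a non-abelian subgroup of $G$ which the retraction $s : G \to A$ collapses to something cyclic, and your contradiction argument cannot rule this out. Concretely, the single-boundary branch uses that the boundary element of $\Sigma_v$ is a product of commutators, which holds only for orientable $\Sigma_v$; for a punctured projective plane or Klein bottle the boundary element is a product of squares, and such surface groups admit maps onto $\Z$ that are injective on boundary subgroups and non-pinching (this is precisely the obstruction dealt with in Lemma \ref{SumBoundaryExponents}). The multi-boundary branch is left entirely unproven, and in fact the sought contradiction simply does not exist in general.

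The paper's proof works differently. It does not post-compose a fixed map with a retraction; instead it pinches the maximal essential set $\mathcal{C}$ of simple closed curves killed by $f$, writes $\rho_{\mathcal{C}}(A) = A_{\mathcal{C}} * R_{\mathcal{C}}$ via a pinching decomposition, and defines $h = ((r \circ f_A) * h_R) \circ \rho_{\mathcal{C}}$, where $r : G \to G'$ is the retraction onto $G'$ (not onto $A$), and crucially $h_R : R_{\mathcal{C}} \to G'$ is a \emph{new} map chosen by Lemma \ref{TypeC} so as to force non-abelianity of the images of surfaces of type (C). This freedom to replace $f$ on the free part $R_{\mathcal{C}}$ is exactly what is missing from your construction. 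Moreover, composing with $r : G \to G'$ rather than with an arbitrary retraction $s : G \to A$ is essential for type (A): Remark \ref{TypeAUnderPreretractions} shows that $f(S)$ contains either a finite-index subgroup of a surface group of the floor decomposition $\Gamma$ of $(G, G', r)$, on which $r$ has non-abelian image by definition of a hyperbolic floor, or a non-abelian subgroup of a non-surface vertex group of $\Gamma$, on which $r$ restricts to conjugation. Neither of these controls is available for an arbitrary retraction $s : G \to A$.
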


The proof of these two propositions is postponed until the last section of this paper. Both results are intermediate steps in the proof of Proposition 6 of \cite{Sel6}, though they are not explicitly stated there. For now, we will assume these two results hold, and use them to prove Theorem \ref{MainResult}.

\subsection{Using first-order to build preretractions}

Suppose that $H$ is a group with an elementary embedding in a torsion-free hyperbolic group $G$. To show that $G$ admits a structure of hyperbolic tower over $H$, we will try to build non injective preretractions, as Theorem \ref{Retraction} can then be used to produce hyperbolic floor structures.

This is what the two following results will enable us to do. They form the heart of the proof of Theorem \ref{MainResult}.
\begin{prop} \label{PreretractionForA} Suppose that $G$ is a non cyclic torsion-free hyperbolic group, and let $H$ be a subgroup whose embedding in $G$ is elementary. 
Suppose $A$ is a subgroup of $G$ which is hyperbolic, properly contains $H$, and is freely indecomposable relative to $H$. Let $\Lambda$ be the cyclic JSJ decomposition of $A$ relative to $H$. Then there exists a non injective preretraction $A \to G$ with respect to $\Lambda$. 
\end{prop}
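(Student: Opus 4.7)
The plan is to use the elementarity $H \preceq G$ to transfer a carefully crafted existential first-order formula $\phi(h_1,\ldots,h_n)$, where $H_0 = \{h_1,\ldots,h_n\}$ is a well-chosen finite subset of $H$. Combining Lemma~\ref{FreelyIndecFG}, Proposition~\ref{CoHopfRelPartial}, and the relative factor set Proposition~\ref{MRGammaRelPartial} applied to $(A,G)$ with the given embedding $H \hookrightarrow G$, one chooses $H_0$ so that $A$ is freely indecomposable relative to $\langle H_0\rangle$, every injective morphism $A \to A$ fixing $H_0$ is an automorphism, and there exist proper quotients $q_1,\ldots,q_m : A \to L_i$ with non-trivial $u_i \in \ker q_i$ such that every non-injective morphism $f : A \to G$ coinciding with the inclusion on $H_0$ satisfies $(f \circ \sigma)(u_i)=1$ for some $\sigma \in \Mod_H(A)$ and some $i$. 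A key corollary: every morphism $A \to H$ fixing $H_0$ is automatically non-injective, since composing an injective such morphism with the inclusion $H \hookrightarrow A$ would give an automorphism of $A$ with image in $H$, forcing $H = A$, a contradiction.

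Using a finite presentation $A = \langle a_1,\ldots,a_k \mid R_1,\ldots,R_p\rangle$, one expresses each $h_i$, each generator $b_j^v$ of the non-surface vertex groups of $\Lambda$, a pair $(\alpha_s,\beta_s)$ of non-commuting generators of each surface vertex group, and each $u_i$, as specific words in the $a_i$. The formula $\phi$ then existentially quantifies over two tuples $\bar x, \bar y \in \Gamma^k$ (interpreted as morphisms $f,f_0 : A \to \Gamma$), conjugators $g_v \in \Gamma$ (one per non-surface vertex), and an index $i$, asserting: $\bar x$ and $\bar y$ satisfy the relations of $A$ and fix $H_0$; for each non-surface vertex $v$ one has $\tilde b_j^v(\bar x) = g_v\, \tilde b_j^v(\bar y)\, g_v^{-1}$ for every $j$ (a ``preretraction of $f$ relative to $f_0$'' condition); for each surface vertex $s$ both $f$ and $f_0$ have non-abelian image (via non-commutators evaluated on $\bar x$ and $\bar y$); additional first-order constraints, suggested by Theorem~\ref{RelShortForEmbeddings}, force $f_0$ to be embedding-like up to a modular twist; and $\tilde u_i(\bar x)=1$ for one $i$ ranging over a finite combinatorial family that enumerates how the Dehn-twist part of $\Mod_H(A)$ modifies the $u_i$. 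By Lemma~\ref{ModAndJSJ}, every $\sigma \in \Mod_H(A)$ acts by conjugation on each non-surface vertex group, which is precisely what the preretraction-relative-to-$f_0$ condition absorbs.

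To verify $H \models \phi$, one first observes that the simpler formula ``$\exists \bar y$ defining a morphism $A \to \Gamma$ fixing $H_0$ with non-abelian image on each surface vertex group'' is first-order and is satisfied on $G$ by the tuple $\bar a$ corresponding to the inclusion $A \hookrightarrow G$; elementarity then gives such an $f_0 : A \to H$ fixing $H_0$. Since $H$ is a proper subgroup of $A$, this $f_0$ is non-injective, so the factor set yields $\sigma \in \Mod_H(A)$ and $i$ with $(f_0 \circ \sigma)(u_i)=1$; setting $f := f_0 \circ \sigma$ and $g_v := f_0(a_v)$, where $\sigma|_{R_v} = \Conj(a_v)|_{R_v}$, supplies the existential witnesses of $\phi$. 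Elementarity gives $G \models \phi$; unpacking the witnesses on $G$ produces a morphism $f : A \to G$ fixing $H_0$, which is a preretraction with respect to $\Lambda$ (once the relative preretraction condition is converted to the absolute one using that $f_0$ is embedding-like, hence conjugate to the inclusion on each non-surface vertex group), and which kills some $u_i$, hence is non-injective; this is the desired preretraction.

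The main obstacle is the precise first-order encoding of the embedding-like property of $f_0$ and of the finite family of $u_i$-variants: one must capture the Dehn-twist action on the $u_i$ by an explicit finite enumeration while allowing the surface mapping-class-group action to be absorbed by the preretraction-relative-to-$f_0$ condition, using that modular automorphisms restrict to conjugations on non-surface vertex groups but act through the mapping class group on surface ones. This partial expressibility of $\Mod_H(A)$ in first order is precisely why the formula, being weaker than the full factorization statement, yields a non-injective preretraction on $G$ rather than a direct contradiction.
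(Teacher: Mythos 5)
You correctly identify the ingredients (Lemma~\ref{FreelyIndecFG}, Proposition~\ref{CoHopfRelPartial}, Proposition~\ref{MRGammaRelPartial}) and the key observation that every morphism $A\to H$ fixing a suitable finite $H_0$ is automatically non-injective. The gap is in the shape of the first-order formula. You propose a purely \emph{existential} formula, quantifying over both the candidate preretraction $\bar{x}$ and a reference morphism $\bar{y}=f_0$, and then try to add constraints forcing $f_0$ to be ``embedding-like'' so that, on $G$, $\Lambda$-relatedness to $f_0$ can be converted into $\Lambda$-relatedness to the inclusion. This cannot be made to work. Injectivity of a morphism $A\to G$ is not first-order expressible --- this is precisely the obstruction the whole construction is designed to circumvent --- and even if you used Theorem~\ref{RelShortForEmbeddings} to encode ``$f_0 = i_j\circ\sigma$ for one of finitely many embeddings $i_j$ and a modular twist $\sigma$'', this does \emph{not} imply that $f_0$ restricts to a conjugation on each non-surface vertex group $R$: since $f_0|_R = \Conj(i_j(a_R))\circ i_j|_R$ and $i_j|_R$ is an arbitrary embedding of $R$ into $G$, one cannot conclude $f_0|_R = \Conj(g)|_R$. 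Worse, any genuinely embedding-like constraint on $\bar{y}$ could never be satisfied by a morphism $A\to H$ (all of which are non-injective), so the formula would fail on $H$.

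The correct structure is a $\forall\exists$ formula, quantifying \emph{universally} over $\bar{x}$: ``for every tuple $\bar{x}$ satisfying the relations of $A$ and fixing $H_0$, there exists $\bar{y}$ that is $\Lambda$-related to $\bar{x}$ and kills one of the $\nu_l$''. On $H$ this holds because every such $\bar{x}$ gives a non-injective morphism $A\to H$ to which Proposition~\ref{MRGammaRelPartial} applies, and the modular precomposition is absorbed by $\Lambda$-relatedness via Lemma~\ref{ModAndJSJ} (Remark~\ref{PrecByModIsRelated}) --- there is no need to enumerate how Dehn twists ``modify'' the $\nu_l$. On $G$, one specializes the universal quantifier to the \emph{tautological} tuple $\bar{x}=\mathbf{a}$ (the inclusion $A\hookrightarrow G$); the existential witness $\bar{y}$ is then $\Lambda$-related to the inclusion, hence a preretraction by Remark~\ref{RelToiIsPreretraction}, and non-injective since it kills some $\nu_l$. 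This sidesteps entirely the need to recognize embeddings or the inclusion in first order, and also removes your extra use of elementarity to manufacture $f_0$: if no morphism $A\to H$ fixing $H_0$ exists, the universal statement is vacuously true on $H$.
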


Note that this implies in particular that $\Lambda$ is non trivial. 

\begin{prop} \label{PreretractionForB} Suppose that $G$ is a torsion-free hyperbolic group, and that $H$ is a subgroup elementarily embedded in $G$ which is also a retract of $G$. Let $B$ be a freely indecomposable hyperbolic subgroup of $G$ which is neither cyclic nor a closed surface group of Euler characteristic at most $-2$. Let $\Lambda$ be the cyclic JSJ decomposition of $B$. Suppose that no non trivial element of $B$ is conjugate in $G$ to an element of $H$. Then there exists a non injective preretraction $B \to G$ with respect to $\Lambda$.
\end{prop}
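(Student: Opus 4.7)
The plan is to adapt the factor-set plus first-order strategy sketched in the introduction for Proposition~\ref{PreretractionForA} to the absolute setting here, using the retraction $\rho: G \to H$ to transport generators of $B$ into the elementary subgroup~$H$.

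First, apply Proposition~\ref{MRGammaFI} to the non-cyclic, freely indecomposable hyperbolic group $B$ with target $\Gamma := G$: this yields a finite factor set $\{\eta_i : B \to L_i\}_{i=1}^{k}$ of proper quotients together with non-trivial witnesses $u_i \in \ker \eta_i$. By Lemma~\ref{ModAndJSJ}, preretractions with respect to $\Lambda$ are closed under pre-composition by $\Mod(B)$ (modular automorphisms restrict to conjugation on rigid vertex groups and send surface vertex groups isomorphically onto their conjugates, so the non-abelianity of images is preserved). Consequently, producing a non-injective preretraction $f : B \to G$ reduces to producing a preretraction satisfying $f(u_i) = 1$ for some $i$.

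Next, fix generators $b_1, \ldots, b_n$ of $B$ and set $h_i := \rho(b_i) \in H$. Choose words $w_{j,k}(\vec b)$ generating each non-surface vertex group $V_j$ of $\Lambda$ and pairs $(w'_{l,1}(\vec b), w'_{l,2}(\vec b))$ of non-commuting elements in each surface vertex group $W_l$. I would then write a first-order formula $\Phi(\vec x)$ that existentially quantifies over $\vec y$ and $\vec g$ and encodes: $\vec y$ satisfies the relations of $B$; $w_{j,k}(\vec y) = g_j w_{j,k}(\vec x) g_j^{-1}$ for every $j, k$; the pairs $w'_{l,1}(\vec y), w'_{l,2}(\vec y)$ fail to commute for every $l$; and $u_i(\vec y) = 1$ for some $i$. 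Evaluated at $\vec x = \vec b$ in $G$, satisfaction of $\Phi(\vec b)$ is exactly the assertion that the desired non-injective preretraction exists. To verify $H \models \Phi(\vec h)$, I would first argue $\rho|_B : B \to G$ cannot be injective (otherwise, combining Theorem~\ref{ShortForEmbeddings} with the disjointness hypothesis — no non-trivial element of $B$ is conjugate in $G$ to an element of $H$ — comparison of $\rho|_B$ and the inclusion leads to a contradiction), then apply the factor set to $\rho|_B$ to obtain $\sigma \in \Mod(B)$ and $i$ with $\rho(\sigma(u_i)) = 1$, and finally take $\vec y := \rho(\sigma(\vec b))$ and $g_j := \rho(\sigma_j)$, where $\sigma|_{V_j} = \Conj(\sigma_j)|_{V_j}$ as in Lemma~\ref{ModAndJSJ}. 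The relations, rigid-conjugation, and kernel conditions are then immediate; the surface non-abelianity clause relies on the hypothesis that $B$ is neither cyclic nor a closed surface group.

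The hard part will be the final transfer: elementary embedding yields $G \models \Phi(\vec h)$, not $G \models \Phi(\vec b)$. The rigid-vertex clause of $\Phi$ depends on its parameter, and since $\rho|_{V_j}$ is not itself a conjugation in $G$ (again by the disjointness hypothesis), the morphism extracted directly from $G \models \Phi(\vec h)$ only satisfies $f|_{V_j} = \Conj(g_j) \circ \rho|_{V_j}$, rather than the genuine preretraction condition $f|_{V_j} = \Conj(g'_j)|_{V_j}$. Bridging this gap is the heart of the proof and will presumably require either a more intrinsic formulation of $\Phi$ — one that encodes the preretraction condition without referring to a parameter tuple on rigid vertex groups — or a corrective post-processing step that exploits the retraction structure of $G$ over $H$ to adjust the extracted morphism into a preretraction of $B$ with respect to $\Lambda$. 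Ensuring that surface non-abelianity survives both the choice of $\sigma$ in $H$ and the post-processing in $G$ is a secondary difficulty that will hinge on $\Lambda$ containing surface vertex groups with sufficient topology.
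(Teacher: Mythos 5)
Your approach diverges from the paper's in a way that creates two genuine gaps, the first of which you flag yourself but do not resolve, and which I do not believe is resolvable along the route you have taken.

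\textbf{The parameter mismatch is fatal, not just ``the hard part.''} Your formula $\Phi(\vec x)$ asserts, for a parameter tuple $\vec x$, the existence of a morphism that restricts on each rigid vertex group $V_j$ to $\Conj(g_j)$ \emph{post-composed with the morphism $\vec b \mapsto \vec x$}. When you evaluate at $\vec x = \vec h = \rho(\vec b)$ and transfer to $G$, you obtain a morphism $f': B \to G$ with $f'|_{V_j} = \Conj(g_j) \circ \rho|_{V_j}$, not $\Conj(g_j)|_{V_j}$. You observe this, and you observe (correctly) that $\rho|_{V_j}$ cannot itself be a conjugation in $G$ by the disjointness hypothesis — but that observation kills the obvious repair (post-compose by the inverse conjugation) rather than suggesting one. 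There is no ``corrective post-processing step that exploits the retraction structure'' available here: $\rho|_B$ is non-injective, so it has no left inverse, and the retraction $\rho$ gives you no purchase on $B$ as a subgroup of $G$ since by hypothesis $B$ is conjugate-disjoint from $H$. The paper avoids this problem entirely by a different quantification structure: its sentence $\xi$ is \emph{universally} quantified over the tuple $\vec x$ defining a morphism $B \to H$, with the hypothesis $\psi(\vec x, i_1(\vec b), \ldots, i_t(\vec b))$ acting as a first-order sufficient criterion for non-injectivity (built from the finite list of ``short'' embeddings $B \hookrightarrow H$ of Theorem \ref{ShortForEmbeddings}). When $\xi$ is transferred to $G$, the universal quantifier lets one substitute the \emph{tautological} tuple $\vec b$, and the verification that $G \models \psi(\vec b, \ldots)$ is exactly where the disjointness hypothesis and the existence of a rigid vertex group (guaranteed by $B$ not being a closed surface group) enter. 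You never plug $\vec b$ into anything; your sentence has $\vec h$ hard-wired.

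\textbf{The surface non-abelianity clause is stated too strongly to be satisfied in $H$.} You make $\Phi$ demand unconditionally that $w'_{l,1}(\vec y), w'_{l,2}(\vec y)$ fail to commute. To verify $H \models \Phi(\vec h)$ with the witness $\vec y = \rho(\sigma(\vec b))$ you would need $\rho(\sigma(W_l))$ non-abelian for every surface vertex group $W_l$. But the factor-set machinery (Proposition \ref{MRGammaFI}) only gives you $\sigma$ making $\rho|_B \circ \sigma$ factor through a proper quotient; it says nothing about $\rho$ preserving non-abelianity of surface pieces, and indeed $\rho$ may well collapse them. The paper's $\Rel$ formula handles this with an implication ($\neg\Ab$ of source $\Rightarrow$ $\neg\Ab$ of target), which is vacuous whenever the source image is already abelian and which becomes a genuine constraint only when the parameter is $\vec b$ — precisely when you need it. Your appeal to ``surface vertex groups with sufficient topology'' has no content here; the hypothesis that $B$ is not a closed surface group is used to guarantee a rigid vertex exists, not to control non-abelianity of images under $\rho$. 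You would need to weaken to the implication form, but that only fixes this second gap; the first remains.

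In short: the paper does not use the retraction $\rho: G \to H$ as a transport device at all — $\rho$ appears only to establish that $H$ is quasiconvex and hence hyperbolic, so that the factor set and short-embedding theorems can be applied with target $H$. The load-bearing device is instead a universal first-order sentence with parameters $i_1(\vec b), \ldots, i_t(\vec b) \in H$ coming from Theorem \ref{ShortForEmbeddings}, interpreted at the tautological tuple in $G$. Your construction would need to be rebuilt around this structure rather than patched.
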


An important point is that these two results do not require that any of the embeddings $H \hookrightarrow A$,  $A \hookrightarrow G$, or $B \hookrightarrow G$ be elementary. We only need the embedding of $H$ in $G$ to be elementary.

Let us now prove Propositions \ref{PreretractionForA} and \ref{PreretractionForB}. The idea of both proofs is to express (a consequence of) the existence of a factor set by a first-order logic formula satisfied by $H$ whose interpretation on $G$ then gives us a non injective preretraction. 

We will use the following definition
\begin{defi} \emph{($\Lambda$-related morphisms)} Let $A$ be a group which admits a JSJ-like decomposition $\Lambda$, and let $f$ be a morphism from $A$ to a group $G$. We say that a morphism $f':A \to G$ is $\Lambda$-related to $f$ if
\begin{itemize}
	\item for each non surface type vertex group $R$ of $\Lambda$, there exists an element $u_R$ of $G$ such that $f'$ restricted to $R$ is $\Conj(u_R) \circ f$;
  \item each surface type vertex group of $\Lambda$ which has non abelian image by $f$ also has non abelian image by $f'$.
\end{itemize}
\end{defi}

\begin{rmk} \label{RelToiIsPreretraction}
Suppose that $A$ is a subgroup of $G$, that it admits a JSJ-like decomposition $\Lambda$, and let $f: A \to G$ be a morphism. Then $f$ is $\Lambda$-related to the embedding $A \hookrightarrow G$ if and only if it is a preretraction with respect to $\Lambda$.
\end{rmk}

\begin{rmk} \label{PrecByModIsRelated} Let $A$ be a torsion-free hyperbolic group which is freely indecomposable (with respect to a subgroup $H$), and let $\Lambda$ be a cyclic (relative) JSJ decomposition (with respect to $H$). Let $f$ be a morphism $A \to G$.

If $\sigma$ is an element of $\Mod(G)$ (respectively of $\Mod_H(G)$), then $f'=f \circ \sigma$ is $\Lambda$-related to $f$ by Lemma \ref{ModAndJSJ}.
\end{rmk}

The following lemma shows that $\Lambda$--relatedness can be expressed in first-order logic.  

\begin{lemma} Let $A$ be a group generated by a finite tuple ${\mathbf a}$. Suppose that $A$ admits a JSJ-like decomposition $\Lambda$. 
There exists a formula $\Rel(\mathbf{x}, \mathbf{y})$ such that for any pair of morphisms $f$ and $f'$ from $A$ to $G$, the formula $\Rel(f(\mathbf{a}), f'(\mathbf{a}))$ is satisfied by $G$ if and only if $f'$ is $\Lambda$-related to $f$.
\end{lemma}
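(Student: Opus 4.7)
My plan is to construct $\Rel$ explicitly as a Boolean combination of atomic formulas built from the words expressing generators of the vertex groups of $\Lambda$ in terms of the generators $\mathbf{a}$ of $A$. The two clauses in the definition of $\Lambda$-relatedness are of a form that translates directly into first-order logic, so the construction is essentially syntactic once the right finite data is fixed.

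First I would fix, for each vertex group $V$ of $\Lambda$, a finite generating set $\{v_1^V, \ldots, v_{k_V}^V\}$ and, since $V \leq A = \langle \mathbf{a}\rangle$, express each $v_i^V$ as a group word $w_i^V(\mathbf{a})$. Surface type vertex groups are surface groups and hence finitely generated by definition; for non surface type vertex groups finite generation is part of the standing assumption on the JSJ-like decompositions used in the paper. Since $\Lambda$ is a finite graph of groups with surfaces, only finitely many $w_i^V$ appear. Note that for any morphism $h\colon A\to G$ and any $g = w(\mathbf{a}) \in A$, one has $h(g) = w(h(\mathbf{a}))$, so plugging $f(\mathbf{a})$ (resp.\ $f'(\mathbf{a})$) into $w_i^V(\mathbf{x})$ yields exactly $f(v_i^V)$ (resp.\ $f'(v_i^V)$).

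Then, for each non surface type vertex group $R$ of $\Lambda$, the condition ``there exists $u_R \in G$ such that $f'|_R = \Conj(u_R)\circ f|_R$'' is equivalent, because both maps are morphisms determined by their values on generators, to the existence of a single $u \in G$ satisfying $f'(v_i^R) = u f(v_i^R) u^{-1}$ for every $i$. This is captured by the formula
$$\varphi_R(\mathbf{x}, \mathbf{y}) \;:\; \exists u\ \bigwedge_{i=1}^{k_R} w_i^R(\mathbf{y}) = u \cdot w_i^R(\mathbf{x}) \cdot u^{-1}.$$
For each surface type vertex group $S$ of $\Lambda$, the image $f(S)$ is non abelian iff some pair of generators of $S$ has non commuting images, and similarly for $f'(S)$; the required implication becomes
$$\psi_S(\mathbf{x}, \mathbf{y}) \;:\; \Bigl(\bigvee_{i,j} [w_i^S(\mathbf{x}), w_j^S(\mathbf{x})] \neq 1\Bigr) \Rightarrow \Bigl(\bigvee_{i,j} [w_i^S(\mathbf{y}), w_j^S(\mathbf{y})] \neq 1\Bigr).$$

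Finally I would set
$$\Rel(\mathbf{x}, \mathbf{y}) \;:\; \bigwedge_{R \text{ non surface type}} \varphi_R(\mathbf{x},\mathbf{y}) \ \wedge\ \bigwedge_{S \text{ surface type}} \psi_S(\mathbf{x},\mathbf{y}),$$
which is a finite conjunction, hence a first-order formula. The required equivalence $G\models \Rel(f(\mathbf{a}), f'(\mathbf{a})) \iff f'$ is $\Lambda$-related to $f$ then holds clause by clause, directly from the way the $\varphi_R$ and $\psi_S$ were engineered. I do not anticipate any genuine obstacle; the only point that needs a moment of care is the tacit finite generation of non surface type vertex groups of $\Lambda$, which is why the formula has finitely many conjuncts and each $\varphi_R$ has finitely many atomic sub-formulas; everything else is routine translation.
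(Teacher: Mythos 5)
Your proposal is correct and follows essentially the same route as the paper's own proof: fix finite generating sets for the (finitely many) vertex groups, express them as words in $\mathbf{a}$, encode the conjugation clause for rigid vertices by an existential quantifier over the conjugating element, and encode the non-abelianity implication for surface vertices via commutators of generators. The only cosmetic differences are that you quantify over one $u$ per rigid vertex inside each conjunct rather than all at once up front, and you write $\neg\Ab$ explicitly as a disjunction of non-commuting pairs; both are logically equivalent to what the paper does.
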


\begin{proof} We introduce some notation. Denote by $R_1, \dots R_r$ the non surface type vertex groups of $\Lambda$, and by $S_1, \ldots S_s$ its surface type vertex groups. For $1 \leq p \leq r$, choose a finite generating set ${\mbox{\boldmath$\rho$}}_p$ for $R_p$, and for $1 \leq q \leq s$, choose a finite generating set ${\mbox{\boldmath$\sigma$}}_q$ for $S_q$. We take the convention to denote tuples by bold font. 

The elements of ${\mbox{\boldmath$\rho$}}_p$ and ${\mbox{\boldmath$\sigma$}}_q$ can be represented by words in the elements ${\mathbf a}$, we denote these by  ${\mbox{\boldmath$\rho$}}_p = \bar{{\mbox{\boldmath$\rho$}}}_p({\mathbf a})$ and ${\mbox{\boldmath$\sigma$}}_q = \bar{{\mbox{\boldmath$\sigma$}}}_q({\mathbf a})$ respectively. 

Now, if $w$ is an element of $A$ which can be represented by a word $\bar{w}({\mathbf a})$, its image by the morphism $f: A \to G$ extending ${\mathbf a} \mapsto {\mathbf g}$ is represented by $\bar{w}({\mathbf g})$.

The maps $f$ and $f'$ extending ${\mathbf a} \mapsto {\mathbf g}$ and ${\mathbf a} \mapsto {\mathbf g}'$ respectively satisfy the relatedness condition on the rigid type vertex groups of $\Lambda$ if and only if
$$ \exists u_1 \dots \exists u_r \; \bigwedge^{r}_{p=1} \left \{  \bar{{\mbox{\boldmath$\rho$}}}_p({\mathbf g}') = u_p \bar{{\mbox{\boldmath$\rho$}}}_p({\mathbf g}) u^{-1}_p \right \}. $$

To express the abelianity of a subgroup generated by a tuple ${\mathbf z}=(z^1, \ldots, z^{q})$, we can use the formula $\Ab({\mathbf z}): \bigwedge_{i,j} \left \{ [z^i,z^j]=1 \right \} $. Thus the non abelianity condition about the image by $f$ and $f'$ of surface type vertex groups of $\Lambda$ can also be expressed by 
$$ \bigwedge^{s}_{q=1} \; \left\{ \neg \Ab(\bar{{\mbox{\boldmath$\sigma$}}}_q({\mathbf g})) \Rightarrow  \neg \Ab(\bar{{\mbox{\boldmath$\sigma$}}}_q({\mathbf g}')) \right \}. $$

\end{proof}

We can now prove the two key propositions.

\begin{proof}[Proof of Proposition \ref{PreretractionForA}] By Proposition \ref{MRGammaRelPartial}, there exists a finite subset $H_0$ of $H$, and a finite family of proper quotients $\eta_j: A \to L_j$ for $j \in [1, m]$, such that any non injective morphism $\theta: A \to A$ which fixes $H_0$ factors through one of the quotients $\eta_j$ after precomposition by an element of $\Mod_H(A)$. Proposition \ref{CoHopfRelPartial} on the other hand shows that an injective morphism $\theta: A \to A$ which fixes a big enough finite subset of $H$ is also surjective. 

Now a morphism $\theta: A \to H$ can be seen as a non surjective morphism $A \to A$ since we assumed $H \not = A$. Thus, up to enlarging $H_0$, any morphism $\theta: A \to H$ which fixes $H_0$ is non injective, so there exists an element $\tau$ of $\Mod_H(A)$ such that $\theta'=\theta \circ \tau$ factors through one of the quotients $\eta_j$. 

Let $\Lambda$ be the cyclic JSJ decomposition of $A$ relative to $H$. By Remark \ref{PrecByModIsRelated}, for any morphism $\theta: A \to H$ which fixes $H_0$, there exists a morphism $\theta': A \to H$ which is $\Lambda$-related to $\theta$ and which factors through one of the quotients $\eta_j$.

Finally, for each $l$ in $[ 1, m ]$, we fix an element $\nu_j$ in the kernel of $\eta_j: A \rightarrow Q_l$. 
\begin{statement} \label{Stat1} For any morphism $\theta: A \to H$ which fixes $H_0$, there exists a morphism $\theta': A \to H$ which is $\Lambda$-related to $\theta$, and an index $j$ in $[1, m]$ such that $\theta'(\nu_j)=1$.
\end{statement}

We claim that Statement \ref{Stat1} can be expressed by a first-order formula satisfied by $H$. 

The group $A$ is hyperbolic, we choose a finite presentation $\langle {\mathbf a} \mid  {\bar{\mathbf{\Sigma}}_A}({\mathbf a})\rangle $. Morphisms $A \to H$ are in one to one correspondence with solutions to the system of equations $\bar{{\mathbf \Sigma}}_A({\mathbf x})=1$ in $H$. The elements $\nu_l$ can be represented by words $\bar{\nu}_l({\mathbf a})$. Let $H_0 =\{ h_1, \ldots, h_u\}$, each element $h_i$ can be represented by a word $ \bar{h}_i({\mathbf a})$.

Finally consider the formula $\psi(h_1, \ldots, h_u)$ over ${\cal L}_H$ given by
\begin{eqnarray*} \forall {\mathbf x} \, \left[ \bar{\Sigma}_A({\mathbf x}) = 1 \, \wedge \, \bigwedge^u_{i=1} h_i =\bar{h}_i({\mathbf x}) \right ] \Rightarrow \exists {\mathbf y} \, \left[ \bar{{\mathbf \Sigma}}_A( {\mathbf y})=1 \right] \, \wedge \,
\Rel({\mathbf x}, {\mathbf y})\; \wedge \left [ \bigvee_{l=1}^{m} \bar{\nu}_l({\mathbf y}) =1 \right ].\\
\end{eqnarray*}

The interpretation of the first-order formula $\psi(h_1, \ldots, h_u)$ on $H$ is exactly Statement \ref{Stat1}, so it is true on $H$. The formula $\psi(h_1, \ldots, h_u)$ is therefore satisfied by $G$. Let us look at its interpretation on $G$.

If we take the ``tautological solution'' ${\mathbf a}$ to the equation $\bar{\Sigma}_A({\mathbf x})=1$, it satisfies the formula in the first square brackets: indeed, $\bar{\Sigma}_A({\mathbf a}) = 1$, and for each $i$, we have $h_i = \bar{h}_i({\mathbf a})$ by definition of $\bar{h}_i$. Thus, by the second part of the formula, we get a tuple ${\mathbf y}$ such that ${\mathbf a} \mapsto {\mathbf y}$ extends to a morphism $f: A \to G$, which is $\Lambda$-related to the morphism ${\mathbf a} \mapsto {\mathbf a}$, and which sends one of the elements $\nu_l$ to $1$. By Remark \ref{RelToiIsPreretraction}, $f: A \to G$ is a preretraction, and since the elements $\nu_l$ are non trivial, it is not injective.
\end{proof}

We now show the second key result.

\begin{proof}[Proof of Proposition \ref{PreretractionForB}] We choose a presentation $\langle {\mathbf b} \mid \bar{{\mathbf \Sigma}}_B({\mathbf b}) \rangle$ for $B$. Let $\Lambda$ be the cyclic JSJ decomposition of $B$. 

Since $H$ is a retract of $G$, it is a quasiconvex, thus it is itself hyperbolic. By Proposition \ref{MRGammaFI}, there exist proper quotients $\eta_1: B \to L_1, \ldots, \eta_m: B \to L_m$ of $B$ such that any non injective morphism $B \to H$ factors through one of the maps $\eta_j$ after precomposition by an element of $\Mod(B)$. Again we choose non trivial elements $\nu_1, \dots, \nu_m$ of $B$ such that $\nu_j$ is in the kernel of $\eta_j$.

If we proceed to the same weakenings as in the proof of Proposition \ref{PreretractionForA}, we see that for any non injective morphism $\theta$ from $B$ to $H$, there is another morphism $\theta'$ which is $\Lambda$-related to $\theta$, and which sends one of the $\nu_j$ to $1$. 

We now need to find a sufficient condition for non injectivity of a map $B \rightarrow H$ that is expressible in first-order. Proposition \ref{ShortForEmbeddings}, applied to $B$ and $H$, tells us that there exist a finite set $i_1, \dots i_t$ of embeddings of $B$ in $H$ such that for any embedding $i: B \hookrightarrow H$, there exists an element $\sigma$ of $\Mod(B)$, an integer $k$ in $[1 , t]$ and an element $h$ of $H$ such that
$$ \Conj(h) \circ i \circ \sigma = i_k$$ 
By Remark \ref{PrecByModIsRelated}, the map on the left hand side is $\Lambda$-related to $i$, so that if $\theta: B \to H$ is an embedding, it is $\Lambda$-related to one of the maps $i_k$. 

Let $\theta: B \rightarrow H$. Consider the following statement 
\begin{stheta}
Any morphism $\theta'': B \to H$ which is $\Lambda$-related to $\theta$ satisfies $\theta''({\mathbf b}) \neq  i_k({\mathbf b})$ for all $k$ in $[1 , t]$. 
\end{stheta}
By the previous argument, $S(\theta)$ is a sufficient condition for $\theta$ not to be an embedding. We finally get that the following statement holds. 
\begin{statement}\label{FinalStat} If $\theta: B \to H$ is a morphism for which $S(\theta)$ holds, then there exists a morphism $\theta': B \to H$ which is $\Lambda$-related to $\theta$, and an integer $l$ in $[1, m]$ such that $\theta'(\nu_l)=1$.
\end{statement}

We claim that this can be expressed by a first-order formula. Let us first express $S(\theta)$ by a first-order formula:
$$ \psi({\mathbf x}, i_1({\mathbf b}), \ldots, i_1({\mathbf b})): \; \left [ \bar{{\mathbf \Sigma}}_B({\mathbf x})=1\right ] \wedge  \forall {\mathbf z} \left \{ \left[ \bar{{\mathbf \Sigma}}_B( {\mathbf z})=1 \wedge \Rel({\mathbf z}, {\mathbf x}) \right] \Rightarrow \left[ \bigwedge^t_{k=1} {\mathbf z} \neq i_k({\mathbf b}) \right] \right \}.$$

Let ${\mathbf x}$ be a tuple in $H$. The formula $ \psi({\mathbf x}, i_1({\mathbf b}), \ldots, i_1({\mathbf b}))$ is satisfied by $H$ if and only if the morphism $\theta: B \to H$ defined by ${\mathbf b} \mapsto {\mathbf x}$ satisfies $S(\theta)$. So if $ \psi({\mathbf x}, i_1({\mathbf b}), \ldots, i_1({\mathbf b}))$ is satisfied by $H$, the morphism $\theta$ defined by ${\mathbf b} \mapsto {\mathbf x}$ is not injective.

We can now write the first-order sentence $\xi(i_1({\mathbf b}), \ldots, i_1({\mathbf b}))$: 
\begin{eqnarray*}
\forall {\mathbf x} \; \;   \psi({\mathbf x}, i_1({\mathbf b}), \ldots, i_1({\mathbf b})) \;\Rightarrow \;\exists {\mathbf y} \, [\bar{{\mathbf \Sigma}}_B(y)=1] \wedge \, \Rel({\mathbf x},{\mathbf y})
\; \wedge \; \left[ \bigvee_{j=1}^{l} \bar{\nu}_j({\mathbf y}) =1 \right ].
\end{eqnarray*}

The first-order formula $\xi(i_1({\mathbf b}), \ldots, i_1({\mathbf b}))$ on $H$ expresses Statement \ref{FinalStat}, so it is satisfied by $H$. As $H$ is elementarily embedded in $G$, it is also satisfied by $G$. As in the proof of \ref{PreretractionForA}, we can apply it to the tautological solution ${\mathbf b}$ of $\bar{{\mathbf \Sigma}}_B({\mathbf x})=1$. To see that $G \models \psi({\mathbf b}, i_1({\mathbf b}), \ldots, i_1({\mathbf b}))$, note first that since we assumed that $B$ is not the fundamental group of a closed surface of characteristic at most $-2$, the JSJ decomposition of $B$ admits at least one non surface type vertex group $R$. A morphism $\mu: B \to G$ which is $\Lambda$-related to the embedding ${\mathbf b} \mapsto {\mathbf b}$ restricts to conjugation on $R$, thus $\mu(R)$ cannot lie in $H$ since by hypothesis, no element of $B$ can be conjugated into $H$ by an element of $G$. This implies in particular that for all $k$, the tuple $\mu({\mathbf b})$ is distinct from the tuple $i_k({\mathbf b})$, so that $G \models \psi ({\mathbf b})$.

The second part of the sentence $\xi(i_1({\mathbf b}), \ldots, i_1({\mathbf b}))$ thus gives a morphism $B \to G$ which is $\Lambda$-related to the embedding $B \hookrightarrow G$ and kills one of the elements $\nu_i$: it is a non injective preretraction. 
\end{proof}

\subsection{Proof of the main result}

Assuming we have Proposition \ref{RETRACTION} and Proposition \ref{RETRACTIONPLUS}, we can now prove Theorem \ref{MainResult}.

\begin{proof}[Proof of Theorem \ref{MainResult}] Note first that if $G$ is infinite cyclic, its only elementarily embedded subgroup is itself, and the theorem is trivial. Assume thus that $G$ is a non cyclic torsion-free hyperbolic group, and let $H$ be an elementary subgroup of $G$. Note that $H$ is not abelian since $G$ is not.

We will first show that $G$ admits a structure of hyperbolic tower over a group $G'$ of the form $G'=H* B'_1* \ldots *B'_r$. Once this is done, we will show that for each $i$, the group $B'_i$ has a structure of extended hyperbolic tower over $\{1\}$, and this will give the result by Remark \ref{TransitivityOfHypTowers}.

Set $G^0=G$. We define by induction a finite sequence $G=G^0 > G^1 > \ldots > G^N$ of subgroups of $G$, such that $H$ is a free factor of $G^N$, and $G^m$ has a structure of hyperbolic floor over $G^{m+1}$ for each $m$ up to $N-1$.

Assume $G^m$ is defined, and write the Grushko decomposition of $G^m$ relative to $H$ as 
$$G^m = A^m * B_1^m * \ldots * B^m_{p_m}$$
where $A^m$ is the factor containing $H$. If $A^m=H$ we are done, so assume $A^m \neq H$.

Note that $A^m$ is freely indecomposable relative to $H$, and that, as a retract, it is a quasiconvex subgroup of $G$, and thus it is hyperbolic. Denote by $\Lambda$ the cyclic JSJ decomposition of $A^m$ relative to $H$.

All the hypotheses of Proposition \ref{PreretractionForA} for $A=A^m$ are satisfied, so we can apply it to get a non injective preretraction $A^m \to G$ with respect to $\Lambda$. Note that we do not use the fact that $A^m$ is elementarily embedded in $G$: according to Sela, the converse of Theorem \ref{MainResult} holds, so that this is true \cite{SelaPrivate}. However, we do not need this for the induction step (or indeed at any point in the proof), we only use the fact that the embedding of $H$ in $G$ is elementary.

We now apply Proposition \ref{RetractionPlus} successively to the floors of the hyperbolic tower formed by $G$ over $G^m$, and to the free product $G^m = A^m * (B_1^m * \ldots * B^m_{p_m})$, to get a non injective preretraction $A^m \to A^m$ with respect to $\Lambda$. Thus by Proposition \ref{Retraction}, we get a retraction $r: A^m \rightarrow A_0^m$ on a proper subgroup of $A^m$ such that $(A^m, A_0^m, r)$ is an extended hyperbolic floor, and the rigid group of $\Lambda$ which contains $H$ is in $A_0^m$. Since $H$ and thus $A^m_0$ is not abelian, this is in fact a hyperbolic floor.
Now define $G^{m+1}$ by 
$$G^{m+1}= A^m_0 * B^m_1 * \ldots * B^m_{p_m}.$$
Since $A^m$ has a structure of hyperbolic floor over $A^m_0$, the group $G^m$ has a structure of hyperbolic floor over $G^{m+1}$ as required.

For each $m$, the group $G^{m+1}$ is a strict retract of $G^m$, and since the groups $G^m$ are all subgroups of $G$, they are $G$-limit groups. Thus by Proposition \ref{DecSeqOfGammaLimit} the sequence is finite. At the end of this process, we get a group $G^N$ in which $H$ is a free factor, and such that $G$ has a structure of hyperbolic tower over $G^N$.

If all the other factors of the Grushko decomposition of $G^N$ relative to $H$ are surface groups of characteristic at most $-2$ or free groups, we are done. So assume that there is a factor $B$ which is neither free nor a closed surface group of characteristic at most $-2$. 
Note that as a retract of $G$, the group $G^N$ is hyperbolic, so as a free factor of $G^N$, the group $B$ is itself hyperbolic. We will now show that $B$ has a structure of extended hyperbolic tower over $\{1\}$. 

Both $H$ and $B$ are free factors of $G^N$, thus any two of their conjugates in $G^N$ intersect trivially. But since $G^N$ is a retract of $G$, any two conjugates of $H$ and $B$ in $G$ must also intersect trivially. Hence the conditions of Proposition \ref{PreretractionForB} are satisfied by $B$: by applying it, we get a non injective preretraction $B \to G$. We apply Proposition \ref{RetractionPlus} iteratively to get a non injective preretraction $B \to B$: by Proposition \ref{Retraction} there is a retraction $r$ from $B$ to a proper subgroup $B'$, such that $(B, B' , r)$ is an extended hyperbolic floor. 

Note that since $B'$ is a retract of $G$, the number of factors in its Grushko decomposition is bounded above by the rank of $G$. If any of the factors of the Grushko decomposition of $B'$ are neither free nor surface, we can repeat the process above. This terminates, as before, because the groups involved are $G$-limit groups and because the number of factors in the Grushko decomposition of our groups is bounded. We finally get that $B$ is an extended hyperbolic tower over $\{1\}$. 

Thus $G$ is an extended hyperbolic tower over $H$. As we noted in Definition \ref{HypTower}, since $H$ is not abelian, $G$ is in fact a hyperbolic tower over $H$.
\end{proof}

\subsection{The case of free groups}

In the special case where our hyperbolic group is free, we can use Theorem \ref{MainResult} to show Theorem \ref{CaseFreeGroups}. To do so, we prove first

\begin{lemma} \label{FreeHaveNoFloors} Let $F$  be a free group which admits a structure of hyperbolic tower over a subgroup $H$. Then $H$ is a free factor of $F$.
\end{lemma}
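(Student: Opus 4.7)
The strategy is to convert the tower data into a retraction $F \to H$ and then apply the classical fact that retracts of finitely generated free groups are free factors.

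Composing the retractions $r_0, r_1, \ldots, r_{m-1}$ coming from the successive hyperbolic floors produces a retraction $F \to G^m$. As a subgroup of the free group $F$, $G^m$ is itself free by Nielsen--Schreier. On the other hand, the definition of a hyperbolic tower provides a decomposition $G^m = H * F' * S_1 * \cdots * S_p$, where $F'$ is free and each $S_i$ is a closed surface group of Euler characteristic at most $-2$. Such closed surface groups are not free: they have cohomological dimension $2$, or alternatively they are one-ended, whereas every finitely generated subgroup of a free group is either trivial, infinite cyclic, or has infinitely many ends. Since each $S_i$ is a subgroup of $G^m \leq F$, this forces $p = 0$, so $G^m = H * F'$. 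Projecting onto the free factor $H$ yields a retraction $G^m \to H$, and composing with $F \to G^m$ gives a retraction $\rho : F \to H$.

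At this point I would invoke the classical theorem that every retract of a finitely generated free group is a free factor. The finite generation of $F$ is automatic in the setting where this lemma will be applied (namely $F = \F_n$, in the proof of Theorem~\ref{CaseFreeGroups}); if desired it can also be recovered from the tower structure and the finiteness of the Grushko rank. Applying this theorem to the retraction $\rho$ just constructed gives that $H$ is a free factor of $F$, as required.

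The main obstacle in this plan is precisely the appeal to the classical ``retract-implies-free-factor'' theorem for finitely generated free groups; its proof, while standard, is not entirely trivial. A more self-contained route would be to prove at each level that $G^{k+1}$ is a free factor of $G^k$, using the specific hyperbolic floor decomposition of $G^k$ together with the freeness of $G^k$, and then conclude by transitivity of ``free factor''; but this variant seems to rely on essentially the same underlying ideas.
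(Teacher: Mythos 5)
Your appeal to a ``classical theorem that every retract of a finitely generated free group is a free factor'' is the fatal gap: there is no such theorem, and the statement is false. In $F = \langle a, b\rangle$ take $w = a^2 b^3$ and define $\rho: F \to \langle w\rangle$ by $\rho(a) = w^{-1}$, $\rho(b) = w$; then $\rho(w) = w^{-2}w^3 = w$, so $\langle w\rangle$ is a retract of $F$. But $a^2 b^3$ is not primitive (the one-relator quotient $\langle a, b \mid a^2b^3\rangle$ is a torus knot group, which has nontrivial centre and is therefore not free), so $\langle w\rangle$ is a retract of $F$ that is not a free factor. More generally, $\langle w\rangle$ is a retract of a free group $F_n$ as soon as the image of $w$ in $\Z^n$ is a primitive vector, which is far weaker than primitivity of $w$ itself. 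Thus producing a retraction $F \to H$ cannot by itself prove the lemma; one must exploit the specific structure carried by a hyperbolic floor retraction. (Your preliminary reductions are fine: composing the floor retractions gives a retraction $F \to G^m$, and the closed surface factors $S_i$ must vanish since one-ended groups do not embed in a free group.)

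The paper's argument uses the floor structure directly and does not pass through any ``retract implies free factor'' statement. Assuming there is at least one floor $(F,F',r)$, it picks representatives $\gamma_1,\ldots,\gamma_r$ of the conjugacy classes of maximal boundary elements of the floor's surfaces, and invokes Lemma 4.1 of Bestvina--Feighn to decompose $F' = Z * F''$ with $Z = \langle\gamma_1\rangle$ and all other $\gamma_i$ lying in conjugates of $F''$. Composing $r$ with the map $F' \to \Z/2\Z$ that kills $F''$ and the squares in $Z$ yields a homomorphism $F \to \Z/2\Z$ sending $\gamma_1$ to the generator while every other $\gamma_i$, every commutator, and every square go to $0$; this contradicts the surface-group relation expressing the product of a surface's boundary elements as a product of commutators and squares. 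Hence the tower has no floors, and the ground-floor decomposition $F = H * F' * S_1 * \cdots * S_p$ (with $p=0$, as you observed) exhibits $H$ as a free factor. Any correct version of your plan has to replace the false retract statement by an argument of this kind about the first floor.
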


\begin{proof} If the tower has at least one floor, there exists a subgroup $F'$ of $F$, and a retraction $r: F \to F'$ so that $H \leq F'$, and $(F, F', r)$ is a hyperbolic floor built by amalgamating some surface groups with boundary to free factors of $F'$. Pick a maximal set $\{ \gamma_1, \ldots \gamma_r \}$ of pairwise non conjugate maximal boundary elements of these surface groups. We know, from the standard presentation of a surface group with boundary, that the product of the elements $\gamma_i$ is equal in $F$ to a product of commutators and squares. 

By Lemma 4.1 in \cite{BestvinaFeighnOuterLimits}, since both $F$ and $F'$ are free groups,  there is a decomposition of $F'$ as $Z * F''$ where $Z$ is an infinite cyclic group generated by one of these maximal boundary elements (say $\gamma_1$), and all the other maximal boundary elements $\gamma_i$ lie in conjugates of $F''$. Now let $\alpha: F' \to Z/2Z $ be the map which kills $F''$ and the squares in $Z$. The image of $\gamma_1$ by $\alpha \circ r$ is the generator of $Z/2Z$, and for $i \not =1$, the image of $\gamma_i$ is trivial. However, squares and commutators of $F$ have trivial image by $\alpha \circ r$, so we get a contradiction. This shows that the only structure of hyperbolic tower that a free group can have over one of its subgroup is a trivial one, where the subgroup is a free factor of the free group.
\end{proof}

\begin{proof}[Proof of Theorem \ref{CaseFreeGroups}] We only need to show that an elementary subgroup of a free group is a non abelian free factor, since the converse is given by Theorem 4 in \cite{Sel6}.

By Theorem \ref{MainResult}, if $H$ is an elementary subgroup of $F$, then $F$ has a structure of hyperbolic tower over $H$. By Lemma \ref{FreeHaveNoFloors}, $H$ is a free factor in $F$.
\end{proof}

\subsection{The case of surface groups}

All the surfaces we consider are compact and connected.

\begin{defi} \emph{(minor subsurface)} Let $\Sigma$ be a closed hyperbolic surface. Let $\Sigma_0$ be a proper connected subsurface of $\Sigma$ which is not a disk. Denote by $\Sigma^c_0$ the closure of $\Sigma \setminus \Sigma_0$. We say that $\Sigma_0$ is a minor subsurface of $\Sigma$ if 
\begin{itemize}
	\item $\Sigma^c_0$ is connected and is either a punctured torus, or has characteristic at most $-2$;
  \item $|\chi(\Sigma_0)| \leq |\chi(\Sigma)|/2$, with equality if and only if $\Sigma_0$ and $\Sigma^c_0$ are homeomorphic; 
	\item if $\Sigma_0$ is not orientable, neither is $\Sigma^c_0$.
\end{itemize}
\end{defi}

\begin{rmk} Note that if $\Sigma$ is the connected sum of three or four projective planes, it does not admit any minor subsurface.
\end{rmk}

We will show
\begin{thm} \label{SurfaceCaseBis} Let $S$ be the fundamental group of a closed hyperbolic surface $\Sigma$. If $H$ is a proper elementary subgroup of $S$, then it is a non abelian free factor of the fundamental group of a minor subsurface of $\Sigma$.
\end{thm}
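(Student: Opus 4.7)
The plan is to apply Theorem \ref{MainResult} to produce a hyperbolic tower structure on $S = \pi_1(\Sigma)$ over $H$, then to exploit the fact that $S$ is a closed surface group to pin down the structure of the tower geometrically.

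First I would invoke Theorem \ref{MainResult} to obtain a tower $S = G^0 \geq G^1 \geq \cdots \geq G^m \geq H$ with ground floor $G^m = H * F * S_1 * \cdots * S_p$; elementarity of $H$ in $S$ together with the formula $\forall x\,\forall y\, xy = yx$ shows that $H$ is non-abelian, so we are in the hyperbolic (not merely extended) tower case. I then claim that every $G^i$ for $i\geq 1$ is free. Indeed any subgroup of the closed hyperbolic surface group $S$ is either a closed hyperbolic surface subgroup of finite index or is the fundamental group of a subsurface with boundary, hence free; and the closed surface alternative is inconsistent with sitting below a hyperbolic floor, since every cyclic splitting of a surface group corresponds (by Lemma \ref{CuttingLemma} and standard surface topology) to a cutting along non null-homotopic simple closed curves into subsurfaces with boundary, whose fundamental groups are all free. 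The same argument excludes the closed surface factors $S_j$ in $G^m$, giving $G^m = H * F$ free.

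Since $G^1$ is free with a hyperbolic tower structure over $H$, the proof of Lemma \ref{FreeHaveNoFloors} (which actually shows that no free group admits a non-trivial hyperbolic floor structure) forces $G^1 = G^m$, so the tower collapses to a single floor $(S, G^1, r_0)$, with $H$ a non-abelian free factor of the free group $G^1$. The graph of groups $\Gamma$ realizing this floor is bipartite, has cyclic edge stabilizers, and has surface-type vertices carrying punctured tori or surfaces of $\chi \le -2$. By the correspondence between cyclic splittings of surface groups and essential families of simple closed curves, $\Gamma$ is realized by cutting $\Sigma$ along a finite family of disjoint simple closed curves, dividing $\Sigma$ into pieces identified with the vertex groups; I would denote by $\Sigma_0$ the union of the non-surface-type pieces and by $\Sigma_0^c$ the union of the surface-type pieces.

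The heart of the argument is to verify that $\Sigma_0$ is a minor subsurface. Using the freedom in Proposition \ref{Retraction} to require $A'$ to contain a prescribed rigid vertex group (taken to be the one containing $H$ in the cyclic JSJ of $S$ relative to $H$), and consolidating cuts whenever possible, I would argue that the decomposition can be arranged with a single non-surface piece $\Sigma_0$ and a single surface piece $\Sigma_0^c$, each connected. For the complexity bound, $r_0$ restricts on $\pi_1(\Sigma_0^c)$ to a morphism of surface groups $\pi_1(\Sigma_0^c) \to \pi_1(\Sigma_0)$ (boundary subgroups are sent to boundary subgroups because both sides of the edge amalgamation act the same way on the cut curves) with non-abelian image by definition of a hyperbolic floor. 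A non-pinching property for this restriction, coming from the JSJ-maximality built into the preretraction-to-floor construction of Proposition \ref{Retraction}, together with Lemma \ref{FiniteIndex}, forces the image to be of finite index in $\pi_1(\Sigma_0)$; Lemma \ref{ComplexitiesAndFiniteIndex} then yields $k(\Sigma_0^c) \ge k(\Sigma_0)$, which translates to $|\chi(\Sigma_0)| \leq |\chi(\Sigma_0^c)|$, i.e.\ $|\chi(\Sigma_0)| \leq |\chi(\Sigma)|/2$. The equality case of Lemma \ref{ComplexitiesAndFiniteIndex} forces the restriction to be an isomorphism of surface groups, so that $\Sigma_0^c$ is homeomorphic to $\Sigma_0$ and $\Sigma$ is their double along the common boundary.

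The main obstacle I anticipate is the claim that the floor decomposition can be arranged with a single surface piece and a single non-surface piece, and with non-pinching restriction on the surface piece. The floor coming from Theorem \ref{MainResult} is not produced in this canonical form, and one must revisit the construction of floors from preretractions (Proposition \ref{Retraction}) and the structure of the preretraction itself (Proposition \ref{PreretractionForA}) to obtain the required geometric features — perhaps by iteratively consolidating cuts or by exploiting the specific choice of preretraction forced by the elementarity of $H$ in $S$.
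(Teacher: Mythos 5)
Your reduction to a single floor is correct and is in the same spirit as the paper: since $S$ is freely indecomposable there is a first floor $(S,G^1,r_0)$, Morgan--Shalen makes the floor decomposition geometric, so $G^1$ is a free product of subsurface-with-boundary groups and hence free, and Lemma~\ref{FreeHaveNoFloors} then forces the rest of the tower to be trivial, so $H$ is a free factor of $G^1$. But the step you single out as the main obstacle is in fact the genuine gap: the floor decomposition of $S$ over $G^1$ need not have a single surface piece and a single non-surface piece, and I do not see how to ``consolidate cuts'' to force this. The ``moreover'' clause of Proposition~\ref{Retraction} only lets you put one chosen rigid vertex group \emph{into} the retract $A'$; it does not reduce the number of rigid or surface pieces of $\Gamma$. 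If there are several rigid pieces, your ``$\Sigma_0 =$ union of non-surface pieces'' need not even be connected, so it is not a candidate minor subsurface, and ``$r_0$ restricted to $\pi_1(\Sigma_0^c)$'' is not a morphism of surface groups because $\Sigma_0^c$ is disconnected. In addition, the non-pinching you invoke does not come for free from JSJ-maximality or from the definition of a preretraction: a preretraction only promises non-abelian images of surface vertex groups, and $r_0$ can very well pinch simple closed curves on a surface piece. You have to factor out a maximal pinched family ${\cal C}$ first, as the paper does.

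The paper avoids all of this by \emph{not} trying to simplify $\Gamma$. It takes $\Sigma_0$ to be just the one piece carrying the rigid vertex group $R_0\supseteq H$, picks a single adjacent surface piece $\Sigma_1$, and studies $r_1=r|_{S_1}$: after quotienting $\Sigma_1$ by a maximal pinched family ${\cal C}$, the exterior vertex group $S_1^\Gamma$ containing $Z=R_0\cap S_1$ is carried by $r'_1$ non-pinchingly into a single conjugate of some $R_j$, and because it contains $Z$ that conjugate must be $R_0$; then Lemma~\ref{HomeoLemma} (which packages Theorem~\ref{Gabai}, Lemma~\ref{FiniteIndex} and Lemma~\ref{ComplexitiesAndFiniteIndex} and requires checking injectivity, not merely finite index) gives an isomorphism of surface groups $S_1^\Gamma\cong R_0$. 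Matching boundary components then forces all $n$ boundary curves of $\Sigma_0$ to be shared with $\Sigma_1$, which is what makes $\Sigma_0^c$ connected even when $\Gamma$ has many pieces; the complexity chain $\chi(\Sigma_0)=\chi(\Sigma_1^\Gamma)\geq\chi(\Sigma_1)\geq\chi(\Sigma_0^c)$ gives the bound, with the equality case tracked through each step. Finally, the paper treats separately the surface that is the connected sum of four projective planes, which admits no minor subsurface; your sketch does not address this.
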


Note that Theorem \ref{SurfaceCaseIntro} that we gave in the introduction is a direct consequence of Theorem \ref{SurfaceCaseBis}. If $\Sigma$ is not the connected sum of four projective planes, Theorem \ref{SurfaceCaseBis} follows immediately from Theorem \ref{MainResult}, which tells us that $S$ has a structure of hyperbolic tower over $H$, and from one direction of the following equivalence result:
\begin{prop} \label{HypFloorMinorSubsurface} Let $S$ be the fundamental group of a closed hyperbolic surface $\Sigma$ which is not the connected sum of four projective planes, and let $H$ be a non trivial proper subgroup of $S$. Then $S$ admits a structure of hyperbolic tower over $H$ if and only if $H$ is a free factor of the fundamental group of a minor subsurface of $\Sigma$.
\end{prop}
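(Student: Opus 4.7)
I would prove both directions of the equivalence.

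For the ``if'' direction, given $H$ a free factor of $\pi_1(\Sigma_0)$ with $\Sigma_0$ a minor subsurface of $\Sigma$, I would construct a hyperbolic tower $S \geq \pi_1(\Sigma_0) \geq H$. The curves $\partial \Sigma_0 = \partial \Sigma^c_0$ induce a bipartite two-vertex graph of groups decomposition of $S$, with surface-type vertex group $\pi_1(\Sigma^c_0)$ (connected, and either a once-punctured torus or with $|\chi| \geq 2$ by the minor hypothesis) and non-surface vertex group $\pi_1(\Sigma_0)$, plus one edge per component of the common boundary. To make this a hyperbolic floor I need a retraction $r: S \to \pi_1(\Sigma_0)$ with non-abelian image on the surface-type vertex. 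I would build $r$ by constructing a continuous map $\phi: \Sigma^c_0 \to \Sigma_0$ restricting to the identity on the shared boundary and with non-abelian $\pi_1$-image. The key geometric input is that the minor hypothesis gives $|\chi(\Sigma^c_0)| \geq |\chi(\Sigma_0)|$ (from $\chi(\Sigma) = \chi(\Sigma_0) + \chi(\Sigma^c_0)$) together with matching orientability, so one can locate inside $\Sigma^c_0$ an embedded copy of $\Sigma_0$ sharing the boundary and collapse the remaining handles onto it. Since $\pi_1(\Sigma_0) = H * F$ by hypothesis, the tower is complete.

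For the ``only if'' direction, given a hyperbolic tower $S = G^0 > G^1 > \cdots > G^m$ over $H$ with base $G^m = H * F * S_1 * \cdots * S_p$, I observe that each $G^k$ is a subgroup of the closed surface group $S$, hence either free or a finite-index (closed) surface subgroup. Since a closed surface group admits no closed surface group as a proper free factor, one has $p = 0$ and $G^m = H * F$, and the first floor being non-trivial forces $G^1$ to be of infinite index in $S$, hence free. By Lemma \ref{CuttingLemma} applied to the Bass-Serre tree of the first floor, its graph of groups is dual to an essential family of simple closed curves $\mathcal{C}$ on $\Sigma$, and $G^1$ is the free product of fundamental groups of the non-surface pieces of $\Sigma \setminus \mathcal{C}$. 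The tower definition places $H$ in a single non-surface vertex group $\pi_1(\Sigma_1)$; applying Lemma \ref{FreeHaveNoFloors} to the sub-tower on the free group $G^1$ and then a standard Kurosh-type argument for free products shows that $H$ is a free factor of $\pi_1(\Sigma_1)$.

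The principal obstacle is verifying that $\Sigma_1$ (or a natural enlargement $\Sigma_0$) is a minor subsurface of $\Sigma$. Connectedness of $\Sigma_0$ is built in, but connectedness of the complement requires care: if $\Sigma \setminus \Sigma_1$ is disconnected I would absorb the extraneous components into $\Sigma_0$, using the fact that additional non-surface pieces would contribute separate free factors to $G^1$ that, since $H$ sits in only one of them, can be reorganized compatibly with the floor structure. For the Euler characteristic inequality $|\chi(\Sigma_0)| \leq |\chi(\Sigma)|/2$, I would apply Lemmas \ref{FiniteIndex} and \ref{ComplexitiesAndFiniteIndex} to the restriction of $r_0$ to each surface-type vertex group: by construction these restrictions are non-pinching morphisms with non-abelian image, so their images have finite index in $\pi_1(\Sigma_0)$, and the complexity comparison yields $|\chi(\Sigma^c_0)| \geq |\chi(\Sigma_0)|$, which rearranges to the required inequality, with the equality case of Lemma \ref{ComplexitiesAndFiniteIndex} giving a homeomorphism and hence the doubling structure. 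Orientability of $\Sigma^c_0$ matches because $\Sigma_0$ is an embedded subsurface of $\Sigma$, and the exclusion of the non-orientable genus-four surface mirrors the remark that it admits no minor subsurface, consistent with the corresponding absence of a non-trivial hyperbolic tower structure over a proper subgroup in that case.
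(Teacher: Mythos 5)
Your high-level strategy parallels the paper's (curves dual to the floor decomposition, complexity comparison via Lemmas \ref{FiniteIndex}, \ref{ComplexitiesAndFiniteIndex} and \ref{HomeoLemma}), but there is a genuine gap in the ``only if'' direction. You assert that ``by construction these restrictions [of the retraction $r$ to surface-type vertex groups] are non-pinching,'' yet the definition of a hyperbolic floor only requires $r$ to have \emph{non-abelian image} on each surface-type vertex group --- it does not forbid $r$ from killing elements corresponding to essential simple closed curves. In general the retraction does pinch curves, and then the hypotheses of Lemmas \ref{FiniteIndex} and \ref{ComplexitiesAndFiniteIndex} simply fail for $r|_{S_1}$ itself. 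The paper's pinching apparatus (the maps $\rho_{\cal C}$, the graph of groups $\Gamma(S_1,{\cal C})$, Remark \ref{FactorsAsNonPinching}) exists precisely to close this gap: one factors $r|_{S_1}=r_1'\circ\rho_{\cal C}$ with ${\cal C}$ a maximal set of curves pinched by $r|_{S_1}$, isolates the exterior surface piece $S_1^\Gamma$ of $\Gamma(S_1,{\cal C})$ containing $Z=R_0\cap S_1$, applies Lemma \ref{HomeoLemma} to the now non-pinching map $r_1'|_{S_1^\Gamma}$ to get $\Sigma_1^\Gamma\cong\Sigma_0$, and only then runs the Euler characteristic chain $\chi(\Sigma_0)=\chi(\Sigma_1^\Gamma)\geq\chi(\Sigma_1)\geq\chi(\Sigma_0^c)$. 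Without this factorisation your inequality step is unsupported, and in fact your connectedness-of-complement claim relies on the same isomorphism $\Sigma_1^\Gamma\cong\Sigma_0$ that you never establish.

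Two further points. In the ``if'' direction, when $\Sigma_0$ is a cylinder or a M\"obius band --- which the definition of a minor subsurface allows, and which forces $H=\pi_1(\Sigma_0)\cong\Z$ --- the target $\pi_1(\Sigma_0)$ is abelian, so no retraction with non-abelian image on $\pi_1(\Sigma_0^c)$ can exist; the paper treats this case separately by enlarging $\Sigma_0$ to a subsurface $\Sigma_1\supseteq\Sigma_0$ over which $\Sigma$ is a double (or a double connect-summed with a projective plane) and building the floor $(S,\pi_1(\Sigma_1),r)$ instead. Also, your description of building the retraction by ``locating an embedded copy of $\Sigma_0$ inside $\Sigma_0^c$ and collapsing the remaining handles onto it'' is not an operation one can perform directly: the map from $\pi_1(\Sigma_0^c)$ to $\pi_1(\Sigma_0)$ must be constructed by pinching an essential curve on $\Sigma_0^c$ and matching the resulting exterior piece to $\Sigma_0$ via a homeomorphism, not by a deformation retraction onto an embedded subsurface.
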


If $\Sigma$ is the connected sum of four projective planes, the only structures of hyperbolic floors it admits over a proper subgroup are over the trivial group, or hyperbolic floor structures over cyclic subgroups. Thus $S$ has no proper elementary subgroups. Since $\Sigma$ has no proper minor subsurfaces, the conclusions of Theorem \ref{SurfaceCaseBis} hold in this case as well. 

There remains to prove Proposition \ref{HypFloorMinorSubsurface}. Suppose $S$ has a structure of hyperbolic tower over a non trivial proper subgroup $H$. Note that $S$ is freely indecomposable, so if $H \neq S$, the tower has at least one hyperbolic floor $(S, R, r)$. The subgroup $R$ contains $H$, and has itself a structure of hyperbolic tower over $H$. Denote by $\Gamma$ the graph of groups decomposition of $S$ and by $R_0, \ldots, R_k$ the free factors of the decomposition of $R$ associated to the hyperbolic floor structure $(S, R, r)$. We may assume $H \leq R_0$. Choose representatives $S_1, \ldots, S_l$ of the conjugacy classes of surface type vertex groups of $\Gamma$. 

By  Theorem III.2.6 of \cite{MorganShalen}, the splitting $\Gamma$ of $S$ is geometric, namely it is dual to a set of non null-homotopic simple closed curves on the surface $\Sigma$. Thus each of the groups $S_i$, $R_j$ is the fundamental group of a connected subsurface of $\Sigma$ that we denote by $\Sigma_i$, $\Xi_j$ respectively. 

Let us first prove Proposition \ref{HypFloorMinorSubsurface} in a particular case
\begin{lemma} \label{SurfaceCaseNonPinching} Suppose that for some $i$, the map $r|_{S_i}$ is non pinching with respect to $\Sigma_i$ (recall Definition \ref{NonPinching}). Then $\Xi_0$ is a minor subsurface of $\Sigma$.
\end{lemma}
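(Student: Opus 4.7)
The plan is to apply the surface-group machinery of Section~\ref{SurfaceGroupsSec}---chiefly Lemmas~\ref{FiniteIndex} and~\ref{ComplexitiesAndFiniteIndex}---to the restriction $r|_{S_i}\colon S_i \to R$. The first step will localize the image of $r|_{S_i}$ inside a single free factor $R_j$ of $R = R_0 * \cdots * R_k$; the second will combine non pinching with the floor axiom that surface-type images are non abelian to conclude $r(S_i)$ has finite index in $R_j$; the third will translate the resulting complexity inequality into the Euler-characteristic bound and structural conditions defining a minor subsurface.

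For localization, I would consider the action of $S_i$ on the Bass-Serre tree $T_R$ of the free-product decomposition of $R$, obtained via $r|_{S_i}$. Boundary subgroups of $S_i$ are identified with edge groups of $\Gamma$, which lie in non-surface vertex groups $R_{j'}$ and are therefore fixed by $r$ and elliptic in $T_R$. If $r(S_i)$ failed to fix a vertex of $T_R$, Lemma~\ref{CuttingLemma} would supply an essential simple closed curve on $\Sigma_i$ whose $r$-image stabilizes an edge of $T_R$; but such edge stabilizers are trivial, contradicting non pinching. Hence $r(S_i)$ is conjugate into a single $R_j$. Combining this with the fact that $r$ fixes boundary elements of $S_i$ pointwise, the normal form in a free product forces $\Sigma_i$ to be glued along all its boundaries to a unique non-surface piece $\Xi_j$, with $r(S_i) \subseteq R_j$.

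Since $r(S_i)$ is non abelian by the floor structure and $r|_{S_i}\colon S_i \to R_j$ is non pinching, Lemma~\ref{FiniteIndex} gives that $r(S_i)$ has finite index in $R_j$. Lemma~\ref{ComplexitiesAndFiniteIndex} then yields $k(\Sigma_i) \geq k(\Xi_j)$, with equality forcing $r|_{S_i}$ to be an isomorphism of surface groups, so that $\Sigma_i \cong \Xi_j$ up to orientability. Using the identity $\rk(\pi_1) = 1 - \chi$ for a surface with non-empty boundary, this reads $|\chi(\Sigma_i)| \geq |\chi(\Xi_j)|$. To extract the minor-subsurface conclusion for $\Xi_0$, I would argue that $j = 0$: the constraint $H \subseteq R_0$ together with the closed-surface combinatorics of $\Gamma$ (and the fact that any boundary element shared with $\Xi_{j'}$ lies in $R_{j'} \cap R_j$, trivial for $j' \neq j$) pins down the factor. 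Then $\Sigma_i \subset \Xi_0^c$, giving $|\chi(\Xi_0^c)| \geq |\chi(\Sigma_i)| \geq |\chi(\Xi_0)|$, hence $|\chi(\Xi_0)| \leq |\chi(\Sigma)|/2$; equality forces $\Xi_0^c = \Sigma_i$ and $r|_{S_i}$ to be a surface-group isomorphism, whence $\Xi_0 \cong \Xi_0^c$. Connectedness of $\Xi_0^c$, the orientability compatibility, and the $\chi\leq -2$ or punctured-torus restriction on $\Xi_0^c$ should follow from the bipartism of $\Gamma$ together with the surface restrictions in Definition~\ref{HypFloor}.

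The main obstacle I expect is justifying cleanly that $j = 0$: non pinching localizes $r(S_i)$ in some $R_j$, but ruling out $j \neq 0$ requires a structural argument specific to hyperbolic floors on closed surfaces, using the connectedness of $\Sigma$ and the position of $H$ in $R_0$. Secondary difficulties will be the orientability compatibility (which asks that a non-orientable $\Xi_0$ forces $\Xi_0^c$ to be non-orientable) and the precise characterization of the equality case in the Euler-characteristic inequality.
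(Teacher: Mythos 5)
Your localization step is sound and matches the paper: the action of $S_i$ on the Bass--Serre tree of the free decomposition $R = R_0 * \cdots * R_k$, together with Lemma~\ref{CuttingLemma} and non-pinching, forces $r(S_i)$ into a conjugate of some $R_j$. But from there you pass to Lemma~\ref{FiniteIndex} (finite index) and Lemma~\ref{ComplexitiesAndFiniteIndex} (complexity inequality), which only yields $k(\Sigma_i) \geq k(\Xi_j)$, and then you try to finish with separate ad hoc arguments for ``$j = 0$'' and for connectedness of $\Xi_0^c$. This is where the proof breaks down: finite index alone does not tell you that $\Sigma_i$ and $\Xi_j$ have the same number of boundary components, so you cannot conclude that the boundary components of $\Xi_j$ are exhausted by those of $\Sigma_i$, and hence you cannot rule out other pieces of the decomposition hanging off $\Xi_j$. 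In particular, you have no way to show $\Xi_j = \Xi_0$, that $\Xi_0^c$ is connected, or that $\Xi_0^c = \Sigma_i$; the reduction to ``$j = 0$ via the position of $H$'' is a hope, not an argument, and connectedness ``should follow from bipartism'' is not substantiated.

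The missing idea is Lemma~\ref{HomeoLemma}, which applies here with essentially no extra work: $r$ is a retraction and boundary subgroups of $S_i$ are edge groups of $\Gamma$, hence already lie in $R$, so $r$ fixes them pointwise. In particular $r$ sends non-conjugate maximal boundary elements of $S_i$ to non-conjugate maximal boundary elements of $R_j$. Combined with non-pinching, Lemma~\ref{HomeoLemma} upgrades your finite-index statement to an \emph{isomorphism} of surface groups $S_i \to R_j$, not just a finite-index inclusion. This equality of complexities is what makes everything collapse: $\Sigma_i$ and $\Xi_j$ then have the same number of boundary components, all boundary components of $\Sigma_i$ are glued to $\Xi_j$ (by the retraction argument you already give), and counting forces all boundary components of $\Xi_j$ to be glued to $\Sigma_i$, so $\Sigma_i \cup \Xi_j$ is a closed subsurface of $\Sigma$ and hence equals $\Sigma$ by connectedness. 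This identifies $\Xi_j = \Xi_0$ and $\Sigma_i = \Xi_0^c$ simultaneously, gives $\chi(\Xi_0) = \chi(\Sigma)/2$, and supplies the homeomorphism $\Xi_0 \cong \Xi_0^c$ required in the equality case of the minor-subsurface definition, all in one stroke. Without the isomorphism you only have one-sided inequalities, which cannot produce these structural conclusions.
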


\begin{proof} First we show that $r(S_i)$ lies in a conjugate of one of the subgroups $R_j$ in $R$. 
Indeed, consider the $R$-tree $T_*$ with trivial edge stabilizers corresponding to the free factor decomposition $R= R_0 * \ldots * R_k$. The group $S_i$ acts on this tree via $r$, and in this action, its boundary elements are elliptic: by Lemma \ref{CuttingLemma}, we get a set of simple closed curves ${\cal C}$ on $\Sigma_i$ whose corresponding element stabilize edges of $T_*$, and thus have trivial image by $r$. Since $r|_{S_i}$ is non pinching, ${\cal C}$ is empty and $r(S_i)$ is elliptic in $T_*$. This proves the claim. Up to replacing $S_i$ by a conjugate, we may assume $r|_{S_i}: S_i \to R_j$.

Note also that $r$ sends non conjugate maximal boundary elements of $S_i$ to non conjugate maximal boundary elements of $R_j$ since it is a retraction. Lemma \ref{HomeoLemma} then implies that $r|_{S_i}$ is an isomorphism of surface groups between $S_i$ and $R_j$. 

This implies that $\Sigma_i$ and $\Xi_j$ have the same number of boundary components: by connectedness of $\Sigma$, this implies that the complement of $\Xi_j$ in $\Sigma$ is exactly $\Sigma_i$: we thus have $\Xi_j=\Xi_0$ and $\Sigma_i = \Sigma_1$. Note that by the restrictions on surfaces in the definition of a hyperbolic tower, $\Sigma_1$ is either a punctured torus, or it has characteristic at most $-2$.

Now we have $\chi(\Xi_0) = \chi(\Sigma_1)$ and $\chi(\Sigma) =\chi(\Xi_0) + \chi(\Sigma_1)$, so that $\chi(\Xi_0)= \chi(\Sigma)/2$. 
\end{proof}

If none of the maps $r|_{S_i}$ are non pinching, the idea to prove Proposition \ref{HypFloorMinorSubsurface} is to write $r|_{S_1}$ as $r' \circ \rho$, where $r'$ is non pinching with respect to some suitable surface groups contained in $\rho(S_1)$. 

\begin{defi} \label{PinchingMapSurf} \emph{(pinching map $\rho_{\cal C}$)} Let $S_1$ be the fundamental group of a surface $\Sigma_1$. Let ${\cal C}$ be an essential set of curves on $\Sigma_1$. Denote by $N({\cal C})$ the subgroup of $S_1$ normally generated by elements corresponding to the curves of ${\cal C}$. The pinching map $\rho_{\cal C}$ is the quotient map $S_1 \to S_1/N({\cal C})$.
\end{defi}
The pinching map $\rho_{\cal C}$ is injective on each boundary subgroup. If $\Delta(S_1, {\cal C})$ is the graph of groups decomposition of $S_1$ dual to ${\cal C}$, its edge groups have trivial image by $\rho_{\cal C}$, and its vertex group are quotiented by some of their boundary subgroups. 

\begin{defi}\emph{(graph of groups $\Gamma(S_1, {\cal C})$)} \label{PinchingDecSurf} Let $S_1$ be the fundamental group of a surface $\Sigma_1$. Let ${\cal C}$ be an essential set of curves on $\Sigma_1$. We denote $\Gamma(S_1, {\cal C})$ the graph of group decomposition of $\rho_{\cal C}(S_1)$ obtained by replacing, in the decomposition $\Delta(S_1, {\cal C})$ of $S_1$ dual to ${\cal C}$, each vertex and edge group by its image by $\rho_{\cal C}$. 
\end{defi}

\begin{defi} \label{Interior} \emph{(interior and exterior surfaces of $\Gamma(S_1, {\cal C})$)} A vertex of $\Gamma(S_1, {\cal C})$ is said to be interior if its fundamental group does not contain the image by $\rho_{\cal C}$ of a boundary subgroup of $S_1$. Otherwise it is said to be exterior. By extension, a vertex of the tree $T_{\Gamma}$ corresponding to $\Gamma(S_1, {\cal C})$ is said to be interior (respectively exterior) if its image by the quotient map $T_{\Gamma} \to \Gamma(S_1, {\cal C})$ is interior (respectively exterior).
\end{defi}

\begin{figure}[!ht]
\begin{center}
\input{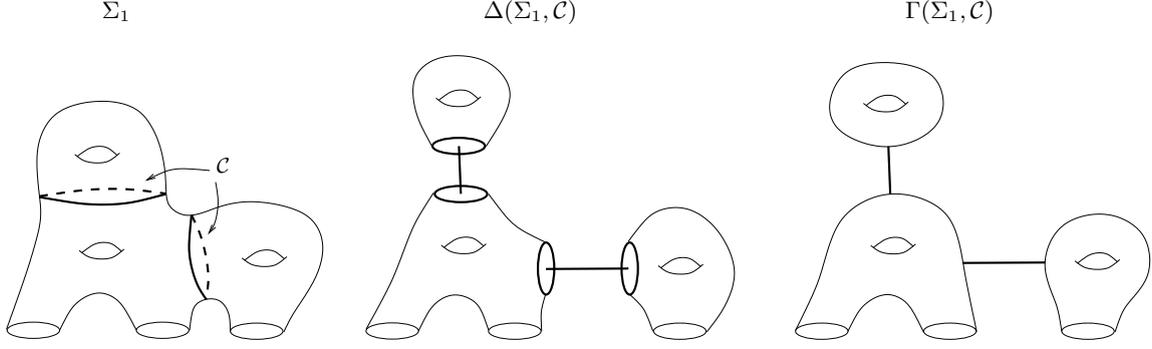}
\caption{The construction of the graph of groups $\Gamma(S_1,{\cal C})$.}
\label{Gamma}
\end{center}
\end{figure}

Note that a vertex group $S^{\Delta}_1$ of $\Delta(S_1, {\cal C})$ is the fundamental group of a subsurface $\Sigma^{\Delta}_1$ of $\Sigma_1$. The image of $S^{\Delta}_1$ by $\rho_{\cal C}$ is the fundamental group of the surface $\Sigma^{\Gamma}_1$ obtained by gluing discs to the boundary components of $\Sigma^{\Delta}_1$ corresponding to curves of ${\cal C}$ (see Figure~\ref{Gamma}). If $\Sigma^{\Gamma}_1$ corresponds to an interior vertex of $\Gamma(S_1, {\cal C})$, all the boundary components of $\Sigma^{\Delta}_1$ correspond to curves of ${\cal C}$, so the image of $S^{\Delta}_1$ by $\rho_{\cal C}$ is the fundamental group of a closed surface. On the other hand, exterior vertex groups are fundamental groups of surfaces with boundary.

\begin{rmk} \label{PinchingComplexity} If $\Sigma^{\Gamma}_1$ is a surface corresponding to a vertex of $\Gamma(S_1, {\cal C})$, then $\chi(\Sigma^{\Gamma}_1)$ is at least $\chi(\Sigma_1)$, with equality if and only if ${\cal C}$ is empty.
\end{rmk}

We can apply this construction to find a factorization of $f$.
\begin{defi} \emph{(essential set of curves pinched by $f$)} \label{KilledByFSurf} Let $S_1$ be the fundamental group of a surface $\Sigma_1$. Let $f: S_1 \to G$ be a group morphism whose restriction to each boundary subgroup is injective.

Let $\gamma$ be the homotopy class of a simple closed curve on $\Sigma$. If the elements of $S_1$ corresponding to $\gamma$ lie in the kernel of $f$, we say $\gamma$ is pinched by $f$. Let ${\cal C}$ be an essential set of curves. If all its elements are pinched by $f$, we say it is an essential set of curves pinched by $f$.
\end{defi}

\begin{rmk} \label{FactorsAsNonPinching} Suppose $f: S_1 \to G$ is injective on boundary subgroups of $S_1$. If ${\cal C}$ is a maximal essential set of curves pinched by $f$ on $\Sigma_1$, the map $f$ factors as $f' \circ \rho_{\cal C}$, and $f'$ is non pinching with respect to the surfaces corresponding to vertices of $\Gamma(S_1, {\cal C})$.
\end{rmk}

We can now prove Proposition \ref{HypFloorMinorSubsurface}.
\begin{proof}[Proof of Proposition \ref{HypFloorMinorSubsurface}] Suppose $H$ is a free factor of a minor subsurface $\Sigma_0$ of $\Sigma$, and denote by $S_0$ the fundamental group of $\Sigma_0$. If $\Sigma_0$ is a cylinder or a Mobius band, we can find a subsurface $\Sigma_1$ containing $\Sigma_0$ such that $\Sigma$ is either the double of $\Sigma_1$, or the double of $\Sigma_1$ connected sum with a projective plane. We can then easily define a retraction $r$ of $S$ on the fundamental group $S_1$ of $\Sigma_1$, it gives us the required hyperbolic floor $(S, S_1, r)$. 

Thus we may assume that $S_0$ is not abelian. Denote by $\Sigma_1$ the closure of $\Sigma - \Sigma_0$. By definition, $\Sigma_1$ is either a punctured torus, or has characteristic at most $-2$. Since $\Sigma_1$ has the same orientability as $\Sigma_0$, the same number of boundary components, and smaller or equal characteristic, we can find an essential curve $\gamma$ on $\Sigma_1$ such that the graph of groups $\Gamma(S_1, \{\gamma \})$ has a unique exterior surface $\Sigma_1^{\Gamma}$, which is homeomorphic to $\Sigma_0$ via some homeomorphism $\phi$ restricting to the identity on $\partial \Sigma_1$.

Consider the map defined to be the identity on $S_0$, and $\phi_*$ on the vertex group $S^{\Gamma}_1$ corresponding to $\Sigma^{\Gamma}_1$: it can be extended to a retraction $r: \rho_{\cal C}(S) \to S_0$.
Then $r=r' \circ \rho_{\cal C}$ is a well-defined retraction $S \to S_0$ which makes $(S, S_0, r)$ a hyperbolic floor. Thus $S$ is a hyperbolic tower over $H$.

Conversely, suppose that $S$ is a hyperbolic tower over $H$. Since $S$ is freely indecomposable, $S$ admits a structure of hyperbolic floor $(S, R, r)$ with corresponding graph of groups decomposition $\Lambda$. We know that $H$ is contained in a non surface type vertex stabilizer $R_0$ of $T_{\Lambda}$, and that $R_0$ is a hyperbolic tower based on $H$. Let $S_1$ be a surface type vertex stabilizer of $T_{\Lambda}$ adjacent to $R_0$.

By Theorem III.2.6 of \cite{MorganShalen}, $\Lambda$ is dual to a set of simple closed curves on $\Sigma$. Denote by $\Sigma_0$ and $\Sigma_1$ the subsurfaces of $\Sigma$ corresponding to $R_0$ and $S_1$ respectively, so that both $R_0$ and $S_1$ are endowed with a structure of surface group with boundary. In particular, $R_0$ is free. It is enough to show that $\Sigma_0$ is a minor subsurface of $\Sigma$, since by Lemma \ref{FreeHaveNoFloors} $H$ is a free factor of $R_0$.

Note that the intersection of $R_0$ and $S_1$ is an infinite cyclic subgroup $Z$.
The morphism $r_1 = r|_{S_1}: S_1 \to R$ restricts to the identity on $Z$. Let ${\cal C}$ be a maximal essential set of curves on $\Sigma_1$ pinched by $r_1$. The map $r_1$ factors as $r'_1 \circ \rho_{\cal C}$. Let $S^{\Gamma}_1$ be the exterior surface type vertex group of $\Gamma(S_1, {\cal C})$ containing $Z$, denote by $\Sigma^{\Gamma}_1$ the corresponding surface. Then  $r'_1|_{S^{\Gamma}_1}$ is non pinching with respect to $\Sigma^{\Gamma}_1$, has image in $R= R_0* \ldots* R_m$, and sends boundary elements of $S$ to conjugates of elements of the subgroups $R_j$: the image of $S^{\Gamma}_1$ by $r'_1$ is contained in a conjugate of a subgroup $R_j$, and since it contains $Z$, it is contained in $R_0$. Now $r'_1$ sends non conjugate maximal boundary elements of $S^{\Gamma}_1$ to non conjugate maximal boundary elements of $R_0$. We can thus apply Lemma \ref{HomeoLemma} to conclude that $r'_1|_{S^{\Gamma}_1}$ is an isomorphism of surface groups between $S^{\Gamma}_1$ and $R_0$. 

In particular, $\Sigma_0$ and $\Sigma^{\Gamma}_1$ have the same number of boundary components. This implies that the complement $\Sigma^c_0$ of $\Sigma_0$ in $\Sigma$ is connected.  

Since $\Sigma_0$ and $\Sigma^{\Gamma}_1$ are homeomorphic, we have $\chi(\Sigma_0)=\chi(\Sigma^{\Gamma}_1)$. On the other hand, by Remark \ref{PinchingComplexity}, we have $\chi(\Sigma^{\Gamma}_1) \geq \chi(\Sigma_1)$, with equality if and only if ${\cal C}$ is empty, in which case we have $\Sigma^{\Gamma}_1=\Sigma_1$ and $\Sigma_0$ and $\Sigma_1$ are homeomorphic. Now $\chi(\Sigma_1) \geq \chi(\Sigma^c_0)$, and if we have equality, $\Sigma_1=\Sigma^c_0$. On the other hand we have $\chi(\Sigma) =  \chi(\Sigma_0) + \chi(\Sigma^c_0)$, so $|\chi(\Sigma_0)| \leq |\chi(\Sigma)|/2$, and if we have equality, $\Sigma_0$ and $\Sigma^c_0$ are homeomorphic. Finally, if $\Sigma_0$ is not orientable, neither is $\Sigma^{\Gamma}_1$ and thus neither are $\Sigma_1$ and $\Sigma^c_0$. Thus $\Sigma_0$ is a minor subsurface of $\Sigma$.
\end{proof}

\section{A property of JSJ-like decompositions} \label{IsoSec}
In this section, we show that if a preretraction $G \to G$ relative to a cyclic JSJ-like decomposition of $G$ satisfies some strong injectivity conditions on the vertex groups, it must be an isomorphism. This will be used in the last section to prove Propositions \ref{Retraction} and \ref{RetractionPlus}. 

\begin{prop} \label{IsoOnVertexIsIsoOnGroup} Let $G$ be a torsion-free hyperbolic group, and let $\Lambda$ be a JSJ-like decomposition of $G$. Let $\theta: G \rightarrow G$ be a preretraction with respect to $\Lambda$ which sends surface type vertex groups of $\Lambda$ isomorphically to conjugates of themselves. Then $\theta$ is an isomorphism.
\end{prop}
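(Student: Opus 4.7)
The plan is to use the Bass-Serre tree $T = T_\Lambda$ and to show that $\theta$ induces a $G$-equivariant isomorphism between $T$ and the same tree equipped with the twisted action $g \cdot_\theta x := \theta(g) \cdot x$. First I would define an equivariant map $t: T \to T$ as follows. For each vertex $v$ of $T$, the hypothesis together with the preretraction property gives $\theta(G_v) = g_v G_v g_v^{-1}$ for some $g_v \in G$ (for non-surface vertices $g_v$ is the conjugating element from the definition of preretraction; for surface vertices this is the assumption). Since in a JSJ-like decomposition distinct vertices of $T$ have distinct stabilizers, $g_v \cdot v$ is the unique vertex of $T$ fixed by $\theta(G_v)$; set $t(v) = g_v \cdot v$. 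Equivariance $t(g \cdot v) = \theta(g) \cdot t(v)$ then holds by construction and by the uniqueness. Extend $t$ to edges by mapping each edge to the geodesic segment between the images of its endpoints.

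The next step is to show that $t$ sends adjacent vertices to adjacent vertices. Let $e = [v_1, v_2]$ be an edge, and assume $v_1$ is a non-surface type vertex (at least one endpoint of $e$ is non-surface by the JSJ-like structure). Then $\theta(G_e) = g_{v_1} G_e g_{v_1}^{-1}$ fixes both endpoints of $g_{v_1} \cdot e$. On the other hand, $\theta(G_e) \leq \theta(G_{v_2}) = g_{v_2} G_{v_2} g_{v_2}^{-1}$, which fixes only $t(v_2) = g_{v_2} \cdot v_2$. By the strong $2$-acylindricity of Remark \ref{JSJIsJSJLike}, the two edges of $T$ fixed by $\theta(G_e)$ lie in a $2$-neighborhood of a common vertex, and a quick case analysis (using that $\theta(G_{v_2})$ is a conjugate of the full vertex group $G_{v_2}$, not a proper subgroup) forces $g_{v_1} \cdot v_2 = g_{v_2} \cdot v_2$. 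Hence $t(v_1)$ and $t(v_2)$ are joined by the edge $g_{v_1} \cdot e$.

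I would then argue that $t$ is locally injective, hence an immersion of trees. At a non-surface vertex $v$, distinct edges $e, e'$ at $v$ have distinct cyclic stabilizers $G_e, G_{e'}$ in $G_v$; their images $g_v G_e g_v^{-1}$ and $g_v G_{e'} g_v^{-1}$ are again distinct cyclic subgroups of $\theta(G_v)$, so by $1$-acylindricity near surface vertices and strong $2$-acylindricity in general, $t(e) \neq t(e')$. At a surface type vertex, the hypothesis that $\theta$ restricts to an isomorphism onto a conjugate of $G_v$ together with malnormality of boundary subgroups in surface groups ensures that distinct boundary subgroups are sent to distinct boundary subgroups, giving local injectivity there as well. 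Since $T$ is simply connected, an equivariant immersion $t: T \to T$ is globally injective. Moreover, $t(T)$ is a subtree of $T$ invariant under the twisted action of $G$ and containing $t(v)$ for all $v$; since the conjugates $\theta(G_v) = g_v G_v g_v^{-1}$ together with their translates generate a subgroup acting minimally on a subtree that must equal all of $T$ (by minimality of the original $G$-action on $T$ and the fact that every vertex of $T$ is of the form $g \cdot v$ with $v$ in a transversal), $t$ is surjective.

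Having established that $t$ is a $G$-equivariant bijection, conjugation by $t$ in $\Aut(T)$ sends the standard action of $G$ to the $\theta$-action, so $\theta$ coincides in $\Aut(T)$ with conjugation by $t$. Since $\Lambda$ is non-trivial the action of $G$ on $T$ is faithful (as $G$ is torsion-free and not a vertex stabilizer), so $\theta$ is a bijection onto $G$, i.e.\ an automorphism. The main obstacle I expect is Step 2/3: verifying local injectivity of $t$ at surface type vertices and ruling out folding, which requires combining the isomorphism hypothesis on surface vertex groups with the malnormality of boundary subgroups and the acylindricity properties of JSJ-like decompositions.
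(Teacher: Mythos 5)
Your overall strategy mirrors the paper's exactly: construct a $\theta$-equivariant map $j$ (your $t$) on the Bass-Serre tree using uniqueness of vertices with a given stabilizer, show it is simplicial, injective, and surjective, and deduce that $\theta$ is an isomorphism. However, the proposal has a genuine gap at the surjectivity step, and the injectivity steps hide real work.

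The surjectivity argument is the most serious problem. You claim $t(T)$ must be all of $T$ because the $\theta(G_v)$ and their translates act minimally; but the twisted action of $G$ on $T$ has the same orbits as the sub-action of $\theta(G)$, and a priori $\theta(G)$ could be a proper subgroup of $G$ whose minimal invariant subtree is proper (this is precisely what one is trying to rule out, so one cannot invoke it). Also, when $\Lambda$ has loops, $G$ is not even generated by vertex stabilizers, so ``the conjugates of $G_v$ generate a subgroup acting minimally'' does not describe $\theta(G)$. The paper instead proves surjectivity by a local-to-global argument: if $v$ is in the image of $j$, then $G_v = \theta(G_w)$ lies in the image of $\theta$, and one uses this to show that every edge adjacent to $v$ is also in the image; since local surjectivity at every reached vertex propagates across the connected tree, $j$ is surjective. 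This propagation is exactly what your minimality claim was meant to replace, and it cannot be skipped.

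Your step 2 (adjacency is preserved) and step 3 (no folding) also compress arguments that are not routine. In the paper, ruling out $j(v)=j(w)$ for adjacent $v,w$ requires using injectivity of $\theta$ on the vertex group to pull back the normalizing element, and the no-folding argument does not end with ``distinct cyclic stabilizers have distinct images'': when two edges fold, one first deduces (from commutation of $\theta(g_e)$ and $\theta(g_{e'})$ and injectivity of $\theta$ on $G_v$) that $v$ is a $Z$-type vertex, and then derives a contradiction by a fixed-point/diameter argument in $T$. Your appeal to ``$1$-acylindricity near surface vertices and strong $2$-acylindricity in general'' does not by itself rule out foldings; the injectivity of $\theta$ on vertex groups is the essential input and it is not mentioned in your sketch. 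Finally, the last deduction (from bijectivity of $t$ to surjectivity of $\theta$) is not automatic as phrased: conjugation by $t$ only shows $\theta(G) = t G t^{-1}$ inside $\Aut(T)$, and one still has to argue that this forces $\theta(G)=G$; the paper does this using that all $G_v$ lie in $\theta(G)$ and that a non-cyclic $G_v$ fixes a unique vertex.
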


Call $Z$ type vertices the vertices of $\Lambda$ which have infinite cyclic vertex group, and call rigid type vertices the vertices which are neither of Z type, nor of surface type. We will also say that a vertex in the tree $T_{\Lambda}$ is of type $Z$ or rigid according to the type of its image by the quotient map $T_{\Lambda} \to \Lambda $. 

\begin{proof} First note that if $G$ is cyclic, the only JSJ-like decomposition it admits is the trivial one, for which the result is immediate. We may thus assume that $G$ is not cyclic. 

Denote by $T$ the Bass-Serre tree $T_{\Lambda}$ corresponding to $\Lambda$. To prove the proposition, we will construct a bijective simplicial map $j: T \rightarrow T$, such that $j$ is equivariant with respect to $\theta$.

The stabilisers of an edge $e$ of $T$ and of a vertex $v$ of $T$ in the standard action of $G$ on $T$ are denoted by $G_e$ and $G_v$ respectively. 

\paragraph{1. Construction of the map $j$ on vertices.} By hypothesis, for each vertex $v$ of $T$, there is an element $g_v$ of $G$ such that $\theta(G_v) = g_v G_v g_v^{-1}$. We set the image of $v$ by $j$ to be $g_v \cdot v$. Its stabiliser is exactly $\theta(G_v)$, and since distinct vertices of the tree corresponding to a JSJ-like decomposition have distinct stabilisers, this defines $j(v)$ uniquely.
The image of $g \cdot v$ by $j$ is the unique vertex whose stabiliser is $\theta(g)\theta(G_v)\theta(g^{-1})$, namely $\theta(g) \cdot j(v)$, and the map $v \mapsto j(v)$ is equivariant with respect to $\theta$. Note that $j(v)$ is in the orbit of $v$, and thus is of the same type.  Note also that $G_{j(v)} = \theta(G_v) \simeq G_v$.

\paragraph{2. The map $v \mapsto j(v)$ can be extended to a simplicial map $j: T \to T$.} We need to check that adjacent vertices are sent on adjacent vertices. Suppose $v,w$ adjacent: without loss of generality $G_v$ is not a surface type vertex group. The image $C = g_v (G_v \cap G_w) g_v^{-1}$ of $G_v \cap G_w$ by $\theta$ is non trivial and fixes both $j(v)$ and $j(w)$, thus $j(v)$ and $j(w)$ are at a distance at most $2$ by $2$-acylindricity. We will first show that it cannot be $2$, then that it cannot be $0$.

\begin{itemize}
\item Assume the distance is $2$. 
The vertex $u$ between $j(v)$ and $j(w)$ is a $Z$ type vertex, sot $j(v)$,$j(w)$,$v$ and $w$ are not $Z$ type vertex. Note that $C$ fixes the vertex $g_v \cdot w$, which is at a distance $1$ from $j(v)$, thus it is distinct from $j(v)$ and from $j(w)$. Its stabiliser is not cyclic, thus it is distinct from $u$. This is a contradiction. 

\item Assume now $j(v)=j(w)$, so $v$ and $w$ are in the same orbit (in particular they must be of rigid type, since they are adjacent). Let $a \in G$ be such that $w = a \cdot v$. We have $G_w=aG_va^{-1}$, and $a \notin G_v$.
We see that $\theta(a) \in \theta(G_v)$, since $\theta(a)$ stabilizes $j(v)$, so there exists $a' \in G_v$ such that $\theta(a')=\theta(a)$. 

Let $C_1:=G_v \cap G_w$, and let $C_2 \leq G_v$ be such that $C_1= aC_2a^{-1}$. Note that $C_1$ is maximal abelian in $G$ since it is the stabiliser of an edge which connects two rigid vertices. 
Now $\theta(C_2) = \theta(a^{-1}) \theta(C_1) \theta(a)$ so that $\theta(C_2)=\theta(a'^{-1} C_1 a')$. By injectivity of $\theta$ on $G_v$, $C_2=a'^{-1} C_1 a'$. Thus we have $a'a^{-1} \in C_1 \leq G_v$. But $a' \in G_v$ so we deduce $a \in G_v$. This is a contradiction.   
\end{itemize}
Thus we can extend $v \mapsto j(v)$ to a simplicial map $j: T \to T$.
 
\paragraph{3. Injectivity of j.} It is enough to show that there are no foldings, i.e. that no two edges adjacent to a same vertex are sent to the same edge by $j$. Suppose that two vertices $w, w'$ of $T$ are adjacent to a vertex $v$, and that the edges $e$ with enpoints $v,w$ and $e'$ with endpoints $v,w'$ have the same image by $j$. Let $g_e$, $g_{e'}$ be generators of the stabilisers $G_e$ of $e$, and $G_{e'}$ of $e'$ respectively.

Both $\theta(g_e)$ and $\theta(g_{e'})$ fix $j(e)$, so they commute. As $\theta$ is injective on $G_v$, the elements $g_e$ and $g_{e'}$ of $G_v$ also commute. Thus they have a common power which fixes both $e$ and $e'$: by strong $2$-acylindricity, $v$ is a $Z$ type vertex. This implies that $w$, $w'$, and $j(w)$ are not type $Z$ vertices.

There exists an element $\gamma$ of $G$ such that $w'= \gamma \cdot w$, and $\gamma$ does not lie in $G_w$. Now  $\theta(\gamma)$ stabilises $j(w)$ thus there is an element $a$ of $G_w$ such that $\theta(a)=\theta(\gamma)$. 

Let $g$ be an element of $G_v$ which stabilise both $e$ and $e'$: then $g$ is both in $G_w$ and in $\gamma G_w \gamma^{-1}$. Let $g'$ be an element of $G_w$ be such that $g = \gamma g' \gamma^{-1}$. We have 
\begin{eqnarray*}
\theta(g) &=& \theta(\gamma) \theta(g') \theta(\gamma^{-1}) \\
            &=& \theta(a) \theta(g') \theta(a^{-1}) = \theta(a g' a^{-1}).
\end{eqnarray*}
Since $\theta$ is injective on $G_w$, we deduce that $g = a g' a^{-1}$ so $g' =\gamma^{-1} g \gamma = a^{-1} g a$. This shows $[\gamma a^{-1},g] = 1$, so $\gamma a^{-1}$ preserves the set $\Fix(g)$ of fixed point of $g$. But $\Fix(g)$ has diameter $2$ and is centred on $v$, so $\gamma a^{-1}$ fixes $v$, and $\gamma a^{-1}$ lies in $G_v$. Now $a$ was chosen so that $\theta(\gamma)=\theta(a)$, so $\theta(\gamma a^{-1})=1$. By injectivity of $\theta$ on $G_v$, we get $\gamma=a$. This is a contradiction since $\gamma$ is not in $G_w$, but $a$ is.

\paragraph{4. Injectivity of $\theta$.} The injectivity of $j$ implies the injectivity of $\theta$.

\paragraph{5. Surjectivity of j.} We prove this by showing that if a vertex $v$ is in the image of $j$, all the edges adjacent to $v$ are also in the image.
Let $g^v$ be an element of $G$ such that $j(g^v \cdot v)=v$. Pick $e_1, \ldots , e_r$ some representatives of the orbits of edges adjacent to $v$. The image $e'_k$ of $g^v \cdot e_k$ by $j$ must be adjacent to $v$. 

We claim that if $e_k$ and $e_l$ lie in different orbits, so do $e'_k$ and $e'_l$. Indeed, if there exists $\alpha$ in $G_v$ such that $\alpha \cdot e'_k=e'_l$, $\alpha$ must fix $v$. As $v$ is in the image of $j$, its stabiliser is in the image of $\theta$ so there exists an element $a$ of $G$ such that $\theta(a)=\alpha$, and by equivariance of $j$ we get $j( ag^v \cdot e_k) = j(g^v \cdot e_l)$. By injectivity of $j$ this means $e_k$ and $e_l$ are in the same orbit: this proves the claim. Thus the edges $e'_k$ form a system of representative of the orbits of edges adjacent to $v$.  

Now let $e$ be an edge adjacent to $v$: there is an element $\beta$ of $G$ such that $\beta \cdot e'_k = e$ for some $k$, and $\beta$ must fix $v$. We know $G_v$ is in the image of $\theta$ so $\beta = \theta(b)$ for some $b$. Thus $j(b \cdot (g^v \cdot e_k))=\theta(b) \cdot j(g^v \cdot e_k) = \beta \cdot e'_k= e$, so $e$ is in the image of $j$. Hence all the vertices which neighbour $v$ are in the image of $j$. This local surjectivity condition implies global surjectivity of $j$.

\paragraph{6. Surjectivity of $\theta$.} Let $g$ be an element of $G$, and let $v$ be a vertex of $T$ with non cyclic stabiliser. By surjectivity of $j$ there exists $w$ such that $j(w)=v$, and $w'$ such that $j(w')= g \cdot v$, so $G_{w'}=hG_wh^{-1}$ for some $h$. We see that 
$$g G_v g^{-1} = G_{g \cdot v}= \theta(G_{w'}) = \theta(h) \theta(G_w) \theta(h^{-1})=\theta(h) G_v \theta(h^{-1}).$$ We get $G_v= g^{-1} \theta(h)G_v \theta(h)^{-1}g$. Thus $G_v$ stabilises both $v$ and $g^{-1} \theta(h) \cdot v$. Since $G_v$ is not cyclic, $v=g^{-1} \theta(h) \cdot v$ so $g^{-1}\theta(h)$ is in $G_v$. Since we know that $G_v$ is in the image of $\theta$, we get that $g$ is in the image of $\theta$.
\end{proof}

\section{From preretractions to hyperbolic floors} \label{RetractionSec}
The aim of this section is to prove Propositions \ref{Retraction} and \ref{RetractionPlus}. In both cases, we are given a preretraction $f: A \to G$ from a group $A$ which admits a JSJ-like decomposition $\Lambda$, and we want to modify it to get an extended hyperbolic floor if $G=A$, or a preretraction to a proper retract $G'$ of $G$ if $A \neq G$. Let us give an outline of the proofs.

In section \ref{PinchingDecSubsec}, we give a decomposition $\Lambda_{\cal C}$ for the group $\rho_{\cal C}(A)$ obtained by quotienting $A$ by the elements representing curves of a set ${\cal C}$ of simple closed curves on the surfaces of $\Lambda$. If ${\cal C}$ corresponds to the curves pinched by the preretraction $f: A \to G$, then $f$ will factor as $f=f_{\cal C} \circ \rho_{\cal C}$. Moreover, we obtain a free product decomposition of $\rho_{\cal C}(A)$ as $A_1 * \ldots * A_l* R_{\cal C}$, and some decompositions $\Lambda_i$ of the subgroups $A_i$ as graph of groups with surfaces with respect to which $f_{\cal C}$ is non pinching. The aim of this construction is to enable us to work with non pinching maps: indeed, the property of non pinching maps described by Lemma \ref{FiniteIndex} will be crucial.  We also classify surfaces of $\Lambda$ in three types (A), (B), and (C) according to the images of the corresponding vertex groups under the map $\rho_{\cal C}$.

The idea is that we want to work with the non pinching maps $f_{\cal C}|_{A_i}$ to build the retraction or the preretraction we are looking for. However we need to worry also about preserving non abelianity of the images of surface type vertex groups: for this, we will use the factor $R_{\cal C}$ and choose suitable maps $R_{\cal C} \to G$. In section \ref{NewPreretractionsSubsec}, given a morphism $h_A: A_{\cal C}  \to G$, we give a way to choose $h_R: A \to G$ so that $(h_A * h_R) \circ \rho_{\cal C}$ sends surface type vertex groups of type (C) to non abelian images. 

In section \ref{PreretractionsAndPinchingSubsec}, we describe what happens to surface type vertex groups of type (A) under $f_{\cal C}$. This already enables us to prove Proposition \ref{RetractionPlus} in section \ref{ProofPlusSubsec}. 

In section \ref{EllipticRefSubsec}, we show that if a surface type vertex group $S$ of $\Lambda$ intersects one of the subgroups $f_{\cal C}(A_i)$ in more than a boundary subgroup, then in fact there must be a surface type vertex group $S'$ of $\Lambda$ such that $f(S') \cap S$ has finite index in $S$. Here the fact that $f_{\cal C}$ is non pinching on the surfaces of $\Lambda_i$ is crucial.

We want to generalize this to the case where $S$ intersects $f_{\cal C}(A_1 * \ldots * A_l)$ in more than a boundary subgroup, however, this might not hold since $S$ could intersect the images by $f_{\cal C}$ of distinct factors $A_i$ in distinct boundary subgroups. We define a special class of preretractions called minimal preretractions, and show that this does hold for them. Thus, up to replacing $f$ by a minimal preretraction, we may assume that if $S$ is in the set $S(f)$ of surface type vertex groups which intersects $f_{\cal C}(A_1 * \ldots * A_l)$ in more than a boundary subgroup, there is a surface type vertex group $S'$ of $\Lambda$ such that $f(S') \cap S$ has finite index in $S$.

It will be useful to replace $f$ by a power of itself, however, a power of a preretraction is not necessarily a preretraction. In section \ref{StablePreretractionsSubsec}, we are interested in preretractions for which this holds, which we call stable. We prove two lemmas which indicates conditions on $\Lambda$ and on the initial preretraction $f$ under which one can build stable preretractions.

Section \ref{AnnoyingJSJ} is devoted to dealing with the cases which are not covered by these lemmas. We show that in such cases, we can find directly a structure of extended hyperbolic floor for $A$.

We finish the proof of Proposition \ref{Retraction} in section \ref{ProofRetractionSubsec}. By the previous sections we can assume that we have a non injective preretraction $f$ which is both stable and minimal. We deduce from this that up to replacing $f$ by a power of itself, it sends surface type vertex groups which are in $S(f)$ isomorphically onto conjugates of themselves. We then consider a subgroup $G'$ of $G$ which is spanned in some way by these vertex groups and by the non surface type vertex groups, and we show using in particular Proposition \ref{IsoOnVertexIsIsoOnGroup} that up to a last modification, $f$ restricts to the identity on $G'$. Moreover, we can ensure that the surface type vertex group which are not in $S(f)$ intersect $f(A)$ (and not just $f_{\cal C}(A_1 * \ldots * A_l)$) at most in a boundary subgroup, so that their images are in $G'$. These surfaces will be precisely the surfaces of the hyperbolic floor decomposition, and $f$ is the corresponding retraction.

\subsection{Pinching decomposition} \label{PinchingDecSubsec}

Recall from Definition \ref{EssentialSet} that an essential set of curves on a surface $\Sigma$ is a set of disjoint, two-sided non boundary parallel simple closed curves on $\Sigma$, considered up to homotopy. By extension, if $\Lambda$ is a graph of groups with surfaces, we will call a union of essential sets of curves on the surfaces of $\Lambda$ an essential set of curves on $\Gamma$. Similarly, we extend Definition \ref{PinchingMapSurf} by
\begin{defi} \emph{(pinching map $\rho_{\cal C}$)} Let $A$ be a group which admits a decomposition as a graph of groups with surfaces $\Lambda$, and let ${\cal C}$ be an essential set of curves on $\Lambda$. Let $N({\cal C})$ be the subgroup normally generated in $A$ by the elements of the surface type vertex groups of $\Lambda$ which represent curves of ${\cal C}$. The quotient map $\rho_{\cal C}: A \to A/ N({\cal C})$ is called the pinching map corresponding to ${\cal C}$. 
\end{defi}

The group $\rho_{\cal C}(A)$ admits a decomposition $\rho_{\cal C}(\Lambda)$ obtained from $\Lambda$ by replacing each vertex and edge group by its image by $\rho_{\cal C}$. 

Let $\Sigma$ be a surface of $\Lambda$ and let $S$ be one of the corresponding surface type vertex groups. Denote by ${\cal C}_{\Sigma}$ the set of curves of ${\cal C}$ which lie on $\Sigma$. Recall that we defined the pinching decomposition $\Gamma(S, {\cal C}_{\Sigma})$ of $\rho_{\cal C}(S)$ in Definition \ref{PinchingDecSurf}. Denote by $T_{\rho(S)}$ the $\rho_{\cal C}(S)$-tree associated to $\Gamma(S, {\cal C}_{\Sigma})$: its edge stabilizers are trivial. Recall that the image by $\rho_{\cal C}$ of a boundary element of $S$ fixes a vertex in $T_{\rho(S)}$, and that such a vertex (as well as its image by the quotient map $T_{\rho(S)} \to \Gamma(S, {\cal C}_{\Sigma})$) is called an exterior vertex (Definition \ref{Interior}). 

\begin{defi} \emph{(graph of groups $\Lambda_{\cal C}$)}  We refine  $\rho_{\cal C}(\Lambda)$ at the each surface type vertex with corresponding vertex group $S$ by the graph of groups $\Gamma(S, {\cal C}_{\Sigma})$. The graph of groups thus obtained is denoted by $\Lambda_{\cal C}$. It admits a structure of graph of groups with surfaces, where the surface type vertices correspond to the exterior vertices of the subgraphs of groups $\Gamma(S, {\cal C}_{\Sigma})$. The $\rho_{\cal C}(A)$-tree corresponding to $\Lambda_{\cal C}$ is denoted by $T_{\cal C}$.
\end{defi}

For each surface type vertex group $S$ of $\Lambda$, the $\rho_{\cal C}(S)$-tree $T_{\rho(S)}$ embeds simplicially and equivariantly in $T_{\cal C}$. We will identify it to its image under this embedding.

\begin{defi}\emph{(graphs of groups $\Lambda_1, \ldots, \Lambda_r$)} We denote  by $\Lambda_1, \ldots, \Lambda_r$ the maximal connected subgraph of groups of $\Lambda_{\cal C}$ whose edge groups are non trivial.
\end{defi}

\begin{figure}[!ht]
\begin{center}
\input{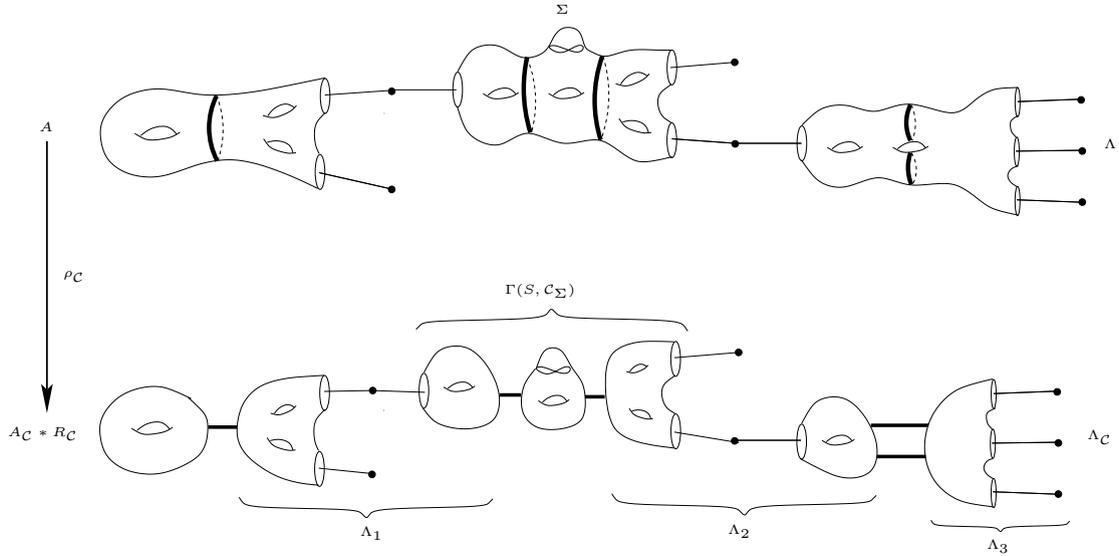}
\caption{Building the graph of groups $\Lambda_{\cal C}$ from $\Lambda$.}
\label{ConstructionFig}
\end{center}
\end{figure}

We now build the graph of groups $\bar{\Lambda}_{\cal C}$ by collapsing each subgraph $\Lambda_i$ to a point. We denote by $\bar{T}_{\cal C}$ the corresponding $\rho_{\cal C}(A)$-tree: it is obtained from $T_{\cal C}$ by collapsing all the non trivially stabilized edges, so that each subtree $T_{\rho(S)}$ of $T_{\cal C}$ embeds in $\bar{T}_{\cal C}$ via the collapse map. The graph of groups $\bar{\Lambda}_{\cal C}$ has trivial edge groups. 

\begin{defi} \emph{(pinching decomposition associated to ${\cal C}$)} Given an essential set of curves on the surfaces of $\Lambda$, a pinching decomposition associated to ${\cal C}$ is a Bass-Serre presentation $(\bar{T}^0_{\cal C}, \bar{\Lambda}^0_{\cal C})$ of $\bar{\Lambda}_{\cal C}$ such that for any subtree $T_{\rho(S)}$, the tree $\bar{T}^0_{\cal C}$ intersects at most one translate of $T_{\rho(S)}$ in more than a point.
\end{defi}

The choice of a pinching decomposition gives a free product decomposition of $\rho_{\cal C}(A)$ as:
$$A_1 * \ldots A_r * S_1 * \ldots * S_q* U_1* \ldots* U_t$$
where the subgroups $S_j$ are the stabilizers of vertices of $\bar{T}^0_{\cal C}$ which are interior vertices of one the subtrees $T_{\rho(S)}$; the subgroup $A_i$ is the stabilizer of the vertex of $\bar{T}^0_{\cal C}$ which is the image of $\Lambda_i$ by the collapse map $\Lambda_{\cal C} \to \bar{\Lambda}_{\cal C}$; and the subgroups $U_k$ are infinite cyclic and correspond to edges of $\bar{T}^0_{\cal C}$ which are not edges of $\bar{\Lambda}^0_{\cal C}$. 

The graph of groups $\Lambda_i$ can be seen as a decomposition of $A_i$ in a canonical way. In what follows, we will thus identify vertex and edge groups of $\Lambda_i$ naturally with subgroups of $A_i$.

\begin{rmk}The choice of an essential set of curves ${\cal C}$ on the surfaces of $\Lambda$ uniquely defines $\rho_{\cal C}$, the graph of groups $\Lambda_{\cal C}$ and its subgraphs of groups $\Lambda_i$, and the graph of groups $\bar{\Lambda}_{\cal C}$. The choice of a pinching decomposition determines the subgroups $A_i$, $S_j$, and $U_k$ of $\rho_{\cal C}(A)$. We will moreover denote by $A_{\cal C}$ and $R_{\cal C}$ the free factors $A_1 * \ldots *A_r$ and $S_1 * \ldots *S_q* F$ of $\rho_{\cal C}(A)$ respectively.
\end{rmk}

We now classify surfaces of $\Lambda$ in three different types: for each type we will later have a different way of ensuring non abelianity of the image of the corresponding surface groups under the various morphisms we will build.
\begin{lemma} \label{NiceSurface} Let ${\cal C}$ be an essential set of curves  on the surfaces of $\Lambda$, and choose a pinching decomposition associated to ${\cal C}$. 

Let $\Sigma$ be a surface of $\Lambda$, and let $S$ be a surface type vertex group corresponding to $\Sigma$.
One of the following is satisfied:
\begin{itemize}
\item[(A)] $\Gamma(S, {\cal C}_{\Sigma})$ is a tree of group which admits a single infinite vertex group;
\item[(B)] $S$ has two boundary subgroups $Z_1$ and $Z_2$ all of whose conjugates in $A$ intersect trivially;
\item[(C)] $S$ has a conjugate $S'$ such that $\rho_{\cal C}(S')$ intersects one of the subgroups $A_i$ in an edge group $Z$ of $\Lambda_i$, and contains also a conjugate $gZg^{-1}$ of $Z$ by an element $g$ of one of the factors $Q=S_j$ or $Q=U_k$ of $R_{\cal C}$. Moreover, there exists an epimorphism $\pi:Q \to \Z$ such that $g \notin \Ker \; \pi$.  
\end{itemize}
\end{lemma}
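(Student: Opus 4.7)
The strategy is to perform a case analysis on the combinatorial type of $\Gamma(S,{\cal C}_\Sigma)$, combined with how the subtree $T_{\rho(S)}$ sits inside $\bar T_{\cal C}$ and the strong $2$-acylindricity of $\Lambda$. I will use throughout that, since $\bar T_{\cal C}$ is a tree, two distinct $\rho_{\cal C}(S')$-orbits of exterior vertices of $T_{\rho(S')}$ cannot absorb into the same $\Lambda_i$-vertex of $\bar T_{\cal C}$: the trivial-edge path joining them inside $T_{\rho(S')}$ would otherwise produce a loop in $\bar T_{\cal C}$. The bijection of Definition \ref{GOGWithSurfaces} between edges of $\Lambda$ adjacent to $v_S$ and $S$-conjugacy classes of maximal boundary subgroups of $S$ will translate geometric configurations of exterior vertices into statements about boundary subgroups.

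If $\Gamma(S,{\cal C}_\Sigma)$ is a tree with exactly one infinite vertex group, then (A) holds and we are done. Assume (A) fails. Suppose first that $\Gamma(S,{\cal C}_\Sigma)$ is a tree admitting at least two exterior vertices $v_1, v_2$ each carrying a boundary component of $\Sigma$. The two corresponding boundary subgroups $Z_1, Z_2$ of $S$ are non $S$-conjugate (they lie in distinct vertex groups of the free product $\rho_{\cal C}(S)$), so they correspond to distinct edges $e_1, e_2$ of $\Lambda$ adjacent to $v_S$. By the tree observation above, the second endpoints of $e_1, e_2$ in $\Lambda$ are distinct vertices; strong $2$-acylindricity (property (iii) of Remark \ref{JSJIsJSJLike}) then forces $a_1 Z_1 a_1^{-1}\cap a_2 Z_2 a_2^{-1}=1$ for all $a_1,a_2\in A$, yielding (B).

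In the remaining situations, where $\Gamma(S,{\cal C}_\Sigma)$ either contains a loop or is a tree in which all boundary components of $\Sigma$ sit on a single exterior vertex, I will derive (C). In each such case there is an element $t\in \rho_{\cal C}(S')$ acting hyperbolically on $\bar T_{\cal C}$ along an axis meeting $T_{\rho(S')}$: in the loop case $t$ is the stable letter of a loop of $\Gamma(S,{\cal C}_\Sigma)$; in the tree case, the failure of (A) together with the one-exterior-vertex structure forces two exterior vertices of $T_{\rho(S')}$ coming from distinct $\rho_{\cal C}(S')$-orbits to absorb into $A$-conjugate but distinct $\Lambda_i$-vertices of $\bar T_{\cal C}$, and the translation between them plays the role of $t$. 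The pinching-decomposition hypothesis that $\bar T^0_{\cal C}$ meets $T_{\rho(S')}$ in more than a point forces the axis of $t$ to cross exactly one vertex of $\bar T^0_{\cal C}$ outside of the $\Lambda_i$-vertices; this pinpoints a factor $Q=S_j$ or $Q=U_k$ of $R_{\cal C}$, and yields the element $g\in Q$ for which $gZg^{-1}\subset\rho_{\cal C}(S')$, where $Z$ is the cyclic edge group of $\Lambda_i$ adjacent to the relevant exterior vertex. The epimorphism $\pi\colon Q\to \Z$ is the identity when $Q\cong\Z$; when $Q$ is a closed surface group, $\pi$ is any surjection to $\Z$ with $\pi(g)\neq 0$, which exists because $g$ represents the translation along the axis and corresponds to an essential loop in the quotient surface, hence is non-trivial in $H_1(Q;\Z)$.

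The main obstacle I anticipate is the bookkeeping of the element $g$ in case (C): tracking how the stable letter of a loop of $\Gamma(S,{\cal C}_\Sigma)$, or the translation element between merged exterior vertices, factorizes in the free product decomposition $\rho_{\cal C}(A)=A_1*\cdots*A_r*S_1*\cdots*S_q*U_1*\cdots*U_t$, and verifying that it lies in a single factor $Q$. This depends crucially on the pinching-decomposition condition that $\bar T^0_{\cal C}$ meets at most one translate of $T_{\rho(S)}$ in more than a point, which ensures the minimality of the crossings needed to realize the translation.
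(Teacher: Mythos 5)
The main problem is in the case you label as yielding (B). You claim that if $\Gamma(S,{\cal C}_\Sigma)$ is a tree with at least two exterior vertices, then (B) follows, and you justify the key step (``the second endpoints of $e_1,e_2$ in $\Lambda$ are distinct'') via your tree observation. But this claim is false, and the tree observation does not support it. The tree observation shows that two exterior vertices of $T_{\rho(S)}$ in distinct $\rho_{\cal C}(S)$-orbits map to distinct vertices of $\bar T_{\cal C}$; vertices of $\bar T_{\cal C}$ are obtained from $T_{\cal C}$ after refinement at the surface vertices and collapse of the subgraphs $\Lambda_i$, and say nothing directly about the second endpoints of $e_1,e_2$ in the original graph $\Lambda$. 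In particular, both edges $e_1$ and $e_2$ of $\Lambda$ adjacent to $v_S$ may terminate at one and the same cyclic vertex $v_Z$: then $Z_1$ and $Z_2$ are both carried into the cyclic group $G_{v_Z}$ by the edge embeddings, so they have conjugates with non-trivial intersection, and (B) fails. (Concretely: take $\Sigma$ of genus $2$ with two boundary components attached to a single cyclic vertex by two edges, and let ${\cal C}$ be a separating curve whose sides each carry one boundary component; this gives a tree with two exterior vertices, yet (B) fails.) The paper's proof of this sub-case does the opposite of what you do: it \emph{assumes} (B) fails, uses strong $2$-acylindricity to deduce that all edges at $v_S$ terminate at a common cyclic vertex, and then produces the element $g$ for (C) as a Bass--Serre element of an edge of $\bar T^0_{\cal C}$ outside $\bar\Lambda^0_{\cal C}$, i.e.\ as a generator of one of the factors $U_k$.

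There is a second, smaller issue in your derivation of (C) in the ``tree with a single exterior vertex'' sub-case. If all boundary components sit on a single exterior vertex, there is only one $\rho_{\cal C}(S)$-orbit of exterior vertices of $T_{\rho(S)}$, so the sentence about ``two exterior vertices of $T_{\rho(S')}$ coming from distinct $\rho_{\cal C}(S')$-orbits absorbing into $A$-conjugate but distinct $\Lambda_i$-vertices'' cannot get started. The correct source of the element $g$ here is the infinite interior vertex group $S_j$ (a closed surface group), since by the failure of (A) such an infinite interior group exists; this is how the paper's first sub-case proceeds, taking $g\in S_j$ outside the kernel of some surjection $S_j\to\Z$, which is what pins down the epimorphism $\pi$. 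You anticipated that the bookkeeping of $g$ was the hard point, and indeed the detail missing is precisely that the paper divides according to whether the extra infinite piece of $\Gamma(S,{\cal C}_\Sigma)$ is an interior surface vertex (then $g\in S_j$), a loop (then $g$ generates a $U_k$), or a second exterior vertex with (B) failing (then again $g$ generates a $U_k$, obtained from an edge of $\bar T^0_{\cal C}$ outside $\bar\Lambda^0_{\cal C}$, located via $2$-acylindricity); your trichotomy misclassifies the last two situations.
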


According to this, we will say that a surface $\Sigma$ of $\Lambda$ (and any of its corresponding vertex groups) is of type (A), (B), or (C). 

\begin{proof} Suppose $\Sigma$ is neither of type (A), nor of type (B). We see $T_{\rho(S)}$ as a subtree of $T_{\cal C}$, recall it embeds in $\bar{T}_{\cal C}$ under the collapse map. Up to replacing $S$ by a conjugate we may assume that no translate of the embedded image of $T_{\rho(S)}$ in $\bar{T}_{\cal C}$ intersects $\bar{T}^0_{\cal C}$ in more than a point. 

Pick an exterior vertex $v_e$ of $T_{\rho(S)}$ whose image $\bar{v}_e$ in $\bar{T}_{\cal C}$ lies in $\bar{\Lambda}^0_{\cal C}$.

Let $z$ be a vertex of $T_{\cal C}$ adjacent to $v_e$, such that the corresponding edge group $Z$ is the image by $\rho_{\cal C}$ of a boundary subgroup of $S$. Note that $\bar{z} = \bar{v}_e$, so $Z$ lies in $A_i$ for some $i$, and is an edge group of $\Lambda_i$. 

If $\Gamma(S, {\cal C}_{\Sigma})$ has interior vertices with infinite stabilizer, there is a vertex $v$ of $\bar{T}^0_{\cal C}\cap T_{\rho(S)}$ stabilized by one of the subgroups $S_j$ with $S_j$ infinite. This means $S_j$ is a subgroup of $\rho_{\cal C}(S)$. Infinite surface groups surject non trivially onto $\Z$: we pick an element $g$ of $S_j$ outside of the kernel of such a surjection. The conjugate $gZg^{-1}$ also lies in $\rho_{\cal C}(S)$. 

If $\Gamma(S, {\cal C}_{\Sigma})$ is not a tree of groups, there is an edge $e$ of $\bar{T}^0_{\cal C}\cap T_{\rho(S)}$ which is not in $\bar{\Lambda}^0_{\cal C}$. Let $g$ be the corresponding Bass-Serre element: it generates one of the factors $U_k$, and $g \cdot v_e$ lies in $T_{\rho(S)}$. Thus $gZg^{-1}$ also lies in $\rho_{\cal C}(S)$. 

If $\Gamma(S, {\cal C}_{\Sigma})$ is a tree of groups and all its interior vertices have finite groups, it must have at least two exterior vertices since we are not in case (A). By our choice of Bass-Serre presentation, $T_{\rho(S)} \cap \bar{T}^0_{\cal C}$ contains a vertex in each orbit of vertices of $T_{\rho(S)}$ under the action of $\rho_{\cal C}(S)$. Thus there exists an exterior vertex $w_e$ of $T_{\rho(S)}$ distinct from $v_e$ whose image $\bar{w}_e$ in $\bar{T}_{\cal C}$ lies in $\bar{T}^0_{\cal C}$. 

Now since $\Sigma$ is not of type (B), all the boundary subgroups of $S$ are conjugates in $A$, so by strong $2$-acylindricity of $\Lambda$, all the edges adjacent to the vertex corresponding to $\Sigma$ in $\Lambda$ are adjacent to a common vertex with cyclic vertex group. This implies that the exterior vertex groups of $\Gamma(S, {\cal C}_{\Sigma})$ are all adjacent to a single vertex with cyclic vertex group. Thus $w_e$ is adjacent to a vertex $g \cdot z$ for some element $g$ of $\rho_{\cal C}(A)$, and the corresponding edge group is non trivial, so $g \cdot z$ and $w_e$ have the same image under the collapse map $T_{\cal C} \to \bar{T}_{\cal C}$. We get $\bar{w}_e = g \cdot \bar{v}_e$. Since $\bar{v}_e$ and $\bar{w}_e$ lie in $\bar{T}^0_{\cal C}$, the element $g$ corresponds to an edge of $\bar{T}^0_{\cal C}$ which is not in $\bar{\Lambda}^0_{\cal C}$, so it generates one of the factors $U_k$. 
\end{proof}

\subsection{Non abelianity for surfaces of type (C)} \label{NewPreretractionsSubsec}

\begin{setting} \label{setting} Let $G$ be a torsion-free hyperbolic group which admits a decomposition as a graph of group with surfaces $\Gamma$, and let $A$ be a subgroup of $G$ endowed with a JSJ-like decomposition $\Lambda$. Assume non surface type vertex groups of $\Lambda$ (and thus edge groups of $\Lambda$) are elliptic in $\Gamma$. 
\end{setting}

Note in particular that if $A$, $\Lambda$ and $G$ satisfy the hypotheses of Proposition \ref{RetractionPlus}, and if $\Gamma$ is the graph of groups associated to the free product $G = G'*G''$, or the decomposition corresponding to the hyperbolic floor $(G, G', r)$ respectively, then we are in the setting above. Similarly, if $A$ and $\Lambda$ satisfy the hypotheses of Proposition \ref{Retraction}, then we are in the setting above with $G=A$ and $\Gamma = \Lambda$.

The following lemma shows that from a morphism $h_A: A_{\cal C} \to G$ which satisfies some weak conditions, we can build a morphism $A \to A$ which sends surface type vertex groups of type (C) to non abelian images. 
\begin{lemma} \label{TypeC}  Suppose $G$, $\Gamma$, $A$, and $\Lambda$ are as given in Setting \ref{setting}. Let ${\cal C}$ be an essential set of curves on the surfaces of $\Lambda$. Choose a pinching decomposition associated to ${\cal C}$. 

Suppose $h_A: A_{\cal C} \to G$ is a morphism  which is injective on the edge group of the graphs of groups $\Lambda_i$. Let $G_0$ be a non abelian subgroup of $G$. Then there exists a morphism $h_R: R_{\cal C} \to G_0$, such that any surface type vertex group of $\Lambda$ of type (C) has non abelian image by $(h_A * h_R) \circ \rho_{\cal C}: A \to G$. 
\end{lemma}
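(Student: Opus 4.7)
The plan is to exploit the free product structure $R_{\cal C} = S_1 * \cdots * S_q * U_1 * \cdots * U_t$ to define $h_R$ factor by factor and extend by the universal property of free products. The first step is to unpack what non-abelianity of a type (C) image requires. Fix a type (C) surface $S$ with data $(Z, g, Q, \pi_S)$ from Lemma \ref{NiceSurface}: $Z \leq A_i$ is an edge group of $\Lambda_i$ contained in $\rho_{\cal C}(S')$, and $gZg^{-1} \leq \rho_{\cal C}(S')$ with $g \in Q$ a factor of $R_{\cal C}$ admitting an epimorphism $\pi_S: Q \to \Z$ with $\pi_S(g) \neq 0$. Since non-abelianity is invariant under conjugation in $G$, I work with $\rho_{\cal C}(S')$: its image under $h_A * h_R$ contains the non-trivial cyclic subgroup $h_A(Z)$ (non-trivial by the injectivity hypothesis on edge groups of $\Lambda_i$) together with its conjugate $h_R(g) h_A(Z) h_R(g)^{-1}$. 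Writing $C_S$ for the maximal cyclic subgroup of $G$ containing $h_A(Z)$, malnormality of maximal cyclics in torsion-free hyperbolic groups forces these two cyclic subgroups to commute if and only if $h_R(g) \in C_S$. So the task reduces to arranging $h_R(g_S) \notin C_S$ for every type (C) surface $S$.

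For a fixed factor $Q$ of $R_{\cal C}$, I then collect the finite list of elements $g_{S_1}, \dots, g_{S_k} \in Q$ and maximal cyclic subgroups $C_{S_1}, \dots, C_{S_k} \leq G$ arising this way. The existence of $\pi_{S_j}$ with $\pi_{S_j}(g_{S_j}) \neq 0$ implies that each $g_{S_j}$ has non-trivial image in the free part $Q^{\text{ab}}/\text{tors} \cong \Z^m$. Identifying $\Hom(Q, \Z)$ with $\Z^m$, the functionals vanishing on a given $\bar g_{S_j}$ form a proper $\Z$-submodule; since a finite union of proper $\Z$-submodules cannot cover $\Z^m$ (apply the analogous statement over $\Q$), I can choose a single homomorphism $\pi: Q \to \Z$ with $\pi(g_{S_j}) \neq 0$ for every $j$. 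Next I pick $\gamma_Q \in G_0 \setminus \bigcup_j C_{S_j}$; this is possible because $G_0$ is non-abelian, and by B.H.\ Neumann's lemma on coverings by subgroups, a non-abelian subgroup of a torsion-free hyperbolic group cannot be the union of finitely many cyclic subgroups (otherwise one would have finite index in $G_0$, making $G_0$ virtually cyclic and hence, by commutative transitivity, abelian). I then set $h_R|_Q(x) := \gamma_Q^{\pi(x)}$, and on any factor $Q$ of $R_{\cal C}$ that is not associated with a type (C) surface, I set $h_R|_Q \equiv 1$. The universal property of free products yields a morphism $h_R: R_{\cal C} \to G_0$.

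Finally I verify the conclusion. For any type (C) surface $S$, $h_R(g_S) = \gamma_Q^{\pi(g_S)}$ with $\pi(g_S) \neq 0$; if this element lay in $C_S$, commutative transitivity in the torsion-free hyperbolic group $G$ would force $\gamma_Q$ itself into $C_S$, contradicting its choice. Hence $h_R(g_S) \notin C_S$, so the image of $\rho_{\cal C}(S')$ under $h_A * h_R$ contains two non-commuting infinite cyclic subgroups and is non-abelian, as required. The main subtlety is the simultaneous choice of $\pi$ and $\gamma_Q$ on each factor $Q$; both reduce to elementary avoidance statements (finite unions of proper submodules of $\Z^m$ do not cover, and non-abelian subgroups of torsion-free hyperbolic groups are not covered by finitely many cyclic subgroups), and once these are in hand, the verification of non-abelianity for each type (C) vertex is immediate.
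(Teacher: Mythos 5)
Your proof is correct and relies on the same core reduction as the paper: by malnormality of maximal cyclic subgroups in the torsion-free hyperbolic group $G$, it suffices to define $h_R$ factor by factor on $R_{\cal C}$ so that $h_R(g_S)$ is non-trivial and outside the maximal cyclic subgroup $C_S$ containing $h_A(Z)$. Where you diverge is in handling the possibility that several type (C) surfaces impose constraints on the \emph{same} factor $Q$, which you resolve by two avoidance arguments (choosing a single $\pi\colon Q \to \Z$ missing finitely many proper submodules of the free abelianization, and choosing $\gamma_Q \in G_0$ outside a finite union of maximal cyclics via B.H.\ Neumann's covering lemma together with torsion-free $+$ virtually cyclic $\Rightarrow$ cyclic). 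The paper sidesteps this entirely with the structural observation that each factor $Q$ of $R_{\cal C}$ (interior vertex group $S_j$ or Bass--Serre generator $U_k$) lies in exactly one subgraph $\Gamma(S, {\cal C}_\Sigma)$, hence is subject to at most one constraint, so a single choice of $\gamma$ per factor suffices and no covering lemmas are needed. On the other hand, your proof makes explicit one point the paper leaves implicit: the epimorphism $\pi$ provided by Lemma~\ref{NiceSurface} case (C) is needed precisely to guarantee $h_R(g_S) \neq 1$ when surjecting $Q$ onto $\langle \gamma_Q \rangle$, and you wire this in directly by setting $h_R|_Q = \gamma_Q^{\pi(\cdot)}$. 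So: same idea, your version is heavier than necessary, and you would streamline it considerably by noticing that the constraints are automatically disjoint across factors.
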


\begin{proof} If $\Sigma$ is a surface of $\Lambda$ of type (C), let $S$ be the corresponding vertex group such that $\rho_{\cal C}(S)$ contains both an edge group $Z$ of one of the subgraphs of groups $\Lambda_i$, and a conjugate $gZg^{-1}$ of $Z$ by an element $g$ of a factor $Q$ of $R_{\cal C}$.  

By hypothesis, $h_A(Z)$ is non trivial. It is enough to choose $h_R$ so that $h_R(g)$ is not contained in the maximal cyclic subgroup containing $h_A(Z)$: indeed, then $h_A(Z)$ and $h_R(g) h_A(Z) h_R(g)^{-1}$ will intersect trivially by malnormality of maximal cyclic subgroups in torsion-free hyperbolic groups.

Note that for each factor $Q$ of $R_{\cal C}$, the morphism $h_R|_Q$ is subjected to at most one such restriction since the vertex or the edge corresponding to $Q$ lies in only one of the subgraphs of groups $\Gamma(S, {\cal C}_{\Sigma})$. Since $G_0$ is non abelian, we can find an element lying outside of the cyclic group $h_A(Z)$. We define $h_R|_Q$ to be the map surjecting $Q$ onto the cyclic subgroup generated by such an element.
\end{proof}

We will need in fact a more precise version of this lemma in the particular case where $G=A$ and $\Lambda$ is complex enough.
\begin{lemma} \label{TypeCPlus} Suppose $A$ is a torsion-free hyperbolic group endowed with a JSJ-like decomposition $\Lambda$. Suppose moreover that if $\Lambda$ has only one non surface type vertex group, this vertex group is non cyclic. Let ${\cal C}$ be an essential set of curves on the surfaces of $\Lambda$. Choose a pinching decomposition associated to ${\cal C}$. 

Suppose $h_A: A_{\cal C} \to A$ is a morphism which sends edge groups of the graphs of groups $\Lambda_i$ injectively to edge groups of $\Lambda$. Let $G_0$ be a non abelian subgroup of $G$.

Then there exists a map $h_R: R_{\cal C} \to G_0$, such that for any surface type vertex group $S$ of $\Lambda$ of type (C), the image of $S$ by $h = (h_A * h_R) \circ \rho_{\cal C}$ contains an edge group $Z$ of $\Lambda$, and a conjugate $uZu^{-1}$ of $Z$ by an element $u$ of a non surface type vertex group $V$ such that one of the following holds
\begin{itemize}
	\item any two conjugates of $Z$ and $V$ intersect trivially; 
	\item $Z$ intersects $V$ non trivially and $\langle u \rangle \cap Z = \{1\}$. 
\end{itemize}
\end{lemma}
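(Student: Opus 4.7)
The plan is to refine the choice of $h_R$ from the proof of Lemma \ref{TypeC}. For each type (C) surface $\Sigma$ of $\Lambda$, Lemma \ref{NiceSurface} provides a conjugate $S'$ of the corresponding vertex group such that $\rho_{\cal C}(S')$ contains an edge group $Z$ of some $\Lambda_i$ together with a conjugate $gZg^{-1}$ where $g$ lies in a factor $Q$ of $R_{\cal C}$ admitting a surjection $\pi \colon Q \to \mathbb{Z}$ with $\pi(g) \neq 0$. By the injectivity hypothesis on $h_A$, the group $Z_\Lambda := h_A(Z)$ is an edge group of $\Lambda$, and the image $h(S')$ contains $Z_\Lambda$ as well as $h_R(g)\, Z_\Lambda\, h_R(g)^{-1}$. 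The construction then prescribes $h_R|_Q$ to factor through $\pi$ as $x \mapsto u_0^{\pi(x)}$, yielding $h_R(g) = u_0^{\pi(g)} \in \langle u_0 \rangle$. The problem thus reduces to choosing a non-surface vertex group $V$ of $\Lambda$ and an element $u_0 \in V \cap G_0$ satisfying either (a) every pair of conjugates of $V$ and $Z_\Lambda$ in $A$ meets trivially, or (b) $V \cap Z_\Lambda \neq \{1\}$ and $\langle u_0 \rangle \cap Z_\Lambda = \{1\}$; in case (b) the condition transfers to $u = h_R(g)$ since $\langle u \rangle \leq \langle u_0 \rangle$, and (a) is insensitive to the choice of $u_0$.

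To establish this reduction, I would case-split on the endpoints of the edge $e$ of $\Lambda$ underlying $Z_\Lambda$, using Remark \ref{JSJIsJSJLike}(ii). If $e$ has a non-cyclic non-surface endpoint $V$, condition (b) is accessible: $Z_\Lambda \leq V$ so $V \cap Z_\Lambda = Z_\Lambda \neq \{1\}$; since centralizers in the torsion-free hyperbolic $A$ are cyclic, the elements of $V$ sharing a non-trivial power with a generator of $Z_\Lambda$ are confined to a single maximal cyclic subgroup $M \subsetneq V$, and any $u_0 \in V \setminus M$ satisfies $\langle u_0 \rangle \cap Z_\Lambda = \{1\}$.

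If the only non-surface endpoint $V_0$ of $e$ is cyclic, then by (ii) the other endpoint of $e$ is a surface, and our hypothesis on $\Lambda$ supplies another non-surface vertex group $V$. By strong 2-acylindricity (Remark \ref{JSJIsJSJLike}(iii)), a non-trivial element of $V$ lying in some conjugate of $Z_\Lambda$ would fix a subtree of $T_\Lambda$ of diameter at most $2$ whose midpoint, if any, has cyclic stabilizer; since the surface endpoint of $e$ is not a possible midpoint, this forces $V$ to be adjacent in $\Lambda$ to $V_0$. Hence: if no non-surface vertex of $\Lambda$ is adjacent to $V_0$, any other non-surface $V$ satisfies (a); and if some non-surface $V$ is adjacent to $V_0$ (necessarily non-cyclic by (ii)), the subgroup $V \cap Z_\Lambda$ equals $Z_{e'} \cap Z_\Lambda$ where $e'$ is the edge connecting $V$ to $V_0$, a non-trivial cyclic subgroup inside $V_0$, and one reprises the argument of the first case to find $u_0 \in V$ with $\langle u_0 \rangle \cap Z_\Lambda = \{1\}$, giving (b).

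The main obstacle will be ensuring $u_0$ can simultaneously be chosen in $G_0 \cap V$: because $G_0$ is non-abelian it cannot be contained in any single maximal cyclic subgroup of $A$, and combined with the non-cyclicity of $V$ in each case this leaves enough room to pick an element of $V \cap G_0$ that escapes the prescribed maximal cyclic subgroup $M$; the delicate point is verifying this when the overlap $V \cap G_0$ is small, where one has to multiply candidates by elements of $G_0$ to reach $V$ while preserving the intersection condition. Once the sub-claim is settled, $h_R$ is defined by treating each factor $Q$ of $R_{\cal C}$ arising from some type (C) surface as prescribed above, and sending the remaining factors of $R_{\cal C}$ into $G_0$ arbitrarily.
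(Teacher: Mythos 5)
Your proposal follows the paper's strategy closely in spirit: extract the data $Z'$, $g$, $Q$, $\pi$ from Lemma \ref{NiceSurface}, push $Z'$ forward by $h_A$ to get an edge group $Z$ of $\Lambda$, and then choose the image of $h_R|_Q$ to be generated by a suitable element $u_0$ in a non-surface vertex group $V$, with the centralizer argument ($u_0 \notin M$ implies $\langle u_0\rangle \cap Z = 1$) identical to the paper's "element of $V$ which does not centralize $Z$." The case split is organized differently, however. The paper splits directly on whether some non-surface vertex group containing a non-trivial subgroup of $Z$ is non-cyclic (if not, take any \emph{other} non-surface $V$ and alternative (a) follows; if so, take that $V$ and alternative (b) follows). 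You instead split on the type of the endpoints of the edge underlying $Z$ and, when the only non-surface endpoint is cyclic, track conjugate intersections using strong $2$-acylindricity and adjacency in the Bass-Serre tree. Both routes land in the same place, but yours is longer and more explicitly Bass-Serre-theoretic, where the paper's formulation packages the acylindricity argument implicitly into the dichotomy.

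The one real gap in your write-up is the sub-claim you flag yourself, namely that $u_0$ can be taken in $V \cap G_0$. Your heuristic --- $G_0$ non-abelian plus $V$ non-cyclic "leaves enough room" --- is not an argument: for an arbitrary non-abelian subgroup $G_0$ of $A$, the intersection $V \cap G_0$ can very well be trivial, and there is no general way to "multiply candidates by elements of $G_0$ to reach $V$." It is worth noting that the paper's own proof does not address this either: it simply defines $h_R|_Q$ to surject onto a cyclic subgroup of $V$, without checking that this lies in $G_0$. This is unproblematic in the lemma's only application (Lemma \ref{StabilizingAPreretraction}), where $G_0 = f_A(A_{\cal C})$ contains conjugates of all non-surface vertex groups of $\Lambda$, so one can choose the Bass-Serre representatives of $V$ and of the edge $e$ inside $G_0$. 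So you have correctly identified an imprecision in the statement, but your proposal does not resolve it, and the attempted justification is not valid as written.
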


\begin{proof} Let $S$ be a surface type vertex group such that $\rho_{\cal C}(S)$ contains both an edge group $Z'$ of one of the subgraphs of groups $\Lambda_i$, and a conjugate $gZ'g^{-1}$ of $Z'$ by an element $g$ of a factor $Q$ of $R_{\cal C}$. 

Let $Z = h_A(Z')$: it is contained in $h(S)$, and by hypothesis, it is an edge group of $\Lambda$. 

Suppose first that any non surface type vertex group containing a non trivial subgroup of $Z$ is cyclic. Since $\Lambda$ is JSJ-like, there is at most one such vertex group. By hypothesis, $\Lambda$ has at least one other non surface type vertex group $V$, so that any two conjugates of $Z$ and $V$ intersect trivially. We define $h_R|_Q$ to be the map surjecting $Q$ onto an infinite cyclic subgroup of $V$. Then the first alternative is satisfied. 

Otherwise, let $V$ be a non abelian non surface type vertex group of $\Lambda$ such that $Z \cap V$ is not trivial. We define $h_R|_Q$ to be the map surjecting $Q$ onto the infinite cyclic group generated by an element of $V$ which does not centralize $Z$. Then the second alternative is satisfied. 
\end{proof}

\subsection{Non abelianity for surfaces of type (A)} \label{PreretractionsAndPinchingSubsec}

\begin{defi} \emph{(essential set of curves pinched by $f$, morphisms $f_{\cal C}$, $f_A$ and $f_R$)} Let $f: A \to G$ be a morphism. An essential set of curves ${\cal C}$ on the surfaces of $\Lambda$ is said to be an essential set of curves pinched by $f$ if $f$ factors through $\rho_{\cal C}$. We denote by $f_{\cal C}$ the morphism $\rho_{\cal C}(A) \to G$ such that $f = f_{\cal C} \circ \rho_{\cal C}$. 

For a given pinching decomposition associated to ${\cal C}$, we denote by $f_A$ and $f_R$ the restrictions of $f_{\cal C}$ to ${A_{\cal C}}$ and $R_{\cal C}$ respectively.
\end{defi}

Note that $f_A$ sends edge groups of $\Lambda_i$ isomorphically to edge groups of $\Lambda$, and non surface type vertex groups of $\Lambda_i$ isomorphically to non surface type vertex groups of $\Lambda$.

\begin{rmk} If ${\cal C}$ is a \textbf{maximal} essential set of curves pinched by $f$, then $f_{\cal C}$ is non pinching on the surfaces of $\Lambda_{\cal C}$.
\end{rmk}

The following lemma shows what happens to surface type vertex groups of the graphs of groups $\Lambda_i$ which have non abelian image under a morphism $A_{\cal C} \to G$. In particular, it will be helpful to guarantee non abelianity of surface groups of type (A) under the preretractions we will build (see Remark \ref{TypeAUnderPreretractions}).

\begin{lemma} \label{FSHatNonAbelian} Suppose $G$, $\Gamma$, $A$, and $\Lambda$ are as given in Setting \ref{setting}. Let $f$ be a morphism $A \to G$, and let ${\cal C}$ be a maximal essential set of curves pinched by $f$. Let $\hat{S}$ be a surface type vertex group of $\Lambda_{\cal C}$.

If $f_{\cal C}(\hat{S})$ is non abelian, then it contains either a finite index subgroup of a surface type vertex group of $\Gamma$, or a non abelian subgroup of a non surface type vertex group of $\Gamma$.
\end{lemma}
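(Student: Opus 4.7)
The plan is to study the action of $\hat{S}$ on the Bass-Serre tree $T_\Gamma$ of $\Gamma$ via $f_{\cal C}$. A key preliminary is that boundary subgroups of $\hat{S}$ act elliptically on $T_\Gamma$: any boundary subgroup of $\hat{S}$ is the $\rho_{\cal C}$-image of a boundary subgroup of the original surface vertex group $S$ of $\Lambda$ which gave rise to $\hat{S}$, and by the JSJ-like property of $\Lambda$ (one-sided adjacency) this lies in some non-surface vertex group $N$ of $\Lambda$; since we are working with $f$ a preretraction (the running assumption of Section \ref{RetractionSec}), $f(N)$ is a conjugate of $N$, and is therefore elliptic in $T_\Gamma$ by Setting \ref{setting}. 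The second key input is that $f_{\cal C}$ is non-pinching on $\hat\Sigma$ by maximality of ${\cal C}$.

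First, if $\hat{S}$ has a global fixed point in $T_\Gamma$, then $f_{\cal C}(\hat{S})$ is contained in a conjugate of some vertex group $V_\Gamma$ of $\Gamma$. If $V_\Gamma$ is of non-surface type, the non-abelian hypothesis immediately yields the desired non-abelian subgroup of a non-surface vertex group. Otherwise $V_\Gamma = S_\Gamma$ is a surface vertex group, and I would verify that $f_{\cal C}|_{\hat{S}}:\hat{S} \to S_\Gamma$ is a morphism of surface groups: any boundary element $\beta \in \hat{S}$ has $f_{\cal C}(\beta)$ lying in both $S_\Gamma$ and a conjugate of $N$, and if these two subgroups of $G$ fix distinct vertices of $T_\Gamma$ then their intersection stabilizes the arc between these vertices, hence lies in an edge stabilizer of $T_\Gamma$, i.e., in a boundary subgroup of $S_\Gamma$. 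Lemma \ref{FiniteIndex} (together with the assumption that the image is not abelian, which excludes being contained in a single boundary subgroup) then gives that $f_{\cal C}(\hat{S})$ has finite index in $S_\Gamma$.

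Suppose instead that $\hat{S}$ has no global fixed point in $T_\Gamma$. Lemma \ref{CuttingLemma} then applies, thanks to the ellipticity of boundary subgroups, and provides an essential set of curves ${\cal C}'$ on $\hat\Sigma$ together with an equivariant map $T_{{\cal C}'} \to T_\Gamma$. Each vertex group $\pi_1(\hat\Sigma_\alpha)$ of the dual splitting $\Delta(\hat{S},{\cal C}')$ then fixes a vertex of $T_\Gamma$, and each cyclic edge group (generated by a curve of ${\cal C}'$) is mapped into an edge stabilizer of $T_\Gamma$, hence is elliptic. If some $\pi_1(\hat\Sigma_\alpha)$ has non-abelian image under $f_{\cal C}$, I would rerun the surface-case argument of the previous paragraph on it: non-pinching restricts to the subsurface $\hat\Sigma_\alpha$, and both types of boundary components of $\hat\Sigma_\alpha$ (those inherited from $\partial\hat\Sigma$, and those which are curves of ${\cal C}'$) have elliptic image in $T_\Gamma$. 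The conclusion then follows because $f_{\cal C}(\pi_1(\hat\Sigma_\alpha)) \subseteq f_{\cal C}(\hat{S})$.

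It remains to rule out the possibility that every $\pi_1(\hat\Sigma_\alpha)$ has abelian image. Since $G$ is torsion-free hyperbolic, each such image would then be cyclic; non-pinching forces the image of each curve of ${\cal C}'$ to be non-trivial, so the cyclic images of any two $\pi_1(\hat\Sigma_\alpha)$ sharing an edge meet non-trivially, and by malnormality of maximal cyclic subgroups in a torsion-free hyperbolic group, all these cyclic images must lie in one common maximal cyclic subgroup $Z$. Any Bass-Serre stable letter $t$ associated to an HNN part of $\Delta(\hat{S},{\cal C}')$ then conjugates a non-trivial subgroup of $Z$ into another such subgroup, forcing $f_{\cal C}(t) \in Z$ again by malnormality, and hence $f_{\cal C}(\hat{S}) \subseteq Z$ would be cyclic, contradicting the non-abelianity hypothesis. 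I expect the hardest part to be the surface subcase of Step 2, namely verifying that $f_{\cal C}|_{\hat{S}}$ (and its restrictions to the $\pi_1(\hat\Sigma_\alpha)$) is indeed a morphism of surface groups; this requires a careful tree-theoretic argument handling the edge case in which a non-surface vertex group $N$ of $\Lambda$ happens to be conjugate into a surface vertex group of $\Gamma$.
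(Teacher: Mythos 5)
Your proof follows essentially the same outline as the paper's: ellipticity of the boundary subgroups of $\hat{S}$ in $T_\Gamma$ allows one to invoke Lemma \ref{CuttingLemma}, the resulting dual decomposition of $\hat{\Sigma}$ has vertex groups mapping into vertex groups of $\Gamma$ and non-trivially-mapped edge groups (by maximality of ${\cal C}$), and Lemma \ref{FiniteIndex} closes off the surface-vertex case. Your initial case split between ``global fixed point'' and ``no global fixed point'' is unnecessary --- Lemma \ref{CuttingLemma} handles both uniformly, producing an empty ${\cal C}'$ when the action has a global fixed point, which is exactly your first case.

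Where you genuinely diverge from the paper is in ruling out the possibility that every piece has abelian image. The paper's argument is: if all vertex groups of $\Delta(\hat{S},{\cal C}^+)$ had cyclic image, $f_{\cal C}(\hat{S})$ would be a generalized Baumslag--Solitar group; by a result of Forester, centralizers in such groups are the whole group, which inside a torsion-free hyperbolic $G$ forces the image to be cyclic. Your malnormality argument (adjacent vertex images chain into a common maximal cyclic $Z$ because the shared edge images are non-trivial; an HNN stable letter conjugating a non-trivial subgroup of $Z$ into $Z$ must lie in $Z$ by malnormality; hence $f_{\cal C}(\hat{S})\leq Z$) reaches the same conclusion more directly and without importing Baumslag--Solitar theory. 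Both are correct; yours is the more self-contained version of this step.

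Two remarks on precision. First, you correctly observe that the argument tacitly requires $f$ to restrict to conjugation on edge groups of $\Lambda$ (so that boundary subgroups of $\hat{S}$ have elliptic image); the lemma's statement only says ``$f$ a morphism,'' but in every application $f$ is a preretraction, and the paper's own proof already uses this when it asserts that boundary subgroups ``are sent to edge groups of $\Lambda$ by $f_{\cal C}$.'' Second, the hypothesis of Lemma \ref{FiniteIndex} that $f_{\cal C}|_{\hat{S}_0}$ is a morphism of surface groups is something the paper's proof does not verify explicitly, so your concern here is well placed. Your arc-stabilizer argument handles the generic situation. The remaining edge case you flag --- a non-surface vertex group $N$ of $\Lambda$ with $f(N)$ elliptic at a surface vertex of $T_\Gamma$ --- is ruled out by acylindricity and bipartism of $\Gamma$: a non-cyclic $N$ cannot have $f(N)$ fix a surface vertex (it would then fix two vertices of $T_\Gamma$), and a cyclic $N$ contained in a surface vertex group must already sit inside a boundary subgroup of that surface group. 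So the worry is legitimate, and worth stating explicitly, but it does resolve.
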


\begin{proof} Denote by $\hat{\Sigma}$ the surface associated to $\hat{S}$. Its boundary subgroups are sent to edge groups of $\Lambda$ by $f_{\cal C}$, so they are elliptic in  $\Gamma$ and we can apply Lemma \ref{CuttingLemma} to find an essential set of curves ${\cal C}^+$ on $\hat{\Sigma}$ such that vertex groups of the dual graph of group decomposition $\Delta(\hat{S}, {\cal C}^+)$ are sent to vertex groups of $\Gamma$. Since $f_{\cal C}$ is non pinching on $\hat{\Sigma}$, the edge groups of $\Delta(\hat{S}, {\cal C}^+)$ are sent injectively to edge groups of $\Gamma$ by $f_{\cal C}$. 

We will now show that one of the vertex groups of $\Delta(\hat{S}, {\cal C}^+)$ must have non abelian image by $f_{\cal C}$. If not, we get from $\Delta(\hat{S}, {\cal C}^+)$ a graph of group decomposition for $f_{\cal C}(\hat{S})$ all of whose vertex and edge groups are infinite cyclic, that is, $f_{\cal C}(\hat{S})$ is a generalized Baumslag-Solitar group. In such groups, the centralizer of an element is the whole group \cite{Forester}. Since $f_{\cal C}(\hat{S})$ lies in the torsion-free hyperbolic group $G$, it must be cyclic. This contradicts its non abelianity.

Let thus $\hat{S}_0$ be a vertex group of $\Delta(\hat{S}, {\cal C}^+)$ such that $f_{\cal C}(\hat{S}_0)$ is not abelian. Either $f_{\cal C}(\hat{S}_0)$ is a non abelian subgroup of a non surface type vertex group, or it is a non abelian subgroup of a surface type vertex group $S_1$ of $\Lambda$: by Lemma \ref{FiniteIndex} it has finite index in $S_1$.
\end{proof}

\begin{rmk} \label{TypeAUnderPreretractions} Suppose $G$, $\Gamma$, $A$, and $\Lambda$ are as given in Setting \ref{setting}. Let $f$ be a preretraction $A \to G$, and let ${\cal C}$ be a maximal essential set of curves pinched by $f$. Let $S$ be a surface type vertex group of type (A), and let $\hat{S}$ be a surface type vertex of $\Lambda_{\cal C}$ which lies in $\rho_{\cal C}(S)$. 

Then $f_{\cal C}(\hat{S}) = f(S)$ so it is not abelian, and by Lemma \ref{FSHatNonAbelian}, it contains either a finite index subgroup of a surface type vertex group of $\Gamma$, or a non abelian subgroup of a non surface type vertex group of $\Gamma$.
\end{rmk}

\subsection{Proof of Proposition \ref{RetractionPlus}}\label{ProofPlusSubsec}

\begin{proof}[Proof of Proposition \ref{RetractionPlus}] Let ${\cal C}$ be a maximal essential set of simple closed curves pinched by the non injective preretraction $f: A \to G$. Choose a pinching decomposition associated to ${\cal C}$. 

Let $\Gamma$ be the graph of groups decomposition associated to the free product decomposition $G = G' * G''$ or to the hyperbolic floor $(G, G', r)$ respectively; in the first case denote by $r$ the retraction of $G \to G'$ which restricts to the trivial map on $G''$. 

The morphism $h_A = r \circ f_A$ is injective on edge groups of $\Lambda$, so by Lemma \ref{TypeC} we can find a morphism $h_R: R_{\cal C} \to G'$ such that under the morphism $h = (h_A * h_R) \circ \rho_{\cal C}$, surface type vertex groups of type (C) have non abelian images.

The morphism $h$ restricts to a conjugation on non surface type vertex groups of $\Lambda$. In particular, if $S$ is a surface type vertex group of type (B), $h(S)$ contains conjugates in $G$ of edge groups $Z_1$ and $Z_2$ of $\Lambda$ whose conjugates in $A$ intersect trivially. But $A$ is a retract of $G$, so conjugates of $Z_1$ and $Z_2$ in $G$ also intersect trivially, and the subgroup they generate is not abelian. 

Let now $\Sigma$ be a surface of $\Lambda$ of type (A). By Remark \ref{TypeAUnderPreretractions}, there is a corresponding surface type vertex group $S$, such that $f(S)$ either contains a finite index subgroup of a surface type vertex group $S_1$ of $\Gamma$, or a non abelian subgroup of a non surface type vertex group $V$ of $\Gamma$. If we are in the first case, $h(S)$ contains a finite index subgroup of $r(S_1)$ which is non abelian, so it is itself non abelian. In the second case, $r$ restricts to conjugation on $V$, so $h(S)$ is not abelian either. 

Thus $h$ is a preretraction. If ${\cal C}$ is non empty, $h$ is non injective since it factors through $\rho_{\cal C}$. If ${\cal C}$ is empty, $f_A = f$ is non injective, hence so is $h$.
\end{proof}

\subsection{Elliptic refinements}\label{EllipticRefSubsec}

Let $A$ be a torsion-free hyperbolic group which admits a JSJ-like decomposition $\Lambda$. Let $f: A \to A$ be a preretraction. Let ${\cal C}$ be a maximal essential set of curves pinched by $f$, and choose a pinching decomposition associated to ${\cal C}$.

We consider the minimal subtree of $A_i$ in its action via $f_A$ on the tree $T_{\Lambda}$ corresponding to $\Lambda$. If a surface type vertex $v$ with stabilizer $S$ lies outside of this minimal subtree, in particular the intersection of $S$ with $f_A(A_i)$ is contained in a boundary subgroup. The aim of this section is to examine the case where $v$ lies in this minimal subtree.

Non surface type vertex groups of $\Lambda_i$ are sent to non surface type vertex groups of $\Lambda$ by $f_A$, so they are elliptic in the action of $A_i$ on $T_{\Lambda}$ via $f_A$. Boundary subgroups of a surface type vertex group $S$ of $\Lambda_i$ are elliptic, so by Lemma \ref{CuttingLemma}, there exists an essential set of curves ${\cal C}^+_{\Sigma}$ on $\Sigma$ such that vertex groups (respectively edge groups) of the graph of groups $\Delta(S, {\cal C}^+_{\Sigma})$ dual to ${\cal C}^+_{\Sigma}$ are sent to vertex groups (respectively edge groups) of $\Lambda$ by $f_A$.

\begin{defi} \emph{(elliptic refinement $\Lambda^+_i$)} We refine the graph of groups $\Lambda_i$ at each surface type vertex with corresponding group $S$ by the graph of groups $\Delta(S, {\cal C}^+_{\Sigma})$. The graph of groups thus obtained is called an elliptic refinement for $\Lambda_i$ with respect to $f_A$, and is denoted by $\Lambda^+_i$. It admits a structure of graph of groups with surfaces where the surface type vertices are the vertices of $\Delta(S, {\cal C}^+_{\Sigma})$ for all $S$. We denote the corresponding tree by $T^+_i$.
\end{defi}

\begin{rmk} The map $f_A$ sends edge groups of $\Lambda^+_i$ injectively to edge groups of $\Lambda$, and non surface type vertex groups of $\Lambda^+_i$ isomorphically to non surface type vertex groups of $\Lambda$. Moreover, it is non pinching on the surfaces of $\Lambda^+_i$. 
\end{rmk}

All the vertex groups of $\Lambda^+_i$ are elliptic in the action on $T_{\Lambda}$ via $f_A$, hence by Lemma \ref{CanAssumeLocallyMinimal}, there exists a locally minimal $f_A$-equivariant map $\phi_i:T^+_i \to T_{\Lambda}$.

\begin{lemma} \label{OneLambda} Let $v_S$ be a surface type vertex of $T_{\Lambda}$ with stabilizer $S$. If $v_S$ lies in $\phi_i(\hat{T}^+_i)$, then there is a surface type vertex group $S_0$ of $\Lambda^+_i$ such that $f_A(S_0)$ has finite index in $S$.
\end{lemma}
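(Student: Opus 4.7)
The plan is to locate a surface type vertex $v$ of $T^+_i$ with $\phi_i(v) = v_S$, set $S_0 = (A_i)_v$, and then apply Lemma~\ref{FiniteIndex} to the restriction $f_A|_{S_0}$.

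First I would establish the existence of a surface type preimage. Since $\phi_i$ is simplicial and equivariant and $v_S$ lies in its image, some vertex $v \in T^+_i$ maps to $v_S$. Suppose $v$ is non surface type. Then $(A_i)_v$ is a non surface vertex group of $\Lambda^+_i$; by the construction of the elliptic refinement it coincides with a non surface vertex group of $\Lambda_i$, so $f_A$ sends it isomorphically onto a conjugate $V$ of a non surface vertex group of $\Lambda$. Since $V \leq S$ and $V$ itself stabilizes some vertex $u \neq v_S$ of $T_\Lambda$, the subgroup $V$ fixes the geodesic from $u$ to $v_S$. Strong $2$-acylindricity of $\Lambda$ (Remark~\ref{JSJIsJSJLike}(iii)) combined with the JSJ-like property that distinct vertices of $T_\Lambda$ have distinct stabilizers then forces $V$, and hence $(A_i)_v$, to be infinite cyclic. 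If every preimage of $v_S$ were of this kind, then $\phi_i^{-1}(v_S)$ would be a subforest of $T^+_i$ whose vertex and edge stabilizers are all cyclic, and its setwise $A_i$-stabilizer would be a generalized Baumslag--Solitar subgroup of the torsion-free hyperbolic $A$, hence cyclic, with $f_A$-image lying in a cyclic subgroup of the non abelian $S$. I would combine this with local minimality of $\phi_i$ and the analysis of surface vertices of $T^+_i$ adjacent to $\phi_i^{-1}(v_S)$ (whose edges to the preimage carry stabilizers sent to boundary subgroups of $S$) to derive a contradiction, thus producing a surface type preimage $v$.

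Once such $v$ is in hand, set $S_0 = (A_i)_v$ and let $\Sigma_0$ be its surface. The edges of $\Lambda^+_i$ adjacent to $v$ have stabilizers equal to boundary subgroups of $S_0$ (graph of groups with surfaces structure), and $f_A$ is injective on these. Their images lie in $S$ because they fix $v_S$; since edges of $\Lambda$ adjacent to the surface vertex $v_S$ have boundary subgroups of $S$ as stabilizers (JSJ-like property), these images are boundary subgroups of $S$. Hence $f_A|_{S_0}\colon S_0 \to S$ is a morphism of surface groups, and by the defining property of $\Lambda^+_i$ it is non pinching on $\Sigma_0$. Lemma~\ref{FiniteIndex} then forces $f_A(S_0)$ either to have finite index in $S$ or to be contained in a boundary subgroup of $S$.

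The residual task is to exclude the second alternative. Since $S$ is non abelian, $f_A|_{S_0}$ would factor through a cyclic quotient; non pinching then restricts $\Sigma_0$ to carry no non-boundary-parallel simple closed curve with non trivial image, which for a surface with boundary leaves only degenerate candidates (annulus, M\"obius band or pair of pants). These are handled by observing that such a degenerate $S_0$ cannot supply a preimage forced on us by local minimality of $\phi_i$, or alternatively by reselecting $v$ among the preimages of $v_S$ so as to avoid them. The main obstacle I foresee is precisely this last step: combining strong $2$-acylindricity, non pinching, and local minimality cleanly enough to exclude all-cyclic or small-complexity preimage configurations without a case explosion.
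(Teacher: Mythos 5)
Your overall plan — locate a surface type preimage $v$ of $v_S$ and feed $f_A|_{(A_i)_v}$ into Lemma~\ref{FiniteIndex} — is the right one, but you miss the single observation that makes the proof short: \emph{local minimality} of $\phi_i$ at $v$ is what rules out the alternative $f_A(S_0)\leq B$ for a boundary subgroup $B$ of $S$. If $f_A(S_0)$ were contained in $B=A_{e'}$ for some edge $e'$ adjacent to $v_S$, then all edges of $T^+_i$ adjacent to $v$ would (by $1$-acylindricity at $v_S$) be forced by $\phi_i$ into $\overline{e'}$, and $(A_i)_{e'}\supseteq (A_i)_v$, directly contradicting Definition~\ref{MinEquivariant}. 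Your replacement for this step — claiming that non-pinching plus a cyclic target restricts $\Sigma_0$ to an annulus, M\"obius band or pair of pants, and then "reselecting $v$" — does not work: surface groups with boundary that admit a non-pinching homomorphism into an abelian group are exactly the punctured spheres, punctured projective planes and punctured Klein bottles (as the paper itself notes in the proof of Lemma~\ref{SumBoundaryExponents}), and several of these do occur as pieces of an elliptic refinement; your plan gives no mechanism for excluding them.

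The first half of your argument also diverges from the paper's. The paper does not need to "locate" a surface type preimage: it shows that \emph{every} preimage of $v_S$ which is not a surface type vertex produces a contradiction, because such a point (a non-surface vertex with cyclic stabilizer, or an interior point of an edge of the subdivided $T^+_i$) has a non-trivially stabilized open neighbourhood whose $\phi_i$-image is confined to a single edge by $1$-acylindricity at $v_S$, contradicting local minimality. Your alternative — passing to the whole preimage forest, invoking a generalized Baumslag--Solitar subgroup, and then gesturing at "combining with local minimality" — is not carried to a contradiction and omits the interior-of-edge case entirely. In short, the gap is the failure to use local minimality where it does the work: it both eliminates the non-surface preimages and excludes the boundary-subgroup alternative.
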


\begin{proof} Recall that a JSJ-like decomposition is $1$-acylindrical next to surface type vertices, that is, if an element stabilizes two distinct edges adjacent to $v_S$, it must be trivial. Let $v$ be a point of $T^+_i$ such that $\phi_i(v) = v_S$. 

Suppose first that $v$ is a surface type vertex of $T^+_i$. Then $f_A$ is a morphism of surface groups $S_v \to S$ between the surface group $S_v$ corresponding to $v$ and $S$. By local minimality of $\phi_i$, the image $f_A(S_v)$ is not contained in a boundary subgroup of $S$. Since $f_A$ is non pinching on the surface type vertex groups of $\Lambda^+_i$,  by Lemma \ref{FiniteIndex} the subgroup $f_A(S_v)$ must have finite index in $S$. 

Since $f_A$ sends non surface type vertex groups of $\Lambda^+_i$ isomorphically to non surface type vertex groups of $\Lambda$, if $v$ is a non surface type vertex of $T^+_i$, it must have cyclic stabilizer $Z$. Thus $v$ has an open neighborood which is stabilized by a non trivial element. Similary, if $v$ is a point of the interior of an edge of $T^+_i$, it has a non trivially stabilized open neighborood. 

By $1$-acylindricity next to $v_S$, the image by $\phi_i$ of this open neighborood of $v$ is contained in an edge of $T$. But this contradicts local minimality of $\phi_i$. 
\end{proof}

We can deduce from this
\begin{lemma} \label{ProperSubtree} If $f$ is not injective, the subtree $\phi_i(T^+_i)$ is a proper subtree of $T$.
\end{lemma}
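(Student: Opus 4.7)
The plan is to argue by contradiction: assume $\phi_i(T^+_i) = T$ and derive that $f$ must be an isomorphism, contradicting the hypothesis of non-injectivity. Applying Lemma \ref{OneLambda} to every $A$-orbit of surface type vertex of $T$, I obtain, for each surface type vertex group $S$ of $\Lambda$, a surface type vertex group $S_0$ of $\Lambda^+_i$ with $f_A(S_0)$ of finite index in $S$; Lemma \ref{ComplexitiesAndFiniteIndex} then gives $k(\Sigma_0) \geq k(\Sigma)$, with equality if and only if $f_A|_{S_0}$ is an isomorphism of surface groups.

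Next, I would use that the non-surface type vertex groups of $\Lambda^+_i$ are precisely those non-surface type vertex groups of $\Lambda$ lying in the subgraph $\Lambda_i$ (they are untouched by $\rho_{\cal C}$), on which $f_A$ restricts to a conjugation since $f$ is a preretraction. Combined with the local minimality of $\phi_i$ and the $1$-acylindricity of $\Lambda$ next to surface vertices, this shows that $f_A(A_i)$ acts cocompactly on $T$ with vertex stabilizers that are either full non-surface vertex groups of $\Lambda$ or finite-index subgroups of surface vertex groups; in particular $f_A(A_i)$ has finite index in $A$. I would then perform an Euler-characteristic bookkeeping that compares the total complexity of the surfaces of $\Lambda^+_i$ to that of the surfaces of $\Lambda$, tracking how cutting along ${\cal C}$ (which does not change $\chi$), capping boundary components to pass from $\Delta(S,{\cal C}_\Sigma)$ to $\Gamma(S,{\cal C}_\Sigma)$ (which increases $\chi$ by one per cap), and further refining by an essential set of curves for $\Lambda^+_i$ each affect the sum $\sum k(\Sigma_0)$. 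The lower bound $k(\Sigma_0) \geq k(\Sigma)$ should then force all of these operations to be trivial: ${\cal C}$ is empty, the elliptic refinement is empty, $\Lambda_i = \Lambda$, and each $f_A|_{S_0}$ is an isomorphism of surface groups by Lemma \ref{HomeoLemma}, applied to the non-pinching morphism with finite-index image and using that $f$ sends non-conjugate maximal boundary elements to non-conjugate ones because it acts by conjugation on the adjacent cyclic edge groups.

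Under this rigidity we have $A_i = A$, $f_A = f$, and $f$ restricts to an isomorphism onto a conjugate of itself on every vertex group of $\Lambda$. Proposition \ref{IsoOnVertexIsIsoOnGroup} then forces $f$ to be an isomorphism, contradicting non-injectivity. The main obstacle I expect in this plan is the complexity bookkeeping: one must show that $\sum k(\Sigma_0)$ over the surfaces of $\Lambda^+_i$ cannot exceed $\sum k(\Sigma)$ over the surfaces of $\Lambda$ unless every intermediate operation is trivial, which requires a careful partition argument on each surface of $\Lambda$ together with the observation that the free factor $R_{\cal C}$ and the other subgraphs $\Lambda_j$ for $j \neq i$ absorb a definite amount of the total complexity whenever ${\cal C}$ is nonempty or $\Lambda_i$ is a proper subgraph of $\Lambda_{\cal C}$.
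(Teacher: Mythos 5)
Your opening is exactly the paper's: assume $\phi_i(T^+_i)=T$, apply Lemma \ref{OneLambda} to every surface type vertex group $S$ of $\Lambda$ to get $S_0$ in $\Lambda^+_i$ with $f_A(S_0)$ of finite index in $S$, and invoke Lemma \ref{ComplexitiesAndFiniteIndex}. But from there you diverge into a ``total complexity'' bookkeeping, and this is where the gap lies. The quantity $k(\Sigma)$ is a lexicographically-ordered pair, not a number, so ``$\sum k(\Sigma_0)$'' is not well-formed; and even if one switches to summing $|\chi|$ or ranks, the inequality ``total complexity of surfaces of $\Lambda^+_i$ is at most that of $\Lambda$, with equality iff ${\cal C}=\emptyset$, $\Lambda_i=\Lambda$, and ${\cal C}^+=\emptyset$'' is not the kind of statement that falls out of the cut/cap accounting you sketch: pieces produced by cutting can have non-negative Euler characteristic before capping, the interior closed pieces of $\Gamma(S,{\cal C}_\Sigma)$ need not contribute a definite amount, and the other subgraphs $\Lambda_j$ and $R_{\cal C}$ need not contain any surfaces at all, so they need not ``absorb'' complexity. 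You also need the correspondence $\Sigma\mapsto\Sigma_0$ to be injective before you can compare sums; this is true (by $2$-acylindricity, since $f_A(S_0)$ is a non-abelian subgroup of $S$ and surface type vertex groups intersect at most cyclically), but you do not address it. You rightly flag the bookkeeping as the main obstacle, and it is indeed the point where the argument is not complete.

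The paper's actual route avoids summing entirely. For each surface $\Sigma$ of $\Lambda$, one traces the chain $\Sigma_0\subset\Sigma_1\subset\Sigma_2$, where $\Sigma_0$ is a piece of the elliptic refinement of a surface $\Sigma_1$ of $\Lambda_{\cal C}$, and $\Sigma_1$ is an exterior piece of $\Gamma(S_2,{\cal C}_{\Sigma_2})$ for a surface $\Sigma_2$ of $\Lambda$. This yields $k(\Sigma)\leq k(\Sigma_0)\leq k(\Sigma_1)\leq k(\Sigma_2)$ and hence a self-map $j:\Sigma\mapsto\Sigma_2$ of the finite set of surfaces of $\Lambda$ that increases complexity. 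The crucial observation is that when $k(j(\Sigma))=k(\Sigma)$ all inequalities collapse, forcing ${\cal C}_{\Sigma_2}={\cal C}^+_{\Sigma_1}=\emptyset$ and $f(S_2)=S$, which shows $\Sigma$ is the unique $j$-preimage of $j(\Sigma)$. A pigeonhole argument on the finite set (starting from maximal complexity) then forces $j$ to preserve complexity everywhere, so $f$ maps surface type vertex groups isomorphically onto surface type vertex groups of $\Lambda$. This is a genuinely cleaner mechanism than a global sum.

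A further gap: at the end you claim that $f$ restricts to an isomorphism onto a conjugate of itself on every vertex group of $\Lambda$ and then apply Proposition \ref{IsoOnVertexIsIsoOnGroup} directly to $f$. What the argument actually gives is that $f$ permutes the (conjugacy classes of) surface type vertex groups; it need not fix each one. Proposition \ref{IsoOnVertexIsIsoOnGroup} requires each surface type vertex group to be sent to a conjugate of itself, so one must first pass to a suitable power $f^k$ of $f$, as the paper does, before invoking it. Finally, the intermediate claim that $f_A(A_i)$ has finite index in $A$ is neither justified nor needed; cocompactness of the $f_A(A_i)$-action on $T$ does not by itself yield finite index.
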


\begin{proof} Suppose not. By Lemma \ref{OneLambda}, for any surface type vertex group $S$ of $\Lambda$, there is a surface type vertex group $S_0$ of $\Lambda^+_i$ such that $f_A(S_0)$ has finite index in $S$. Thus by Lemma \ref{ComplexitiesAndFiniteIndex}, the complexity of the surface $\Sigma$ corresponding to $S$ is at most that of the surface $\Sigma_0$ corresponding to $S_0$, and if we have equality the map $f_A$ sends $S_0$ isomorphically onto $S$. Now $\Sigma_0$ is a surface of $\Delta(S_1, {\cal C}^+_{\Sigma_1})$ for some vertex group $S_1$  of $\Lambda_{\cal C}$ with corresponding surface $\Sigma_1$, and in turn $\Sigma_1$ is a surface of $\Gamma(S_2, {\cal C}_{\Sigma_2})$  for some vertex group $S_2$  of $\Lambda$ with corresponding surface $\Sigma_2$. We have $k(\Sigma_0) \leq k(\Sigma_1)$, with equality if and only if ${\cal C}^+_{\Sigma_1}$ is empty, and $k(\Sigma_1) \leq k(\Sigma_2)$, with equality if and only if ${\cal C}_{\Sigma_2}$ is empty. 

Thus we have $k(\Sigma) \leq k(\Sigma_2)$, and if we have equality $f$ sends $S_2$ isomorphically onto $S$. The map $j: \Sigma \mapsto \Sigma_2$ is a map from the set of surfaces of $\Lambda$ to itself which increases complexity, and if $k(\Sigma_2) = k(\Sigma)$ no other surface is mapped by $j$ to $\Sigma_2$ since then $f(S_2)= S$.

The set of surfaces of $\Lambda$ being finite, we conclude that we must have $k(\Sigma)=k(\Sigma_2)$ for all the surfaces $\Sigma$ of $\Lambda$, so that $f$ sends surface groups of $\Lambda$ isomorphically onto surface groups of $\Lambda$. Some power $f^k$ of $f$ sends surface type vertex groups isomorphically onto themselves: by Proposition \ref{IsoOnVertexIsIsoOnGroup}, $f^k$ is an isomorphism, which contradicts non injectivity of $f$.
\end{proof}

\subsection{Minimal preretractions} \label{MinimalPreretractionsSubsec}

Let $A$ be a torsion-free hyperbolic group which admits a JSJ-like decomposition $\Lambda$. Let $f:A \to A$ be a preretraction with respect to $\Lambda$. Let ${\cal C}$ be an essential set of curves pinched by $f$. 

\begin{defi} \emph{(set ${\cal S}_{\cal C}(f)$)} We let ${\cal S}_{\cal C}(f)$ be the set of surfaces of $\Lambda$ for which there exists a corresponding surface type vertex group $S$ such that $f_A(A_{\cal C}) \cap S$ is not contained in a boundary subgroup of $S$. 
\end{defi}

\begin{defi} \emph{(${\cal C}$-minimal preretractions)} We say that $f$ is ${\cal C}$-minimal if there is no preretraction $h: A \to A$ which factors through $\rho_{\cal C}$ such that ${\cal S}_{\cal C}(f)$ strictly contains ${\cal S}_{\cal C}(h)$.
\end{defi}

An essential property of ${\cal C}$-minimal maps is given by
\begin{prop} \label{NoWeirdSurfaces} Let $f: A \to A$ be a preretraction. Let ${\cal C}$ be a maximal essential set of curves pinched by $f$. Assume $f$ is non injective and ${\cal C}$-minimal.

If $\Sigma$ lies in ${\cal S}_{\cal C}(f)$, then there is a surface type vertex group $S_0$ of one of the graphs of groups $\Lambda^+_i$ such that $f_A(S_0)$ has finite index in a surface type vertex group corresponding to $\Sigma$. 
\end{prop}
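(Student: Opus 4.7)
The plan is to argue by contradiction, using the ${\cal C}$-minimality of $f$. Suppose $\Sigma \in {\cal S}_{\cal C}(f)$ yet no surface type vertex group $S_0$ of any $\Lambda_i^+$ has $f_A(S_0)$ of finite index in a surface vertex group corresponding to $\Sigma$; the aim is to produce a preretraction $h \colon A \to A$ factoring through $\rho_{\cal C}$ with ${\cal S}_{\cal C}(h) \subsetneq {\cal S}_{\cal C}(f)$, contradicting this minimality.

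First I would extract geometric consequences of the hypothesis. By the contrapositive of Lemma \ref{OneLambda}, applied to each $i$, no vertex in the $A$-orbit of $v_S$ lies in $\phi_i(T^+_i)$. Consequently, for each $i$ and each translate $g v_S$, the stabilizer of $g v_S$ in the action of $A_i$ on $T_\Lambda$ via $f_A$ fixes the first edge on the geodesic from $g v_S$ to $\phi_i(T^+_i)$, and since $\Lambda$ is JSJ-like this edge has a cyclic stabilizer contained in a boundary subgroup of $g S g^{-1}$. Thus each individual image $f_A(A_i)$ meets every conjugate of $S$ only inside a boundary, and the hypothesis $\Sigma \in {\cal S}_{\cal C}(f)$ forces the various $f_A(A_i)$'s to hit conjugates of $S$ through \emph{distinct} boundaries.

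To build $h$, I would align the minimal subtrees by conjugation. For each $i$ I pick $u_i \in A$ so that the translated subtree $u_i \cdot \phi_i(T^+_i)$ lies in a single component $C$ of $T_\Lambda \setminus (A \cdot v_S)$. This is possible because the $A$-orbits of components of $T_\Lambda \setminus (A \cdot v_S)$ correspond to the connected components of $\Lambda$ minus its $\Sigma$-vertex, with one handling each such connected component separately. Then I define $h_A \colon A_{\cal C} \to A$ by $h_A|_{A_i} = \Conj(u_i) \circ f_A|_{A_i}$, and set $h = (h_A * h_R) \circ \rho_{\cal C}$, where $h_R$ is supplied by Lemma \ref{TypeC} so that surfaces of type $(C)$ receive non abelian images. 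Since $h_A$ restricts to a conjugation on every non surface type vertex group of $\Lambda$, surfaces of types $(A)$ and $(B)$ automatically receive non abelian images as in the proof of Proposition \ref{RetractionPlus}, so $h$ is a preretraction factoring through $\rho_{\cal C}$.

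The alignment forces $\Sigma \notin {\cal S}_{\cal C}(h)$: the minimal $h_A(A_{\cal C})$-invariant subtree of $T_\Lambda$ sits in $C$ together with the single edges joining $C$ to its neighbouring translates of $v_S$, so the $A_{\cal C}$-stabilizer of any $g v_S$ via $h_A$ is contained in a single cyclic boundary of $g S g^{-1}$. The main obstacle is proving the reverse inclusion ${\cal S}_{\cal C}(h) \subseteq {\cal S}_{\cal C}(f)$, namely that for any surface $\Sigma' \notin {\cal S}_{\cal C}(f)$ the intersections $h_A(A_{\cal C}) \cap g S' g^{-1}$ remain inside a boundary subgroup. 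Since $f_A(A_{\cal C}) \cap g' S' g'^{-1}$ lies in a boundary for every $g'$, and conjugation by $u_i$ only permutes the family of such intersections for $A_i$ equivariantly along the $A$-orbit of $v_{S'}$, the property survives on each factor; combining over the factors using the free product structure of $A_{\cal C}$ and the tree geometry of $T_\Lambda$ then keeps $h_A(A_{\cal C}) \cap g S' g^{-1}$ inside a boundary, yielding the strict inclusion and the desired contradiction.
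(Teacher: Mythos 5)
Your plan — build a preretraction $h$ factoring through $\rho_{\cal C}$ with ${\cal S}_{\cal C}(h)\subsetneq{\cal S}_{\cal C}(f)$ and invoke ${\cal C}$-minimality — is sound in spirit and is essentially what the paper does, but your construction of $h$ does not deliver the control that the paper's construction does, and the step you yourself single out as ``the main obstacle'' (the inclusion ${\cal S}_{\cal C}(h)\subseteq{\cal S}_{\cal C}(f)$) is precisely where your argument fails. You conjugate each $f_A(A_i)$ by $u_i$ so that the translates $u_i\cdot\phi_i(T^+_i)$ all lie in one component $C$ of $T_\Lambda\setminus(A\cdot v_S)$. Two problems arise. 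First, this alignment need not be achievable: the components of $T_\Lambda\setminus(A\cdot v_S)$ fall into several $A$-orbits when $\Lambda$ minus the $\Sigma$-vertex is disconnected, and if $\phi_i(T^+_i)$ and $\phi_j(T^+_j)$ sit in components in different orbits, no choice of $u_i,u_j$ puts them both in the same $C$. Your parenthetical ``handling each such connected component separately'' does not resolve this, since the subgroup $h_A(A_{\cal C})$ is generated by all the conjugated factors together and its minimal invariant subtree then again crosses $A\cdot v_S$.

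Second, and more fundamentally, even if the alignment works, you have no control over how $h_A(A_{\cal C})$ meets conjugates of \emph{other} surface type vertex groups. The assertion that ``conjugation by $u_i$ only permutes the family of such intersections ... the property survives on each factor; combining over the factors using the free product structure of $A_{\cal C}$ ... keeps $h_A(A_{\cal C})\cap gS'g^{-1}$ inside a boundary'' does not hold up: $A_{\cal C}=A_1*\cdots*A_r$ is a free product, but its image $h_A(A_{\cal C})=\langle f_A(A_1)^{u_1},\ldots,f_A(A_r)^{u_r}\rangle$ need not be, and the intersection of the generated subgroup with $gS'g^{-1}$ can be strictly larger than the subgroup generated by the individual intersections $f_A(A_i)^{u_i}\cap gS'g^{-1}$. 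Concretely, two different factors could contribute two non-commuting boundary elements of the same conjugate $gS'g^{-1}$, so that $h_A(A_{\cal C})\cap gS'g^{-1}$ becomes non-abelian, adding $\Sigma'$ to ${\cal S}_{\cal C}(h)$.

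The paper overcomes exactly this by choosing the conjugating elements so that the translated subtrees $g_i\cdot\phi_i(T^+_i)$ are \emph{far apart} (Lemma~\ref{DistantSubtrees}), and then applying the ping-pong argument of Lemma~\ref{PingPong} on the $2$-acylindrical tree $T_\Lambda$. This yields, by induction on the number of factors, that the subgroup generated is their \emph{free product}, that its minimal subtree $\tau_l$ is proper, and — crucially — that every vertex of $\tau_l$ either lies in a translate of some $\phi_i(T^+_i)$ or has vertex stabilizer (intersected with the generated subgroup) contained in an edge stabilizer. This third bullet is the precise replacement for your hand-waved ``combining over the factors'', and it is what gives both ${\cal S}_{\cal C}(F)\subseteq{\cal S}_{\cal C}(f)$ and, after equality is forced by ${\cal C}$-minimality, the conclusion via Lemma~\ref{OneLambda}. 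So the gap is not a small one: you would need to replace ``align in one component'' with ``translate far apart and run the ping-pong/acylindricity argument'', which is essentially the paper's Lemmas~\ref{DistantSubtrees} and~\ref{PingPong}.
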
 

We build elliptic refinements $\Lambda^+_i$ with respect to $f_A$ for the graphs of groups $\Lambda_i$ as in the previous section. Recall that we have a locally minimal equivariant map $\phi_i: T^+_i \to T_{\Lambda}$, where $T^+_i$ is the tree corresponding to $\Lambda^+_i$, and that (by Lemma \ref{OneLambda}) if a surface type vertex $v_S$ with vertex group $S$ lies in one of the trees $\phi_i(\hat{T}^+_i)$, then there is a surface type vertex group $S_0$ of $\Lambda^+_i$ such that $f_A(S_0)$ has finite index in $S$.

If $v_S$ does not lie in $\phi_i(T^+_i)$, its intersection with $f_A(A_i)$ is at most a boundary subgroup, so if there is only one factor $A_i$, Proposition \ref{NoWeirdSurfaces} is proved. The problem when there are at least two factors $A_i$ is that $S$ could intersect several subgroups $f_A(A_i)$ in distinct boundary subgroups. By $2$-acylindricity of $\Lambda$, this would require in particular that the trees $\phi_i(T^+_i)$ be close in $T$. The idea is thus to find a preretraction $f'$ which restricts to $\Conj(g_i) \circ f_{A}$ on each $A_i$, where the elements $g_i$ are chosen so that the trees $g_i \cdot \phi_i(T^+_i)$ are far away from each other. This will imply that the surface corresponding to $v_S$ is not in ${\cal S}_{\cal C}(f')$, but this will contradict ${\cal C}$-minimality of $f$.

We first prove a lemma which shows that we can find a preretraction from a morphism $A_{\cal C} \to A$ which coincides up to conjugation with $f_A$ on each subgroup $A_i$. 
\begin{lemma} \label{ConjugatingAway} Let $A$ be a torsion-free hyperbolic group endowed with a JSJ-like decomposition $\Lambda$. Let $f: A \to A$ be a non injective preretraction with respect to $\Lambda$. Let ${\cal C}$ be a maximal essential set of curves pinched by $f$ on the surfaces of $\Lambda$, and choose a pinching decomposition associated to ${\cal C}$. 

Suppose $h_A: A_{\cal C} \to A$ restricts to $\Conj(g_i) \circ f_A$ on $A_i$. Then there exists a morphism $h_R: R_{\cal C} \to A$ such that $h = (h_A * h_R) \circ \rho_{\cal C}$ is a non injective preretraction. Moreover if $h_A(A_{\cal C})$ is not abelian, we can assume that $h(A)$ is contained in $h_A(A_{\cal C})$.
\end{lemma}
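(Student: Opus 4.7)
The plan is to choose $h_R$ factor by factor on $R_{\cal C} = S_1 * \cdots * S_q * U_1 * \cdots * U_t$ and then verify the three conditions required of $h := (h_A * h_R) \circ \rho_{\cal C}$: that its restriction to each non-surface vertex group of $\Lambda$ is a conjugation, that every surface type vertex group has non-abelian image, and that $h$ is non-injective. The first condition is automatic: any non-surface vertex group $V$ of $\Lambda$ embeds via $\rho_{\cal C}$ into some $A_i$, so $h|_V = \Conj(g_i) \circ f|_V$, a composition of conjugations since $f$ is a preretraction. The heart of the argument is constructing $h_R$ so that surface type vertex groups behave correctly, and this is organized via the classification in Lemma \ref{NiceSurface}.

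First I would apply Lemma \ref{TypeC} in the degenerate setting $G = A$, $\Gamma = \Lambda$ (where the ellipticity hypothesis is trivially satisfied), with target a non-abelian subgroup $G_0 \leq A$. For the ``moreover'' clause one takes $G_0 := h_A(A_{\cal C})$, which is non-abelian by assumption; otherwise any non-abelian subgroup will do. The hypothesis of Lemma \ref{TypeC} holds because $h_A$ agrees with $\Conj(g_i) \circ f_A$ on each $A_i$, and $f_A$ sends the edge groups of $\Lambda_i$ isomorphically onto edge groups of $\Lambda$. This fixes the values of $h_R$ on the factors of $R_{\cal C}$ relevant to type (C) surfaces and guarantees the non-abelianity of their images. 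For a surface $S$ of type (A), if the unique infinite vertex of $\Gamma(S, {\cal C}_\Sigma)$ is exterior then $\rho_{\cal C}(S) \subseteq A_i$, and $h(S) = \Conj(g_i)(f(S))$ is already non-abelian; if it is interior then $\rho_{\cal C}(S) = S_j$ for some $j$, and $S_j$ must itself be non-abelian (otherwise $f(S) = f_{\cal C}(S_j)$ would be abelian, contradicting the preretraction hypothesis on $f$), so I choose $h_R|_{S_j}$ to be any non-abelian morphism into $G_0$. This cannot conflict with the Lemma \ref{TypeC} prescription, since a surface cannot simultaneously be of type (A) interior and of type (C): the former forces $\rho_{\cal C}(S) = S_j \subseteq R_{\cal C}$, which is disjoint (as a free factor) from every $A_i$, while the latter requires $\rho_{\cal C}(S)$ to meet some $A_i$ in an edge group.

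For a type (B) surface no further choice is needed. Its two distinguished boundary subgroups $Z_1, Z_2$ get sent by $h$ to subgroups of the form $g_{i_k} b_k Z_k b_k^{-1} g_{i_k}^{-1}$, that is, to conjugates of $Z_1, Z_2$ in $A$ (using that $f$ restricts to a conjugation on the non-surface vertex group containing each $Z_k$). If these commuted they would generate an abelian, hence cyclic, subgroup of the torsion-free hyperbolic group $A$, so both would lie in a common maximal cyclic subgroup; conjugating back would exhibit an infinite-cyclic intersection of conjugates of $Z_1$ and $Z_2$, contradicting the defining property of type (B). Any remaining factor of $R_{\cal C}$ is sent arbitrarily into $G_0$. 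Non-injectivity of $h$ is immediate: if ${\cal C}$ is non-empty then $\rho_{\cal C}$ has non-trivial kernel; if ${\cal C}$ is empty then, because the edge groups of $\Lambda$ are infinite cyclic, $r = 1$, $A_{\cal C} = A$ and $R_{\cal C}$ is trivial, so $h = \Conj(g_1) \circ f$ inherits non-injectivity from $f$. The ``moreover'' statement follows from $h_R(R_{\cal C}) \subseteq G_0 = h_A(A_{\cal C})$, which forces $h(A) \subseteq h_A(A_{\cal C})$.

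The main obstacle is the simultaneous consistency of the prescriptions arising from Lemma \ref{TypeC} and from type (A) interior surfaces, and the delicate malnormality argument used to handle type (B) without any new choice of $h_R$; once these points are justified the rest of the verification is routine.
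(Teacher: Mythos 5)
Your proof is correct and follows the same route as the paper's: apply Lemma \ref{TypeC} (with $G_0 = h_A(A_{\cal C})$ when the ``moreover'' clause is wanted) to handle type (C) surfaces, observe that $h$ restricts to conjugation on non-surface vertex groups to handle type (B), and invoke the preretraction property of $f$ for type (A), which is precisely what Remark \ref{TypeAUnderPreretractions} packages. One remark: your ``type (A) interior'' subcase is vacuous, since every exterior vertex group of $\Gamma(S, {\cal C}_{\Sigma})$ contains the injective $\rho_{\cal C}$-image of a boundary subgroup of $S$ and is therefore infinite, so the unique infinite vertex group of a type (A) surface is automatically exterior. This is fortunate, because the argument you give there has a small gap: a non-abelian closed surface group $S_j$ need not admit a non-abelian homomorphism into $G_0 \leq A$ (the Klein bottle group has no non-abelian image in any torsion-free hyperbolic group); what one would actually need to use is that $f_{\cal C}|_{S_j}$ already exhibits a non-abelian image of $S_j$ in $A$, which excludes torus and Klein bottle groups and hence supplies a surjection $S_j \twoheadrightarrow F_2$.
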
 

\begin{proof} Note that $h_A$ sends edge groups of $\Lambda_i$ injectively into edge groups of $\Lambda$, since $f_A$ does. We can thus choose $h_R$ according to Lemma \ref{TypeC}, and $h$ sends surface type vertex groups of type (C) to non abelian images.
Clearly $h$ restricts to conjugation on each non surface type vertex of $\Lambda$. In particular, this implies the non abelianity of the image of surface type vertex groups of type (B).
Finally, by Remark \ref{TypeAUnderPreretractions}, for any surface of type (A), there is a corresponding surface type vertex group $S$ and a subgroup $\hat{S}$ of $\rho_{\cal C}(S)$ which lies in $A_i$ for some $i$ and such that $f_A(\hat{S})$ is not abelian. Hence $h_A(\hat{S})$ is not abelian, so neither is $h(S)$. 
\end{proof}

We will also need the two following lemmas about actions on trees. Recall that a $G$-tree is said to be irreducible if none of its ends are fixed by $G$.

\begin{lemma} \label{DistantSubtrees} Let $G$ be a finitely generated group, and let $T$ be a minimal irreducible $G$-tree. If $\tau$ and $\tau'$ are proper subtrees of $T$, for any integer $D$ there is a translate of $\tau'$ by an element of $G$ which lies at a distance at least $D$ of $\tau$.
\end{lemma}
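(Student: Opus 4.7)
The plan is to find a hyperbolic element $h \in G$ whose axis $L$ is positioned so that no ray in $\tau$ converges to the attracting end of $h$ and no ray in $\tau'$ converges to the repelling end; then iterating by $h^n$ for large $n$ will push $\tau'$ far toward the attracting end of $L$, separating it from $\tau$ by at least $D$.

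I will first check that since $T$ is minimal and irreducible, $T$ has no leaves (the orbit of a leaf together with its incident edges would determine a proper $G$-invariant subtree, contradicting minimality) and $\partial T$ is infinite. Consequently every proper subtree has infinite complement, and $\partial \tau$, $\partial \tau'$ are proper closed subsets of $\partial T$, so $U := \partial T \setminus \partial \tau'$ and $V := \partial T \setminus \partial \tau$ are non-empty open sets. I will then invoke the standard fact that for irreducible minimal actions on trees, the set of pairs $(\xi^-(h), \xi^+(h))$ of axis endpoints of hyperbolic elements of $G$ is dense in $\partial^2 T \setminus \Delta$; in particular I can choose $h \in G$ hyperbolic with $\xi^-(h) \in U$ and $\xi^+(h) \in V$.

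The main argument is then a projection analysis onto $L$. Writing $P := p_L(\tau')$ and $Q := p_L(\tau)$, each is a subinterval of $L \cong \R$ (since $\tau$ and $\tau'$ are connected and the projection is continuous). The condition $\xi^-(h) \notin \partial \tau'$ forces $P$ to be bounded in the $\xi^-$ direction: otherwise, taking two points of $\tau'$ projecting arbitrarily far toward $\xi^-$ and connecting them by a geodesic within $\tau'$, that geodesic would contain an arbitrarily long initial segment of $L$ going to $\xi^-$, hence $\tau'$ would contain a ray to $\xi^-$. Symmetrically, $Q$ is bounded in the $\xi^+$ direction. Since $h^n$ translates $L$ by $n\ell$ toward $\xi^+$ (where $\ell > 0$ is the translation length), the set $h^n P$ is shifted by $n\ell$ toward $\xi^+$ and becomes $L$-distant from $Q$ by at least $D$ once $n$ is large. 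The tree inequality $d(x,y) \geq d_L(p_L(x), p_L(y))$ then yields $d(h^n \tau', \tau) \geq D$.

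The main obstacle is the density statement for axis endpoints; this is a classical consequence of minimality and irreducibility, relying on the existence of two hyperbolic elements with distinct axes together with a ping-pong construction. A degenerate case to isolate is when $T$ is a line, so that all hyperbolic elements share $L = T$ as their axis and a single iteration cannot separate subtrees containing both ends; here irreducibility forces the existence of a reflection in $G$, and composing it with a suitable translation provides the required element.
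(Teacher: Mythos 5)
Your proof is correct, but it takes a genuinely different route from the paper's. The paper invokes Lemma 4.3 of Paulin (through any two distinct vertices there is a hyperbolic element whose axis contains the connecting path), and then splits into two cases according to whether the convex hull $\tau_0$ of $\tau\cup\tau'$ equals $T$: if $\tau_0\subsetneq T$, it finds a non-degenerate tripod in a complementary component so that $\tau$ and $\tau'$ both project to a single point of the chosen axis; if $\tau_0=T$, it observes that the projections of $\tau$ and $\tau'$ to the axis are bounded in opposite directions, and translates. You instead pass to $\partial T$ and use density of axis-endpoint pairs in $\partial^2T\setminus\Delta$ (a standard consequence of the same Paulin-type lemma, or of ping-pong) to find a hyperbolic $h$ with $\xi^-(h)\notin\partial\tau'$ and $\xi^+(h)\notin\partial\tau$; your projection analysis is then sound and subsumes the paper's two cases in one. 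Two small points deserve to be made explicit. First, ``$\partial T$ is infinite'' is false when $T$ is a line, so that statement should be conditional. Second, you should justify that the degenerate case you isolate (``$T$ is a line'') is the \emph{only} one: if no $h$ as above exists, then $U\times V$ lies in the diagonal, so $U=V=\{\xi_0\}$ for a single end $\xi_0$, hence $\partial\tau=\partial T\setminus\{\xi_0\}$; this forces the half-tree missing from $\tau$ to be a ray, and cocompactness of a minimal action of a finitely generated group then gives a hyperbolic $g$ translating along that ray, whose axis therefore consists entirely of degree-$2$ vertices by $g$-periodicity of the vertex degree function, forcing $T$ to be a line. With that observation your line-case sketch (translate if the two rays go to opposite ends; conjugate a reflection far along the line if they go to the same end) completes the argument. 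Incidentally, your single-case approach sidesteps a small imprecision in the paper's Case 1, whose claim that a complementary component is not a line does not by itself yield the non-degenerate tripod when $T$ is a line and the component is a ray.
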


\begin{proof}By Lemma 4.3 in \cite{PaulinGromovTop}, the hypotheses allow us, for any two distinct vertices $v$ and $w$ of $T$, to find an element of $G$ which is hyperbolic in the action of $G$ on $T$ and whose axis contains the path between $v$ and $w$.

Suppose first that the smallest tree $\tau_0$ containing $\tau \cup \tau'$ is a proper subtree of $T$. Let $K$ be a connected component of the complement of $\tau_0$ in $T$, and let $u$ be the vertex of $T$ such that $\overline{K} \cap \tau_0 = \{u\}$. By minimality and irreducibility of the action, $K$ is not a line, so we can find points $v$ and $w$ in such a component such that the tripod formed by $v$, $w$, and $u$ is non trivial. We pick a hyperbolic element $g$ whose axis contains the path between $v$ and $w$. The projection of $\tau$ and $\tau'$ on the axis of $g$ is reduced to a point. Thus $g^{D} \cdot \tau'$ is at distance greater than $D$ of $\tau$.

If on the other hand $\tau_0 = T$, we pick vertices $v, w$ of the tree which are in $\tau'$ but not in $\tau$, and in $\tau$ but not in $\tau'$ respectively. Now $\tau$ lies in the connected component of $T-\{v\}$ containing $w$ and $\tau'$ lies in the connected component of $T-\{w\}$ containing $v$. Thus the intersection $\tau \cap \tau'$ lies in the connected component of $T-\{v, w\}$ containing the arc between $v$ and $w$. Pick a hyperbolic element whose axis contains the path between $v$ and $w$. By applying a suitable power of this element we can translate $\tau'$ away from $\tau$.
\end{proof}

\begin{lemma} \label{PingPong} Let $G$ be a finitely generated group,  and let $\tau$ be a $k$-acylindrical minimal $G$-tree. Suppose $G_1$ and $G_2$ are subgroups of $G$ which generate $G$, and whose minimal subtrees $T_1$ and $T_2$ in $\tau$ are joined by an arc $D$ of length at least $2k +3$. Then for any vertex $v$ of $\tau$
\begin{itemize}
\item either $\Stab_G(v)$ stabilizes an edge adjacent to $v$;
\item or $v$ lies in a translate of $T_i$ by an element of $G$, and in this case $\Stab_G(v)$ stabilizes this translate.
\end{itemize}
Moreover, any vertex of $D$ which is at distance greater than $k+1$ of both $T_1$ and $T_2$ has valence $2$ in $T$, and we have $G = G_1*G_2$.
\end{lemma}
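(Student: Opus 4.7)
The key quantitative input is that $k$-acylindricity combined with $|D| \geq 2k+3$ gives trivial pointwise stabilizers for any subarc of $D$ (or of any $G$-translate of such a subarc) of length strictly greater than $k$. My plan is to exploit this to control how $G$-translates of the subtree $X := T_1 \cup D \cup T_2$ can meet in $\tau$, then extract each conclusion from this overlap control.

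I would first observe that by minimality of $\tau$ and the fact that $X$ is connected and meets both $T_1$ and $T_2$, the union of the $G$-translates of $X$ forms a $G$-invariant subtree (any two translates can be joined through $T_1 \cup T_2$), so it equals $\tau$. Every vertex $v$ of $\tau$ thus lies in some $gX$. For the first bullet, if $v \in gT_i$ I would show that $\Stab_G(v)$ preserves $gT_i$ by arguing that any other translate of $T_i$ containing $v$ would share an arc with $gT_i$ of length exceeding $k$ (via the structure of $T_i$ near $v$), and then $k$-acylindricity would force these translates to coincide. If instead $v$ lies strictly in the interior of some $gD$ and is not in any translate of $T_i$, then $v$ is incident to two edges of $gD$, and similar overlap control shows that $\Stab_G(v)$ must stabilize one of these edges.

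For the valence-$2$ assertion, suppose $v$ is a vertex of $D$ at distance greater than $k+1$ from both $T_1$ and $T_2$ with an adjacent edge $e$ not on $D$. Then $e$ lies in some translate $gX$ with $g \neq 1$. Since $v$ is at distance greater than $k+1$ from either $T_j$ inside $D$, the translate $gX$ cannot be a disjoint $gT_j$ touching $D$ only near $v$; it must contain $v$ together with an edge off $D$ at $v$. The tree geometry combined with the shape of $gX$ then forces $gX$ to meet $D$ in a subarc of length greater than $k$ around $v$, and such a subarc lying simultaneously in $D$ and in $gX$ produces a nontrivial element of $G$ fixing it, contradicting $k$-acylindricity. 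Hence $v$ has no edge off $D$, so its valence is exactly $2$.

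Finally, for $G = G_1 * G_2$, I would run a ping-pong argument on the midpoint $m$ of $D$, which lies at distance at least $k + 3/2$ from both $T_1$ and $T_2$. For a reduced word $w = h_n \cdots h_1$ in $G_1 * G_2$ with each $h_i \neq 1$ alternating between $G_1$ and $G_2$, I would estimate $d(m, w \cdot m)$ inductively on $n$: each letter $h_i$ contributes at least $|D| - 2k \geq 3$ to the displacement, because the overlap of $D$ with a translate $hD$ near $v_1$ or $v_2$ (for $h \in G_j \setminus \{1\}$) is bounded by $k$ via acylindricity. This yields $d(m, w \cdot m) \geq 3n > 0$ for $n \geq 1$, so the canonical surjection $G_1 * G_2 \to G$ is injective, hence an isomorphism. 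The hardest part of the whole argument is the detailed case analysis behind the overlap control, since each conclusion ultimately reduces to showing that excessive overlap between translates of $X$ in $\tau$ would produce a nontrivial element fixing an arc of length greater than $k$, which $k$-acylindricity forbids.
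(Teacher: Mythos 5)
Your overall intuition is right (acylindricity plus the long bridge $D$ should control overlaps of translates), and your opening observation that $\tau$ is the union of $G$-translates of $X=T_1\cup D\cup T_2$ matches the paper. But the way you try to extract the conclusions has real gaps, essentially because you never set up the actual ping-pong.

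The central difficulty is your repeated appeal to the following shape of argument: if two translates of $T_i$ (or of $X$) through $v$ overlap in an arc of length greater than $k$, then acylindricity forces them to coincide, or produces a nontrivial element fixing the overlap. Acylindricity does neither of these things on its own. It says that a \emph{nontrivial group element fixing} a segment of length greater than $k$ gives a contradiction; but two translates $gT_i$ and $hgT_i$ sharing a long arc does not by itself exhibit any element pointwise fixing that arc. And when $v\in gT_i$ is fixed by some $h\in\Stab_G(v)$, the translates $gT_i$ and $hgT_i$ may intersect in the single vertex $v$, so there is no long overlap to feed into acylindricity at all. Your argument for the dichotomy on $\Stab_G(v)$ therefore does not get off the ground, and the valence-two argument has the same flaw: a translate $gX$ can be attached to $v$ by a single edge without forcing $gX\cap D$ to contain any long arc. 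The paper instead constrains stabilizers via the subgroups $G_1,G_2$: it introduces the $(k+1)$-neighborhoods $\hat T_1,\hat T_2$, the middle segment $\hat D$ and the "far" regions $B_1,B_2$, and uses $G_i$-invariance of $T_i$ plus acylindricity to show that nontrivial elements of $G_1$ send $\hat D\cup\hat T_2\cup B_2$ into $B_1$ (and symmetrically). From this one gets $\Stab_G(v)\leq G_i$ for $v\in\hat T_i$, triviality of $\Stab_G(v)$ for $v\in\hat D$, the valence claim, and the free product, all at once. This structure — pinning the stabilizer of $v$ inside $G_1$ or $G_2$ via the region ping-pong — is exactly the missing ingredient in your write-up.

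Your argument for $G=G_1*G_2$ has the same issue in disguise. The single-step estimate $d(m,h_1m)\geq|D|-2k$ is fine, but the inductive claim that displacements of successive letters accumulate is exactly the content of a ping-pong, not a consequence of the triangle inequality. Without showing that the orbit $m, h_1m, h_2h_1m,\dots$ never backtracks (which the paper does by showing each image lands in the alternating far region $B_1$ or $B_2$, disjoint from $\hat D\ni m$), the "each letter contributes at least $3$" claim is unjustified. Once you have the region ping-pong the freeness follows immediately without any distance bookkeeping, which is what the paper does.
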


\begin{proof} The tree $\tau$ is the union of translates of $T_1$, $T_2$ and $D$ by elements of $G$. Let $\hat{T_i}$ for $i=1,2$ be the set of points whose distance to $T_i$ is at most $k+1$: note that $\hat{T}_1$ and $\hat{T}_2$ are disjoint. Denote by $\hat{D}$ the subsegment of $D$ which joins $\hat{T}_1$ and $\hat{T}_2$. Let $B_i$ be the complement in $\tau-\hat{T}_i$ of the connected component containing the interior of $\hat{D}$ for $i=1,2$.

By $k$-acylindricity, an element of $G_1$ sends points of $\hat{D}$, of $\hat{T}_2$ and of $B_2$ into $B_1$, and an element of $G_2$ sends points of $\hat{D}$, of $\hat{T}_1$ and of $B_1$ into $B_2$.

If $v \in \hat{D}$, its image by a non trivial element of $G$ lies in $B_1 \cup B_2$. This implies that $v$ is not contained in any translates of $T_1$ or $T_2$, so that it has valence $2$ in $T$, but also that $\Stab_G(v)$ is trivial. Moreover, if $u$ is a non trivial word in $G_1$ and $G_2$, then $u \cdot v$ is distinct from $v$, so $u$ represents a non trivial element of $G$. Thus $G=G_1*G_2$.

If $v$ lies in $\hat{T}_1$ and $g \notin G_1$, then $g \cdot v$ lies in $B_1 \cup B_2$. This implies that the stabilizer of $v$ is contained in $G_1$. If $v$ lies in $\hat{T}_1 - T_1$, the stabilizer of $v$ also stabilizes the path between $v$ and $T_1$, so it stabilizes an edge adjacent to $v$. We get a similar result if $v$ lies in $\hat{T}_2$. If $v$ lies in a translate $g \cdot \hat{D}$ of $\hat{D}$, or in a translate $g \cdot \hat{T_i}$ of $\hat{T_i}$, we apply the results above to $g^{-1} \cdot v$.
\end{proof}

We now prove Proposition \ref{NoWeirdSurfaces}. We denote by $H^g$ the conjugate of a subgroup $H$ by an element $g$.
\begin{proof}[Proof of Proposition \ref{NoWeirdSurfaces}] We will show by induction on $k$, for $1 \leq k \leq l$, that we can find $g_1, \ldots, g_l$ in $A$ such that if $\tau_k$ is the minimal subtree in $T_{\Lambda}$ of the subgroup $G_k$ generated by $f_A(A_1)^{g_1}, \ldots , f_A(A_k)^{g_k}$ we have:
\begin{itemize}
\item $G_k = f_A(A_1)^{g_1}* \ldots * f_A(A_k)^{g_k}$;
\item $\tau_k$ is a proper subtree of $T_{\Lambda}$;
\item if $v$ is a vertex of $\tau_k$, either $v$ lies in a translate of $\phi_i(T^+_i)$, or the intersection of the stabilizer of $v$ with $G_k$ stabilizes an edge.
\end{itemize}

For $k=1$ we let $g_1=1$. Note that the minimal subtree $\tau_1$ of $f_A(A_1)$ is exactly $\phi_1(T^+_1)$, which is a proper subtree of $T$ by Lemma \ref{ProperSubtree}. The other properties are immediate.

Suppose we have found $g_1, \ldots, g_{k-1}$ satisfying these properties. The subtrees $\tau_{k-1}$ and $T_k= \phi_k(T^+_k)$ are both proper subtrees of $T_{\Lambda}$ by induction hypothesis and Lemma \ref{ProperSubtree} respectively. By applying Lemma \ref{DistantSubtrees}, there is an element $g_k$ of $A$ such that $\tau_{k-1}$ and $g_k \cdot T_k$ are at a distance at least $20$ from each other. 

Lemma \ref{PingPong} shows that the subgroup $G_k$ generated by $G_{k-1}$ and $f_A(A_k)^{g_k}$ is in fact their free product, and that the third property holds as well. Moreover, any vertex of $D$ far from $\tau_{k-1}$ and $T_k$ has valence $2$ in $\tau_k$. Since $D$ is long, by strong $2$-acylindricity of $T_{\Lambda}$ there exist such a vertex of whose valence in $T_{\Lambda}$ is greater than $2$. Hence $\tau_k$ is a proper subtree of $T_{\Lambda}$. 

Now let $F_A: A_{\cal C} \to A$ be defined by $\Conj(g_i) \circ f_{A}$ on each $A_i$.  By Lemma \ref{ConjugatingAway}, we can choose $F_R: R_{\cal C} \to A$ such that $F = (F_A * F_R) \circ \rho_{\cal C}$ is still a preretraction. 

Let $S$ be a surface type vertex group of $\Lambda$ whose intersection with $F_A(A_{\cal C})$ is not contained in a boundary subgroup, so that the corresponding surface $\Sigma$ lies in ${\cal S}_{\cal C}(F)$. The vertex $v_S$ it stabilizes in $T_{\Lambda}$ must lie in the minimal subtree $\tau_l$ of $F_A(A_{\cal C})$, so it lies in a translate of $\phi_i(T^+_i)$ for some $i$. By Lemma \ref{OneLambda}, up to replacing $S$ by a conjugate, one of the surface type vertex group of $\Lambda^+_i$ is sent by $F_A$ to a finite index subgroup of $S$. Since $f_A$ and $F_A$ differ by a conjugation on $A_i$, this is still true for $f_A$, and in particular $\Sigma$ lies in ${\cal S}_{\cal C}(f)$. 

Thus ${\cal S}_{\cal C}(F) \subseteq {\cal S}_{\cal C}(f)$, but by ${\cal C}$-minimality of $f$, we must have equality. This proves the result.
\end{proof}

\subsection{Stable preretractions} \label{StablePreretractionsSubsec}

Let $A$ be a torsion-free hyperbolic group which admits a JSJ-like decomposition $\Lambda$. 

\begin{defi} \emph{(stable preretraction)} Let $f$ be a preretraction $A \to A$ with respect to $\Lambda$. We say that $f$ is stable if $f^k$ is a preretraction for all $k>0$.
\end{defi}

Even if we know that there exists a non injective preretraction $f: A \to A$, we cannot always guarantee the existence of a stable non injective preretraction. However, this will be possible under some restrictions on $\Lambda$, or if $f$ satisfies some extra conditions. 

\begin{lemma} \label{StabilizingAPreretraction} Suppose that if $\Lambda$ has only one non surface type vertex group $V$, then $V$ is non abelian. Let $f: A \to A$ be a non injective preretraction, and let ${\cal C}$ be a maximal essential set of curves pinched by $f$. Choose a pinching decomposition associated to ${\cal C}$.

There exists a morphism $h_R: R_{\cal C} \to A$ such that $h = (f_A * h_R) \circ \rho_{\cal C}$ is a stable non injective preretraction and $h(A) \leq f_A(A_{\cal C})$. 
\end{lemma}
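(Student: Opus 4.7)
The strategy is to apply Lemma \ref{TypeCPlus} with $h_A = f_A$ to define $h_R$ whose image lies in $f_A(A_{\cal C})$, arranging for $h(S)$ to contain explicit non-abelian witnesses rooted in non-surface-type vertex groups and edge groups of $\Lambda$ (rather than inside the surface groups themselves). First I would exhibit a non-abelian subgroup $G_0 \leq f_A(A_{\cal C})$. Under the standing hypothesis, if $\Lambda$ has a unique non-surface-type vertex group $V$, then $V$ is non-abelian, so its $f_A$-image (a conjugate of $V$) is non-abelian and lies in $f_A(A_{\cal C})$; otherwise, two non-surface-type vertex groups in distinct orbits generate a non-abelian subgroup in the torsion-free hyperbolic group $A$, and their $f_A$-images provide such a $G_0$. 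Since $f$ is a preretraction with respect to $\Lambda$, the map $f_A$ sends the edge groups of $\Lambda$ (and hence those of the graphs $\Lambda_i$) injectively, by conjugations, to edge groups of $\Lambda$; so Lemma \ref{TypeCPlus} applies and yields $h_R : R_{\cal C} \to G_0$ such that for each surface $S$ of type (C), $h(S)$ contains $\langle Z, uZu^{-1}\rangle$ where $Z$ is an edge group of $\Lambda$ and $u$ lies in a non-surface-type vertex group, satisfying one of the two alternatives of Lemma \ref{TypeCPlus}.

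I would then verify that $h = (f_A * h_R) \circ \rho_{\cal C}$ is a non-injective preretraction with $h(A) \leq f_A(A_{\cal C})$. The image condition is automatic because $h_R(R_{\cal C}) \subseteq G_0 \subseteq f_A(A_{\cal C})$. On non-surface-type vertex groups, $h$ coincides with $f$, hence is a conjugation. For surface-type vertex groups $S$, I check non-abelianity of $h(S)$ by type: type (A) follows from Remark \ref{TypeAUnderPreretractions} since $h(S) \supseteq f_A(\hat{S})$, which is non-abelian; type (B) holds because $h(S)$ contains two conjugates $g_1 Z_1 g_1^{-1}$ and $g_2 Z_2 g_2^{-1}$ of boundary subgroups, and the trivial-intersection property of conjugates of $Z_1, Z_2$ forces $\langle g_1Z_1g_1^{-1}, g_2Z_2g_2^{-1}\rangle$ to be non-abelian; type (C) is built into the construction. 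Non-injectivity of $h$ holds because when ${\cal C}$ is non-empty, $h$ factors through the non-injective $\rho_{\cal C}$, and when ${\cal C}$ is empty, $h = f$ is non-injective by hypothesis.

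For stability I would induct on $k$ to show that each $h^k$ is a preretraction. The conjugation property propagates on each non-surface-type vertex group: if $h|_V = \Conj(g_V)$, then $h^k|_V = \Conj(c_k)$ where $c_{k+1} = h(c_k) g_V$. For the non-abelianity of $h^k(S)$, types (B) and (C) together with type (A) case (b) of Lemma \ref{FSHatNonAbelian} retain their non-abelian witnesses inside edge groups and non-surface-type vertex groups of $\Lambda$, on which $h$ acts by conjugation; consequently the trivial-intersection or $\langle u\rangle \cap Z = \{1\}$ conditions that ensured non-abelianity at the first step remain valid for every iterate. The main obstacle is type (A) case (a), where $h(S)$ contains only a finite-index subgroup $S''$ of a surface-type vertex group $S_1$ of $\Lambda$. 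Here I would observe that $h^k(S) \supseteq h^{k-1}(S'')$ and that $[h^{k-1}(S_1) : h^{k-1}(S'')] \leq [S_1 : S''] < \infty$ since group homomorphisms do not increase indices; by the inductive hypothesis $h^{k-1}$ is a preretraction, so $h^{k-1}(S_1)$ is non-abelian. Since in the torsion-free hyperbolic group $A$ any finite-index subgroup of a non-abelian subgroup is itself non-abelian (two non-commuting elements have non-commuting powers, as torsion-free virtually cyclic groups are cyclic), we conclude $h^{k-1}(S'')$ is non-abelian, and hence so is $h^k(S)$, closing the induction.
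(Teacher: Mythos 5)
Your proof is correct and follows essentially the same strategy as the paper's: exhibit a non-abelian subgroup of $f_A(A_{\cal C})$ via the hypothesis on non-surface-type vertex groups, apply Lemma \ref{TypeCPlus} to build $h_R$, verify the base case by type, and induct on powers of $h$. One small point in your favor: in the stability induction for type (A), case (a), the paper states tersely that ``$h^{k-1}(S_1)$ is not abelian by induction hypothesis, so $H(S)$ is non abelian,'' implicitly passing from non-abelianity of $h^{k-1}(S_1)$ to non-abelianity of the subgroup $h^{k-1}(S'')$ where $S''$ is the finite-index subgroup of $S_1$ actually contained in $h(S)$; your explicit observation that homomorphisms do not increase index and that finite-index subgroups of non-abelian subgroups of torsion-free hyperbolic groups remain non-abelian cleanly supplies that missing step.
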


\begin{proof} Note that the subgroup $f_A(A_{\cal C})$ is not abelian: indeed, it contains conjugates of all the non surface type vertex groups of $\Lambda$, but by assumption $\Lambda$ admits either a non abelian non surface type vertex group, or two non conjugate non surface type vertex groups. 

We choose a morphism $h_R: R_{\cal C} \to f_A(A_{\cal C})$ according to Lemma \ref{TypeCPlus}. In particular, if $S$ is a surface type vertex group of type (C), its image by $h$ is not abelian.

The morphism $h$ restricts to conjugation on non surface type vertex groups of $\Lambda$, and thus surface type vertex groups of type (B) have non abelian image by $h$. If $\Sigma$ is a surface of $\Lambda$ of type (A), $h(S)$ is not abelian by Remark \ref{TypeAUnderPreretractions}. If ${\cal C}$ is not empty, $h$ is non injective since it factors through $\rho_{\cal C}$, and if ${\cal C}$ is empty, $h = f_A = f$ is also non injective. Thus $h$ is a non injective preretraction.

Suppose $h^{k-1}$ is a non injective preretraction. The morphism $H=h^k$ factors as $H_{\cal C} \circ \rho_{\cal C}$, and the restriction $H_A$ of $H_{\cal C}$ to $A_{\cal C}$ is exactly $h^{k-1} \circ h_A$. Clearly $H$ restricts to conjugation on non surface type vertex groups and edge groups of $\Lambda$, so in particular surface type vertex groups of type (B) have non abelian image by $H$. 

Let $S$ be a surface type vertex group of type (A): by Remark \ref{TypeAUnderPreretractions}, its image $h(S)$ contains either a subgroup of finite index of a surface type vertex group $S_1$ of $\Lambda$, or a non abelian subgroup of a non surface type vertex group $V$ of $\Lambda$. In the first case, note that $h^{k-1}(S_1)$ is not abelian by induction hypothesis, so $H(S)$ is non abelian. In the second case, $h^{k-1}$ restricts to a conjugation on $V$ so $H(S)$ is non abelian.

We chose $h_R$ according to Lemma \ref{TypeCPlus}, so if $S$ is of type (C), the group $h(S)$ contains an edge group $Z$ of $\Lambda$ and a conjugate $uZu^{-1}$ of $Z$ by an element $u$ of a non surface type vertex group $V$ such that one of the following holds
\begin{itemize}
	\item any two conjugates of $Z$ and $V$ intersect trivially; 
	\item $Z$ intersects $V$ non trivially, and $\langle u \rangle \cap Z$ is trivial. 
\end{itemize}
In the first case, since $h^{k-1}$ sends $Z$ and $V$ to conjugate of themselves, $H(S)$ contains both $h^{k-1}(Z)$ and a conjugate of $h^{k-1}(Z)$ by an element $h^{k-1}(u)$ which lies in a conjugate of $V$, and as such does not centralize $h^{k-1}(Z)$. In the second case, $h^{k-1}$ restricts to a conjugation on $V$, so $h^{k-1}(Z)$ and $h^{k-1}(\langle u \rangle)$ intersect trivially. In both cases $H(S)$ is not abelian.
\end{proof}

If the conditions on $\Lambda$ are not satisfied, we can still get a stable non injective preretraction provided there exists a non injective preretraction which satisfies some good properties.
\begin{lemma} \label{StabilizingWithFSHatNonAbelian} Let $f: A \to A$ be a non injective preretraction, and let ${\cal C}$ be a maximal essential set of curves pinched by $f$. Choose a pinching decomposition associated to ${\cal C}$.

Suppose that for any surface type vertex group $S$ of $\Lambda$, at least one of the exterior vertex group $\hat{S}$ of $\Gamma(S, {\cal C}_{\Sigma})$ has non abelian image by $f_{\cal C}$. 

Then the morphism $h = (f_A * 1) \circ \rho_{\cal C}$ is a stable non injective preretraction.  
\end{lemma}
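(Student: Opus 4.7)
The plan is to verify the three required properties of $h = (f_A * 1) \circ \rho_{\cal C}$ in turn: that it is a preretraction, that it is non injective, and that every iterate $h^k$ remains a preretraction.

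First, I would check that $h$ itself is a preretraction. On a non surface type vertex group $V$ of $\Lambda$ the map $\rho_{\cal C}$ is the identity on $V$, and $V$ sits inside some factor $A_i$ of $A_{\cal C}$, so $h|_V$ coincides with $f|_V$; since $f$ is a preretraction this is a conjugation. For a surface type vertex group $S$, the decomposition $\Gamma(S, {\cal C}_{\Sigma})$ has trivial edge groups, so $\rho_{\cal C}(S)$ is a free product of its exterior vertex groups $\hat{S}_1, \ldots, \hat{S}_m$, its interior vertex groups, and free generators for the edges outside a spanning tree. Because exterior vertex groups lie in $A_{\cal C}$ while interior vertex groups and Bass-Serre generators lie in $R_{\cal C}$, we get $h(S) = \langle f_{\cal C}(\hat{S}_1), \ldots, f_{\cal C}(\hat{S}_m) \rangle$, which is non-abelian by the hypothesis that at least one $f_{\cal C}(\hat{S}_j)$ is non-abelian.

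Non-injectivity splits into two cases. If ${\cal C}$ is non-empty, then $\rho_{\cal C}$ kills non-trivial elements representing the pinched curves and so does $h$. If ${\cal C}$ is empty, then the pinching decomposition gives $R_{\cal C}$ trivial and $h = f$, which is non-injective by assumption.

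For stability, I would induct on $k \ge 1$. Assume $h^{k-1}$ is a preretraction. The conjugation property on non surface type vertex groups is preserved under composition. For a surface type vertex group $S$ of $\Lambda$, we have $h^k(S) \supseteq h^{k-1}(f_{\cal C}(\hat{S}))$ for the exterior vertex group $\hat{S}$ whose image is non-abelian. Since ${\cal C}$ is maximal, $f_{\cal C}$ is non-pinching on the surfaces of $\Lambda_{\cal C}$, so Lemma \ref{FSHatNonAbelian} applied in Setting \ref{setting} with $G = A$ and $\Gamma = \Lambda$ implies that $f_{\cal C}(\hat{S})$ contains either a finite index subgroup $K$ of some surface type vertex group $S'$ of $\Lambda$, or a non-abelian subgroup $N$ of some non surface type vertex group $V'$ of $\Lambda$. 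In the second case $h^{k-1}|_{V'}$ is a conjugation by the inductive hypothesis, so it is injective on $N$ and $h^{k-1}(N)$ is non-abelian. In the first case $h^{k-1}(S')$ is non-abelian by induction and $h^{k-1}(K)$ has finite index in $h^{k-1}(S')$; as $A$ is torsion-free hyperbolic and contains no $\Z^2$, a finite index subgroup of a non-abelian subgroup of $A$ must itself be non-abelian, so $h^{k-1}(K)$ is non-abelian and hence $h^k(S)$ is non-abelian.

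The delicate step is the finite index subcase above, for which one uses that virtually cyclic torsion-free subgroups of torsion-free hyperbolic groups are cyclic, so passing from a non-abelian subgroup to one of finite index cannot make it abelian. Everything else is a direct bookkeeping on the free product structure of $\rho_{\cal C}(S)$ and the factorization of $h$ through $\rho_{\cal C}$.
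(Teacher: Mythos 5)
Your proof is correct and follows essentially the same route as the paper's: verify the conjugation and non-abelianity conditions to show $h$ is a preretraction, handle non-injectivity by the two cases on whether ${\cal C}$ is empty, then induct on $k$ using Lemma \ref{FSHatNonAbelian} to carry non-abelianity through each power. Your treatment of the finite-index subcase makes explicit a point the paper leaves terse (the paper only says ``in both cases $H(S)$ contains a non abelian subgroup''): since $h^{k-1}(K)$ has finite index in the non-abelian $h^{k-1}(S')$ and $A$ is torsion-free hyperbolic, $h^{k-1}(K)$ cannot be abelian, because a torsion-free group with a cyclic finite-index subgroup is itself cyclic. This is the right justification and closes a small gap in the paper's exposition.
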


Note that we also get $h(A) \leq f_A(A_{\cal C})$.

\begin{proof} Clearly $h$ restricts to conjugation on non surface type vertex groups of $\Lambda$, and for any surface type vertex group $S$, we have $\hat{S} \leq \rho_{\cal C}(S)$, so $f(S)$ contains $f_A({\hat S})$ and is not abelian. This shows $h$ is a preretraction. Since $f$ is not injective, neither is $h$.

Suppose $h^{k-1}$ is a non injective preretraction.  By Lemma \ref{FSHatNonAbelian}, if $S$ is a surface type vertex group of $\Lambda$, then $f_A(\hat{S})$ contains either a non abelian subgroup of a non surface type vertex group, or a finite index subgroup of a surface type vertex group. Since $h^{k-1}$ is a preretraction, in both cases $H(S)$ contains a non abelian subgroup.
\end{proof}

\begin{rmk} \label{StableAndMinimal} Note that being ${\cal C}$-minimal only depends on $f_A$. Thus if $f$ is ${\cal C}$-minimal, and we are in the setting of one of the two lemmas above, we may assume without loss of generality that $f$ is stable and that $f(A) \leq f_A(A_{\cal C})$.
\end{rmk}

\subsection{A special case} \label{AnnoyingJSJ}

In Section \ref{ProofRetractionSubsec}, we will prove Proposition \ref{Retraction} with the extra assumption that there exists a stable and ${\cal C}$-minimal non injective preretraction. However, there are cases which are covered neither by Lemma \ref{StabilizingAPreretraction} nor by Lemma \ref{StabilizingWithFSHatNonAbelian}, so that we cannot guarantee that such a preretraction exists. We will now prove directly the existence of an extended hyperbolic floor in these cases.

\begin{prop} \label{IfNoStable} Let $A$ be a torsion-free hyperbolic group which admits a JSJ-like decomposition $\Lambda$ which does not consist of a single surface type vertex. Let $f:A \to A$ be a non injective preretraction with respect to $\Lambda$.

Suppose that $\Lambda$ and $f$ satisfy neither the hypotheses of Lemma \ref{StabilizingAPreretraction}, nor those of Lemma \ref{StabilizingWithFSHatNonAbelian}. Then there exists a retraction $r: A \to A'$ to a proper subgroup of $A$ such that $(A, A', r)$ is an extended hyperbolic floor. If moreover $\Lambda$ has at least two surface type vertices, $(A,A',r)$ can be assumed to be a hyperbolic floor.
\end{prop}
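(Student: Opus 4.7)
The hypothesis that neither Lemma \ref{StabilizingAPreretraction} nor Lemma \ref{StabilizingWithFSHatNonAbelian} applies amounts to the conjunction of: (a) $\Lambda$ has a unique non-surface type vertex group $V$, and $V$ is abelian, hence infinite cyclic because $A$ is torsion-free hyperbolic; and (b) there exists a surface type vertex $S$ of $\Lambda$, with corresponding surface $\Sigma$, such that every exterior vertex group of the pinching decomposition $\Gamma(S, {\cal C}_{\Sigma})$ has abelian image under $f_{\cal C}$ (where ${\cal C}$ is a fixed maximal essential set of curves pinched by $f$). The plan is to take $\Gamma = \Lambda$ and $A' = V$ and to construct the extended hyperbolic floor $(A, V, r)$ by the second alternative of Definition \ref{HypFloor}: produce a retraction $r' : A * \langle t \rangle \to V * \langle t \rangle$ that is the identity on both factors. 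Bipartism of $\Gamma = \Lambda$ is automatic, since by property (ii) of JSJ-like decompositions two surface vertices cannot be adjacent, so every edge of $\Lambda$ joins a surface vertex to the unique non-surface vertex $V$.

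The first step is to extract from (a), (b), and the existence of the non-injective preretraction $f$ the arithmetic constraint that, for every surface vertex $S'$ of $\Lambda$ with boundary identifications $c_j \mapsto v^{n_j^{S'}}$, one has $\sum_j n_j^{S'} = 0$. This is exactly the obstruction to extending $r'|_V = \id$ to a morphism on $A * \langle t \rangle$: the surface relation in $\pi_1(\Sigma')$ forces $v^{\sum n_j^{S'}}$ to equal a product of commutators in $V * \langle t \rangle$, whose intersection with $V$ is trivial (by the free-product normal form). For the surface $S$ of condition (b) the vanishing is immediate: each boundary $c_i$ of $\Sigma$ lies in some exterior vertex group $\hat S$ of $\Gamma(S,{\cal C}_\Sigma)$, and the combination of $f_{\cal C}(\hat S)$ being abelian, of $f(c_i) = g_V v^{n_i} g_V^{-1}$ (by the preretraction property on $V$), and of the way the boundaries sit inside the free group $\hat S$ yields arithmetic relations whose total over the boundaries of $\Sigma$ gives $\sum_i n_i^S = 0$. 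For the other surface vertices of $\Lambda$, the analogous vanishing follows by an analysis of $A^{\mathrm{ab}}$: since $V$ is the unique non-surface vertex, the single generator $v$ becomes torsion in $A^{\mathrm{ab}}$ as soon as some $\sum_j n_j^{S'} \neq 0$, and this torsion is incompatible with the image structure guaranteed by condition (b) together with the existence of a non-injective $f$.

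Once the arithmetic constraint is established, I define $r'|_{S'}$ on each surface vertex $S' = \pi_1(\Sigma')$ by sending each boundary $c_j \mapsto v^{n_j^{S'}}$ (forced) and extending to the remaining generators of the free group $\pi_1(\Sigma')$ so that the commutator-product relation $\prod_i [r'(a_i), r'(b_i)] = 1$ is arranged in $V * \langle t \rangle$ while $t$ appears in the image, ensuring non-abelianity. This is possible because every surface of $\Lambda$ is either a punctured torus or has $\chi \leq -2$, so $\pi_1(\Sigma')$ has rank at least $2$, leaving enough room to telescope pairs of commutators (for example $(a_1,b_1) \mapsto (t,v)$ compensated by $(a_2,b_2) \mapsto (v,t)$). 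If $\Lambda$ has at least two surface type vertices, one upgrades the extended floor to an honest hyperbolic floor: pick a second surface vertex $S_2 \neq S$ and merge $S_2$ with $V$ into a single non-surface vertex of a new decomposition $\Gamma'$ whose stabilizer is $R = V *_{Z^{S_2}_1, \ldots, Z^{S_2}_k} \pi_1(\Sigma_2)$; now $A' = R$ is non-cyclic and contains the non-abelian group $\pi_1(\Sigma_2)$, and one builds a plain retraction $r : A \to R$ by sending each remaining surface $S'$ of $\Lambda$ into the non-abelian part of $R$, eliminating the need for the $\langle t \rangle$ extension. The main obstacle I expect is precisely the simultaneous verification of the vanishing of $\sum_j n_j^{S'}$ for \emph{all} surfaces of $\Lambda$ in the multi-surface case: propagating the constraint from the distinguished surface $S$ of condition (b) to the others requires a delicate use of the non-injectivity of $f$, the cyclicity of $V$, and the torsion-freeness of $A$.
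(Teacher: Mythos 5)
Your reduction of the hypotheses to (a) $\Lambda$ has a unique non-surface vertex group $V$, and $V$ is infinite cyclic, and (b) some surface $S$ has all its exterior vertex groups of $\Gamma(S,\mathcal{C}_\Sigma)$ mapped abelianly by $f_{\mathcal{C}}$, is correct and matches the paper's starting point. However, the central claim driving your construction — that the boundary exponents satisfy $\sum_j n_j^{S'}=0$ for \emph{every} surface vertex $S'$ of $\Lambda$ — is both too strong and not what the hypotheses give you, and the gap is fatal to the whole strategy.

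The paper (Lemma 7.8, \emph{SumBoundaryExponents}) extracts an arithmetic constraint on the boundary exponents of the \emph{single} surface $\Sigma$ of condition (b), and even there the conclusion is only that $k_1+\cdots+k_m$ is \emph{even} (equal to $0$ only when $\Sigma$ is orientable), with further case-specific constraints for small surfaces. Nothing is asserted about the other surfaces of $\Lambda$, and nothing can be: their boundary exponent sums are essentially arbitrary. Your attempted propagation via $A^{\mathrm{ab}}$ does not hold up. For a non-orientable surface $S'$ with relation $d_1^2\cdots d_p^2 = c_1\cdots c_m$, the abelianization gives $2(d_1+\cdots+d_p) = (\sum n_j) v$, which is just a linear relation and does not make $v$ torsion. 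And even for orientable surfaces, where $\sum n_j \neq 0$ would indeed make $v$ torsion in $A^{\mathrm{ab}}$, there is no contradiction to derive: torsion-free hyperbolic groups routinely have torsion in their abelianizations. The asserted incompatibility with "the image structure guaranteed by condition (b) together with the existence of a non-injective $f$" is not a proof.

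This matters because your choice of decomposition $\Gamma = \Lambda$ and retract $A' = V$ forces you to kill \emph{all} surface vertex groups, hence requires the vanishing for all of them; once that fails for even one surface, the map $r'$ cannot be defined. The paper's construction is structurally different: it does not use $\Gamma=\Lambda$, and the retract $A'$ is \emph{not} $V$. Instead it uses a coarser decomposition in which only the distinguished surface $\Sigma$ (or a subsurface of it, obtained by cutting along one or two simple closed curves) is a surface vertex, while every other surface of $\Lambda$ is absorbed into the retract — $r$ is set to the identity on all surface groups other than the one being retracted. After cutting $\Sigma$ into $\Sigma_1\cup\Sigma_2$ (or, for one boundary component, gluing in a M\"obius band), the constraint on $\Sigma$'s boundary exponents from Lemma 7.8 is exactly what is needed to retract $\Sigma_2$ onto $\Sigma_1$ (or onto the cyclic group, in the extended case), and the "extended" option arises precisely when the small-surface cases leave too little room inside $V\cong\Z$ to produce a non-abelian image without adjoining a free $\Z$ factor. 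You should recenter the construction on one surface, keep the other surfaces in the retract, and use the weaker even-sum constraint rather than insisting on vanishing.
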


Assume that the hypotheses of Lemma \ref{StabilizingAPreretraction} do not hold: $\Lambda$ has a unique non surface type vertex $w$, and the corresponding vertex group $Z = \langle z \rangle$ is maximal cyclic. We assume without loss of generality that $f$ is the identity on $Z$.

Let ${\cal C}$ be a maximal essential set of curves pinched by $f$. Choose a pinching decomposition associated to ${\cal C}$. Let $v$ be a surface type vertex of $\Lambda$. If the corresponding surface $\Sigma$ is orientable, we fix an orientation. We choose maximal boundary elements $b_1, \ldots, b_m$ of $S$ corresponding to boundary curves (with positive orientation if $\Sigma$ is orientable). For each edge $e$ adjacent to $v$, let $k_i$ be such that the edge group $G_e$ has a generator which maps to $b_i$ and $z^{k_i}$ under the embeddings $G_e \hookrightarrow G_v$ and $G_e \hookrightarrow G_w=Z$ respectively. We call the integers $k_i$ the boundary exponents of $\Sigma$.

\begin{lemma} \label{SumBoundaryExponents}   Let $A$ be a torsion-free hyperbolic group which admits a JSJ-like decomposition $\Lambda$ which does not consist of a single surface type vertex. Let $f:A \to A$ be a non injective preretraction with respect to $\Lambda$. Suppose that $\Lambda$ and $f$ satisfy neither the hypotheses of Lemma \ref{StabilizingAPreretraction}, nor those of Lemma \ref{StabilizingWithFSHatNonAbelian}.

Then there exists a surface $\Sigma$ of $\Lambda$ with boundary exponents $k_1, \ldots, k_m$ such that the sum $k_1+ \ldots + k_m$ is even. If $\Sigma$ is in fact orientable, we have in fact $k_1+ \ldots + k_m=0$. 

Moreover, if $\Sigma$ is $(i)$ a $4$-punctured sphere, $(ii)$ a $3$-punctured projective plane, $(iii)$ a $2$-punctured Klein bottle, we must have (up to reordering) in case $(i)$ $k_1 = - k_2$ and $k_3= -k_4$; in case $(ii)$ $k_1=- k_2$ and $k_3$ even; in case $(iii)$ $k_1 = -k_2$, or $k_1$ and $k_2$ both even.
\end{lemma}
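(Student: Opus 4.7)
The strategy is to exploit the two hypothesis failures to read off the boundary exponents $k_i$ directly from $f$ and to extract linear relations from the abelian images of the exterior vertex groups $\hat S$. Since the hypothesis of Lemma~\ref{StabilizingAPreretraction} fails, $\Lambda$ has a unique non--surface-type vertex $w$ with cyclic vertex group $Z=\langle z\rangle$; composing $f$ with an inner automorphism, I may assume $f|_Z=\mathrm{id}$, so that every boundary element $b_i$ of a surface type vertex group $S$ (which equals $z^{k_i}$ in $A$) satisfies $f(b_i)=z^{k_i}$. Since the hypothesis of Lemma~\ref{StabilizingWithFSHatNonAbelian} fails, I pick a maximal essential set ${\cal C}$ of curves pinched by $f$ together with a surface $\Sigma$ of $\Lambda$ whose corresponding $S$ has \emph{every} exterior vertex group $\hat S=\pi_1(\Sigma^\Gamma)$ of $\Gamma(S,{\cal C}_\Sigma)$ mapped to an abelian subgroup by $f_{\cal C}$. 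Note that ${\cal C}_\Sigma$ is nonempty, for otherwise $\hat S=S$ would have non-abelian image as $f$ is a preretraction.

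For each such $\hat S$, the image $f_{\cal C}(\hat S)$ is abelian in the torsion-free hyperbolic group $A$, hence cyclic; since it contains the elements $z^{k_{i_j}}$ coming from the components of $\partial\Sigma$ lying on $\Sigma^\Gamma$, malnormality of maximal cyclic subgroups of $A$ forces $f_{\cal C}(\hat S)$ into a conjugate of $Z$. Applying $f_{\cal C}$ to the standard surface-group relation for $\pi_1(\Sigma^\Gamma)$ (product of boundary loops equals a product of commutators if orientable, of squares otherwise) and reading exponents in $Z\cong\mathbb{Z}$, I obtain $\sum_j \varepsilon_j k_{i_j}=0$ with signs $\varepsilon_j\in\{\pm 1\}$ in the orientable case, and $\sum_j k_{i_j}\equiv 0\pmod 2$ in general. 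Summing over all exterior vertices of $\Gamma(S,{\cal C}_\Sigma)$---each component of $\partial\Sigma$ lies in exactly one piece of $\Sigma\setminus{\cal C}_\Sigma$, and every such piece is exterior since it contains a component of $\partial\Sigma$---proves the first two assertions: $\sum_i k_i$ is even, and equals $0$ when $\Sigma$ is orientable (in which case all signs are coherent with the chosen orientation of $\Sigma$ that we use to define the $b_i$'s).

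For the special cases, I enumerate the finitely many topological types of maximal ${\cal C}_\Sigma$ on each of $(i)$--$(iii)$ and apply the above to each piece. In $(i)$, a $4$-punctured sphere admits only a single essential non--boundary-parallel two-sided SCC as maximal ${\cal C}_\Sigma$, separating $\Sigma$ into two pants each of which caps to a $2$-punctured sphere ($\pi_1\cong\mathbb{Z}$), yielding $k_{i_1}=-k_{i_2}$ and $k_{i_3}=-k_{i_4}$. In $(ii)$, every essential two-sided SCC on the $3$-punctured projective plane separates, and maximality configurations split $\Sigma$ as pants $+$ $1$-punctured Möbius band or (after a second curve) as two pants $+$ Möbius band; the Möbius pieces cap either to a closed $\mathbb{RP}^2$ (interior, with torsion $\pi_1$ killed in the torsion-free $A$) or to a Möbius band (whose boundary element is twice a core), and in each subcase the pants part gives $k_{i_1}=-k_{i_2}$ while the Möbius part gives $k_{i_3}$ even. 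Case $(iii)$ is handled by the analogous enumeration on $N_{2,2}$. The main obstacle is the topological bookkeeping of (ii) and (iii): one must use the maximality of ${\cal C}$ to rule out intermediate configurations---whenever a relation $k_i+k_j=0$ arises from a separating two-sided SCC $\delta$ disjoint from ${\cal C}_\Sigma$, that $\delta$ is itself pinched by $f$ and must belong to ${\cal C}$, a contradiction---combined with the torsion-freeness of $A$ to discard interior $\mathbb{RP}^2$-vertices produced by curves bounding Möbius bands.
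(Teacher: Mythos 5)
Your proof follows the same strategy as the paper's: use the failure of Lemma~\ref{StabilizingAPreretraction} to pin down the unique rigid vertex with cyclic group $Z=\langle z\rangle$ and normalize $f|_Z=\mathrm{id}$; use the failure of Lemma~\ref{StabilizingWithFSHatNonAbelian} to locate a surface $\Sigma$ whose exterior vertex groups in $\Gamma(S,{\cal C}_\Sigma)$ all have abelian (hence cyclic) image; read the parity and vanishing relations off the surface-group relator in each capped exterior piece; and finally enumerate the possible maximal ${\cal C}_\Sigma$ for the three small surfaces. The first half is handled cleanly, essentially as in the paper (the paper additionally records that the pieces must be punctured spheres, projective planes or Klein bottles, which you bypass but which is harmless for the parity conclusion).

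The one place where your argument is genuinely incomplete is the enumeration for cases $(ii)$ and $(iii)$. In case $(ii)$, besides the cut into a pair of pants plus a $2$-punctured projective plane, there is another topological possibility for a single curve in ${\cal C}_\Sigma$: a curve $\gamma$ bounding a M\"obius band, separating $\Sigma$ into a $4$-punctured sphere and a M\"obius band. After capping, this gives one exterior pants carrying all three original boundary circles, and only the weaker relation $k_1+k_2+k_3=0$, so if this configuration were allowed the lemma's conclusion would fail. It is in fact excluded, but for a reason you do not state: since $\rho_{\cal C}(S)$ is the free product of the exterior pants group (abelian image) and a $\mathbb{Z}/2$ (trivial image in torsion-free $A$), one would get $f(S)$ abelian, contradicting that $f$ is a preretraction. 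Conversely, the configuration ``two pants $+$ M\"obius band'' that you do list is actually impossible for a different reason (one of the capped pants becomes a disk, forcing some $k_i=0$, contradicting nontriviality of the edge groups), and your derivation of $k_{i_1}=-k_{i_2}$, $k_{i_3}$ even from it does not in fact go through. The paper's own proof is equally terse here, so this does not indicate a wrong approach, but the argument needs these two exclusions spelled out to be complete; the same remark applies to the M\"obius-band curve in the $2$-punctured Klein bottle in case $(iii)$.
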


\begin{proof} Since the hypotheses of Lemma \ref{StabilizingWithFSHatNonAbelian} are not satisfied, there exists a surface type vertex group $S$ of $\Lambda$ with corresponding surface $\Sigma$, such that any exterior surface type vertex group $\hat{S}$ of $\Gamma(S, {\cal C}_{\Sigma})$ has abelian image by $f_{\cal C}$. Note that $f_{\cal C}$ is non pinching on the surface $\hat{\Sigma}$ corresponding to $\hat{S}$. 

The only surface groups which admit a map to an abelian group which is non pinching and injective on boundary subgroups correspond to punctured spheres, punctured projective planes, and punctured Klein bottles. Indeed, any other surface has a simple closed curve represented by a product of commutators. 

The product of the elements $\rho_{\cal C}(b_i)$ which lie in a common exterior surface type vertex group $\hat{S}$ of $\Gamma(S, {\cal C}_{\Sigma})$ is thus either trivial, or a square, or a product of squares. In the cyclic group $f_{\cal C}(\hat{S})$, the product of the corresponding elements $f(b_i)$ is either trivial, or a square. Since $f(b_i)$ is conjugate to $z^{k_i}$, the sum of these boundary exponents is even, and equal to $0$ if $\Sigma$ is orientable. 

Note now that since all the exterior surface type vertex group have abelian image by $f$, and $S$ doesn't, this means that ${\cal C}$ is not empty. If $\Sigma$ is a $4$-punctured sphere, the only possibility is a simple closed curve separating $\Sigma$ into two $3$-punctured spheres. If $\Sigma$ is a $3$-punctured projective plane, ${\cal C}$ consists of a single curve which separates $\Sigma$ into a $2$-punctured projective plane and a $3$-punctured projective plane. If $\Sigma$ is a $2$-punctured Klein bottle there are two possibilities for ${\cal C}$: it contains a single curve which separates $\Sigma$ into either two $2$-punctured projective plane, or a punctured Klein bottle and a $3$-punctured sphere. The restrictions on boundary exponent follows.
\end{proof}

To prove Proposition \ref{IfNoStable}, we deal separately with the case where such a surface $\Sigma$ has a single boundary component.

\begin{lemma} \label{OneBoundaryComp} Suppose $\Lambda$  admits  a surface $\Sigma$ which satisfies the conclusion of Lemma \ref{SumBoundaryExponents}.

If $\Sigma$ has only one boundary component, then $A$ admits a structure of {\bf extended} hyperbolic floor, which can be assumed to be a hyperbolic floor if $\chi(\Sigma) < -2$ or if $\Lambda$ has surfaces other than $\Sigma$.
\end{lemma}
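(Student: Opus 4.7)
The plan is to first extract the precise structure of $\Sigma$ forced by Lemma \ref{SumBoundaryExponents}, then build a two-vertex graph of groups on $A$ realizing an extended hyperbolic floor, and finally upgrade the construction to a hyperbolic floor under the two stated extra hypotheses — either by using the non-abelianity of the non-surface part (when $\Lambda$ has another surface), or by introducing an auxiliary decomposition obtained by cutting $\Sigma$ along the boundary of an embedded Möbius band (when $\chi(\Sigma)<-2$).

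First I unwind the hypothesis. Since edge groups of $\Lambda$ inject into their vertex groups, the boundary exponent $k_1$ cannot vanish, so by Lemma \ref{SumBoundaryExponents} the surface $\Sigma$ must be non-orientable with $k_1 = 2k$ for some $k \neq 0$. Because $\Sigma$ is a valid surface-type vertex with one boundary component, its non-orientable genus $g$ satisfies $g \geq 3$, whence $\chi(\Sigma) \leq -2$. I fix a presentation $S = \pi_1(\Sigma) = \langle a_1,\ldots,a_g \rangle$ as a free group with boundary element $b = a_1^2 \cdots a_g^2$, so that in $A$ we have $b = z^{2k}$.

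For the extended hyperbolic floor, I collapse $\Lambda$ to a two-vertex graph of groups $\Delta$ with surface vertex $v$ carrying $\Sigma$ and non-surface vertex $w'$ carrying the subgroup $A'$ generated by $Z$ together with all other surface-type vertex groups of $\Lambda$, joined by a single edge with edge group $\langle z^{2k} \rangle$. This is a non-trivial bipartite graph of groups with surfaces whose one surface satisfies $\chi(\Sigma) \leq -2$. The map $r : A \to A'$ fixing $A'$ and sending $a_1 \mapsto z^k$, $a_i \mapsto 1$ for $i \geq 2$ satisfies $r(b) = z^{2k}$, so it is a well-defined retraction. If $A' = Z$ is cyclic, I define $r' : A * \langle t \rangle \to Z * \langle t \rangle$ fixing $Z * \langle t \rangle$ and sending $a_1 \mapsto z^k$, $a_2 \mapsto t$, $a_3 \mapsto t^{-1}$, and $a_i \mapsto 1$ for $i \geq 4$ (using $g \geq 3$); then $r'(b) = z^{2k} \cdot t^2 \cdot t^{-2} = z^{2k}$ and $r'(S) \supseteq \langle z^k, t \rangle$ is non-abelian, so $(A, A', r)$ is an extended hyperbolic floor.

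To obtain the stronger hyperbolic floor in each case of the alternative: if $\Lambda$ has a surface-type vertex other than $v$, then $A'$ contains a non-abelian surface subgroup amalgamated to $Z$, and because $z$ is maximal cyclic in $A$ one can pick $x \in A' \setminus \langle z \rangle$ which does not commute with $z$; redefining $r$ by $a_2 \mapsto x$, $a_3 \mapsto x^{-1}$ keeps $r(b) = z^{2k}$ and makes $r(S) \supseteq \langle z^k, x \rangle$ non-abelian. If instead $\chi(\Sigma) < -2$ (so $g \geq 4$) but $\Lambda = \{v, w\}$, I replace $\Delta$ by a decomposition $\Delta'$ obtained by cutting $\Sigma$ along the boundary $\gamma$ of an embedded Möbius band $M \subset \Sigma$ with core $a_1$. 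Writing $\Sigma = M \cup_\gamma \Sigma_0$, the complement $\Sigma_0$ may be chosen homeomorphic to the non-orientable surface $N_{g-1}^{(2)}$, which has $\chi = 1-g \leq -3$ and is hence a valid surface vertex. In $\Delta'$ I take $v_0$ (surface, $\Sigma_0$), $v_M$ (non-surface, $\pi_1(M) = \langle a_1 \rangle \cong \mathbb{Z}$), and $w$ (non-surface, $Z$), so $A' = \langle a_1 \rangle * Z \cong F_2$ is non-abelian. Presenting $\pi_1(\Sigma_0) = \langle x_2,\ldots,x_g, b \rangle$ with $c = b^{-1} x_g^{-2} \cdots x_2^{-2}$, I define $r$ to be the identity on $A'$ and on $\pi_1(\Sigma_0)$ to send $b \mapsto z^{2k}$, $x_2 \mapsto a_1^{-1}$, $x_g \mapsto z^{-k}$, and $x_i \mapsto 1$ for $3 \leq i \leq g-1$. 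Then $r(c) = z^{-2k} \cdot z^{2k} \cdot a_1^2 = a_1^2$, consistent with the $\gamma$-amalgamation, and $r(\pi_1(\Sigma_0)) \supseteq \langle a_1, z^k \rangle$ is non-abelian in $F_2$, so $(A, A', r)$ is a hyperbolic floor.

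The main obstacle is the last case: one must verify the existence of the embedded Möbius band in $\Sigma$ whose complement is $N_{g-1}^{(2)}$ (using $N_g = \mathbb{RP}^2 \# N_{g-1}$), identify the correct boundary-word presentation of $\pi_1(\Sigma_0)$, and check consistency of the retraction simultaneously at the $\gamma$-edge and the $b$-edge; the algebraic felicity of the definition relies crucially on $\gamma$ being represented by the square $a_1^2$, which both lifts to a simple closed curve on $\Sigma$ and is itself a square in $A'$.
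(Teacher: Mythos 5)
The proof is correct but takes a genuinely different route from the paper's. The paper's proof first forms the closed non-orientable surface group $S' = \langle z^k, S\rangle_A \cong \pi_1(N_p)$ by \emph{gluing} a M\"obius band (with core $z^k$) onto $\partial\Sigma$, and then defines a single uniform retraction on $S'$ (the move $d_3\mapsto d_2^{-1}$, $d_4\mapsto d_1^{-1}$, $d_j\mapsto 1$, which exploits the closed-surface relation $d_1^2\cdots d_p^2=1$) to simultaneously handle all $p\geq 5$; the case $p=4$ (i.e.\ $\chi(\Sigma)=-2$) is then treated separately with a plug element $s$ or an auxiliary $\Z$-factor. Your proof instead stays entirely inside the ambient group $A$ and its JSJ-like decomposition: for the hyperbolic-floor case with $\chi(\Sigma)<-2$ and no other surfaces you \emph{cut} $\Sigma$ along the boundary $\gamma$ of an interior M\"obius band to produce a three-vertex refinement $\Delta'$ of $\Lambda$, whose surface vertex carries $\Sigma_0\cong N_{g-1}^{(2)}$ and whose two cyclic non-surface vertices give $A'=\langle a_1\rangle * Z$. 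What the paper's gluing buys is economy --- one formula for a whole range of $p$, and in fact the text's ``$p>5$'' looks like a typo for ``$p\geq 5$'' since once-punctured $N_3$ has $\chi=-2$ and is admissible. What your cutting buys is that the floor decomposition is a literal refinement of $\Lambda$, which makes the verification of the graph-of-groups-with-surfaces axioms transparent and avoids having to explain how the auxiliary closed-surface group $S'$ sits inside a decomposition of $A$. Two small points you gloss over but should record: the claim that $\langle a_1\rangle$ and $Z$ generate a free product in $A$ is exactly what the hyperbolic-floor definition requires of $A'$, and although you assert it, it does need an argument --- it follows cleanly, however, because your own retraction $r$ restricts to the identity on $A'$ while its abstract version $A\to\langle a_1\rangle * \langle z\rangle\cong F_2$ is surjective on generators, which left-inverts the natural map $\langle a_1\rangle * Z\to A$; and the boundary-word bookkeeping for $\pi_1(\Sigma_0)$ (the exact form of $c$ and the sign conventions in $r(x_g)=z^{-k}$) has to be checked against a fixed Van Kampen presentation, which you acknowledge but do not carry out.
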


\begin{proof} Note that $k_1 \neq 0$, so $\Sigma$ is not orientable and $k_1$ is even, say $k_1=2k$. If $S$ is the vertex group corresponding to $\Sigma$, the subgroup $S' = \langle z^k, S \rangle_A$ is the fundamental group of the closed non orientable surface $\Sigma'$ obtained by gluing a Mobius band $M$ along the boundary component of $\Sigma$. We have $S' =\langle d_1, \ldots, d_p \mid d_1^2\ldots d_p^2=1 \rangle$.

If $p > 5$ there is a retraction $S' \to \langle d_1, d_2 \rangle_{S'}$ given by $r(d_3)=d^{-1}_2$, $r(d_4)=d^{-1}_1$ and $r(d_j) =1$ for all $j>4$. It extends to a retraction which gives $(A, r(A), r)$ a structure of hyperbolic floor (with a single surface which is the once punctured connected sum of $p-2$ projective planes).

If $p=4$, let $s$ be a non trivial element in one of the other surface groups of $\Lambda$ (if there are some), or the generator of an infinite cyclic group $\Z$ otherwise. We define a retraction $r$ on $A$ (respectively on $A * \Z$) by setting $r(d_1) = d_1$, $r(d_2)=d^{-1}_1$, and $r(d_3) = r(d^{-1}_4) = s$ and extending it to $A$ (respectively to $A* \Z$) by the identity on the other surface type vertex groups of $\Lambda$ (respectively on $\Z$). Thus $A$ has a structure of extended hyperbolic floor over a proper subgroup, which is a hyperbolic floor if $\Lambda$ has surfaces other than $\Sigma$.
\end{proof}

We can finally prove Proposition \ref{IfNoStable}.
\begin{proof}[Proof of Proposition \ref{IfNoStable}] Let $\Sigma$ be the surface of $\Lambda$ obtained in Lemma \ref{SumBoundaryExponents}. By Lemma \ref{OneBoundaryComp}, we may assume $\Sigma$ has at least two boundary components. 

Suppose first that $\Sigma$ is an $m$-punctured surface of genus at least $1$ ($m \geq 2$), with the exception of a twice punctured torus. We can find (see Figure \ref{FoldableFig} for the orientable case) a set $C$ which is the union of one or two disjoint simple closed curves on $\Sigma$ such that $\Sigma$ can be written as the union of two subsurfaces $\Sigma_1$ and $\Sigma_2$ with
\begin{itemize}
  \item $\Sigma_1 \cap \Sigma_2 = C$;
	\item $\Sigma_1$ is orientable and contains exactly one boundary component of $\Sigma$;
	\item $\Sigma_2$ is homeomorphic to an $(m-2)$-punctured copy of $\Sigma_1$ (if $\Sigma$ is orientable), or to the $(m-2)$-punctured connected sum of a projective plane with $\Sigma_1$ (if $\Sigma$ is non orientable).
\end{itemize}

\begin{figure}[!ht]
\begin{center}
\input{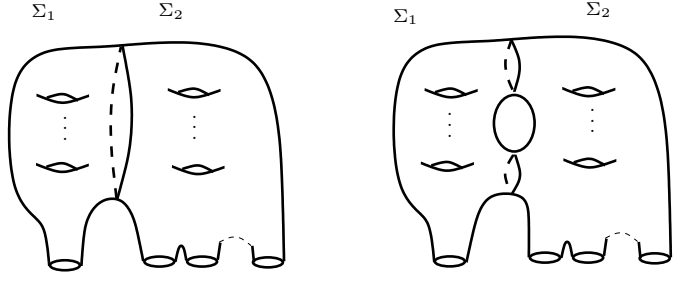}
\caption{The even genus case is represented on the left, the odd genus case on the right.}
\label{FoldableFig}
\end{center}
\end{figure}

In the even genus case, this means we have a presentation for the fundamental group $S$ of $\Sigma$ with generators 
$\{ x_i, y_i, x'_i, y'_i \}_{1 \leq i \leq r}  \cup \{b_1, \ldots, b_m\} \cup \{d\}$, and relations
$$ b^{-1}_1 \; \Pi^r_{i=1} [x_i, y_i] = b_2 \ldots  b_m d^2  \; \Pi^r_{i=1} [ x'_i, y'_i ]$$
and $d=1$ if $\Sigma$ is orientable, and such that the fundamental group $S_1$ of $\Sigma_1$ is generated by $\{ x_i, y_i \}_{1 \leq i \leq r} \cup \{b_1\}$ and  the fundamental group $S_2$ of $\Sigma_2$ by $\{ x'_i, y'_i \}_{1 \leq i \leq r}  \cup \{b_2, \ldots, b_m\} \cup \{d\}$

We then define the retraction $r$ to be the identity on $S_1$, and defined on $S_2$ by $r(x'_i) = x_i$, $r(y'_i) = y_i$, and $r(b_i)= z^{k_i}$ for $i\geq 2$, and if $\Sigma$ is non orientable, we set $r(d)= z^{\delta}$ where $\delta$ is such that $k_1 + k_2 + \ldots + k_m + 2\delta = 0$. This extends to $A$, and $(A, r(A), r)$ is a hyperbolic floor. We can proceed similarly in the odd genus case.

We consider now the case of a twice punctured torus. The corresponding vertex group admits a presentation as $\langle x, y, b_1, b_2 \mid [x,y] = b_1 b_2 \rangle,$ and we must have $b_1 = z^k$ and $b_2 = tz^{-k}t^{-1}$ for $t$ a Bass-Serre element corresponding to the extra edge joining the surface vertex to the rigid vertex. The retraction which sends $x$ to $z^k$, $y$ to $t$, and is the identity on $Z$ gives a structure of hyperbolic floor for $A$ over the (free) subgroup generated by $z$ and $t$. 

If $\Sigma$ has genus $0$, it is an $m$-punctured $(i)$ sphere, $(ii)$ projective plane or $(iii)$ Klein bottle. If $m$ is at least $(i)$ $5$, $(ii)$ $4$, and $(iii)$ $3$ respectively, then $A$ admits a structure of hyperbolic floor with one surface group corresponding to the complement of a thrice punctured sphere in $\Sigma$.

Suppose now that $\Sigma$ is a $(i)$ $4$-punctured sphere, $(ii)$ $3$-punctured projective plane or $(iii)$ $2$-punctured Klein bottle. By the moreover part of Lemma \ref{SumBoundaryExponents}, we can choose
\begin{itemize}\addtolength{\itemsep}{-0.5\baselineskip}
 \item[(i)] $t, u, v$ elements of $A$ such that $z^{k_1} tz^{-k_1} t^{-1} uz^{k_3}u^{-1} vz^{-k_3}v^{-1} = 1$;
 \item[(ii)] $t, u, a$ elements of $A$ such that $z^{k_1} tz^{-k_1} t^{-1} uz^{2 \delta}u^{-1} a^2 = 1$;
 \item[(iii)] $t, a, b$ elements of $A$ such that $z^{k_1} tz^{k_2} t^{-1} a^2 b^2 = 1$ with $k_1 = -k_2$ or $k_1 = 2 \delta$ and $k_2 = 2 \epsilon$.
\end{itemize}
We now define
\begin{itemize}\addtolength{\itemsep}{-0.5\baselineskip}
 \item[(i)] $r(t)=1$ and $r(u) = r(v) = s$;
 \item[(ii)] $r(t)=1$,  $r(u)=s$ and $r(a)=s z^{-\delta} s^{-1}$;
 \item[(iii)] $r(t) = 1$, $r(a) = s$ and $r(b)= s^{-1}$ if $k_1 = -k_2$; and $r(t)=s$, $r(a) = sz^{-\epsilon}s^{-1}$ and $r(b) = z^{-\delta}$ otherwise;
\end{itemize}
where $s$ is a non trivial element a surface group of $\Lambda$ other than $S$ (if there are some), and the generator of an infinite cyclic group $\Z$ if not. We extend $r$ to $A$ (respectively to $A* \Z$) by the identity on the other surface type vertex groups of $A$ (as well as on $\Z$). This gives $A$ a structure of (extended) hyperbolic floor over $r(A)$ (respectively over $\langle z \rangle$). 

\end{proof}

\subsection{Proof of Proposition \ref{Retraction}} \label{ProofRetractionSubsec}

Let $A$ be a torsion-free hyperbolic group which admits a JSJ-like decomposition $\Lambda$. 
Let $f: A \to A$ be a non injective preretraction with respect to $\Lambda$. We want to show that there exists an extended hyperbolic floor $(A, A', r)$.

Let ${\cal C}$ be a maximal set of essential curves pinched by $f$. Up to replacing $f$ by another non injective preretraction, we may assume that ${\cal C}$ is not contained properly in any essential set of curves pinched by a preretraction. We choose a pinching decomposition associated with ${\cal C}$.

Recall that ${\cal S}_{\cal C}(f)$ is the set of surfaces one of whose corresponding vertex groups intersects $f_A(A_{\cal C})$ in more than a boundary subgroup, and that $f$ is ${\cal C}$-minimal if for no preretraction $h$ factoring through $\rho_{\cal C}$ does ${\cal S}_{\cal C}(f)$ properly contain ${\cal S}_{\cal C}(h)$.

Without loss of generality, we may assume that $f$ is ${\cal C}$-minimal. By Remark \ref{StableAndMinimal} and Proposition \ref{IfNoStable}, we may also suppose that $f$ is also stable and that $f(A) \leq f_A(A_{\cal C})$. 

\begin{lemma} We have ${\cal S}_{\cal C}(f^k) = {\cal S}_{\cal C}(f)$ for all $k > 0$. 
\end{lemma}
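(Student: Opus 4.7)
The plan is very direct: establish an inclusion using the hypothesis $f(A) \leq f_A(A_{\cal C})$, then apply ${\cal C}$-minimality to upgrade it to equality.

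First I would observe that $f^k$ qualifies as a ``test morphism'' in the definition of ${\cal C}$-minimality. Indeed, stability of $f$ gives that $f^k$ is a preretraction with respect to $\Lambda$. Moreover $f^k$ factors through $\rho_{\cal C}$: any element $x \in N({\cal C})$ lies in $\ker \rho_{\cal C} \subseteq \ker f$, so $f^k(x) = f^{k-1}(1) = 1$. Thus $f^k = (f^k)_{\cal C} \circ \rho_{\cal C}$ and we may form $(f^k)_A$ as the restriction of $(f^k)_{\cal C}$ to $A_{\cal C}$ with respect to the fixed pinching decomposition.

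Next I would iterate the hypothesis $f(A) \leq f_A(A_{\cal C})$ to obtain
$$ f^k(A) \;\leq\; f_A(A_{\cal C})$$
for every $k \geq 1$. This is by induction on $k$: assuming $f^{k-1}(A) \leq f_A(A_{\cal C}) \leq A$, we get $f^k(A) = f(f^{k-1}(A)) \leq f(A) \leq f_A(A_{\cal C})$. Since $(f^k)_A(A_{\cal C}) = (f^k)_{\cal C}(A_{\cal C}) \subseteq (f^k)_{\cal C}(\rho_{\cal C}(A)) = f^k(A)$, this yields
$$ (f^k)_A(A_{\cal C}) \;\leq\; f_A(A_{\cal C}). $$
Consequently, for every surface type vertex group $S$ of $\Lambda$, the intersection $(f^k)_A(A_{\cal C}) \cap S$ is contained in $f_A(A_{\cal C}) \cap S$. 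In particular, whenever the latter lies inside a boundary subgroup of $S$, so does the former. Taking this over all conjugacy class representatives of surface type vertex groups gives the inclusion
$$ {\cal S}_{\cal C}(f^k) \;\subseteq\; {\cal S}_{\cal C}(f). $$

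Finally, since $f^k$ is a preretraction $A \to A$ factoring through $\rho_{\cal C}$, the ${\cal C}$-minimality of $f$ forbids strict containment ${\cal S}_{\cal C}(f^k) \subsetneq {\cal S}_{\cal C}(f)$. The inclusion above must therefore be an equality, giving ${\cal S}_{\cal C}(f^k) = {\cal S}_{\cal C}(f)$ as required. There is no real obstacle here; the lemma is essentially a bookkeeping consequence of the two running assumptions that $f$ is ${\cal C}$-minimal and that $f(A) \leq f_A(A_{\cal C})$ (which, as noted in Remark 6.18, can be arranged together with stability).
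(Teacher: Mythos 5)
Your proof is correct and follows essentially the same path as the paper's: establish $(f^k)_A(A_{\cal C}) \leq f_A(A_{\cal C})$ using the running hypothesis $f(A) \leq f_A(A_{\cal C})$ (you do this via the induction $f^k(A) \leq f_A(A_{\cal C})$, the paper via the identity $(f^k)_A = f^{k-1} \circ f_A$, but these are the same observation), deduce the inclusion ${\cal S}_{\cal C}(f^k) \subseteq {\cal S}_{\cal C}(f)$, and invoke ${\cal C}$-minimality to force equality. The only point the paper makes explicit that you fold into a parenthetical is that stability of $f$ ensures $f^k$ is again a preretraction, which is needed before ${\cal C}$-minimality can be applied; you do state it, so there is no gap.
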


\begin{proof} Since $f$ is stable, $f^k$ is also a preretraction. The set ${\cal C}$ is an essential set of curves pinched by $f^k$. Now $(f^k)_A = f^{k-1} \circ f_A$ so $(f^k)_A(A_{\cal C})$ is contained in $f(A)$, which is contained in $f_A(A_{\cal C})$ by assumption on $f$. In particular ${\cal S}_{\cal C}(f^{k}) \subseteq {\cal S}_{\cal C}(f)$. By ${\cal C}$-minimality of $f$, this inclusion is an equality. 
\end{proof}

\begin{rmk} \label{f2minimal} Note that this implies that $f^k$ is also ${\cal C}$-minimal. Moreover, $f^k$ is also stable, and $f^k(A) \leq (f^k)_A(A_{\cal C})$.
\end{rmk}

Let ${\cal C}^+$ be a maximal essential set of curves on the surfaces of $\Lambda$ whose corresponding elements are sent to edge groups of $\Lambda$ by $f$. Note that the elements corresponding to curves of ${\cal C}^+$ are also sent to edge groups of $\Lambda$ by any power of $f$. Up to replacing $f$ by $f^k$ for some $k$, we can assume ${\cal C}^+$ is a maximal essential set of curves whose corresponding elements are sent to edge groups of $\Lambda$ by $f$. By Remark \ref{f2minimal}, we see that none of the properties of $f$ are affected by this modification.

With the set ${\cal C}^+$, we can build elliptic refinements $\Lambda^+_i$ of the graphs of groups $\Lambda_i$ with respect to $f$ as in Section \ref{EllipticRefSubsec}.  

\begin{lemma} \label{IsoOnCoveredSurfaces} Some power of $f$ sends surface type vertex groups whose corresponding surface is in ${\cal S}_{\cal C}(f)$ isomorphically onto conjugates of themselves.
\end{lemma}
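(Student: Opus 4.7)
My plan is to adapt the complexity-bookkeeping argument of Lemma \ref{ProperSubtree}, restricted to surfaces in ${\cal S}_{\cal C}(f)$. For each $\Sigma \in {\cal S}_{\cal C}(f)$, I use Proposition \ref{NoWeirdSurfaces} to produce a surface type vertex $S_0$ of some $\Lambda^+_i$ with $f_A(S_0)$ of finite index in a conjugate of a vertex group $S_\Sigma$ for $\Sigma$. The surface $\Sigma_0$ of $S_0$ arises as a piece of the cut by ${\cal C}^+$ of an exterior surface $\hat{\Sigma}_1$ of some $\Gamma(S_{\Sigma_2}, {\cal C}_{\Sigma_2})$, for a unique surface $\Sigma_2$ of $\Lambda$; I set $j(\Sigma) := \Sigma_2$. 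Combining Lemma \ref{ComplexitiesAndFiniteIndex} with Remark \ref{PinchingComplexity} and its analogue for the cutting by ${\cal C}^+$ yields
\[
k(\Sigma) \le k(\Sigma_0) \le k(\hat{\Sigma}_1) \le k(\Sigma_2),
\]
with simultaneous equality forcing ${\cal C}_{\Sigma_2}$ and the ${\cal C}^+$-cut at $\hat{\Sigma}_1$ to be empty, hence $S_0 = S_{\Sigma_2}$ as subgroups of $A$ and $f_A$ restricting to an isomorphism from $S_{\Sigma_2}$ onto a conjugate of $S_\Sigma$.

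The injectivity observation underpinning the finiteness argument is this: if $j(\Sigma) = j(\Sigma') = \Sigma_2$ with equality of complexities for both, then $f$ sends $S_{\Sigma_2}$ isomorphically onto conjugates of both $S_\Sigma$ and $S_{\Sigma'}$; the two conjugates coincide in $A$ and contain a common non-cyclic subgroup, so by the JSJ-like strong $2$-acylindricity of $\Lambda$ (Remark \ref{JSJIsJSJLike}) they belong to the same conjugacy class, forcing $\Sigma = \Sigma'$. To promote the inequalities into equalities, I iterate: by Remark \ref{f2minimal} every power $f^n$ is stable, ${\cal C}$-minimal, and satisfies ${\cal S}_{\cal C}(f^n) = {\cal S}_{\cal C}(f)$, so the analogous construction applied to $f^n$ yields maps $j_n$ with the same properties. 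Restricting to the subset $X_M \subseteq {\cal S}_{\cal C}(f)$ of maximum complexity $M$, the maximality of $M$ within ${\cal S}_{\cal C}(f)$ combined with a pigeonhole argument on the finite set ${\cal S}(\Lambda)$ forces $j_n(X_M) \subseteq X_M$ for large enough $n$; the injectivity above then makes $j_n|_{X_M}$ a bijection preserving complexity, so $f^n$ sends each $S_\Sigma$ with $\Sigma \in X_M$ isomorphically onto a conjugate of itself. Descending through complexity strata and combining the resulting powers by a least common multiple yields the uniform $N$ claimed by the lemma.

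The main obstacle is controlling the image of $j$: a priori $j(\Sigma)$ could escape ${\cal S}_{\cal C}(f)$ into surfaces of ${\cal S}(\Lambda)$ of even larger complexity, so the direct finiteness argument of Lemma \ref{ProperSubtree} does not transfer verbatim. Either the iteration argument sketched above — exploiting that all powers of $f$ retain the structure of Setting \ref{setting} and that $j_n$-images cycle within a finite set — or a contradiction to the ${\cal C}$-minimality of $f$ (constructing a preretraction with strictly smaller ${\cal S}_{\cal C}$ from a hypothetical escape configuration) is required to close this gap.
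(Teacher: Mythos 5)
Your proposal contains a genuine gap, which you in fact identify yourself but do not resolve. The issue is exactly the one you flag in your last paragraph: the map $j$ you define sends $\Sigma \in {\cal S}_{\cal C}(f)$ to the surface $\Sigma_2$ of $\Lambda$ from which the piece $S_0$ is cut (through ${\cal C}_{\Sigma_2}$ and then ${\cal C}^+$), and there is no reason for $\Sigma_2$ to lie in ${\cal S}_{\cal C}(f)$. Membership in ${\cal S}_{\cal C}(f)$ is a condition on the \emph{target} of $f_A$ (namely that $f_A(A_{\cal C})$ meets a vertex group for $\Sigma$ in more than a boundary subgroup), whereas your $\Sigma_2$ is a \emph{source}: a surface whose subpiece maps onto $\Sigma$. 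In Lemma \ref{ProperSubtree} this same complexity bookkeeping succeeds precisely because the hypothesis $\phi_i(T^+_i) = T$ puts every surface of $\Lambda$ in play, so the map is a self-map of the finite set of all surfaces; here you only control ${\cal S}_{\cal C}(f)$ and $j$ can escape it. Neither of your two proposed repairs closes this: the iteration argument cannot apply $j_n$ to a surface outside ${\cal S}_{\cal C}(f^n) = {\cal S}_{\cal C}(f)$ (Proposition \ref{NoWeirdSurfaces} only produces the piece $S_0$ for surfaces in ${\cal S}_{\cal C}$), so there is no well-defined orbit in ${\cal S}(\Lambda)$ on which to run a pigeonhole; and the appeal to ${\cal C}$-minimality is left entirely unspecified.

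The paper's route around this is genuinely different and is the missing idea. Instead of applying Proposition \ref{NoWeirdSurfaces} to $f$ and then tracing $S_0$ backwards through the cuts, apply it to $f^2$ (which is still stable, ${\cal C}$-minimal, and has ${\cal S}_{\cal C}(f^2) = {\cal S}_{\cal C}(f)$): this produces $S_0$ in some $\Lambda^+_i$ with $(f^2)_A(S_0) = f(f_A(S_0))$ of finite index in $S$. Now look \emph{forward} at $f_A(S_0)$: it is elliptic in $\Lambda$, it cannot live in a non surface type vertex group (else $f(f_A(S_0))$ would lie in a conjugate of such a group, meeting $S$ in at most a cyclic subgroup), so it lies in some surface type vertex group $S_1$, and by Lemma \ref{FiniteIndex} it has finite index there. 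Since $f_A(S_0) \leq f_A(A_{\cal C})$, this automatically forces $\Sigma_1 \in {\cal S}_{\cal C}(f)$, which is exactly the membership statement your $\Sigma_2$ lacks. Then $f(S_1)$ has finite index in $S$, the assignment $\Sigma \mapsto \Sigma_1$ is an injective complexity-nondecreasing self-map of the finite set ${\cal S}_{\cal C}(f)$, and the finiteness argument closes as in Lemma \ref{ProperSubtree}, with Lemma \ref{ComplexitiesAndFiniteIndex} giving that equality of complexity forces $f|_{S_1}$ to be an isomorphism. This is not a variant of your argument: it replaces the backward-tracing map $j$ by a forward map built from $f$ itself, and it is precisely the use of $f^2$ that makes the forward map land inside ${\cal S}_{\cal C}(f)$.
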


\begin{proof} Let $\Sigma$ be a surface in ${\cal S}_{\cal C}(f)={\cal S}_{\cal C}(f^2)$. Let $S$ be a surface type vertex group corresponding to $\Sigma$ which intersects $(f^2)_A(A_{\cal C})$ in more than a boundary subgroup. Note that $(f^2)_A = f \circ f_A$. 

By hypothesis on $f$, the set ${\cal C}$ is in fact a maximal set of essential curves pinched by $f^2$. Since $f^2$ is ${\cal C}$-minimal, by Proposition \ref{NoWeirdSurfaces}, there is a surface type vertex group $S_0$ in one of the graphs of groups $\Lambda^+_i$ such that $(f^2)_A(S_0)$ has finite index in $S$. Now $f_A(S_0)$ is elliptic in $\Lambda$: if it lies in a non surface type vertex group $V$ of $\Lambda$, then $(f^2)_A(S_0) = f(f_A(S_0))$ lies in a conjugate of $V$, but $V \cap S$ is at most cyclic which contradicts $(f^2)_A(S_0)$ having finite index in $S$. Thus $f_A(S_0)$ lies in a surface type vertex group $S_1$ of $\Lambda$. By Lemma \ref{FiniteIndex}, since $f_A$ is non pinching on the surfaces of $\Lambda^+_i$, $f_A(S_0)$ has finite index in $S_1$. In particular, the surface $\Sigma_1$ corresponding to $S_1$ is in ${\cal S}_{\cal C}(f)$. 

Now $f(S_1)$ has a subgroup of finite index, namely $f(f_A(S_0))$, which is elliptic in $\Lambda$. Thus it is itself elliptic in $\Lambda$. Since $f(f_A(S_0))$ is not abelian and is contained in $S$, $f(S_1)$ must be contained in $S$. Hence $f(S_1)$ has finite index in $S$. By Lemma \ref{ComplexitiesAndFiniteIndex}, this implies that the complexity $k(\Sigma_1)$ of $\Sigma_1$ is greater than that of $\Sigma$, and if we have equality $f|_{S_1} : S_1 \to S$ is an isomorphism of surfaces. 

Consider the application $\Sigma \mapsto \Sigma_1$ from ${\cal S}_{\cal C}(f)$ to itself. It is injective since $f(S_1) \leq S$, and it increases complexity: since ${\cal S}_{\cal C}(f)$ is finite, it is a bijection and we have $k(\Sigma_1) = k(\Sigma)$ for all $\Sigma$. Thus $f$ sends each surface type vertex groups whose corresponding surface is in ${\cal S}_{\cal C}(f)$ isomorphically onto such a vertex group. This proves the result. 
\end{proof}

We replace $f$ by the power given by Lemma \ref{IsoOnCoveredSurfaces}.

Let $\Gamma$ be the graph of groups decomposition obtained from $\Lambda$ by collapsing all the edges except those adjacent to surface type vertices whose corresponding surfaces are not in ${\cal S}_{\cal C}(f)$. It naturally inherits a structure of graph of groups with surfaces from $\Lambda$. Note that any edge of $\Gamma$ is adjacent to exactly one surface type vertex and one non surface type vertex.

Denote by $\Gamma_1, \ldots, \Gamma_t$ the subgraphs of groups of $\Lambda$ which are inverse images of the non surface type vertices of $\Gamma$ under the collapse map $\Lambda \to \Gamma$. Note that $\Gamma_j$ is a JSJ-like decomposition. Any surface $\Sigma$ of $\Gamma_j$ is in ${\cal S}_{\cal C}(f)$, thus $f$ is injective on the corresponding surface type vertex groups by Lemma \ref{IsoOnCoveredSurfaces}. In particular, no curves of ${\cal C}$ lie on $\Sigma$. Thus under the refinement of $\Lambda$ to $\Lambda_{\cal C}$, $\Gamma_j$ embeds naturally in $\Lambda_{i_j}$ for some ${i_j}$. 

We can thus choose non surface type vertex groups $H_1, \ldots, H_t$ of $\Gamma$ such that $\rho_{\cal C}(H_j) \leq A_{i_j}$. 

\begin{lemma} \label{AiInHi} For each $i$, the group $f_A(A_i)$ is contained in a non surface type vertex group of $\Gamma$.
\end{lemma}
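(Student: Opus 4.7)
The plan is to show that $f_A(A_i)$ fixes a non-surface vertex of the Bass-Serre tree $T_\Gamma$ of $\Gamma$. I will do this by constructing, equivariantly, a vertex $\phi(v)\in T_\Gamma$ for each vertex $v$ of $T^+_i$ which is fixed by $f_A(G_v)$ and is of non-surface type, and then showing that $\phi$ is constant; the lemma then follows since $f_A(A_i)$ must stabilize this common value, which is a conjugate of some $H_j$.

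For the first step (existence of non-surface fixed vertices), I analyze the images of vertex and edge groups of $\Lambda^+_i$ under $f_A$. The non-surface vertex groups of $\Lambda^+_i$ coincide with (conjugates of) non-surface vertex groups of $\Lambda$, which the preretraction $f$ sends to conjugates of themselves; by construction of $\Gamma$, each such group sits inside some $\Gamma_j$, hence in a conjugate of $H_j$. Edge groups of $\Lambda^+_i$ are sent injectively to edge groups of $\Lambda$, each contained in a non-surface vertex stabilizer of $T_\Gamma$. The delicate case is a surface vertex group $S_0$ of $\Lambda^+_i$: maximality of ${\cal C}^+$ makes $f_A|_{S_0}$ non-pinching, so by Lemma~\ref{FiniteIndex}, $f_A(S_0)$ either sits in a boundary subgroup of some surface group of $\Lambda$ (then cyclic, hence in an edge group, hence in a conjugate of some $H_j$) or has finite index in a surface vertex group $S_1$ of $\Lambda$. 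In the second subcase, $\Sigma_1$ must lie in ${\cal S}_{\cal C}(f)$, for otherwise $f_A(S_0)\le f_A(A_{\cal C})\cap S_1$ would be contained in a boundary subgroup of $S_1$, contradicting finite index in the non-cyclic surface group $S_1$; hence $S_1$ is a vertex group of some $\Gamma_{j'}$ and $f_A(S_0)\le H_{j'}$. In every case I can equivariantly choose a non-surface $\phi(v)$ fixed by $f_A(G_v)$.

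For the second step (constancy of $\phi$), I use two features of $\Gamma$. First, $\Gamma$ is bipartite: the JSJ-like property forbids edges of $\Lambda$ between two surface vertices, and the construction of $\Gamma$ collapses every edge not adjacent to a surface vertex whose surface is outside ${\cal S}_{\cal C}(f)$, so every surviving edge of $\Gamma$ joins its unique surface endpoint to a non-surface endpoint. Second, $\Gamma$ is $1$-acylindrical next to its surface vertices, since a surface vertex group of $\Gamma$ is a surface group of $\Lambda$ whose boundary subgroups are pairwise non-conjugate and malnormal. Suppose for contradiction that $\phi(v)\neq\phi(v')$ for some adjacent vertices $v,v'$ of $T^+_i$, with edge $e$ between them. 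By bipartiteness, the arc $[\phi(v),\phi(v')]$ in $T_\Gamma$ passes through some surface vertex $u$. The non-trivial cyclic group $f_A(G_e)$, non-trivial by injectivity of $f_A$ on edge groups of $\Lambda^+_i$, fixes both endpoints of the arc, hence the entire arc, in particular two distinct edges adjacent to $u$; this contradicts $1$-acylindricity near $u$. Thus $\phi$ is constant with value a non-surface vertex $w^*\in T_\Gamma$, and by equivariance $f_A(A_i)\le\Stab(w^*)=gH_jg^{-1}$ for some $j$ and some $g\in A$.

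The main obstacle is the surface vertex group case in Step~1: one must exclude the possibility that $f_A(S_0)$ lies in a surface vertex stabilizer of $T_\Gamma$, which is achieved by combining Lemma~\ref{FiniteIndex} with the defining property that $\Sigma\notin{\cal S}_{\cal C}(f)$ forces $f_A(A_{\cal C})\cap S$ to be at most a boundary subgroup, incompatible with having finite index in the non-cyclic surface group.
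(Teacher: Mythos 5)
Your proof is correct, but it takes a genuinely different route from the paper. The paper simply invokes Lemma~\ref{OneLambda}: if a surface vertex $v_S$ of $T_{\Lambda}$ lies in $\phi_i(T^+_i)$, then its surface is in ${\cal S}_{\cal C}(f)$, so the connected subtree $\phi_i(T^+_i)$ avoids the preimages of the surface vertices of $T_{\Gamma}$; after composing with the collapse $T_\Lambda \to T_\Gamma$, the image is a connected subtree of $T_\Gamma$ that contains no surface vertex, and bipartism of $\Gamma$ (every edge touches a surface vertex) forces this subtree to be a single non-surface vertex, which $f_A(A_i)$ stabilizes. You instead re-derive the content of Lemma~\ref{OneLambda} from scratch (via Lemma~\ref{FiniteIndex} and the non-pinching property) and build an equivariant vertex-to-vertex assignment $\phi$ on $T^+_i$ rather than reusing the continuous map $\phi_i$. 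Because your $\phi$ is a priori only a map on vertices and not continuous, its image need not be connected, so the "avoiding surface vertices plus bipartism implies a point" argument is unavailable; you compensate by invoking $1$-acylindricity of $\Gamma$ near its surface vertices, applied to the nontrivial cyclic subgroup $f_A(G_e)$, to force $\phi(v)=\phi(v')$ for adjacent $v,v'$. This works, and is a perfectly good alternative, but it is strictly more machinery than the paper uses: post-composing the existing $\phi_i$ with the collapse map gives connectedness for free and renders the acylindricity argument unnecessary.

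Two small points to tighten. First, in your Step~1 dichotomy for a surface vertex group $S_0$ of $\Lambda^+_i$, recall that by the very construction of the elliptic refinement $f_A(S_0)$ lies in \emph{some} vertex group of $\Lambda$; you should make explicit that if that vertex group is non-surface type you are already done, before splitting into "boundary subgroup" versus "finite index via Lemma~\ref{FiniteIndex}" inside a surface vertex group. Second, the phrase "I can equivariantly choose $\phi(v)$" deserves a word of justification: one picks a target vertex for one representative $v_\alpha$ in each $A_i$-orbit and extends by $\phi(g\cdot v_\alpha)=f_A(g)\cdot\phi(v_\alpha)$; this is well-defined precisely because $f_A(G_{v_\alpha})$ fixes $\phi(v_\alpha)$, which is the property you established in Step~1. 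With those remarks, the argument is complete and correct.
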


\begin{proof} By Lemma \ref{OneLambda}, if a surface type vertex $v_S$ of $T_{\Lambda}$ lies in $\phi_i(T^+_i)$, the corresponding vertex group $S$ intersects $f_A(A_i)$ in a finite index subgroup, so the surface corresponding to $v_S$ is in ${\cal S}_{\cal C}(f)$. So the tree $\phi_i(T^+_i)$ does not contain any vertices which are sent to surface type vertices of $T_{\Gamma}$ under the collapse map $ T_{\Lambda} \to T_{\Gamma}$. By bipartism of $T_{\Gamma}$, this implies that $f(A_i)$ is contained in a non surface type vertex group of $\Gamma$.
\end{proof}

We now show
\begin{lemma}\label{DerDesDer} Each $H_j$ is sent isomorphically to a conjugate of itself by $f$, and $f_A(A_{i_j})$ is a conjugate of $H_j$.
\end{lemma}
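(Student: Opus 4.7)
The plan is to use Lemma~\ref{AiInHi} to locate $f(H_j)$ inside a conjugate of some $H_k$, then use the presence of a non-cyclic subgroup of $H_j$ to force $k=j$, and finally apply Proposition~\ref{IsoOnVertexIsIsoOnGroup} to the induced map $H_j\to H_j$ to upgrade the inclusion to equality. The statement about $f_A(A_{i_j})$ will then follow immediately from a sandwich of inclusions.

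First I would observe that since $\rho_{\cal C}(H_j)\leq A_{i_j}$, we have $f(H_j)=f_A(\rho_{\cal C}(H_j))\leq f_A(A_{i_j})$, and by Lemma~\ref{AiInHi} this sits inside a conjugate $hH_kh^{-1}$ of some non-surface vertex group of $\Gamma$. To identify $k$, I would pick a non-cyclic subgroup $W$ of $H_j$ arising as a vertex group of $\Gamma_j$: either a non-cyclic non-surface vertex group of $\Lambda$, or a covered surface vertex group (whose associated surface lies in ${\cal S}_{\cal C}(f)$). The degenerate case where no such $W$ exists forces $\Gamma_j$ to reduce to a single cyclic non-surface vertex of $\Lambda$, hence $H_j$ itself is cyclic and the conclusion is immediate from $f$ being a preretraction. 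Otherwise, by the preretraction condition or by Lemma~\ref{IsoOnCoveredSurfaces}, $f(W)=gWg^{-1}$ is a non-cyclic conjugate of $W$, which sits inside $hH_kh^{-1}$. But $gWg^{-1}$ also sits inside the natural non-surface vertex stabilizer $gH_jg^{-1}$ of $T_\Gamma$; since $\Gamma$ is bipartite with cyclic edge groups, two non-surface vertex stabilizers of $T_\Gamma$ sharing a non-cyclic subgroup must coincide, giving $gH_jg^{-1}=hH_kh^{-1}$ and thus $k=j$.

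Next, set $\phi=\Conj(h^{-1})\circ f|_{H_j}:H_j\to H_j$ and view $\Gamma_j$ as a JSJ-like decomposition of $H_j$ (which is torsion-free hyperbolic, being quasiconvex in $A$). On each non-surface vertex group $V$ of $\Gamma_j$, $\phi$ is a conjugation by an element of $A$; for non-cyclic $V$ the argument of the previous paragraph forces this conjugating element to lie in $H_j$, and for cyclic $V$ one propagates this conclusion along edges of $\Gamma_j$ using that any cyclic non-surface vertex of $\Gamma_j$ is adjacent (by the JSJ-like condition (ii)) to a non-cyclic vertex with which it shares a (maximal cyclic) edge group, so that the two conjugating elements differ by an element of the centralizer of the edge group, which lies in $H_j$. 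On each covered surface vertex group of $\Gamma_j$, $\phi$ is an isomorphism onto a conjugate by Lemma~\ref{IsoOnCoveredSurfaces}. Thus $\phi$ satisfies the hypotheses of Proposition~\ref{IsoOnVertexIsIsoOnGroup}, and we conclude that $\phi$ is an isomorphism, i.e.\ $f(H_j)=hH_jh^{-1}$.

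Finally, the chain $f(H_j)\leq f_A(A_{i_j})\leq hH_jh^{-1}$ collapses to equalities, yielding $f_A(A_{i_j})=hH_jh^{-1}$, a conjugate of $H_j$, as claimed. The main obstacle is the careful verification that $\phi$ is a genuine preretraction of $H_j$ with respect to $\Gamma_j$; in particular, ensuring that the conjugating elements on cyclic non-surface vertex groups of $\Gamma_j$ can be chosen inside $H_j$ rather than in $A$, which is where the JSJ-like structural conditions and the maximality of edge groups are used in an essential way.
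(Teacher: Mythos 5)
Your overall strategy matches the paper's: use Lemma~\ref{AiInHi} to place $f(H_j)$ inside a conjugate $hH_kh^{-1}$, identify $k=j$, modify $f|_{H_j}$ by $\Conj(h^{-1})$ to get an endomorphism of $H_j$, and apply Proposition~\ref{IsoOnVertexIsIsoOnGroup}. The difference is the structural fact you rely on. You use the weaker observation that two non-surface vertex stabilizers of $T_\Gamma$ sharing a \emph{non-cyclic} subgroup coincide, whereas the paper opens with the stronger statement that \emph{distinct non-surface vertex stabilizers of $T_\Gamma$ intersect trivially}, which follows from bipartism of $\Gamma$ together with $1$-acylindricity at surface-type vertices (malnormality of boundary subgroups): any non-trivial element fixing two distinct non-surface vertices fixes the bipartite path between them and hence two edges at a common surface vertex. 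With this stronger fact, one applies it to $f(V)=g_VVg_V^{-1}$ for \emph{every} vertex group $V$ of $\Gamma_j$ at once (cyclic, rigid, or covered surface), immediately getting $g_VH_jg_V^{-1}=hH_kh^{-1}$, hence $k=j$ and $h^{-1}g_V\in H_j$ uniformly, with no case split and no propagation.

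Your weaker criterion introduces two genuine gaps. First, the degenerate case where $\Gamma_j$ is a single cyclic vertex: you assert the conclusion is ``immediate from $f$ being a preretraction,'' but it is not. You know $f(H_j)=gH_jg^{-1}$, but to conclude that $f_A(A_{i_j})$ equals this conjugate (rather than merely contains it), you must identify $gH_jg^{-1}$ with $hH_kh^{-1}$ — and since $gH_jg^{-1}$ is cyclic, your non-cyclic coincidence criterion cannot do that. This is precisely what the paper's stronger intersection fact supplies. Second, your propagation to cyclic non-surface vertices $V$ of $\Gamma_j$ implicitly assumes the adjacent non-cyclic vertex $V'$ is also of non-surface type, so that both $f|_V$ and $f|_{V'}$ are genuine conjugations and their conjugating elements can be compared via the centralizer of the shared edge group. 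But condition~(ii) of a JSJ-like decomposition does not prevent a cyclic non-surface vertex of $\Gamma_j$ from being adjacent in $\Gamma_j$ only to covered surface vertices, and on such a $V'$ the map $f$ is an isomorphism onto a conjugate, not itself a conjugation — so there is no ``conjugating element for $V'$'' to compare against. One can repair this using that distinct boundary subgroups of a surface vertex group are non-conjugate in $A$ (again a consequence of $1$-acylindricity at surface vertices), but that needs to be said, and at that point one has essentially recovered the paper's stronger observation. I recommend replacing your coincidence criterion by the paper's remark at the outset; everything downstream then goes through without case analysis.
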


\begin{proof} We start by remarking that bipartism of $\Gamma$ and $1$-acylindricity at surface type vertices imply that distinct non surface type vertex groups of $\Gamma$ intersect trivially.

Now $\rho_{\cal C}(H_j) \leq A_{i_j}$, and by Lemma \ref{AiInHi}, $f_A(A_{i_j})$ lies in $g_jH_{k_j}g_j^{-1}$ for some $k$. Any vertex group $V$ of $\Gamma_j$ is sent to a conjugate of itself $g_V Vg^{-1}_V$ by $f$, so $f(H_j)$ intersects $g_V H_jg^{-1}_V$ non trivially. Thus we must have $k_j=j$, so that $f_A(A_{i_j}) \leq g_jH_jg_j^{-1}$, and $g^{-1}_j g_V  \in H_j$. 

The map $F = \Conj(g^{-1}_j) \circ f|_{H_j}$ is a morphism $H_j \to H_j$ which restricts to conjugation by an element of $H_j$ on non surface type vertex groups of $\Gamma_j$, and sends surface type vertex groups of $\Gamma_j$ isomorphically to a conjugate of themselves by an element of $H_j$. By Proposition \ref{IsoOnVertexIsIsoOnGroup}, $F$ is an isomorphism, hence so is $f|_{H_j}$. Thus in fact $f_A(A_{i_j}) = g_jH_jg_j^{-1}$
\end{proof}

We can finally finish the proof of Proposition \ref{Retraction}. By Lemma \ref{DerDesDer}, $f_A(A_{i_j}) = g_jH_jg^{-1}_j$, so the map $j \mapsto i_j$ is injective. Each $\Lambda_i$ contains at least one non surface type vertex group, so $i=i_j$ for some $j$, and up to renumbering we can assume $i_j = j$.

The group $f(A)$ acts on $T_{\Gamma}$. We assumed $f(A) = f_A(A_{\cal C})$, so the intersection of $f(A)$ with any surface type vertex group of $\Gamma$ is contained in a boundary subgroup. Thus in the action of $f(A)$ on $T_{\Gamma}$, a surface type vertex of $\Gamma$ has at most one non trivially stabilized adjacent edge. This implies that $f(A)$ is the free product of subgroups of conjugates of the groups $H_i$ with possibly a free group. But $f_A(A_{\cal C})$ is generated by the subgroups $f_A(A_i)$, so $f(A) = g_1H_1g^{-1} * \ldots * g_lH_lg^{-1}_l$.

Denote by $A'$ the subgroup of $A$ generated by the subgroups $H_i$. The morphism $f|_{A'}: A' \to g_1H_1g^{-1} * \ldots * g_lH_lg^{-1}_l$ is injective on each $H_i$, and distinct subgroups $H_i$ are sent to distinct free factors of the image. This implies that $A'$ is in fact the free product of the subgroups $H_i$.

The morphism $r=(f|_{A'})^{-1} \circ f$ restricts to the identity on $A'$, so it is a retraction $A \to A'$, and it is not injective since $f$ isn't. Moreover, it sends surface type vertex groups of $\Gamma$ to non abelian images since $f$ does. Thus $(A, A', r)$ forms a hyperbolic floor with respect to the decomposition $\Gamma$. This finishes the proof.

\nocite{Sel2}
\nocite{Sel3}
\nocite{Sel4}
\nocite{Sel5}
\nocite{Sel6}
\nocite{Sel7}
\bibliography{biblio}

\end{document}